\theoremstyle{plain}
\newtheorem{theorem}{Theorem}[section]
\newtheorem{corollary}[theorem]{Corollary}
\newtheorem{lemma}[theorem]{Lemma}
\newtheorem{proposition}[theorem]{Proposition}
\newtheorem{preremark}[theorem]{Remark}
\newenvironment{remark}{\begin{preremark}\normalfont}{\end{preremark}}
\theoremstyle{definition}
\newtheorem{assumption}{Assumption}
\newtheorem{definition}[theorem]{Definition}
\definecolor{myblue}{cmyk}{1,0.7,0,0}
\definecolor{mylightblue}{cmyk}{0.5,0.3,0,0}
\definecolor{mypaleblue}{cmyk}{0.08,0.05,0,0}
\definecolor{mypurple}{cmyk}{0.6,1,0,0}
\definecolor{myorange}{cmyk}{0,0.83,1,0.5}
\definecolor{mygreen}{cmyk}{0.9,0,1,0.2}
\definecolor{myred}{cmyk}{0,1,1,0.1}
\definecolor{mycolor}{cmyk}{0,0.5,1,0.5}
\definecolor{mymint}{cmyk}{0.93,0,0.75,0}
\renewcommand{\l}{\mathcal L}
\renewcommand{\r}{\mathcal R}
\newcommand{\e}{\mathcal E}
\newcommand{\F}{\mathcal F}
\newcommand{\R}{\mathbb R}
\title[Parabolic Harnack inequality on fractal-type Dirichlet spaces]{Parabolic Harnack inequality on fractal-type metric measure Dirichlet spaces}
\author{Janna Lierl}
\begin{document}

\begin{abstract}
This paper proves the strong parabolic Harnack inequality for local weak solutions to the heat equation associated with time-dependent (nonsymmetric) bilinear forms. The underlying metric measure Dirichlet space is assumed to satisfy the volume doubling condition, the strong Poincar\'e inequality, 
and a cutoff Sobolev inequality. The metric is not required to be geodesic.
Further results include a weighted Poincar\'e inequality, as well as upper and lower bounds for non-symmetric heat kernels.
\end{abstract}

\maketitle

\noindent
\emph{AMS Subject classification:} 35D30, 31C25, 60J60.

\noindent
\emph{Key words:} weak solutions, heat equation, Moser iteration, parabolic Harnack inequality, weighted Poincar\'e inequality, heat kernel estimates, Dirichlet form, fractals.

\noindent
\emph{Author affiliation and address:} Department of Mathematics, University of Connecticut, 341 Mansfield Road, Storrs, CT 06250.

\tableofcontents

\section{Introduction}
Parabolic Harnack inequalities are relevant in studying regularity of solutions to the heat equation, and to obtain heat kernel estimates.
On some metric measure spaces, sharp two-sided bounds of (sub-)Gaussian type for the transition density of a diffusion process can be characterized by the parabolic Harnack inequality.
Moreover, parabolic Harnack inequalities can be characterized by geometric conditions, namely the volume doubling property and the Poincar\'e inequality. 
This equivalence was first proved on complete 
Riemannian manifolds by Saloff-Coste \cite{SC92, SC95} and Grigor'yan \cite{Gri91}. Sturm \cite{SturmIII} extended this result to metric measure Dirichlet spaces. Biroli and Mosco \cite{BM95} proved the elliptic Harnack inequality on Dirichlet spaces.

It is desirable to obtain similar results under minimal assumptions on the metric of the underlying Dirichlet space.
Interesting and comprehensive results in this direction have been obtained in recent years. See, e.g., \cite{HSC01, BBK06, GT12, BGK12, GHL15} and references therein for results in the context of fractal-type Dirichlet spaces.
The main focus of these works is on bounds for symmetric heat kernels. Harnack inequalities are used to obtain or characterize these estimates. For this purpose, one may replace the parabolic Harnack inequality by the 
elliptic Harnack inequality together with some additional conditions, e.g., resistance estimate, or exit time estimate. 

In this paper, we present three main results. The first is the strong parabolic Harnack inequality on any metric measure Dirichlet space that satisfies volume doubling, strong Poincar\'e inequality, and the cutoff Sobolev inequality on annuli. 
We emphasize that we do not require the metric to be geodesic, though if the metric is geodesic then we also have the converse implication, namely that the parabolic Harnack inequality implies the strong Poincar\'e inequality. See Proposition \ref{prop:VD,PI,CSA equiv}. 

More specifically, we show that the strong parabolic Harnack inequality
\[ \sup_{Q^-} u \leq C \inf_{Q^+} u \]
holds for any non-negative local weak solution $u(t,x)$ of the heat equation on a time-space cylinder
$Q(x,a,r) := (a,a + \Psi(r)) \times B(x,r)$, where $Q^- := (a + \tau_1 \Psi(r), a + \tau_2 \Psi(r)) \times B(x,\delta r)$ and 
$Q^+ := (a + \tau_3 \Psi(r), a + \tau_4 \Psi(r)) \times B(x,\delta r)$ are two smaller time-space cylinder of radius $\delta r < r$ that are separated by a 
time gap $(a + \tau_3 \Psi(r)) - (a + \tau_2 \Psi(r))$. Here $a$ is any real number, $x \in X$ is any point in the underlying metric measure space, and $C$ is a positive constant depending on 
the arbitrary choice of parameters $0 < \tau_1 < \tau_2 < \tau_3 < \tau_4 \leq 1$. The function $\Psi$ describes the appropriate time-space scaling that is implicit in the assumed 
Poincar\'e inequality PI($\Psi$) and the cutoff Sobolev inequality CSA($\Psi$) whose definitions we recall in the main text. Our only condition on $\Psi$ is that it satisfies a polynomial growth condition 
\eqref{eq:beta} given in Section \ref{ssec:cutoff sobolev}.

In the absence of a geodesic metric, we must distinguish between the {\em strong} parabolic Harnack inequality as stated above, and the {\em weak} parabolic Harnack inequality (see \cite{BGK12}) in which the Harnack
constant exists for {\em some} parameters $0 < \tau_1 < \tau_2 < \tau_3 < \tau_4 \leq 1$ but not necessarily for any arbitrary choice of parameters. See \cite{BGK12, GHL15} for equivalence results
for the weak parabolic Harnack inequality on symmetric Dirichlet spaces.

The second main result concerns weak solutions of the heat equation associated with time-dependent and/or non-symmetric bilinear forms $(\e_t,\F)$, $t \in \R$. These bilinear forms generalize Dirichlet forms: they may lack the Markovian property, non-negative definiteness, or symmetry. We think of these forms as perturbations of a symmetric strongly local regular reference Dirichlet form $(\e^*,\F)$. Our hypothesis is that the bilinear forms $\e_t$ satisfy certain structural conditions (see Assumption \ref{as:0}) and quantitative conditions (Assumptions \ref{as:skew1}, \ref{as:skew2}). 
We establish the local boundedness of local weak solutions (Corollary \ref{cor:u loc bounded}) and the strong parabolic Harnack inequality for $\e_t$ (Theorem \ref{thm:PHI}) under natural geometric conditions on the reference Dirichlet space. 
The local boundedness and the H\"older continuity (Corollary \ref{cor:Hoelder}) of local weak solutions are well-known consequences of the parabolic Harnack inequality.
A priori, however, the local boundedness of weak solutions is not obvious. We derive it from mean value estimates which we prove using a Steklov average technique similar to that in \cite{LierlSC2}.

Third, we present upper and lower bounds for the nonsymmetric heat kernels or, in the time-dependent case, heat propagators associated with $\e_t$, $t \in \R$.
As in \cite{LierlSC2}, our assumptions on the non-symmetric perturbations cover plenty of examples on Euclidean space, Riemannian manifolds, or polytopal complexes. For instance, our results apply to uniformly elliptic second order differential operators with (time-dependend) bounded measurable coefficients.
Examples of non-symmetric bilinear forms on an abstract Dirichlet space are not immediate.
In Section \ref{sec:examples}, we construct a non-symmetric perturbation $\e$ of a symmetric strongly local regular Dirichlet form $(\e^*,\F)$ so that $\e$ satisfies the strong parabolic Harnack inequality and heat kernel estimates.

Our setting includes fractal spaces like the Sierpinski carpet, though in this case the strong parabolic Harnack inequality is equivalent to the weak parabolic Harnack inequality because the metric is geodesic. Nevertheless, this case is interesting because we give a proof that does not rely on heat kernel estimates.

This work is in part motivated by applications to estimates for nonsymmetric Dirichlet heat kernels on inner uniform domains in fractal spaces \cite{LierlHKEf}. A common hypothesis in the works \cite{BB04, BBK06, AB15} which treat fractal-type spaces, is the conservativeness of the Dirichlet form. Since the estimates in \cite{LierlHKEf} are proved using Doob's transform and it is not clear a priori that the transformed Dirichlet space would be conservative, it was important to not assume conservativeness in the present work. We remark that the assumption of conservativeness was already dropped in, e.g., \cite{GHL15} in a similar context.

We prove our main results using the {\em parabolic} Moser iteration scheme \cite{Moser64, Moser67, Moser71}.
It was proved by Barlow and Bass in \cite{BB04,BBK06} that the {\em elliptic} Moser iteration scheme can be applied to obtain the elliptic Harnack inequality on a fractal-type
metric measure Dirichlet space which is symmetric strongly local regular and which satisfies the volume doubling property, the strong Poincar\'e inequality, and a cutoff Sobolev inequality. The parabolic Harnack inequality was then derived 
through an estimate for the resistance of balls in concentric larger balls.
The approach in \cite{BB04,BBK06} is to follow Moser's line of arguments with $d\mu$ replaced by a measure $d\gamma_{x,R} = \Psi(R) d\Gamma(\phi,\phi) + d\mu$, 
where $d\Gamma(\cdot,\cdot)$ is the energy measure of the Dirichlet form, and $\phi$ is a cutoff function for the ball $B(x,R/2)$ with compact support in the larger ball $B(x,R)$. 
This approach does not seem to generalize to the parabolic case: the estimates for sub- and supersolutions (cf.~Lemma \ref{lem:estimate subsol p>2} and \ref{lem:estimate subsol p<2}),
which are an important step in obtaining mean value estimates, are not available with $\gamma_{x,R}$ in place of $\mu$. 
Therefore, the parabolic case requires that the energy measure $d\Gamma(\psi,\psi)$ of a suitable cutoff function $\psi$ must be estimated through a cutoff Sobolev inequality very early in the line of arguments, that is, when 
proving sub- and supersolution estimates. This is possible thanks to the cutoff Sobolev inequality on annuli CSA($\Psi$) which was introduced in \cite{AB15}. 
The relevant property of this condition is that for {\em every} $\epsilon \in (0,1)$ there exists a cutoff function $\psi$ for $B(x,R)$ in $B(x,R+r)$ that satisfies the inequality
\begin{align} \label{csa}
 \int f^2 d\Gamma(\psi,\psi) \leq \epsilon \int \psi^2 d\Gamma(f,f) + C \frac{ \epsilon^{1-\beta_2/2}}{\Psi(r)} \int_{B(x,R+r)} f^2 d\mu,
\end{align}
for all $f \in \F$, where $C$ is a positive constant independent of $\psi,f,x,R,r,\epsilon$.

A slightly weaker condition is the {\em generalized capacity condition} introduced in \cite{GHL15}: It is inequality \eqref{csa} for bounded functions $f \in \F \cap L^{\infty}(X)$ 
and the cutoff functions $\psi$ are
allowed to depend on $f$. The generalized capacity condition appears to be too weak to run the parabolic Moser iteration. Indeed, since the local boundedness of weak
solutions is not known a priori, several approximation arguments are used in our proof. 
Because of this we need the cutoff functions to be independent of the functions that approximate the weak solution.

Once the mean value estimates for sub- and supersolutions are proved, we apply a weighted Poincar\'e inequality to complete the proof of 
the parabolic Harnack inequality. More specifically, we need the weight to be a cutoff function that satisfies CSA($\Psi$). The weighted Poincar\'e inequality is obtained in Theorem \ref{thm:weighted PI}.

It is worth pointing out that our arguments are local. Therefore, our hypotheses on the space (volume doubling and Poincar\'e inequality) are local. That is, they are stated for balls $B(x,R)$ that lie in
some subset $Y$ of the underlying space $X$, with radii $R$ up to a fixed scale $R \leq R_0 \in (0,\infty]$.

Regarding the notion of (local) weak solutions to the heat equation, we adopt the definition that is natural from the viewpoint of existence and uniqueness theory (see, e.g., \cite{LM68, Wlo87en, EldredgeSC14}).
In order to clarify the relation of recent literature to our results, we verify that the space of local weak solutions to the heat equation associated with a symmetric strongly local regular Dirichlet
 form constitutes a space  of caloric functions in the sense of \cite{BGK12}. 
Along the way, we obtain a proof of the parabolic maximum principle (Proposition \ref{prop:para max princ}) using the Steklov average technique.  We remark that the axiomatic properties of caloric functions implicitly presume the strong locality of the Dirichlet form.

In part of this paper, we will work with the so-called very weak solutions introduced in \cite{LierlSC2}. Very weak solutions may lack continuity in the time-variable and are thus too general to satisfy the parabolic Harnack inequality unless we additionally assume continuity in the time-variable, which then leaves us with weak solutions.

\ \\

\textbf{Structure of the paper.}
In Section \ref{sec:cutoff sobolev} we recall basic properties of the underlying metric measure Dirichlet space and introduce non-symmetric perturbations of the reference Dirichlet form $(\e^*,\F)$.
Since the assumptions we impose on the perturbations involve cutoff functions, we provide some background on cutoff Sobolev inequalities in the same section, and introduce a localized cutoff Sobolev condition.

In Section \ref{sec:Sobolev and Poincare} we consider Sobolev and Poincar\'e inequalities for the reference form. 
The main result of this section is the weighted Poincar\'e inequality of Theorem \ref{thm:weighted PI}.

In Section \ref{sec:Moser} we return to the setting of time-dependent non-symmetric local bilinear forms. We recall the definition of very weak solutions introduced in \cite[Definition 3.1]{LierlSC2} in Section \ref{ssec:very weak solutions}
 and then follow Moser's reasoning: We first prove estimates for non-negative local weak sub- and supersolutions 
(Section \ref{ssec:estim for sub and supsol}) and then run the parabolic Moser iteration scheme to obtain mean value estimates (Section \ref{ssec:Mean value estimates}). 
A main result of the paper, the local boundedness of weak solutions, hides in Corollary \ref{cor:u loc bounded}.

Section \ref{sec:PHI} is devoted to parabolic Harnack inequalities. 
Section \ref{ssec:PHI main results} contains main results, namely parabolic Harnack inequalities in the context of non-symmetric local bilinear forms.  
In Section \ref{ssec:PHI symmetric case} we take a closer look at the case of a symmetric strongly local regular Dirichlet form, relating the present paper to recent literature. 
This subsection relies on a parabolic maximum principle and a super-mean value property for local weak solutions. We prove these in Section \ref{sec:parabolic max princ}.

In Section \ref{sec:heat propagator} we present applications: estimates for symmetric and non-symmetric heat kernels and, in the time-dependent case, heat propagators. 
Some of these estimates are proved under the additional assumption that the metric is geodesic, and the bilinear forms satisfy a further quantitative condition (Assumption \ref{as:skew davies}).

We conclude the paper by constructing an example of a non-symmetric local bilinear form on a fractal-type metric measure space, see Section \ref{sec:examples}.

\textbf{Acknowledgement.}
The author thanks Laurent Saloff-Coste for discussions.

\section{Cutoff Sobolev type conditions and non-symmetric forms}
\label{sec:cutoff sobolev}
\subsection{The symmetric reference form}
\label{ssec:reference form}
Let $(X,d,\mu)$ be a locally compact separable metric measure space, where $\mu$ is a Radon measure on $X$ with full support. 
Throughout this paper we fix a symmetric strongly local regular Dirichlet form $(\e^*,\F)$ on $L^2(X,\mu)$. The Dirichlet form $(\e^*,\F)$ induces the norm 
\[ ||f||_{\F}^2 := \e^*(f,f) + \int f^2 d\mu \] 
on its domain $\F$. 
The energy measure $\Gamma$ of $\e^*$ (in \cite{FOT94} denoted as $\frac{1}{2}\mu^c_{<\cdot,\cdot>}$) satisfies a Cauchy-Schwarz inequality, cf.~\cite[Lemma 5.6.1]{FOT94},
\begin{align} \label{eq:CS}
\left| \int fg \, d\Gamma(u,v) \right|
\leq & \left(\int f^2 d\Gamma(u,u) \right)^{\frac{1}{2}} 
      \left(\int g^2 d\Gamma(v,v) \right)^{\frac{1}{2}},
\end{align}
for any $u,v \in \F$ and any bounded Borel measurable functions $f,g$ on $X$.
We have the following chain rule for $\Gamma$: For any $v, u_1, u_2, \ldots, u_m \in \F \cap L^{\infty}(X,\mu)$, $u = (u_1, \ldots, u_m)$, and $\Phi \in \mathcal C^1(\R^m)$ with $\Phi(0)=0$, we have $\Phi(u) \in \F \cap L^{\infty}(X,\mu)$ and
\begin{align} \label{eq:chain rule for Gamma}
d\Gamma(\Phi(u),v) = \sum_{i=1}^{m} \Phi_{x_i}(\tilde u) d\Gamma(u_i, v),
\end{align}
where $\Phi_{x_i}:=\partial \Phi / \partial x_i$ and $\tilde u$ is a quasi-continuous version of $u$, see \cite[(3.2.27) and Theorem 3.2.2]{FOT94}. 
When $\Phi_{x_i}$ is bounded for every $i \in \{1,\ldots,m\}$ in addition, then $\Phi(u) \in \F$ and \eqref{eq:chain rule for Gamma} hold for any $u_1, \ldots, u_m \in \F$ and any $v \in \F \cap L^{\infty}(X,\mu)$; see \cite[(3.2.28)]{FOT94}.

Inequality \eqref{eq:CS} together with a Leibniz rule \cite[Lemma 3.2.5]{FOT94} implies that
\begin{align} \label{eq:Gamma(fg)}
 \int d\Gamma(fg,fg)
& \leq  2\int f^2 d\Gamma(g,g)  
      + 2\int g^2 d\Gamma(f,f),
\end{align}
for any $f,g \in \F \cap L^{\infty}(X)$. Here, on the right hand side, quasi-continuous versions of $f$ and $g$ must be used. 

By definition, the (essential) support of $f\in L^2(X,\mu)$ is the support of the measure $|f| d\mu$.
For an open set $U \subset X$, we set
\begin{align*}
& \F_{\mbox{\tiny{c}}}(U) := \{ f \in \F : \textrm{ The support of } f \textrm{ is compact in } U \}, \\
& \F^0(U) := \mbox{ closure of } \F_{\mbox{\tiny{c}}}(U) \mbox{ in } (\F,\| \cdot \|_{\F}), \\
& \F_{\mbox{\tiny{{loc}}}}(U)  :=  \{ f \in L^2_{\mbox{\tiny{loc}}}(U) : \forall \textrm{ compact } K \subset U, \ \exists f^{\sharp} \in \F, f\big|_K = f^{\sharp}\big|_K \mbox{ $\mu$-a.e.} \}.
\end{align*} 
For functions in $\F_{\mbox{\tiny{loc}}}(U)$ we always take their quasi-continuous versions.
Note that $\Gamma(f,g)$ can be defined locally on $U$ for $f,g \in \F_{\mbox{\tiny{loc}}}(U)$ by virtue of \cite[Corollary 3.2.1]{FOT94}. For any $v, u_1, \ldots, u_m \in \F_{\mbox{\tiny{loc}}}(U) \cap L^{\infty}_{\mbox{\tiny{loc}}}(U,\mu)$ and $\Phi \in \mathcal C^1(\R^m)$, we have $\Phi(u) \in \F_{\mbox{\tiny{loc}}}(U) \cap L^{\infty}_{\mbox{\tiny{loc}}}(U,\mu)$ and the chain rule \eqref{eq:chain rule for Gamma} holds. 
For convenience, we set $\F_{\mbox{\tiny{b}}} := 
\F \cap L^{\infty}(X,\mu)$, $\F_{\mbox{\tiny{c}}} := 
\F_{\mbox{\tiny{c}}}(X)$ and $\F_{\mbox{\tiny{loc}}} := 
\F_{\mbox{\tiny{loc}}}(X)$.

Throughout the paper we will use the notation $f \vee a := \max\{f,a\}$, $f \wedge a := \min \{f,a\}$, $f^+ := f \vee 0$ and $f^- := (-f)^+$, for a function $f$ and a real number $a$.

\subsection{Cutoff Sobolev inequalities} \label{ssec:cutoff sobolev}
For the ease of readability, we suppose in this subsection that any metric ball $B(x,R+r) \subset X$ under consideration is relatively compact. Later, we will localize this assumption; see condition (A2-$Y$) in Subsection \ref{ssec:localized CSA}.

Let $\Psi:[0,\infty) \to [0,\infty)$ be a continuous strictly increasing  bijection.
Assume there exist $\beta_1,\beta_2 \in [2,\infty)$ and $C_{\Psi} \in [1,\infty)$ such that, for all $0 < s < R$,
\begin{align} \label{eq:beta}
C_{\Psi}^{-1} \left( \frac{R}{s} \right)^{\beta_1}
\leq
\frac{\Psi(R)}{\Psi(s)} 
\leq 
C_{\Psi} \left( \frac{R}{s} \right)^{\beta_2}.
\end{align}

\begin{definition}
A function $\psi \in \F$ is called a \emph{cutoff function} for $B(x,R)$ in $B(x,R+r)$, where $x \in X$, $R>0$, $r>0$, if
\begin{enumerate}
\item
$\psi$ is continuous,
\item
$0 \leq \psi \leq 1$ $\mu$-a.e.,
\item
$\psi = 1$ on $B(x,R)$ $\mu$-a.e.,
\item
The compact support of $\psi$ is contained in $B(x,R+r)$.
\end{enumerate}
\end{definition}

\begin{definition} 
$(X,d,\mu,\e^*,\F)$ satisfies the cutoff Sobolev condition on annuli, CSA($\Psi$), if there exists a constant $C_0 \in (0,\infty)$ such that for any 
$\epsilon \in (0,1)$, $x \in X$, $R>0$, $r>0$, there exists a cutoff function $\psi$ for $B(x,R)$ in $B(x,R+r)$ such that
 \begin{align} \label{eq:CSA}
\forall f \in \F, \quad \int_A f^2 d\Gamma(\psi,\psi) \leq \epsilon \int_A \psi^2 d\Gamma(f,f) + \frac{C_0 \epsilon^{1-\beta_2/2}}{ \Psi( r )} \int_A \psi f^2 d\mu,
 \end{align}
 where $A = B(x,R+r) \setminus B(x,R)$.
\end{definition}

Abusing notation, we denote by CSA($\Psi$) not only the cutoff Sobolev condition on annuli, but also the collection of all cutoff functions that satisfy \eqref{eq:CSA} for some $x,R,r$. 
We will sometimes write $\psi \in \mbox{CSA($\Psi,\epsilon$)}$ or $\psi \in \mbox{CSA($\Psi,\epsilon,C_0$)}$ when $\psi$ satisfies \eqref{eq:CSA} for the specified
 $\epsilon$ and $C_0$. To keep notation simple, we will write $C_0(\epsilon)$ for $C_0 \epsilon^{1-\beta_2/2}$.

The cutoff Sobolev condition on annuli was introduced in \cite{AB15} for fixed $\epsilon = \frac{1}{8}$. From the proof of \cite[Lemma 5.1]{AB15} it is clear that CSA($\Psi$) holds with some fixed $\epsilon$ and for all $r>0, R>0$ if and only if it holds for all $\epsilon \in (0,1)$ and for all $r>0, R>0$ (with a different cutoff function for each $\epsilon$). Thus, the two definitions are equivalent. More precisely, we have the following lemma which quantifies the scaling of the zero order term on the right hand side of \eqref{eq:CSA} as $\epsilon$ varies. 

\begin{lemma} \label{lem:CSA epsilon}
Let $B(x,R+r) \subset X$ be relatively compact. For every $\epsilon \in (0,1)$ there exists $\lambda \in (0,\infty)$ such that the following holds. For each non-negative integer $n$ let 
$b_n = e^{-n\lambda}$, $s_n = c_{\lambda} r e^{-n \lambda/\beta_2}$, where $c_{\lambda}$ is chosen so that $\sum_{n=1}^{\infty} s_n =: r' < r$. Let $r_0=0$,
\begin{align} \label{eq:r_n}
 r_n  = \sum_{k=1}^n s_k
 \end{align}
 and $B_n=B(x,R+r_n)$. Let $\psi_n$ be a cutoff function for $B_{n-1}$ in $B_n$ which satisfies
  \begin{align*}
\forall f\in \F, \qquad \int_{B_n \setminus B_{n-1}} f^2 d\Gamma(\psi_n,\psi_n) \leq c_1 \int_{B_n \setminus B_{n-1}} d\Gamma(f,f) + \frac{c_2}{\Psi( s_n )} \int_{B_n \setminus B_{n-1}} f^2 d\mu,
 \end{align*}
 for some fixed constants $c_1, c_2$ that do not depend on $n, x, r, R$.
  Let
\[ \psi := \sum_{n=1}^{\infty} (b_{n-1} - b_n) \psi_n. \]
Then $\psi$ is a cutoff function for $B(x,R)$ in $B(x,R+r)$ and $\psi$ satisfies \eqref{eq:CSA} for the given $\epsilon$ with some constant $C_0 \in (0,\infty)$ 
that depends only on $\beta_2, C_{\Psi}, c_1, c_2$.
\end{lemma}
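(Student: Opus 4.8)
The plan is to check that $\psi$ is a cutoff function lying in $\F$, then to split the energy measure $\Gamma(\psi,\psi)$ annulus by annulus, feed each piece into the hypothesis on $\psi_n$, and finally calibrate $\lambda$ against $\epsilon$. Write $a_n:=b_{n-1}-b_n>0$, so that $\sum_{n\ge1}a_n=b_0=1$ and $\psi=\sum_n a_n\psi_n$. The bounds $0\le\psi\le1$ and the identity $\psi\equiv1$ on $B(x,R)$ are immediate from $0\le\psi_n\le1$, $\psi_n\equiv1$ on $B_{n-1}\supseteq B(x,R)$, and $\sum_n a_n=1$; continuity holds because $\bigl|\psi-\sum_{n\le N}a_n\psi_n\bigr|\le b_N\to0$ uniformly; and the support of $\psi$ lies in $\{d(x,\cdot)\le R+r'\}$, a compact subset of $B(x,R+r)$ because $r'<r$.

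The one genuinely analytic point is that $\psi\in\F$ and that its energy measure splits as expected. I would first note that $\psi_n$ is locally constant off the pairwise disjoint annuli $A_n:=B_n\setminus B_{n-1}$, so $\Gamma(\psi_n,\psi_n)$ is carried by $A_n$; by \eqref{eq:CS} all cross terms $\Gamma(\psi_n,\psi_m)$ with $n\ne m$ then vanish, and the partial sums satisfy $\Gamma\bigl(\sum_{n\le N}a_n\psi_n,\sum_{n\le N}a_n\psi_n\bigr)=\sum_{n\le N}a_n^2\Gamma(\psi_n,\psi_n)$. Testing the hypothesis for $\psi_n$ against a cutoff function $f_0$ for $B(x,R+r')$ in $B(x,R+r)$ — for which $\Gamma(f_0,f_0)$ vanishes on $A_n$ while $f_0\equiv1$ there — yields the uniform estimate
\[
 \e^*(\psi_n,\psi_n)=\Gamma(\psi_n,\psi_n)(A_n)\le\frac{c_2}{\Psi(s_n)}\,\mu\bigl(B(x,R+r)\bigr).
\]
Since $r/s_n=c_\lambda^{-1}e^{n\lambda/\beta_2}$, the growth condition \eqref{eq:beta} gives $\Psi(s_n)^{-1}\le C_\Psi\, c_\lambda^{-\beta_2}e^{n\lambda}\,\Psi(r)^{-1}$, and because $a_n^2 e^{n\lambda}=(e^\lambda-1)^2e^{-n\lambda}$ is summable we obtain $\sum_n a_n^2\,\e^*(\psi_n,\psi_n)<\infty$; combined with $\bigl\|\sum_{M<n\le N}a_n\psi_n\bigr\|_{L^2}\le b_M\,\mu(B(x,R+r))^{1/2}$ this shows the partial sums are Cauchy in $\F$, so $\psi\in\F$ and $\Gamma(\psi,\psi)=\sum_n a_n^2\Gamma(\psi_n,\psi_n)$.

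Next I would prove \eqref{eq:CSA} for $\psi$. Fix $f\in\F$ and set $A=B(x,R+r)\setminus B(x,R)$. Since $\Gamma(\psi,\psi)$ is carried by $\bigcup_n A_n\subseteq A$ with the $A_n$ disjoint, and $A_n=B_n\setminus B_{n-1}$, the hypothesis on $\psi_n$ gives
\begin{align*}
 \int_A f^2\,d\Gamma(\psi,\psi)
 &=\sum_n a_n^2\int_{A_n}f^2\,d\Gamma(\psi_n,\psi_n)\\
 &\le\sum_n a_n^2\Bigl(c_1\int_{A_n}d\Gamma(f,f)+\frac{c_2}{\Psi(s_n)}\int_{A_n}f^2\,d\mu\Bigr).
\end{align*}
On $A_n$ one has $\psi=b_n+a_n\psi_n$ (there $\psi_m\equiv1$ for $m>n$ and $\psi_m\equiv0$ for $m<n$), hence $b_n\le\psi\le e^\lambda b_n$; so there $a_n=(e^\lambda-1)b_n\le(e^\lambda-1)\psi$ and $a_n^2\,\Psi(s_n)^{-1}\le(e^\lambda-1)^2C_\Psi\,c_\lambda^{-\beta_2}\,\psi\,\Psi(r)^{-1}$. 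Inserting these bounds and using the disjointness of the $A_n$ together with $A_n\subseteq A$ leads to
\begin{align*}
 \int_A f^2\,d\Gamma(\psi,\psi)
 &\le c_1(e^\lambda-1)^2\int_A\psi^2\,d\Gamma(f,f)\\
 &\quad+\frac{c_2\,C_\Psi\,c_\lambda^{-\beta_2}(e^\lambda-1)^2}{\Psi(r)}\int_A\psi f^2\,d\mu.
\end{align*}

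Finally I would choose $\lambda=\lambda(\epsilon)\in(0,\infty)$ so that $c_1(e^\lambda-1)^2\le\epsilon$ with $\lambda$ still comparable to $\epsilon^{1/2}$ as $\epsilon\to0$ (for instance, by taking $c_1(e^\lambda-1)^2=\min\{\epsilon,c_1\}$); the normalization $\sum_n s_n=r/2$ then forces $c_\lambda=\tfrac12(e^{\lambda/\beta_2}-1)$. The first coefficient above is then $\le\epsilon$, so it only remains to verify $C_0(\epsilon):=c_2\,C_\Psi\,c_\lambda^{-\beta_2}(e^\lambda-1)^2\le C_0\,\epsilon^{1-\beta_2/2}$ for a constant $C_0=C_0(\beta_2,C_\Psi,c_1,c_2)$. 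This is elementary one-variable estimation: for $\epsilon$ bounded away from $0$, $C_0(\epsilon)$ is bounded while $\epsilon^{1-\beta_2/2}\ge1$; and as $\epsilon\to0$ one has $e^\lambda-1\sim\lambda$, $c_\lambda\sim\lambda/(2\beta_2)$, $\lambda\sim(\epsilon/c_1)^{1/2}$, so $C_0(\epsilon)$ is comparable to $\lambda^{2-\beta_2}$, hence to $\epsilon^{1-\beta_2/2}$, with the constant depending only on the stated quantities. The main obstacle is the middle step — securing $\psi\in\F$ together with $\Gamma(\psi,\psi)=\sum_n a_n^2\Gamma(\psi_n,\psi_n)$ — which rests on the uniform energy bound for the $\psi_n$ (extracted from the given inequality by testing against a function locally constant on $A_n$) and on the geometric decay of $b_n$ and $s_n$ that the choice of $\lambda$ builds in; the annulus-by-annulus estimate and the bookkeeping for $C_0(\epsilon)$ are then routine.
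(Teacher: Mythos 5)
Your proof is correct and follows essentially the same route as the paper's: decompose $\Gamma(\psi,\psi)$ over the disjoint annuli $A_n$, apply the hypothesis to each $\psi_n$, use $b_n\le\psi\le e^{\lambda}b_n$ on $A_n$ together with \eqref{eq:beta} to absorb the two sums, and calibrate $\lambda\sim(\epsilon/c_1)^{1/2}$ at the end. The only difference is that you explicitly verify $\psi\in\F$ and the cutoff properties (via the Cauchy partial sums and the uniform energy bound obtained by testing against a function locally constant on $A_n$), details the paper leaves implicit.
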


The cutoff function $\psi$ constructed in Lemma \ref{lem:CSA epsilon} will serve as a weight function in the weighted Poincar\'e inequality of Theorem \ref{thm:weighted PI}. 
We include the full proof of this lemma for the convenience of the reader, though it is essentially the same as \cite[Proof of Lemma 5.1]{AB15}.

\begin{proof}
Let $f \in \F$. Note that $\psi=1$ on $B_0 = B(x,R)$, and $\psi - (b_{n-1}-b_n)\psi_n$ is constant on $B_n \setminus B_{n-1}$. 
Because of the strong locality and \cite[Theorem 4.3.8]{CF12}, we obtain
\begin{align*} 
& \int f^2 d\Gamma(\psi,\psi) \nonumber \\
& = \int_{B_0} f^2 d\Gamma(\psi,\psi) 
 + \sum_{n=1}^{\infty} (b_{n-1} - b_n)^2 \int_{B_n \setminus B_{n-1}} f^2 d\Gamma(\psi_n,\psi_n)  \nonumber \\
& \quad + 2\sum_{n=1}^{\infty} (b_{n-1} - b_n) \int_{B_n \setminus B_{n-1}} f^2 d\Gamma(\psi_n,\psi- (b_{n-1} - b_n) \psi_n)  \\
& \quad + \sum_{n=1}^{\infty}  \int_{B_n \setminus B_{n-1}} f^2 d\Gamma(\psi-(b_{n-1} - b_n)\psi_n,\psi-(b_{n-1} - b_n)\psi_n)  \\
& = \sum_{n=1}^{\infty} (b_{n-1} - b_n)^2 \int_{B_n \setminus B_{n-1}} f^2 d\Gamma(\psi_n,\psi_n) \nonumber \\
& \le
\sum_{n=1}^{\infty} (b_{n-1} - b_n)^2  \left( c_1 \int_{B_n \setminus B_{n-1}} d\Gamma(f,f) + \frac{c_2}{ \Psi( s_n )} \int_{B_n \setminus B_{n-1}} f^2 d\mu  \right)  \nonumber\\
& \leq
(e^{\lambda} - 1)^2 \left( \sum_{n=1}^{\infty} e^{-2n\lambda}   c_1 \int_{B_n \setminus B_{n-1}} d\Gamma(f,f)\right) \nonumber\\
& \quad  +  \sum_{n=1}^{\infty} (b_{n-1} - b_n)^2 \frac{c_2}{ \Psi( s_n )} \int_{B_n \setminus B_{n-1}} f^2 d\mu \nonumber\\
& \leq 
(e^{\lambda} - 1)^2 c_1 \int \psi^2 d\Gamma(f,f) 
+  \sum_{n=1}^{\infty} (b_{n-1} - b_n)^2 \frac{c_2}{ \Psi( s_n )} \int_{B_n \setminus B_{n-1}} f^2 d\mu. \nonumber
\end{align*} 
The last inequality is where we needed the annuli (rather than balls) because we want the sum to be a telescoping sum. We also used the fact that $\psi_n \ge b_n = e^{-n\lambda}$ on $B_{n-1} \setminus B_n$.
By \eqref{eq:beta}, we have
\begin{align} \label{eq:Psi(r)/Psi(s_n)}
\frac{\Psi( r )}{ \Psi( s_n )} 
 \leq C_{\Psi}
\left( \frac{r}{c_{\lambda} r e^{-n \lambda/\beta_2}} \right)^{\beta_2} 
& \leq C_{\Psi} \frac{ e^{\lambda} - 1}{c_{\lambda}^{\beta_2}(b_{n-1}-b_n)}.
\end{align}
Thus,
\begin{align*}
& \sum_{n=1}^{\infty} (b_{n-1} - b_n)^2 \frac{c_2}{ \Psi( s_n )} \int_{B_n \setminus B_{n-1}} f^2 d\mu 
 \leq
\frac{ (e^{\lambda} - 1)^2 }{c_{\lambda}^{\beta_2}}  \frac{c_2 \cdot C_{\Psi}}{\Psi( r )} \int \psi f^2 d\mu.
\end{align*}
Finally,
\begin{align*}
& \int f^2 d\Gamma(\psi,\psi) 
 \leq 
(e^{\lambda} - 1)^2 c_1 \int \psi^2 d\Gamma(f,f) 
+ \frac{ (e^{\lambda} - 1)^2 }{c_{\lambda}^{\beta_2}} 
 \frac{c_2 \cdot C_{\Psi}}{\Psi( r )} \int \psi f^2 d\mu.
\end{align*} 
Choose $\lambda := \log( 1 + (\epsilon/c_1)^{1/2})$. Then $(e^{\lambda}-1)^2 c_1 = \epsilon$. 
By the choice of $c_{\lambda}$,
\[ c_{\lambda} 
= e^{\lambda/\beta_2} (1-e^{-\lambda/\beta_2}) \frac{r'}{r}
= (e^{\lambda/\beta_2} - 1) \frac{r'}{r}.
\]
Hence,
\begin{align*}
\frac{ (e^{\lambda} - 1)^2 }{c_{\lambda}^{\beta_2}}
& = \frac{ (e^{\lambda} - 1)^2}{(e^{\lambda/\beta_2}-1)^{\beta_2}} \left( \frac{r'}{r} \right)^{-\beta_2}
= \frac{\epsilon}{c_1} (e^{\lambda/\beta_2}-1)^{-\beta_2} \left( \frac{r'}{r} \right)^{-\beta_2} \\
& \le \mbox{const}(\beta_2,c_1,r'/r) \cdot \left(\frac{\epsilon}{c_1} \right)^{1-\beta_2/2},
\end{align*}
where we applied the trivial inequality $(e^x-1)^{-1} \leq x^{-1}$ with $x = \log(1+(\epsilon/c_1)^{1/2})/\beta_2$. This completes the proof.
\end{proof}

\subsection{Local cutoff Sobolev condition} \label{ssec:localized CSA}

Let $(X,d,\mu,\e^*,\F)$ be as in Section \ref{ssec:reference form}. Let $Y \subset X$ be open and $R_0 > 0$. 
\begin{definition}
The {\em cutoff Sobolev inequality on annuli, \em CSA($\Psi$)}, is satisfied on $Y$ up to scale $R_0$ if there exists a constant $C_0 \in (0,\infty)$ such that, for any $\epsilon \in (0,1)$, $0 < r < R \le R_0$, $B(x,2R) \subset Y$,
there exists a cutoff function $\psi$ for $B(x,R)$ in $B(x,R+r)$ such that
 \begin{align} \label{eq:localized CSA epsilon} 
\forall f \in \F, \quad \int_A f^2 d\Gamma(\psi,\psi) \leq \epsilon \int_A \psi^2 d\Gamma(f,f) + \frac{C_0 \epsilon^{1-\beta_2/2}}{ \Psi(r)} \int_A \psi f^2 d\mu,
 \end{align}
where $A = B(x,R+r) \setminus B(x,R)$.
\end{definition}

\subsection{Structural assumptions on the form}
Let $(X,d,\mu,\e^*,\F)$ be as in Section \ref{ssec:reference form}.
We will refer to $(\e^*,\F)$ as the {\em reference form} for the bilinear forms defined below.
Let $(\e_t,\F)$, $t \in \R$, be a family of (possibly non-symmetric) local bilinear forms that all have the same domain $\F$ as the reference form $(\e^*,\F)$. 
We always assume that, for every $f,g \in \F$, the map $t \mapsto \e_t(f,g)$ is measurable. 

For $f,g \in \F$, let $\e_t^{\mbox{\tiny{sym}}}(f,g) := \frac{1}{2} \big[ \e_t(f,g) + 
\e_t(g,f) \big]$ be the symmetric part of $\e_t(f,g)$ and let $\e_t^{\mbox{\tiny{skew}}}(f,g) := 
\frac{1}{2} \big[ \e_t(f,g) - \e_t(g,f) \big]$ be the skew-symmetric part.
Notice that $1 \in \F_{\mbox{\tiny{loc}}}$, thus $\e_t(1,f)$ and $\e_t(f,1)$ are well-defined for any $f \in \F_{\mbox{\tiny{c}}}$.
We will use the decomposition
\[ \e_t(f,g) = \e_t^{\mbox{\tiny{s}}}(f,g) + \e_t^{\mbox{\tiny{sym}}}(fg,1) + \l_t(f,g) + \r_t(f,g), \quad \mbox{ for any } f,g \in \F \mbox{ with } fg \in \F_{\mbox{\tiny{c}}}, \]
that we introduced in \cite{LierlSC2}.
Here, the so-called \emph{symmetric strongly local part} $\e_t^{\mbox{\tiny{s}}}$ is defined by
\[ \e_t^{\mbox{\tiny{s}}}(f,g) := \e_t^{\mbox{\tiny{sym}}}(f,g)- \e_t^{\mbox{\tiny{sym}}}(fg,1),
\quad f,g \in \F \mbox{ with } fg \in \F_{\mbox{\tiny{c}}}, \]
and the bilinear forms  $\l_t$ and $\r_t$ are defined by
\begin{align*}
\l_t (f,g) & := \frac{1}{4} \big[ \e_t(fg,1) - \e_t(1,fg) + 
\e_t(f,g) - \e_t(g,f) \big],\\
\r_t (f,g) & := \frac{1}{4} \big[ \e_t(1,fg) - \e_t(fg,1) + \e_t(f,g) - \e_t(g,f) \big]
= - \l_t(g,f),
\end{align*}
for any $f,g \in \F$ with $fg \in \F_{\mbox{\tiny{c}}}$. Due to the locality of $\e_t$, the bilinear forms 
$\l_t(f,g)$ and $\r_t(f,g)$ are well-defined whenever $f \in \F_{\mbox{\tiny{loc}}} \cap L^{\infty}_{\mbox{\tiny{loc}}}(X,\mu)$ and $g \in \F_{\mbox{\tiny{c}}} \cap L^{\infty}_{\mbox{\tiny{loc}}}(X,\mu)$, or vice versa.

Let $\mathcal D$ be a linear subspace of $\F \cap \mathcal C_{\mbox{\tiny{c}}}(X)$ such that
\begin{enumerate}
\item
$\mathcal D$ is dense in $(\F,\| \cdot \|_{\F})$.
\item
If $f \in \mathcal D$ then $(f \vee 0) \in \mathcal D$ and $(f \wedge 1) \in \mathcal D$.
\item
If $f \in \mathcal D$ then $\Phi(f) \in \mathcal D$ for any function $\Phi \in \mathcal{C}^1(\R^m)$ with $\Phi(0)=0$, where $m$ is a positive integer.
\end{enumerate} By the regularity of the reference form $(\e^*,\F)$, such a space $\mathcal D$ exists.
We make the following assumption on the structure of the forms $\e_t$, $t \in \R$.
\begin{assumption} \label{as:0}
For each $t \in \R$, $\e_t$ is a local bilinear form with domain $D(\e_t) = \F$. For every $f,g \in \F$, the map $t \mapsto \e_t(f,g)$ is measurable. Moreover,
\begin{enumerate}
\item
there exists a constant $C_* \in(0,\infty)$ such that
\[ |\e_t(f,g)| \leq C_* || f||_{\F} ||g||_{\F}, \quad \forall f,g \in \F, \]
\item
for all $f,g \in \F_{\mbox{\tiny{b}}}$ with $fg \in \F_{\mbox{\tiny{c}}}$,
\[ |\e_t^{\mbox{\tiny{sym}}}(fg,1)| \leq C_* || f||_{\F} ||g||_{\F}, \]
\item there is a constant $C \in [1,\infty)$ such that
\[ \frac{1}{C} \e^*(f,f) \leq \e_t^{\mbox{\tiny{s}}}(f,f) \leq C \e^*(f,f), \quad \forall f \in \F \cap \mathcal{C}_{\mbox{\tiny{c}}}(X). \]
\item (Product rule for $\l_t$)
For any $u,v,f \in \mathcal D$, 
 \[ \l_t(uf,v) = \l_t(u,fv) + \l_t(f,uv). \]
\item (Chain rule for $\l_t$)
For any $v, u_1, u_2, \ldots, u_m \in \mathcal D$ and $u = (u_1, \ldots, u_m)$, and for any $\Phi \in \mathcal C^2(\R^m)$,
\begin{align*}
\l_t(\Phi(u),v) = \sum_{i=1}^{m} \l_t(u_i, \Phi_{x_i}(u) v).
\end{align*}
\item
There exist constants $0 < c \le \alpha < \infty$ such that, for all $f \in \F$,
\[ \e_t(f,f) + \alpha \int f^2 d\mu \geq c || f||_{\F}^2. \]
\end{enumerate}
\end{assumption}

Part (i) and (vi) of Assumption \ref{as:0} ensure the existence of weak solutions to the heat equation. See, e.g., \cite{LM68}.

Under Assumption \ref{as:0}, the bilinear forms $\e_t$, $\e_t^{\mbox{\tiny{sym}}}$, and $\e_t^{\mbox{\tiny{skew}}}$ are continuous on $\F \times \F$. For results on 
extending the bilinear forms $\l_t$ and $\r_t$ and the maps $(f,g) \mapsto \e_t(fg,1)$ and $(f,g) \mapsto \e_t(1,fg)$ to $\F \times \F$, 
see \cite[Section 7.2]{LierlSC2}. The elementary proof of the next lemma will be given elsewhere.

\begin{lemma}
Under {\em Assumption \ref{as:0}}(i)-(iii), the bilinear form $\e^{\mbox{\em\tiny{s}}}_t$, defined for $f,g \in \F_{\mbox{\em \tiny{b}}}$ with $fg \in \F_{\mbox{\em \tiny{c}}}(X)$, 
extends continuously to $\F \times \F$, and the extension $(\e^{\mbox{\em \tiny{s}}}_t,\F)$ is a strongly local regular symmetric Dirichlet form.
\end{lemma}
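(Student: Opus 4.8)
The plan is to show that $\e^{\mbox{\tiny{s}}}_t$ extends to a closed symmetric bilinear form on $\F$, that it is Markovian, and that it is strongly local and regular, in that order. First I would establish the continuity estimate. For $f,g \in \F_{\mbox{\tiny{b}}}$ with $fg \in \F_{\mbox{\tiny{c}}}$, we have $\e^{\mbox{\tiny{s}}}_t(f,g) = \e_t^{\mbox{\tiny{sym}}}(f,g) - \e_t^{\mbox{\tiny{sym}}}(fg,1)$, and Assumption \ref{as:0}(i),(ii) give $|\e^{\mbox{\tiny{s}}}_t(f,g)| \leq 2 C_* \|f\|_{\F}\|g\|_{\F}$; since $\mathcal D$ is dense in $\F$ and $\mathcal D \cdot \mathcal D \subset \F_{\mbox{\tiny{c}}}$ (so the diagonal values $\e^{\mbox{\tiny{s}}}_t(f,f)$ are defined on a dense set), this bounded symmetric bilinear form extends by continuity and bilinearity to all of $\F \times \F$, with the same bound. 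Assumption \ref{as:0}(iii) upgrades this: on $\F \cap \mathcal C_{\mbox{\tiny{c}}}(X)$ we have $\frac{1}{C}\e^*(f,f) \le \e^{\mbox{\tiny{s}}}_t(f,f) \le C\e^*(f,f)$, and by density and continuity (of both $\e^{\mbox{\tiny{s}}}_t$ and $\e^*$ in the $\F$-norm) this two-sided bound persists on all of $\F$. In particular $\e^{\mbox{\tiny{s}}}_t \geq 0$, and the form $\e^{\mbox{\tiny{s}}}_t + \int \cdot \, d\mu$ is comparable to $\|\cdot\|_{\F}^2$, hence $(\e^{\mbox{\tiny{s}}}_t,\F)$ is a closed symmetric nonnegative form.

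Next I would verify the Markovian (unit contraction) property: for $f \in \F$ one needs $\e^{\mbox{\tiny{s}}}_t(f^+ \wedge 1, f^+ \wedge 1) \le \e^{\mbox{\tiny{s}}}_t(f,f)$. The natural route is to prove it first for $f \in \mathcal D$ using the chain rule \eqref{eq:chain rule for Gamma} for $\e^*$ together with the comparison $\e^{\mbox{\tiny{s}}}_t \asymp \e^*$ — but comparison alone only gives the contraction property up to the constant $C^2$, which is not good enough. The cleaner approach: since $\e^{\mbox{\tiny{s}}}_t$ is a closed form with $\mathcal D(\e^{\mbox{\tiny{s}}}_t) = \F = \mathcal D(\e^*)$ and equivalent form norms, and since $\e^{\mbox{\tiny{s}}}$ is by construction the symmetric part of a \emph{local} form from which the ``zeroth-order'' contribution $\e_t^{\mbox{\tiny{sym}}}(fg,1)$ has been subtracted off, I expect one should show directly that $\e^{\mbox{\tiny{s}}}_t$ admits an energy measure and is a Dirichlet form by the Beurling–Deny / LeJan criteria. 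Concretely: use the polarization identity and the algebraic structure (the subtraction of $\e_t^{\mbox{\tiny{sym}}}(fg,1)$ is exactly what kills the killing measure and forces the carré-du-champ identity $\e^{\mbox{\tiny{s}}}_t(fg,h) + \e^{\mbox{\tiny{s}}}_t(fh,g) - \e^{\mbox{\tiny{s}}}_t(gh,f)$ to define an energy measure $\Gamma_t$ satisfying the Leibniz rule). Then the Markov property follows from the general fact that a closed symmetric form possessing a carré-du-champ/energy measure with the Leibniz and chain rules, and no killing part, is Markovian. The technical work here is checking the required algebraic identities on $\mathcal D$, which hold because $\e^{\mbox{\tiny{s}}}_t$ is the strongly local part of $\e_t$ by definition, and then extending by density; this is the step I'd expect to be the main obstacle, essentially because ``strongly local part'' must be shown to genuinely behave like a strongly local form rather than merely formally.

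Finally, regularity is immediate: $\mathcal D(\e^{\mbox{\tiny{s}}}_t) = \F$ and the reference form $(\e^*,\F)$ is regular, so it has a core $\C \subset \F \cap \mathcal C_{\mbox{\tiny{c}}}(X)$ that is dense in $\F$ in the $\|\cdot\|_{\F}$-norm and uniformly dense in $\mathcal C_{\mbox{\tiny{c}}}(X)$; since the $\e^{\mbox{\tiny{s}}}_t$-form norm is equivalent to $\|\cdot\|_{\F}$, the same set $\C$ is a core for $\e^{\mbox{\tiny{s}}}_t$, giving regularity. Strong locality of $\e^{\mbox{\tiny{s}}}_t$ follows from the strong locality of $\e_t$ (equivalently of $\e^*$, via the comparison and the fact that strong locality of a regular Dirichlet form is equivalent to the vanishing of the jumping and killing measures, both of which are controlled by $\e^*$): if $f,g \in \F$ have supports that are compact and such that $g$ is constant on a neighborhood of $\mathrm{supp}(f)$, then the energy measure identity and the locality of $\Gamma$ give $\e^{\mbox{\tiny{s}}}_t(f,g)=0$. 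Assembling these three parts — closedness + nonnegativity + equivalence of norms, Markov property via the energy-measure structure, and regularity + strong locality via the reference form — completes the proof. To keep the writeup short one can cite \cite[Section 7.2]{LierlSC2} for the extension of $\l_t,\r_t$ and the map $(f,g)\mapsto \e_t(fg,1)$, and reduce the Markov-property verification to the identity $\e^{\mbox{\tiny{s}}}_t(f,f) = \int d\Gamma_t(f,f)$ for a suitable energy measure $\Gamma_t$, after which the standard theory of \cite{FOT94} applies verbatim.
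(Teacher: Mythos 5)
First, note that the paper does not actually contain a proof of this lemma: the surrounding text states that ``the elementary proof of the next lemma will be given elsewhere,'' so your proposal can only be judged on its own merits. Your first and third steps are fine: the bound $|\e_t^{\mbox{\tiny{s}}}(f,g)|\le 2C_*\|f\|_{\F}\|g\|_{\F}$ from Assumption 0(i)--(ii), the continuous bilinear extension from $\mathcal D\times\mathcal D$, the transfer of the two-sided comparison in 0(iii) to all of $\F$ by density (hence closedness and nonnegativity), and the observation that any core of $(\e^*,\F)$ is a core of the extension (regularity) are all correct. Strong locality is also attainable, but more directly than you suggest: if $g$ equals a constant $c$ on a neighborhood of the compact support of $f$, then $fg=cf$ and, choosing a cutoff $\phi$ equal to $1$ near $\operatorname{supp}f$ and supported where $g=c$, locality of $\e_t$ gives $\e_t^{\mbox{\tiny{sym}}}(f,g)=c\,\e_t^{\mbox{\tiny{sym}}}(f,\phi)=c\,\e_t^{\mbox{\tiny{sym}}}(f,1)=\e_t^{\mbox{\tiny{sym}}}(fg,1)$, so $\e_t^{\mbox{\tiny{s}}}(f,g)=0$; no energy measure is needed.

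The genuine gap is the Markov property, and you have in effect flagged it yourself without closing it. Your plan is: build a carr\'e-du-champ $\Gamma_t$ for $\e_t^{\mbox{\tiny{s}}}$ from the algebraic identity $\int h\,d\Gamma_t(f,f)=\e_t^{\mbox{\tiny{s}}}(f,fh)-\tfrac12\e_t^{\mbox{\tiny{s}}}(f^2,h)$ and then invoke ``the general fact'' that a closed form with an energy measure obeying Leibniz and chain rules and no killing part is Markovian. But that general fact requires $\Gamma_t(f,f)$ to be a \emph{nonnegative} measure --- otherwise the chain-rule estimate $\int|\Phi'(f)|^2\,d\Gamma_t(f,f)\le\int d\Gamma_t(f,f)$ for normal contractions $\Phi$ is unavailable --- and for a Dirichlet form that nonnegativity is normally \emph{deduced from} the Markov property, which is exactly what you are trying to prove. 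So as written the argument is circular. The missing input is that nonnegativity of the localized energy must be derived from the locality of $\e_t$ together with the lower bound in Assumption 0(iii): strong locality lets one replace $f$ by a suitable truncation supported near $\operatorname{supp}h$ without changing $\int h\,d\Gamma_t(f,f)$, and the bound $\e_t^{\mbox{\tiny{s}}}(v,v)\ge C^{-1}\e^*(v,v)\ge 0$ applied to these localizations yields $\Gamma_t(f,f)\ge 0$ (indeed $\Gamma_t(f,f)\asymp\Gamma(f,f)$, which is \eqref{eq:C_10}). This localization-plus-comparison step is the substance of the representation results in \cite{Mosco94} that the paper cites immediately after the lemma; your proposal needs either to carry it out or to cite it explicitly, since without it the Markov property --- and hence the conclusion that $(\e_t^{\mbox{\tiny{s}}},\F)$ is a Dirichlet form rather than merely a closed symmetric form comparable to one --- is not established.
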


Under Assumption \ref{as:0}, the Dirichlet form $(\e_t^{\mbox{\tiny{s}}},\F)$ admits an energy measure $\Gamma_t$ which has all properties that are described in 
Section \ref{ssec:reference form} for the energy measure $\Gamma$ of $(\e^*,\F)$. In particular, $\Gamma_t$ satisfies the product rule, the chain rule, and a 
Cauchy-Schwarz type inequality.

Assumption \ref{as:0}(ii) implies that there exists a constant $C_{10} \in [1,\infty)$ such that
\begin{align} \label{eq:C_10}
 \frac{1}{C_{10}} \int f^2 d\Gamma(g,g) \leq \int f^2 d\Gamma_t(g,g) \leq C_{10} \int f^2 d\Gamma(g,g), \quad \forall f,g \in \F \cap \mathcal{C}_{\mbox{\tiny{c}}}(X).
 \end{align}
See \cite{Mosco94}. Of course, this inequality extends to all bounded Borel measurable functions 
$f:X \to (-\infty,+\infty)$ and $g \in \F$. The inequality also holds when $f \in \F$ and $g \in \mbox{CSA($\Psi$)}$.
If the reference form $(\e^*,\F)$ satisfies CSA($\Psi$,$C_0$) locally on $Y$ up to scale $R_0$, and if $(\e_t,\F)$ satisfies Assumption \ref{as:0}, then 
$(\e_t^{\mbox{\tiny{s}}},\F)$ satisfies 
CSA($\Psi$, $\hat C_0$) locally on $Y$ up to scale $R_0$ (with $\hat C_0$ depending on $C_0$ and $C_{10}$).

We refer to Section \ref{sec:examples} and to \cite{LierlSC2} for examples of forms $\e_t$ that satisfy Assumption \ref{as:0}.

\subsection{Quantitative assumptions on the perturbations} \label{ssec:quantitative assumptions}
Suppose Assumption \ref{as:0} is satisfied. 
In this section we introduce quantitative assumptions on the zero-order part and on the skew-symmetric part of each of the forms $(\e_t,\F)$, $t \in \R$. 
We will show in Section \ref{sec:Moser} below that our assumptions are sufficient to perform the Moser iteration technique to obtain $L^2$-mean value estimates. 
The statements of Assumption \ref{as:skew1} and Assumption \ref{as:skew2} are inspired by and weaker than \cite[Assumptions 1 and 2]{LierlSC2}. The new contribution here is that 
we state these quantitative conditions only for functions $\psi$ that are cutoff functions and in CSA($\Psi$). 

As before, we fix an open connected set $Y \subset X$ and $R_0>0$. Let $C_0 \in (0,\infty)$ be given.
Let 
\begin{align} \label{eq:C_1}
C_1(\epsilon) := \epsilon^{-1/2} C_0(\epsilon) = C_0 \cdot \epsilon^{\frac{1-\beta_2}{2}}, \quad \mbox{ for } \epsilon \in (0,1].
\end{align}

\begin{assumption} \label{as:skew1} 
There are constants $C_2, C_3, C_{11} \in [0,\infty)$ such that for all $t \in \R$, for any $\epsilon \in (0,1)$, any $0 < r < R \leq R_0$, any ball 
$B(x,2R) \subset Y$, any cutoff function $\psi \in \mbox{CSA}(\Psi,\epsilon,C_0)$ for $B(x,R)$ in $B(x,R+r)$, and any 
$0 \leq f \in \F_{\mbox{\tiny{loc}}}(Y) \cap L^{\infty}_{\mbox{\tiny{loc}}}(Y,\mu)$,
\begin{align*}
& \quad |\e_t^{\mbox{\tiny{sym}}}(f^2 \psi^2,1)| + |\e_t^{\mbox{\tiny{skew}}}(f^2 \psi^2,1)| +  \left| \e_t^{\mbox{\tiny{skew}}}(f,f\psi^2) \right| \\
& \leq C_{11} \epsilon^{1/2} \int \psi^2 d\Gamma(f,f) + (C_2 + C_3  \Psi( r )) \frac{C_1(\epsilon)}{ \Psi( r )} \int_B f^2 d\mu,
\end{align*}
where $B=B(x,R+r)$. 
\end{assumption}

\begin{assumption} \label{as:skew2} 
There are constants $C_4$, $C_5$, $C_{11} \in [0,\infty)$ such that for all $t \in \R$, for any $\epsilon \in (0,1)$, 
any $0 < r < R \leq R_0$, any ball $B(x,2R) \subset Y$, any cutoff function $\psi \in \mbox{CSA}(\Psi,\epsilon,C_0)$ for 
$B(x,R)$ in $B(x,R+r)$, and any $0 \leq f \in \F_{\mbox{\tiny{loc}}}(Y)$ with $f + f^{-1} \in L^{\infty}_{\mbox{\tiny{loc}}}(Y,\mu)$,
\begin{align*} 
\big|\e_t^{\mbox{\tiny{skew}}}(f,f^{-1} \psi^2)\big|
& \leq C_{11} \epsilon^{1/2} \int \psi^2 d\Gamma(\log f,\log f) + (C_4 + C_5  \Psi( r )) \frac{C_1(\epsilon)}{ \Psi( r )} \int_B d\mu,
\end{align*}
where $B=B(x,R+r)$.
\end{assumption}

\begin{remark}
For simplicity, we may and will assume that the constants $C_{11}$ in Assumption \ref{as:skew1} and in Assumption \ref{as:skew2} are the same.
\end{remark}

\subsection{Some preliminary computations} \label{ssec:algebraic computations}
In the next three lemmas, we consider bilinear forms $(\e_t,\F)$, $t \in \R$, which satisfy Assumption \ref{as:0} and Assumption \ref{as:skew1} with respect to the reference 
form $(\e^*,\F)$. Recall that $Y$ is an open subset of $X$. For a non-negative function $u$ and a positive integer $n$ let
\[ u_n := u \wedge n. \]

\begin{lemma} \label{lem:SUP s}
Suppose Assumption \ref{as:0} and Assumption \ref{as:skew1} are satisfied.
Let $p \in \R$, $\epsilon \in (0,1)$, $0 < r < R \leq R_0$, and $B(x,2R) \subset Y$. Let $\psi \in \mbox{\em {CSA($\Psi,\epsilon,C_0$)}}$ be a cutoff function for $B(x,R)$
 in $B(x,R+r)$, and $0 \leq u \in \F_{\mbox{\emph{\tiny{loc}}}}(Y) \cap L_{\mbox{\emph{\tiny{loc}}}}^{\infty}(Y,\mu)$. 
Assume either of the following hypotheses.
\begin{enumerate}
\item
$p\geq 2$,
\item
$u$ is locally uniformly positive.
\end{enumerate} 
Then $u u_n^q \in \F_{\mbox{\emph{\tiny{loc}}}}(Y)$, $u u_n^q \psi^2 \in \F_{\mbox{\emph{\tiny{c}}}}(Y)$, for any $q \geq 0$. Moreover, for any $k >0$ it holds
\begin{align} \label{eq:SUP s}
(1-p) \e_t^{\mbox{\emph{\tiny{s}}}}(u, u u_n^{p-2} \psi^2) 
& \le  \left( 8k \epsilon C_{10} + C \left( \frac{|1-p|^2}{k} + 1-p \right) \right) \int \psi^2  u_n^{p-2} d\Gamma(u,u) \nonumber \\       
      &  \quad + \left( 2 k \epsilon C_{10} (p-2)^2 - C' \left( (1-p)^2 + ( 1-p ) \right) \right) \int \psi^2  u_n^{p-2} d\Gamma(u_n,u_n) \nonumber \\
      & \quad + 4k C_{10} \frac{C_0(\epsilon)}{\Psi( r )} \int \psi u^2 u_n^{p-2} d\mu,
\end{align}
where $C=C_{10}$ if $\frac{|1-p|^2}{k} + 1-p>0$ and $C=\frac{1}{C_{10}}$ otherwise, and $C'=\frac{1}{C_{10}}$ if $ (1-p)^2 +  1-p > 0$ and $C' = C_{10}$ otherwise.
\end{lemma}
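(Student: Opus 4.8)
The plan is to compute $\e_t^{\mbox{\tiny{s}}}(u, uu_n^{p-2}\psi^2)$ by first establishing the required membership statements, then expanding via the product and chain rules for the energy measure $\Gamma_t$ of the strongly local Dirichlet form $(\e_t^{\mbox{\tiny{s}}},\F)$, and finally estimating the cross terms by Cauchy--Schwarz with a free parameter $k$, absorbing the $\Gamma(\psi,\psi)$-terms via CSA($\Psi$) and passing between $\Gamma$ and $\Gamma_t$ using \eqref{eq:C_10}. For the membership claims: under hypothesis (i), $p\ge 2$ gives $uu_n^{p-2} = u\cdot u_n^{p-2}$, a product of a function in $\F_{\mbox{\tiny{loc}}}(Y)$ with a bounded locally Lipschitz function of $u_n\in\F_{\mbox{\tiny{loc}}}(Y)\cap L^\infty_{\mbox{\tiny{loc}}}$; under hypothesis (ii), $u$ is locally bounded away from $0$, so $u_n^{p-2}$ is again a bounded locally Lipschitz function of $u_n$ even for $p<2$. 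In either case the chain rule for $\Gamma$ (valid locally on $Y$ as recalled in Section \ref{ssec:reference form}) gives $uu_n^q\in\F_{\mbox{\tiny{loc}}}(Y)$, and multiplying by the cutoff $\psi\in\F$ with compact support in $B(x,R+r)\subset Y$ puts it in $\F_{\mbox{\tiny{c}}}(Y)$.

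Next I would compute the bilinear form. Since $\e_t^{\mbox{\tiny{s}}}$ is a strongly local Dirichlet form with energy measure $\Gamma_t$, write $g := uu_n^{p-2}\psi^2$ and expand $d\Gamma_t(u,g)$ using the Leibniz rule: $d\Gamma_t(u,g) = u_n^{p-2}\psi^2\,d\Gamma_t(u,u) + u\psi^2\,d\Gamma_t(u,u_n^{p-2}) + uu_n^{p-2}\,d\Gamma_t(u,\psi^2)$. The chain rule gives $d\Gamma_t(u,u_n^{p-2}) = (p-2)u_n^{p-3}\,d\Gamma_t(u,u_n)$ and, since on $\{u\le n\}$ one has $d\Gamma_t(u,u_n)=d\Gamma_t(u,u)$ while on $\{u>n\}$ the measure $d\Gamma_t(u_n,\cdot)$ vanishes, the term $u\,u_n^{p-3}\,d\Gamma_t(u,u_n)$ equals $u_n^{p-2}\,d\Gamma_t(u,u_n)$ (using $uu_n^{-1}=1$ on $\{u\le n\}$ and strong locality). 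Likewise $d\Gamma_t(u,\psi^2) = 2\psi\,d\Gamma_t(u,\psi)$. So, multiplying through by $(1-p)$ and integrating, $(1-p)\e_t^{\mbox{\tiny{s}}}(u,uu_n^{p-2}\psi^2)$ becomes a sum of three integrals: $(1-p)\int\psi^2 u_n^{p-2}\,d\Gamma_t(u,u)$, $(1-p)(p-2)\int\psi^2 u_n^{p-2}\,d\Gamma_t(u,u_n)$, and $2(1-p)\int\psi u u_n^{p-2}\,d\Gamma_t(u,\psi)$.

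The first two integrals are already of the desired form once one replaces $d\Gamma_t$ by $d\Gamma$ using \eqref{eq:C_10}: here the sign of the coefficient dictates which direction of the two-sided bound \eqref{eq:C_10} to apply, which is exactly the source of the case distinctions $C=C_{10}$ vs.\ $C=1/C_{10}$ and $C'=1/C_{10}$ vs.\ $C'=C_{10}$ in the statement; one must also split $d\Gamma_t(u,u) = d\Gamma_t(u,u_n) + \mathbf 1_{\{u>n\}}\,d\Gamma_t(u,u)$ appropriately to land on the two $\Gamma$-terms $d\Gamma(u,u)$ and $d\Gamma(u_n,u_n)$ shown on the right. The third (cross) integral is the heart of the matter: by the Cauchy--Schwarz inequality \eqref{eq:CS} for $\Gamma_t$ and Young's inequality with parameter $k$, $2|1-p|\,\big|\int\psi u u_n^{p-2}\,d\Gamma_t(u,\psi)\big| \le k\int\psi^2 u_n^{p-2}\,d\Gamma_t(u,u) + k^{-1}|1-p|^2\int u^2 u_n^{p-2}\,d\Gamma_t(\psi,\psi)$ — wait, more carefully one splits $u\,u_n^{(p-2)/2}$ against $\psi\,u_n^{(p-2)/2}$ to keep the weights matched, giving $k\int\psi^2 u_n^{p-2}\,d\Gamma_t(u,u) + k^{-1}|1-p|^2\int u^2 u_n^{p-2}\,d\Gamma_t(\psi,\psi)$. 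Now apply CSA($\Psi$) — recall $\psi\in\mbox{CSA}(\Psi,\epsilon,C_0)$, so \eqref{eq:localized CSA epsilon} holds on the annulus $A=B(x,R+r)\setminus B(x,R)$ — to the function $f = u u_n^{(p-2)/2}$ (which lies in $\F$ after truncation/approximation, or one argues for bounded $u$ first and then passes to the limit). Since $\psi=1$ on $B(x,R)$, the measure $d\Gamma(\psi,\psi)$ is supported in $\bar A$, so $\int u^2 u_n^{p-2}\,d\Gamma(\psi,\psi) = \int_A u^2 u_n^{p-2}\,d\Gamma(\psi,\psi) \le \epsilon\int_A\psi^2\,d\Gamma(uu_n^{(p-2)/2},uu_n^{(p-2)/2}) + \frac{C_0(\epsilon)}{\Psi(r)}\int_A\psi u^2 u_n^{p-2}\,d\mu$. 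Expanding $d\Gamma(uu_n^{(p-2)/2},\cdot)$ by Leibniz once more produces terms controlled by $\int\psi^2 u_n^{p-2}\,d\Gamma(u,u)$ and $\int\psi^2 u_n^{p-2}\,d\Gamma(u_n,u_n)$ (the precise constants $8$, $2(p-2)^2$ come out of carefully expanding $|\nabla(uu_n^{(p-2)/2})|^2 \lesssim u_n^{p-2}|\nabla u|^2 + (p-2)^2 u^2 u_n^{p-4}|\nabla u_n|^2$ and using $u = u_n$ on the support of $d\Gamma(u_n,u_n)$). Converting back from $\Gamma$ to $\Gamma_t$ on these two terms via \eqref{eq:C_10} — this is where the $C_{10}$ factors multiplying $8k\epsilon$ and $2k\epsilon(p-2)^2$ enter — and collecting everything yields exactly \eqref{eq:SUP s}.

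I expect the main obstacle to be bookkeeping rather than conceptual: tracking the signs so that \eqref{eq:C_10} is applied in the correct direction on each term (hence the $C,C'$ case split), and handling the truncations $u_n$ cleanly — in particular justifying that $uu_n^{(p-2)/2}\in\F$ (locally) so that CSA($\Psi$) and the Cauchy--Schwarz inequality for $\Gamma$ apply, which may need a preliminary reduction to bounded $u$ and an approximation argument, together with the identity $d\Gamma(u,u) = d\Gamma(u_n,u_n) + \mathbf 1_{\{u>n\}}d\Gamma(u,u)$ from strong locality to separate the $\Gamma(u,u)$ and $\Gamma(u_n,u_n)$ contributions. A secondary subtlety is that the cross term must be split with the weight $u_n^{(p-2)/2}$ symmetrically placed so that the Young inequality produces $\psi^2 u_n^{p-2}d\Gamma(u,u)$ and not an unweighted energy integral, and that the same placement survives the subsequent application of CSA($\Psi$).
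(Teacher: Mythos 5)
Your proposal is correct and follows essentially the same route as the paper's (very terse) proof: expand $\e_t^{\mbox{\tiny{s}}}(u,uu_n^{p-2}\psi^2)$ by the Leibniz and chain rules for $\Gamma_t$, control the cross term by Cauchy--Schwarz and Young's inequality with parameter $k$, absorb $\int u^2u_n^{p-2}\,d\Gamma(\psi,\psi)$ via CSA($\Psi$) applied to $f=uu_n^{(p-2)/2}$, and pass between $\Gamma_t$ and $\Gamma$ using \eqref{eq:C_10} in the sign-dependent direction, which is exactly what produces the $C,C'$ case split. The only cosmetic differences are that the paper cites the membership assertion from earlier work rather than arguing it directly, and that your placement of the Young parameter ($k$ on the $\Gamma_t(u,u)$ term rather than on the $\Gamma_t(\psi,\psi)$ term) yields the displayed constants only after the harmless reparametrization $k\mapsto|1-p|^2/k$, which is immaterial since the lemma holds for all $k>0$.
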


\begin{proof}
The first assertion follows from \cite[Lemma 1.3]{LierlSC2}. Moreover, by \eqref{eq:chain rule for Gamma} and \eqref{eq:CS} we have for any $k >0$ that
\begin{align*}
(1-p) \e_t^{\mbox{\tiny{s}}}(u, u u_n^{p-2} \psi^2) 
& \leq  4k \int u^2 u_n^{p-2} d\Gamma_t (\psi,\psi)  \nonumber \\
      & \quad + \left( \frac{|1-p|^2}{k} + (1-p) \right) \int \psi^2  u_n^{p-2}  d\Gamma_t(u,u) \nonumber \\       
      & \quad - ( (1-p)^2 + (1-p)) \int \psi^2  u_n^{p-2} d\Gamma_t(u_n,u_n).
\end{align*}
Hence \eqref{eq:SUP s} follows from applying \eqref{eq:CSA} and \eqref{eq:C_10}.
\end{proof}

\begin{lemma} \label{lem:SUP s for p<1}
Suppose Assumption \ref{as:0} and Assumption \ref{as:skew1} are satisfied.
Let $p \in (-\infty,1-\eta)$ for some small $\eta > 0$. Let $\epsilon \in (0,1)$, $0 < r < R \leq R_0$, and $B(x,2R) \subset Y$. Let $\psi \in \mbox{\em {CSA($\Psi,\epsilon,C_0$)}}$ be a cutoff function for $B(x,R)$ in $B(x,R+r)$, and $0 \leq u \in \F_{\mbox{\emph{\tiny{loc}}}}(Y) \cap L_{\mbox{\emph{\tiny{loc}}}}^{\infty}(Y,\mu)$. 
Assume us is locally uniformly positive and locally bounded. 
Then, for any $k >0$, it holds
\begin{align}
 \e_t^{\mbox{\emph{\tiny{s}}}}(u, u^{p-1} \psi^2) 
& \le   \left(  \frac{2C_{10} \epsilon}{\eta} p^2 + \frac{1}{C_{10}} \left( p - (1-\eta/2) \right) \right) \int \psi^2  u^{p-2} d\Gamma(u,u) \nonumber \\       
      & \quad + \frac{8C_{10}}{\eta}  \frac{C_0(\epsilon)}{\Psi( r )} \int \psi u^p d\mu.
\end{align}
\end{lemma}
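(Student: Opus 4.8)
The plan is to mimic the computation in the proof of Lemma \ref{lem:SUP s}, but now with the cutoff test function $u^{p-1}\psi^2$ attached to the genuinely negative power $p-1 < -\eta$, so that no truncation $u_n$ is needed (the hypothesis that $u$ is locally uniformly positive and locally bounded makes $u^{p-1}, u^{p-2}, u^p$ all lie in the right local classes, so $u^{p-1}\psi^2 \in \F_{\mbox{\tiny c}}(Y)$ and the chain and Leibniz rules for $\Gamma_t$ apply on the support of $\psi$). First I would expand, using the Leibniz rule for $\Gamma_t$ and the chain rule $d\Gamma_t(u^{p-1},v)=(p-1)u^{p-2}d\Gamma_t(u,v)$,
\[
\e_t^{\mbox{\tiny s}}(u,u^{p-1}\psi^2) = (p-1)\int \psi^2 u^{p-2}\,d\Gamma_t(u,u) + 2\int \psi u^{p-1}\,d\Gamma_t(u,\psi).
\]
Since $p-1<0$ the first term is a good (negative) term. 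For the cross term I would apply the Cauchy–Schwarz inequality \eqref{eq:CS} for $\Gamma_t$ in the form $2\int \psi u^{p-1}\,d\Gamma_t(u,\psi) \le \delta \int \psi^2 u^{p-2}\,d\Gamma_t(u,u) + \delta^{-1}\int u^{p}\,d\Gamma_t(\psi,\psi)$ for a free parameter $\delta>0$.

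Next I would absorb the $\int u^p\,d\Gamma_t(\psi,\psi)$ term using that $\psi\in\mbox{CSA}(\Psi,\epsilon,C_0)$: apply \eqref{eq:CSA} with test function $f=u^{p/2}$ (which is admissible since $u$ is locally uniformly positive and bounded on the support of $\psi$, so $u^{p/2}\in\F$ after the usual cutoff), giving
\[
\int u^{p}\,d\Gamma(\psi,\psi) \le \epsilon \int \psi^2\,d\Gamma(u^{p/2},u^{p/2}) + \frac{C_0(\epsilon)}{\Psi(r)}\int \psi u^{p}\,d\mu = \epsilon\,\frac{p^2}{4}\int \psi^2 u^{p-2}\,d\Gamma(u,u) + \frac{C_0(\epsilon)}{\Psi(r)}\int \psi u^{p}\,d\mu,
\]
and then pass between $\Gamma$ and $\Gamma_t$ via \eqref{eq:C_10} at the cost of factors $C_{10}$. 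Collecting everything, the coefficient of $\int\psi^2 u^{p-2}\,d\Gamma(u,u)$ becomes $(p-1)\tfrac{1}{C_{10}}$ (from the good term, noting $p-1<0$ so the lower bound in \eqref{eq:C_10} is the relevant one after converting $\Gamma_t\to\Gamma$) plus $\delta C_{10}$ (from Cauchy–Schwarz) plus $\delta^{-1}\epsilon \tfrac{p^2}{4}C_{10}^2$ (from CSA), and the zero-order term is $\delta^{-1} C_{10}\frac{C_0(\epsilon)}{\Psi(r)}\int\psi u^p\,d\mu$. Choosing $\delta$ appropriately — I expect $\delta$ of order $\eta/C_{10}$, or more precisely tuned so that $\delta C_{10} + $ (half of) the slack from $(p-1)\tfrac1{C_{10}}$ is controlled — and using $p-1\le -\eta$, i.e. $p-1 \le p-(1-\eta/2) - \eta/2$, one splits the negative good term into a part that kills the Cauchy–Schwarz contribution and a remaining part $\tfrac{1}{C_{10}}(p-(1-\eta/2))$, arriving at the stated inequality with constants $\tfrac{2C_{10}\epsilon}{\eta}p^2$ and $\tfrac{8C_{10}}{\eta}\frac{C_0(\epsilon)}{\Psi(r)}$.

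The routine-but-delicate part is the bookkeeping of which side of \eqref{eq:C_10} to use at each conversion (the sign of $p-1$ and of the various combined coefficients determines whether $C_{10}$ or $C_{10}^{-1}$ appears, exactly as in the $C,C'$ dichotomy of Lemma \ref{lem:SUP s}), and then picking $\delta$ so that all the $C_{10}$-powers and the factor $1/\eta$ land on the advertised constants; this is where I would be most careful. The one genuine subtlety — the main obstacle — is the admissibility of $u^{p/2}$ (equivalently $u^{p-1}\psi^2$) as an element of $\F$ to which \eqref{eq:CSA} and the chain rule apply: this is precisely why the hypotheses "locally uniformly positive and locally bounded" are imposed, and one invokes \cite[Lemma 1.3]{LierlSC2} (as in Lemma \ref{lem:SUP s}) together with the fact that $\psi$ has compact support in $Y$, so that on a neighbourhood of $\mbox{supp}\,\psi$ the function $u$ is bounded away from $0$ and $\infty$ and the nonlinear function $x\mapsto x^{p/2}$ can be modified outside that range to a $\mathcal C^1$ function with bounded derivative, making the chain rule \eqref{eq:chain rule for Gamma} applicable. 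Everything else is algebra of the same flavour as the preceding lemma.
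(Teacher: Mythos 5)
Your proof is correct and is essentially the paper's: the paper's entire proof of this lemma is the instruction to take $k=\tfrac{2}{\eta}(1-p)$ in the proof of Lemma \ref{lem:SUP s} (with $u_n=u$, since $u$ is locally bounded), which is exactly your chain-rule/Cauchy--Schwarz/CSA computation with $\delta=(1-p)/k=\eta/2$. One bookkeeping point: to land on the stated constants you must add the Cauchy--Schwarz term $\delta\int\psi^2u^{p-2}\,d\Gamma_t(u,u)$ to the good term $(p-1)\int\psi^2u^{p-2}\,d\Gamma_t(u,u)$ \emph{before} invoking \eqref{eq:C_10} (the combined coefficient $p-1+\delta=p-(1-\eta/2)$ is negative, so a single factor $1/C_{10}$ appears); converting the two terms separately, as your parenthetical suggests, would force $\delta\lesssim\eta/C_{10}^{2}$ and inflate the $\epsilon p^{2}$ coefficient to order $C_{10}^{3}\epsilon p^{2}/\eta$.
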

For the proof, simply choose $k=\frac{2}{\eta} (1-p)$ in the proof of Lemma \ref{lem:SUP s}.

\begin{lemma} \label{lem:SUP skew} 
Suppose Assumption \ref{as:0} and Assumption \ref{as:skew1} are satisfied.
Let $p \in \R$, $\epsilon \in (0,1)$, $0 < r < R \leq R_0$, and $B(x,2R) \subset Y$. Let $\psi \in \mbox{\em {CSA($\Psi,\epsilon,C_0$)}}$ be a cutoff function for 
$B(x,R)$ in $B=B(x,R+r)$, and $0 \leq u \in \F_{\mbox{\emph{\tiny{loc}}}}(Y) \cap L^{\infty}_{\mbox{\emph{\tiny{loc}}}}(Y,\mu)$. 
Assume either of the following hypotheses.
\begin{enumerate}
\item
$p\geq 2$,
\item
$p \neq 0$ and $u$ is locally uniformly positive.
\end{enumerate}
Then,
\begin{align*}
|\e_t^{\mbox{\emph{\tiny{sym}}}}(u^2 u_n^{p-2} \psi^2,1)|
& \leq   
       2C_{11}\epsilon^{1/2} \int u_n^{p-2}  \psi^2 d\Gamma(u,u) + \frac{(p-2)^2}{2} C_{11} \epsilon^{1/2} \int u_n^{p-2} \psi^2 d\Gamma(u_n,u_n) \\
& \quad +  (C_2 + C_3  \Psi( r )) \frac{C_1(\epsilon)}{\Psi( r )} \int_B u^2 u_n^{p-2} d\mu,
\end{align*}
and,
\begin{align*}
|\e_t^{\mbox{\emph{\tiny{skew}}}}(u, u u_n^{p-2} \psi^2) |
& \leq 2 C_{11} \epsilon^{1/2} \int u_n^{p-2}  \psi^2  d\Gamma(u,u)  \\
& \quad   + C_{11} \epsilon^{1/2} \left(\frac{(p-2)^2}{2}  + \frac{|p(p-2)|}{4} \right) \int u_n^{p-2}  \psi^2  d\Gamma(u_n, u_n) \\
  & \quad + \big( C_2 + C_3  \Psi( r ) \big) \frac{C_1(\epsilon)}{ \Psi( r )} \int_B u^2 u_n^{p-2}  d\mu \\
& \quad  +  \left( C_2 + C_3  \Psi( r ) \right) \frac{|p-2|}{|p|} \frac{C_1(\epsilon)}{ \Psi( r )} \int_B u_n^p  d\mu.
\end{align*}
\end{lemma}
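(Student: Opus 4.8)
The plan is to decompose each skew-symmetric term using the algebraic identities for $\e_t^{\mbox{\tiny{sym}}}(\cdot,1)$ and $\e_t^{\mbox{\tiny{skew}}}$ built into Assumption \ref{as:0}, reducing everything to quantities controlled by Assumption \ref{as:skew1}. For the first inequality, I set $f := u u_n^{(p-2)/2}$, so that $f^2 = u^2 u_n^{p-2}$, noting that under either hypothesis (i) or (ii) the truncation gymnastics of \cite[Lemma 1.3]{LierlSC2} guarantee $f \in \F_{\mbox{\tiny{loc}}}(Y) \cap L^{\infty}_{\mbox{\tiny{loc}}}(Y,\mu)$ with $f \ge 0$, and $f^2\psi^2 = u^2 u_n^{p-2}\psi^2 \in \F_{\mbox{\tiny{c}}}(Y)$. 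Then Assumption \ref{as:skew1} applied to this $f$ directly bounds $|\e_t^{\mbox{\tiny{sym}}}(f^2\psi^2,1)|$ by $C_{11}\epsilon^{1/2}\int \psi^2 d\Gamma(f,f) + (C_2 + C_3\Psi(r))\frac{C_1(\epsilon)}{\Psi(r)}\int_B f^2 d\mu$. The zero-order term is already in the desired form, so the work is to expand $\int \psi^2 d\Gamma(f,f)$ via the chain rule \eqref{eq:chain rule for Gamma} and the Leibniz/product rule for $\Gamma$: writing $f = u u_n^{(p-2)/2}$ gives $d\Gamma(f,f) \le 2 u_n^{p-2} d\Gamma(u,u) + 2 u^2 \cdot \frac{(p-2)^2}{4} u_n^{p-4}\mathbf{1}_{\{u \le n\}} d\Gamma(u_n,u_n)$ after using $u \le n$ on $\{u \le n\}$ to replace $u^2 u_n^{-2}$ by $1$ (and $\Gamma(u_n,u_n)$ is supported there). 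This yields exactly $2C_{11}\epsilon^{1/2}\int u_n^{p-2}\psi^2 d\Gamma(u,u) + \frac{(p-2)^2}{2}C_{11}\epsilon^{1/2}\int u_n^{p-2}\psi^2 d\Gamma(u_n,u_n)$, completing the first estimate.

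For the second inequality I use the decomposition of $\e_t(f,g)$ recorded before Assumption \ref{as:0}, which gives $\e_t^{\mbox{\tiny{skew}}}(f,g) = \l_t(f,g) + \r_t(f,g) + \frac{1}{2}[\e_t(fg,1) - \e_t(1,fg)]$ — more precisely, $\e_t^{\mbox{\tiny{skew}}}(f,g) = \frac{1}{2}(\e_t(f,g)-\e_t(g,f))$ and from the definitions $\l_t + \r_t$ captures the antisymmetric plus the $\e_t(fg,1) - \e_t(1,fg)$ pieces. The key move is to rewrite $\e_t^{\mbox{\tiny{skew}}}(u, u u_n^{p-2}\psi^2)$ using the product and chain rules for $\l_t$ from Assumption \ref{as:0}(iv),(v): setting $v = u\psi^2$ roughly and differentiating through the power $u_n^{p-2}$, each application of the product rule transfers a factor and produces terms of the shape $\l_t(\text{power of }u, \text{power of }u \cdot \psi^2)$, which after combining with $\r_t(f,g) = -\l_t(g,f)$ reassemble into $\e_t^{\mbox{\tiny{skew}}}$ evaluated at products of the form $f^2\psi^2$ where $f$ is a power of $u$ or $u_n$. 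Concretely, one aims to express the bound in terms of $|\e_t^{\mbox{\tiny{skew}}}(g_1, g_1\psi^2)|$ and $|\e_t^{\mbox{\tiny{skew}}}(g_2^2\psi^2,1)|$ and $|\e_t^{\mbox{\tiny{sym}}}(g_2^2\psi^2,1)|$ for $g_1 \in \{u, u_n^{(p-2)/2}u,\ldots\}$ and $g_2$ a suitable power, then invoke Assumption \ref{as:skew1} for each. The extra term $(C_2 + C_3\Psi(r))\frac{|p-2|}{|p|}\frac{C_1(\epsilon)}{\Psi(r)}\int_B u_n^p d\mu$ arises precisely from the piece where the chain rule for $\l_t$ forces a $u_n^p$ (rather than $u^2 u_n^{p-2}$) integrand — this happens when the differentiation lands entirely on the truncated variable — and the $|p-2|/|p|$ prefactor comes from comparing the coefficient $(p-2)$ produced by $\partial_x(x u_n^{p-2})$-type terms against the normalization needed to write things as $\e_t^{\mbox{\tiny{skew}}}$ of a square. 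The coefficients $\frac{(p-2)^2}{2}$ and $\frac{|p(p-2)|}{4}$ on the energy-measure terms track, respectively, the square of the chain-rule derivative of $u_n^{p-2}$ and a cross term between the $u$-gradient and the $u_n^{p-2}$-gradient after a Cauchy–Schwarz split with the same weighting $k$ implicitly set to $1$.

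The main obstacle will be the bookkeeping in the second inequality: correctly tracking which sub-terms of $\e_t^{\mbox{\tiny{skew}}}(u, u u_n^{p-2}\psi^2)$ are handled by which clause of Assumption \ref{as:skew1} (the $|\e_t^{\mbox{\tiny{sym}}}(f^2\psi^2,1)|$, $|\e_t^{\mbox{\tiny{skew}}}(f^2\psi^2,1)|$, and $|\e_t^{\mbox{\tiny{skew}}}(f,f\psi^2)|$ pieces), and verifying that the product/chain rules for $\l_t$ in Assumption \ref{as:0}(iv),(v) can legitimately be applied — this requires all the functions involved to lie in $\mathcal D$ or be reachable by approximation from $\mathcal D$, which is where the hypotheses (i) $p \ge 2$ or (ii) $u$ locally uniformly positive (so that negative powers make sense) enter, together with the truncation by $u_n$ that keeps everything bounded. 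Once the decomposition is organized, each resulting term is bounded by a direct citation of Assumption \ref{as:skew1} plus a chain-rule expansion of the relevant $\int \psi^2 d\Gamma(\cdot,\cdot)$ exactly as in the first inequality, and summing the contributions with the indicated combinatorial coefficients gives the stated bound; no genuinely new analytic input beyond Assumptions \ref{as:0} and \ref{as:skew1} and the $\Gamma$-calculus of Section \ref{ssec:reference form} is needed.
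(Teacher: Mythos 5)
Your proposal is correct and takes essentially the same route as the paper: the first bound is obtained exactly as you describe (Assumption \ref{as:skew1} applied to $f=u u_n^{(p-2)/2}$ plus a chain-rule expansion of $\int\psi^2\,d\Gamma(f,f)$), and for the second the paper simply invokes the ready-made identity \eqref{eq:skew identity with p} from \cite[Lemma 2.13]{LierlSC2} — precisely the decomposition you propose to rederive from the product and chain rules for $\l_t$ — followed by the $u+\varepsilon$ regularization and $\mathcal D$-approximation you anticipate. The only slip is cosmetic: the coefficient $|p(p-2)|/4$ arises from applying Assumption \ref{as:skew1} to $f=u_n^{p/2}$ in the middle term of that identity (giving $\tfrac{|2-p|}{|p|}\cdot\tfrac{p^2}{4}$), not from a Cauchy--Schwarz cross term.
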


\begin{proof}
We will prove the assertion for $u \in \mathcal D$. Then the general case follows by approximation, using Assumption 0(i), the locality of $\e_t$, and the fact that 
$\mathcal D$ is dense in $(\F,\| \cdot \|_{\F})$. 
First consider the case when $u$ is uniformly positive on the support of $\psi$. 
By strong locality, \eqref{eq:Gamma(fg)} and \eqref{eq:chain rule for Gamma}, we have
\begin{align} \label{eq:uu_n^p-2/2}
\int \psi^2  d\Gamma(u u_n^{\frac{p-2}{2}},u u_n^{\frac{p-2}{2}})
& \leq 2 \int u_n^{p-2} \psi^2  d\Gamma(u,u) + \frac{ (p-2)^2}{2} \int u_n^{p-2} \psi^2  d\Gamma(u_n,u_n).
\end{align}
The first assertion follows easily from Assumption \ref{as:skew1} and \eqref{eq:uu_n^p-2/2}.
By \cite[Lemma 2.13]{LierlSC2}, we have
\begin{align} \label{eq:skew identity with p}
  \e_t^{\mbox{\tiny{skew}}}(u,u u_n^{p-2}  \psi^2) 
& =  \e_t^{\mbox{\tiny{skew}}}(u u_n^{\frac{p-2}{2}},u u_n^{\frac{p-2}{2}}  \psi^2) 
    + \frac{2-p}{p} \e_t^{\mbox{\tiny{skew}}}(u_n^{p/2},u_n^{p/2}  \psi^2) \nonumber \\
& \quad     + \frac{2-p}{p} \e_t^{\mbox{\tiny{skew}}}(u_n^p  \psi^2,1).
\end{align}    
Hence, by Assumption \ref{as:skew1}, \eqref{eq:Gamma(fg)} and \eqref{eq:uu_n^p-2/2}, we have
\begin{align*}
 |\e_t^{\mbox{\tiny{skew}}}(u,u u_n^{p-2}  \psi^2)| 
& \leq  
   2 C_{11}\epsilon^{1/2} \int u_n^{p-2}  \psi^2  d\Gamma(u,u)  \\
& \quad   
+ C_{11} \epsilon^{1/2} \left(\frac{(p-2)^2}{2}  +  \frac{|p(p-2)|}{4} \right) \int u_n^{p-2} \psi^2  d\Gamma(u_n, u_n) \\
  & \quad 
  + \big( C_2 + C_3  \Psi( r ) \big) \frac{C_1(\epsilon)}{ \Psi( r )} \int_B u^2 u_n^{p-2}  d\mu \\
& \quad  
+  \left( C_2 + C_3  \Psi( r ) \right) \frac{|p-2|}{|p|} \frac{C_1(\epsilon)}{ \Psi( r )} \int_B u_n^p  d\mu.
\end{align*}

In the case when $u$ is not uniformly positive on the support of $\psi$, repeat the proof with $u+\varepsilon$ in place of $u$. If $p \geq 2$, then we can let $\varepsilon$ tend to $0$ 
at the end of the proof. 
\end{proof}

For $\varepsilon >0$, let
\[ u_{\varepsilon} := u + \varepsilon. \]

\begin{lemma} \label{lem:e(1,)} 
Suppose Assumption \ref{as:0} and Assumption \ref{as:skew1} are satisfied.
Let $p \in \R$, $\epsilon \in (0,1)$, $0 < r < R \leq R_0$, and $B(x,2R) \subset Y$. Let $\psi \in \mbox{\em CSA}(\Psi,\epsilon,C_0)$ be a cutoff function 
for $B(x,R)$ in $B(x,R+r)$, and $0 \leq u \in \F_{\mbox{\emph{\tiny{loc}}}}(Y) \cap L^{\infty}_{\mbox{\emph{\tiny{loc}}}}(Y,\mu)$. 
Then, for any $k \geq 1$,
\begin{align*}
 |\e_t(\varepsilon,u_{\varepsilon}^{p-1} \psi^2)| 
& \leq C_{11} \epsilon^{1/2} \frac{(p-1)^2}{4} \int  \psi^2 u_{\varepsilon}^{p-2} d\Gamma(u_{\varepsilon}, u_{\varepsilon}) 
 +  (C_2 + C_3 \Psi(r)) \frac{C_1(\epsilon)}{\Psi(r)} \int_B u_{\varepsilon}^{p} d\mu,
\end{align*}
where $B=B(x,R+r)$.
\end{lemma}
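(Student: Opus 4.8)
The plan is to reduce the estimate to a single application of Assumption~\ref{as:skew1}. Since $\varepsilon$ is a constant, bilinearity gives $\e_t(\varepsilon, u_{\varepsilon}^{p-1}\psi^2) = \varepsilon\,\e_t(1, u_{\varepsilon}^{p-1}\psi^2)$; this is legitimate because $1 \in \F_{\mbox{\tiny{loc}}}$, $\psi$ is a cutoff function with compact support in $B(x,R+r) \subset Y$, and $u_{\varepsilon}^{p-1} \in \F_{\mbox{\tiny{loc}}}(Y) \cap L^{\infty}_{\mbox{\tiny{loc}}}(Y,\mu)$ by the chain rule (here $u_{\varepsilon} = u+\varepsilon$ is locally uniformly positive and locally bounded), so that $u_{\varepsilon}^{p-1}\psi^2 \in \F_{\mbox{\tiny{c}}}(Y)$ and $\e_t(1,\cdot)$ is defined on it. Decomposing $\e_t = \e_t^{\mbox{\tiny{sym}}} + \e_t^{\mbox{\tiny{skew}}}$ and using $\e_t^{\mbox{\tiny{sym}}}(1,h) = \e_t^{\mbox{\tiny{sym}}}(h,1)$ and $\e_t^{\mbox{\tiny{skew}}}(1,h) = -\e_t^{\mbox{\tiny{skew}}}(h,1)$, I obtain
\[ \e_t(\varepsilon, u_{\varepsilon}^{p-1}\psi^2) = \e_t^{\mbox{\tiny{sym}}}(\varepsilon u_{\varepsilon}^{p-1}\psi^2, 1) - \e_t^{\mbox{\tiny{skew}}}(\varepsilon u_{\varepsilon}^{p-1}\psi^2, 1). \]

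Next I would set $f := \varepsilon^{1/2}\, u_{\varepsilon}^{(p-1)/2}$, which is a non-negative element of $\F_{\mbox{\tiny{loc}}}(Y) \cap L^{\infty}_{\mbox{\tiny{loc}}}(Y,\mu)$ and satisfies $f^2 = \varepsilon u_{\varepsilon}^{p-1}$. Then the right-hand side above equals $\e_t^{\mbox{\tiny{sym}}}(f^2\psi^2,1) - \e_t^{\mbox{\tiny{skew}}}(f^2\psi^2,1)$, so $|\e_t(\varepsilon, u_{\varepsilon}^{p-1}\psi^2)| \le |\e_t^{\mbox{\tiny{sym}}}(f^2\psi^2,1)| + |\e_t^{\mbox{\tiny{skew}}}(f^2\psi^2,1)|$. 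Applying Assumption~\ref{as:skew1} to this $f$ (and discarding the harmless nonnegative term $|\e_t^{\mbox{\tiny{skew}}}(f,f\psi^2)|$ on its left-hand side) bounds the last sum by
\[ C_{11}\epsilon^{1/2}\int \psi^2\, d\Gamma(f,f) + (C_2 + C_3\Psi(r))\,\frac{C_1(\epsilon)}{\Psi(r)}\int_B f^2\, d\mu. \]

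Finally I would rewrite the two integrals in terms of $u_{\varepsilon}$. The chain rule for $\Gamma$ gives $d\Gamma(f,f) = \tfrac{(p-1)^2}{4}\,\varepsilon\, u_{\varepsilon}^{p-3}\, d\Gamma(u_{\varepsilon}, u_{\varepsilon})$; since $0 < \varepsilon \le u_{\varepsilon}$ and $u_{\varepsilon}^{p-3}, u_{\varepsilon}^{p-1} \ge 0$, we have the pointwise bounds $\varepsilon u_{\varepsilon}^{p-3} \le u_{\varepsilon}^{p-2}$ and $f^2 = \varepsilon u_{\varepsilon}^{p-1} \le u_{\varepsilon}^{p}$. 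Substituting these yields exactly the claimed inequality; the parameter $k \ge 1$ never enters the argument, so the statement holds for every such $k$ trivially.

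I do not expect a genuine obstacle, as the computation is short. The only points that need care are the well-definedness issues in the first step — the membership $u_{\varepsilon}^{p-1}\psi^2 \in \F_{\mbox{\tiny{c}}}(Y)$ and $f \in \F_{\mbox{\tiny{loc}}}(Y)\cap L^{\infty}_{\mbox{\tiny{loc}}}(Y,\mu)$, both of which follow from the local uniform positivity and local boundedness of $u_{\varepsilon}$ together with the chain rule for $\F_{\mbox{\tiny{loc}}}$ — and, if one wishes to be scrupulous about applying Assumption~\ref{as:skew1} and the decomposition above, one may first carry out the argument for $u \in \mathcal D$ and then pass to general $u$ by the density and approximation argument already used in the proof of Lemma~\ref{lem:SUP skew} (invoking Assumption~\ref{as:0}(i) and the locality of $\e_t$).
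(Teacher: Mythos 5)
Your proposal is correct and follows essentially the same route as the paper: reduce to $\varepsilon\,\e_t(1,u_{\varepsilon}^{p-1}\psi^2)$, bound the symmetric and skew parts via Assumption~\ref{as:skew1} applied to (a constant multiple of) $u_{\varepsilon}^{(p-1)/2}$, use the chain rule for $\Gamma$, and finish with $\varepsilon \le u_{\varepsilon}$. The only cosmetic difference is that you absorb $\varepsilon^{1/2}$ into $f$ while the paper keeps $\varepsilon$ as an external factor, which is equivalent by the quadratic homogeneity of the inequality in Assumption~\ref{as:skew1}; your observation that $k$ plays no role is also consistent with the paper's proof.
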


\begin{proof} 
We apply Assumption \ref{as:skew1} and \eqref{eq:chain rule for Gamma}. Then,
\begin{align*}
& \quad 
|\e_t(\varepsilon,u_{\varepsilon}^{p-1} \psi^2)| \\
& \leq  \varepsilon |\e_t^{\mbox{\tiny{skew}}}(1,u_{\varepsilon}^{p-1} \psi^2)|
    + \varepsilon | \e_t^{\mbox{\tiny{sym}}}(1,u_{\varepsilon}^{p-1} \psi^2)| \\
& \le  \varepsilon  C_{11} \epsilon^{1/2}  \int \psi^2 d\Gamma( u_{\varepsilon}^{\frac{p-1}{2}},u_{\varepsilon}^{\frac{p-1}{2}}) 
 +  \varepsilon (C_2 + C_3 \Psi(r)) \frac{C_1(\epsilon)}{\Psi(r)} \int_B  u_{\varepsilon}^{p-1} d\mu \\
& \le   C_{11} \epsilon^{1/2}  \frac{(p-1)^2}{4} \int \varepsilon u_{\varepsilon}^{p-3} \psi^2 d\Gamma( u_{\varepsilon},u_{\varepsilon}) 
 +  (C_2 + C_3 \Psi(r)) \frac{C_1(\epsilon)}{\Psi(r)} \int_B \varepsilon u_{\varepsilon}^{p-1} d\mu.
\end{align*}
Applying $\varepsilon  \leq u_{\varepsilon}$ completes the proof.
\end{proof}

\section{Sobolev and Poincar\'e inequalities}
\label{sec:Sobolev and Poincare}

\subsection{Weak, strong, and weighted Poincar\'e inequalities}
In this section we consider Sobolev and Poincar\'e inequalities for the symmetric reference form $(\e^*,\F)$ defined in Section \ref{ssec:reference form}.
We fix an open connected set $Y \subset X$ and $R_0>0$.

For the rest of the paper we suppose that
\begin{align}
\mbox{ If } B(x,2R) \subset Y \mbox{ with } 0 < r < R \leq R_0, \mbox{ then } B(x,R+r) \mbox{ is relatively compact.} \tag{A2-$Y$}
\end{align}
Note that  any open set $Y$ such that $\overline{Y}$ is complete in $(X,d)$
satisfies (A2-$Y$), see, e.g., \cite[Lemma 1.1(i)]{SturmIII}.

\begin{definition}
The \emph{volume doubling property} is satisfied on $Y$ up to scale $R_0$ if there exists a constant $C_{\mbox{\tiny{VD}}} \in (1,\infty)$ such that for every ball $B(x,2R) \subset Y$, $0 < r < R \leq R_0$,
\begin{align} \label{eq:VD}
V(x,R+r) \leq C_{\mbox{\tiny{VD}}} \, V(x,R), \tag{VD}
\end{align}
where $V(x,R) = \mu( B(x,R))$ denotes the volume of $B(x,R)$.
\end{definition}

\begin{lemma} \label{lem:nu}
If {\em VD} is satisfied on $Y$ up to scale $R_0$, then for $\nu= \log_2(C_{\mbox{\em \tiny{VD}}})$,
\[ \frac{\mu(B(x,R))}{\mu(B(y,s))} \leq C_{\mbox{\em \tiny{VD}}}^2 \left( \frac{R}{s} \right)^{\nu}, \] for all $0 < s < R \leq R_0$ and $y \in B(x,R)$ with $B(y,2R) \subset Y$.
\end{lemma}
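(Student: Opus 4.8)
The plan is to derive the global-looking ratio bound by iterating the one-step volume doubling inequality \eqref{eq:VD} along a dyadic scale, exactly in the manner of the classical argument (cf.\ \cite{SturmIII}), while being careful that every ball appearing in the chain still lies in $Y$ so that (VD) applies. First I would reduce to comparing two \emph{concentric} balls. Given $y \in B(x,R)$ with $B(y,2R) \subset Y$, note that $B(x,R) \subset B(y,2R)$, so it suffices to bound $\mu(B(y,2R))/\mu(B(y,s))$; absorbing the factor from passing $R \to 2R$ will only cost a controlled power of $2$, which is harmless since $\nu = \log_2 C_{\mathrm{VD}}$ is exactly the exponent that makes (VD) scale like $(2R/R)^\nu = C_{\mathrm{VD}}$.

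Next I would run the iteration. Fix the center $y$ and, starting from radius $s$, form the radii $s, 2s, 4s, \dots, 2^k s$ where $k$ is the least integer with $2^k s \geq 2R$. For each $j$ with $2^j s \le R$ we have $B(y, 2 \cdot 2^j s) \subset B(y, 2R) \subset Y$, so \eqref{eq:VD} with $r = 2^j s = R' $ (more precisely the form $V(y, 2R') \le C_{\mathrm{VD}} V(y,R')$, obtained from (VD) by taking $r = R'$) yields $\mu(B(y, 2^{j+1} s)) \le C_{\mathrm{VD}}\, \mu(B(y, 2^j s))$. Chaining these $k$ inequalities gives $\mu(B(y, 2^k s)) \le C_{\mathrm{VD}}^{\,k}\, \mu(B(y,s))$. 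Since $2^{k-1} s < 2R$, i.e.\ $2^k < 4R/s$, we get $C_{\mathrm{VD}}^{\,k} = 2^{\nu k} \le (4R/s)^\nu = 4^\nu (R/s)^\nu$, and since $4^\nu = C_{\mathrm{VD}}^2$, combining with $B(x,R) \subset B(y,2R) \subset B(y, 2^k s)$ gives
\[
\frac{\mu(B(x,R))}{\mu(B(y,s))} \le \frac{\mu(B(y,2^k s))}{\mu(B(y,s))} \le C_{\mathrm{VD}}^{\,k} \le C_{\mathrm{VD}}^2 \left(\frac{R}{s}\right)^\nu,
\]
as claimed. One should handle the trivial edge case $R \le s$ (or $k=0$) separately, where the bound is immediate because $\mu(B(x,R)) \le \mu(B(y,2R)) \le C_{\mathrm{VD}} \mu(B(y,R)) \le \dots$; in fact when $R/s$ is bounded the stated inequality holds just from monotonicity and a single application of (VD).

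The only genuine point requiring care — the ``main obstacle,'' though it is minor — is the bookkeeping on which intermediate balls are admissible for (VD): the hypothesis only guarantees $B(y,2R) \subset Y$, so every doubling step must be applied at a radius $R' \le R$, and one must verify that the last step (from a radius possibly between $R$ and $2R$ up to $\ge 2R$) is either avoided by choosing $k$ so that $2^k s$ just reaches the scale $2R$ from below in one of the admissible steps, or absorbed into the constant. Choosing $k = \lceil \log_2(2R/s) \rceil$ and checking $2^{k-1} s \le R$ fails in general, so instead I would stop the iteration at the largest $j_0$ with $2^{j_0} s \le R$, use monotonicity $\mu(B(x,R)) \le \mu(B(y,2R)) \le C_{\mathrm{VD}}\,\mu(B(y,R)) \le C_{\mathrm{VD}}\,\mu(B(y,2^{j_0+1}s))$ (valid since $B(y,2R)\subset Y$), and then chain the $j_0$ admissible steps down to $s$. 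Tracking that $2^{j_0+1} > R/s$ gives $C_{\mathrm{VD}}^{\,j_0+2} \le C_{\mathrm{VD}}^2 (R/s)^\nu$, which is the asserted bound. This is routine; no deeper idea is needed beyond the standard dyadic-chaining of volume doubling.
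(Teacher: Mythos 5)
Your final version of the argument --- stopping at the largest $j_0$ with $2^{j_0}s\le R$, performing one extra doubling step at radius $R$ itself (legitimate since $B(y,2R)\subset Y$ and $R\le R_0$), and then chaining the $j_0+1$ admissible dyadic steps down to $s$ --- is correct and is precisely the standard dyadic-chaining proof that the paper simply outsources to \cite[Lemma 5.2.4]{SC02}. Two cosmetic points: the bound $C_{\mathrm{VD}}^{\,j_0+2}\le C_{\mathrm{VD}}^2(R/s)^{\nu}$ comes from $2^{j_0}\le R/s$ (the defining property of $j_0$), not from $2^{j_0+1}>R/s$ as written; and since \eqref{eq:VD} is stated for $r<R$ strictly, the step $V(y,2\rho)\le C_{\mathrm{VD}}V(y,\rho)$ should be obtained by letting $r\uparrow\rho$ and using that $B(y,2\rho)=\bigcup_{r<\rho}B(y,\rho+r)$.
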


\begin{proof}
See \cite[Lemma 5.2.4]{SC02}.
\end{proof}

\begin{definition}
$(\e^*,\F)$ satisfies the \emph{(strong) Poincar\'e inequality} PI($\Psi$) on $Y$ up to scale $R_0$, if there exists a constant $C_{\mbox{\tiny{PI}}} \in (0,\infty)$ such 
that for any $0 <  r < R \leq R_0$ and $B(x,2R) \subset Y$,
\begin{align}
\forall f \in \F_{\mbox{\tiny{loc}}}(Y), \  \int_B |f - f_B|^2 d\mu  \leq  C_{\mbox{\tiny{PI}}} \, \Psi(R+r) \int_{B} d\Gamma(f,f), \tag{PI($\Psi$)} 
\end{align}
where $f_B = \frac{1}{V(x,R+r)} \int_{B(x,R+r)} f d\mu$ is the mean of $f$ over $B=B(x,R+r)$. 
\end{definition}

\begin{assumption} \label{as:VD+PI}
The reference form $(X,d,\mu, \e^*,\F)$ satisfies {A2-$Y$, VD, PI($\Psi$)} and {CSA($\Psi$)} on $Y$ up to scale $R_0$.
\end{assumption}

\begin{theorem} \label{thm:weighted PI} 
Suppose {\em Assumption \ref{as:VD+PI}} is satisfied.
Then $(\e^*,\F)$ satisfies a \emph{weighted Poincar\'e inequality} on $Y$ up to scale $R_0$. That is, there exists a constant $C_{\mbox{\em \tiny{wPI}}} \in (0,\infty)$ 
such that for any $0 < r < R \leq R_0$, any $B(x,2R) \subset Y$, and for every $\epsilon \in (0,1)$, there exists a cutoff function $\psi \in \mbox{\em CSA}(\Psi,\epsilon,C_0)$ for $B(x,R)$ in $B(x,R+r)$ such that
\begin{align} \label{eq:weighted PI}
\forall f \in \F_{\mbox{\em \tiny{loc}}}(Y), \  \int |f - f_{\psi}|^2 \psi^2 d\mu  \leq  C_{\mbox{\tiny{\emph{wPI}}}} \, \Psi(R+r) \int \psi^2 d\Gamma(f,f),
\end{align}
where 
\[ f_{\psi} = \frac{\int f \psi^2 d\mu }{ \int \psi^2 d\mu}. \]
The constant $C_{\mbox{\em \tiny{wPI}}}$ depends only on $C_0$, $C_{\mbox{\em \tiny{VD}}}$, $C_{\mbox{\em \tiny{PI}}}$.
\end{theorem}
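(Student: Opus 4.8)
The plan is to deduce the weighted Poincaré inequality \eqref{eq:weighted PI} from the unweighted strong Poincaré inequality PI($\Psi$) together with the control on $d\Gamma(\psi,\psi)$ afforded by CSA($\Psi$). The starting observation is that for any constant $c$ the left-hand side of \eqref{eq:weighted PI} is minimized over $c$ by $f_\psi$, so it suffices to bound $\int |f-c|^2\psi^2\,d\mu$ for the convenient choice $c = f_B$, the unweighted mean of $f$ over $B = B(x,R+r)$. Since $0\le\psi\le 1$ and $\psi$ is supported in $B$, we immediately get
\[ \int |f - f_B|^2 \psi^2\, d\mu \le \int_B |f - f_B|^2\, d\mu \le C_{\mbox{\tiny{PI}}}\,\Psi(R+r)\int_B d\Gamma(f,f). \]
The difficulty is that the right-hand side here is the \emph{unweighted} energy $\int_B d\Gamma(f,f)$, whereas \eqref{eq:weighted PI} asks for the weighted energy $\int\psi^2\,d\Gamma(f,f)$; these differ precisely on the annulus $A = B(x,R+r)\setminus B(x,R)$, where $\psi$ may be small. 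This is the main obstacle, and it is exactly what CSA($\Psi$) is designed to handle.

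To bridge the gap, I would use the cutoff function $\psi$ built in Lemma \ref{lem:CSA epsilon}, which for the given $\epsilon$ lies in CSA($\Psi,\epsilon,C_0$). First split $\int_B d\Gamma(f,f) = \int_{B(x,R)} d\Gamma(f,f) + \int_A d\Gamma(f,f)$. On $B(x,R)$ we have $\psi = 1$, so $\int_{B(x,R)} d\Gamma(f,f) = \int_{B(x,R)}\psi^2\,d\Gamma(f,f) \le \int\psi^2\,d\Gamma(f,f)$, which is already of the desired form. For the annular part, the idea is to write, on $A$, $d\Gamma(f,f)$ in terms of $d\Gamma(\psi f,\psi f)$ and $d\Gamma(\psi,\psi)$ via a Leibniz/product-rule estimate like \eqref{eq:Gamma(fg)} applied to $\psi$ and $(f-f_B)$ — more precisely, using the inequality $\psi^2\,d\Gamma(f,f) \ge \tfrac12 d\Gamma(\psi f,\psi f)$-type bounds is the wrong direction; instead one should estimate $\int_A d\Gamma(f,f)$ by something like $\int_A d\Gamma((f-f_B)\psi + (f-f_B)(1-\psi), \cdot)$ and expand. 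The cleanest route is: since $\psi\equiv 1$ near the inner boundary, write $g := (f - f_B)(1-\psi)$, which is supported in $A$ and vanishes (together with being small) where $\psi = 1$; then $\int_A d\Gamma(f,f) \lesssim \int_A d\Gamma((f-f_B)\psi,(f-f_B)\psi)\,\text{(no — }\psi\text{ small here)} + \int_A d\Gamma(g,g)$, and $\int_A d\Gamma(g,g) \lesssim \int_A (1-\psi)^2 d\Gamma(f,f) + \int_A (f-f_B)^2\,d\Gamma(\psi,\psi)$ by \eqref{eq:Gamma(fg)}. Applying CSA($\Psi$), \eqref{eq:CSA}, to the last term with the function $f - f_B \in \F$ gives
\[ \int_A (f-f_B)^2\,d\Gamma(\psi,\psi) \le \epsilon\int_A \psi^2\,d\Gamma(f,f) + \frac{C_0\epsilon^{1-\beta_2/2}}{\Psi(r)}\int_A (f-f_B)^2\,d\mu. \]

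The two error terms now produced must be reabsorbed. The energy term $\epsilon\int_A\psi^2\,d\Gamma(f,f)$ is harmless: choosing $\epsilon$ small it is absorbed into the left side of a rearranged inequality (or simply bounded by $\int\psi^2\,d\Gamma(f,f)$ up to a constant, since we only need \emph{some} admissible $\psi$, not a sharp constant). The $L^2$ term $\frac{C_0\epsilon^{1-\beta_2/2}}{\Psi(r)}\int_A (f-f_B)^2\,d\mu$ is controlled by $\int_B|f-f_B|^2\,d\mu$, which by PI($\Psi$) is again $\le C_{\mbox{\tiny{PI}}}\Psi(R+r)\int_B d\Gamma(f,f)$ — but this reintroduces the unweighted full-ball energy, so one must be careful that the coefficient $\frac{\Psi(R+r)}{\Psi(r)}$ is not dangerous. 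Here the growth condition \eqref{eq:beta} bounds $\Psi(R+r)/\Psi(r)$ by $C_\Psi((R+r)/r)^{\beta_2}$, which is \emph{not} uniformly bounded as $r\to 0$. This is the real subtlety: one should instead iterate over a dyadic family of annuli between $B(x,R)$ and $B(x,R+r)$ (exactly the decomposition $r_n$ in Lemma \ref{lem:CSA epsilon}), applying PI($\Psi$) on each annular scale so that the $\Psi$-ratios telescope, rather than crudely on the full ball $B$. Concretely, I would run a chaining argument: bound $\int_A(f-f_B)^2\,d\mu$ by comparing means over the successive balls $B_n = B(x,R+r_n)$, use volume doubling (Lemma \ref{lem:nu}) to compare the normalizing volumes, use PI($\Psi$) on each $B_n$, and sum a geometric series whose ratio is controlled by $\beta_2$ and the chosen $\lambda$ — this is where $C_{\mbox{\tiny{wPI}}}$ picking up its dependence only on $C_0, C_{\mbox{\tiny{VD}}}, C_{\mbox{\tiny{PI}}}$ comes from. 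Once the annular $L^2$-mass is bounded by $C\,\Psi(R+r)\int\psi^2\,d\Gamma(f,f)$ (the weighted energy now, because each annulus-localized Poincaré term has come back multiplied by the appropriate piece of $\psi^2$), collecting all pieces and choosing $\epsilon$ small enough to absorb the stray $\epsilon\int\psi^2 d\Gamma(f,f)$ completes the proof. The main obstacle, to summarize, is precisely controlling the $\Psi(R+r)/\Psi(r)$ blow-up via the dyadic annular chaining rather than a single application of PI on the whole ball.
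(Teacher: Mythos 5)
There is a genuine gap. Your core strategy is to first apply the unweighted inequality PI($\Psi$) on the full ball $B=B(x,R+r)$, producing the unweighted energy $\int_B d\Gamma(f,f)$, and then to convert the annular part $\int_A d\Gamma(f,f)$ back into the weighted energy $\int \psi^2\,d\Gamma(f,f)$ using \eqref{eq:Gamma(fg)} and CSA($\Psi$). This conversion cannot work: after your product-rule expansion you are left with a term of the form $\int_A (1-\psi)^2\,d\Gamma(f,f)$ (equivalently, the unweighted energy on the outer part of the annulus where $\psi$ is small), and no choice of $\epsilon$ absorbs it into $\int\psi^2\,d\Gamma(f,f)$. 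Indeed, a function $f$ that is constant on $B(x,R)$ and oscillates only where $\psi\approx 0$ has huge $\int_A d\Gamma(f,f)$ but tiny $\int\psi^2 d\Gamma(f,f)$, so any intermediate bound of the left side of \eqref{eq:weighted PI} by $\Psi(R+r)\int_B d\Gamma(f,f)$ has already discarded the information the theorem needs. The secondary issue you flag, the blow-up of $\Psi(R+r)/\Psi(r)$, is real but is not the main obstruction. Note also that CSA($\Psi$) is not actually needed in the energy estimate at all; in the paper it enters only through Lemma \ref{lem:CSA epsilon}, to certify that the constructed weight is an admissible cutoff in $\mbox{CSA}(\Psi,\epsilon,C_0)$.

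The mechanism you are missing is to avoid ever producing the unweighted annular energy. Take $\psi=\sum_n (b_{n-1}-b_n)\psi_n$ from Lemma \ref{lem:CSA epsilon}, replace $f_\psi$ by $f_{B_0}$ with $B_0=B(x,R)$ (at the cost of a factor $2$, by Cauchy--Schwarz), and expand
\[
\int|f-f_{B_0}|^2\psi^2 d\mu=\sum_n\sum_m (b_{n-1}-b_n)(b_{m-1}-b_m)\int |f-f_{B_0}|^2\psi_n\psi_m\,d\mu
\le \sum_n\sum_m (\cdots)\int_{B_n\cap B_m}|f-f_{B_0}|^2 d\mu .
\]
Now apply PI($\Psi$) on each ball $B_n\cap B_m=B_{n\wedge m}$ (after splitting off the constant $|f_{B_n\cap B_m}-f_{B_0}|^2$, which is handled with VD and one more application of PI). The resulting unweighted energies $\int_{B_{n\wedge m}}d\Gamma(f,f)$ are dominated by $\int\psi_{n+1}\psi_{m+1}\,d\Gamma(f,f)$ because $\psi_{n+1}\equiv 1$ on $B_n$; since $b_{n-1}-b_n=e^{\lambda}(b_n-b_{n+1})$, the double sum resums to $e^{2\lambda}\int\psi^2 d\Gamma(f,f)$, with $e^{2\lambda}$ bounded independently of $\epsilon$. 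This is precisely how the weight $\psi^2$ reappears on the energy side, and it is the step your chaining sketch gestures at but does not carry out.
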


\begin{proof} Let $\epsilon \in (0,1)$. Let 
\begin{align} \label{eq:construction psi}
 \psi = \sum_{n=1}^{\infty} (b_{n-1} - b_n) \psi_n
 \end{align}
be the cutoff function constructed in Lemma \ref{lem:CSA epsilon}. In particular, 
for each non-negative integer $n$, $b_n = e^{-n\lambda}$ for some $\lambda = \lambda(\epsilon)$, and $\psi_n \in \mbox{CSA}(\Psi)$ is a cutoff function for $B_{n-1}$ in $B_n$, where $B_n=B(x,R+r_n)$ and the sequence $r_n \uparrow r' < r$ is defined by \eqref{eq:r_n}.
By Lemma \ref{lem:CSA epsilon}, we have $\psi \in \mbox{CSA($\Psi,\epsilon,C_0$)}$ for a suitable choice of $\lambda(\epsilon)$. We will prove the weighted Poincar\'e inequality  \eqref{eq:weighted PI} for the weight $\psi$ given by \eqref{eq:construction psi}.
By the triangle inequality,
\begin{align*}
\int | f - f_{\psi}|^2 \psi^2 d\mu 
& \leq 
\int | f - f_{B_0}|^2 \psi^2 d\mu +  \int | f_{B_0} - f_{\psi}|^2 \psi^2 d\mu. 
\end{align*}
The second integral on the right hand side can be estimated by
\begin{align*}
 \int | f_{B_0} - f_{\psi}|^2 \psi^2 d\mu
 = \int \left| \frac{\int (f - f_{B_0}) \psi^2 d\mu}{ \int \psi^2 d\mu } \right|^2 \psi^2 d\mu 
 \leq  \int | f - f_{B_0}|^2 \psi^2 d\mu,
\end{align*}
where we used the definition of $f_{\psi}$ and the Cauchy-Schwarz inequality.
Thus, it suffices to show that there exists a constant $C \in (0,\infty)$ such that 
 \[ \forall f \in \F_{\mbox{\tiny{loc}}}(Y), \qquad \int | f - f_{B_0}|^2 \psi^2 d\mu \leq C \Psi(R+r) \int \psi^2 d\Gamma(f,f). \]
By \eqref{eq:construction psi} and the fact that $\psi_n$ vanishes outside $B_n$ and $0 \leq \psi_n \leq 1$, we have
\begin{align*}
 \int | f - f_{B_0}|^2 \psi^2 d\mu
& = 
\sum_n \sum_m (b_{n-1} - b_n)(b_{m-1} - b_m) \int | f - f_{B_0}|^2 \psi_n \psi_m d\mu  \\
& \leq 
\sum_n \sum_m (b_{n-1} - b_n)(b_{m-1} - b_m) \int_{B_n \cap B_m} | f - f_{B_0}|^2 d\mu  \\
& \leq  
I_1 + I_2,
\end{align*}
where we applied the triangle inequality with
\[ I_1 := 2\sum_n \sum_m (b_{n-1} - b_n)(b_{m-1} - b_m)   \int_{B_n \cap B_m} |f-f_{B_n \cap B_m}|^2 d\mu \]
and
\[ I_2 := 2\sum_n \sum_m (b_{n-1} - b_n)(b_{m-1} - b_m)   \int_{B_n\cap B_m} |f_{B_n \cap B_m} - f_{B_0}|^2 d\mu. \]
Observe that
\[ b_{n-1} - b_n = e^{\lambda} ( b_n - b_{n+1} ). \]
Applying the strong Poincar\'e inequality on the ball $B_n \cap B_m = B_{n \wedge m}$, and using the fact that $\psi_{n+1} = 1$ on $B_{n}$, we obtain
\begin{align*}
I_1
& \leq
C_{\mbox{\tiny{PI}}} \sum_n \sum_m (b_{n-1} - b_n)(b_{m-1} - b_m)  \Psi(R+(r_{n} \wedge r_{m})) \int_{B_{n} \cap B_{m}} d\Gamma(f,f)   \\
& \leq 
 C_{\mbox{\tiny{PI}}} \Psi(R+r) \sum_n \sum_m (b_{n-1} - b_n)(b_{m-1} - b_m)   \int \psi_{n+1} \psi_{m+1} d\Gamma(f,f)   \\
& \leq 
 C_{\mbox{\tiny{PI}}}  \Psi(R+r) \sum_n \sum_m  e^{2\lambda} (b_{n} - b_{n+1})(b_{m} - b_{m+1})  \int \psi_{n+1} \psi_{m+1} d\Gamma(f,f) \\
& \leq 
 C_{\mbox{\tiny{PI}}} \, e^{2\lambda} \Psi(R+r) \int \psi^2 d\Gamma(f,f).
\end{align*}
Now we estimate $I_2$. Note that $|f_{B_n \cap B_m} - f_{B_0}|$ is constant and $\mu(B_n \cap B_m) \le V(x,R+r) \leq C_{\mbox{\tiny{VD}}} \mu(B_0)$ by the volume doubling property. We apply the triangle inequality and then the Poincar\'e inequality on the balls $B_n \cap B_m$ and $B_0$. This yields
\begin{align*}
I_2 
& \le 2C_{\mbox{\tiny{VD}}} \sum_n \sum_m (b_{n-1} - b_n)(b_{m-1} - b_m)   \int_{B_0} |f_{B_n \cap B_m} - f_{B_0}|^2 d\mu \\
& \le 
4C_{\mbox{\tiny{VD}}} \sum_n \sum_m (b_{n-1} - b_n)(b_{m-1} - b_m)  \left( \int_{B_n \cap B_m} |f_{B_n \cap B_m} - f|^2  d\mu  +  \int_{B_0} |f - f_{B_0}|^2  d\mu \right) \\
& \le
8 C_{\mbox{\tiny{VD}}} C_{\mbox{\tiny{PI}}} \sum_n \sum_m (b_{n-1} - b_n)(b_{m-1} - b_m) \Psi(R+(r_{n} \wedge r_{m})) \int_{B_{n} \cap B_{m}} d\Gamma(f,f) \\
& \le
8 C_{\mbox{\tiny{VD}}} C_{\mbox{\tiny{PI}}} e^{2\lambda} \Psi(R+r) \int \psi^2 d\Gamma(f,f).
\end{align*}
\end{proof}

\begin{definition}
$(\e^*,\F)$ satisfies the \emph{weak Poincar\'e inequality} weak-PI($\Psi$) on $Y$ up to scale $R_0$, if there exist constants $\kappa \in (0,1)$ and $C(\kappa) \in (0,\infty)$ 
such that for any $0 < r < \kappa R < R \leq R_0$ and any ball $B(x,2R) \subset Y$,
\begin{align*}
\forall f \in \F_{\mbox{\tiny{loc}}}(Y), \  \int_B |f - f_B|^2 d\mu  \leq  C(\kappa) \, \Psi(2R) \int_{B(x,2R)} d\Gamma(f,f), 
\end{align*}
where $B=B(x,R+r)$. 
\end{definition}

\begin{remark} \label{rem:strong and weak PI}
If (A2-Y) and VD hold on $Y$ up to scale $R_0$ and if the metric $d$ is geodesic, then the weak Poincar\'e inequality PI($\Psi$) on $Y$ up to scale $R_0$ implies the strong Poincar\'e inequality on $Y$ up to scale $R_0$. 
This is immediate from a weighted Poincar\'e inequality with weight function $\psi = 1_{B(x,R)}$, see \cite[Corollary 5.3.5]{SC02}. 
The weighted Poincar\'e inequality with weight $\psi = 1_{B(x,R)}$ can be proved using a Whitney covering and chaining arguments that
are applicable when the metric is geodesic. See \cite[Section 5.3.2 - 5.3.5]{SC02}.
\end{remark}

\begin{lemma} \label{lem:pseudo PI}
Assume that $(\e^*,\F)$ satisfies {\em A2-$Y$} and {\em VD, PI($\Psi$)} on $Y$ up to scale $R_0$. Then the pseudo-Poincar\'e inequality holds: There is a constant 
$C=C(\beta_1, \beta_2, C_{\Psi}, C_{\mbox{\em \tiny{VD}}}, C_{\mbox{\em \tiny{PI}}}) \in (0,\infty)$ such that for any ball $B(x,2R) \subset Y$ with 
$0 < R \le R_0$, and any $f \in \F_{\mbox{\em \tiny{c}}}(B(x,R))$,
\[  \int |f - f_s|^2 d\mu \leq C \Psi(s) \int d\Gamma(f,f), \quad \forall s \in (0,R), \] 
where $f_s(y) := \frac{1}{V(y,s)} \int_{B(y,s)} f d\mu$.
If, in addition, $f \in \F_{\mbox{\em \tiny{c}}}(B(x,R/4))$ and $B(x,R) \neq Y$, then
\[  \int f^2 d\mu \leq C \Psi(R) \int d\Gamma(f,f), \] 
\end{lemma}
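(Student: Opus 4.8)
The plan is to treat the two assertions separately. The first is the classical pseudo-Poincar\'e inequality, and I would derive it from PI($\Psi$) and VD by the standard dyadic telescoping across scales. Write $f_t(y):=\frac{1}{V(y,t)}\int_{B(y,t)}f\,d\mu$, so $f_s$ is the function in the statement. Since VD holds up to scale $R_0$, the Lebesgue differentiation theorem together with the $L^2$-boundedness of the localized maximal function gives $f_{2^{-j}s}\to f$ in $L^2$ as $j\to\infty$, hence $f-f_s=\sum_{j\ge 0}\bigl(f_{2^{-j-1}s}-f_{2^{-j}s}\bigr)$ in $L^2$. The heart of the proof is the single-scale estimate: for dyadic $t\le s$,
\[
 \|f_{t/2}-f_t\|_{L^2}^2\le C\,\Psi(t)\int d\Gamma(f,f).
\]
To see this, observe that for $\mu$-a.e.\ $y$ one has $B(y,t/2)\subset B(y,t)$ and $f_t(y)=f_{B(y,t)}$, so by Jensen and Cauchy--Schwarz, followed by VD (Lemma \ref{lem:nu}),
\[
 |f_{t/2}(y)-f_t(y)|^2\le \frac{1}{V(y,t/2)}\int_{B(y,t)}\bigl|f-f_{B(y,t)}\bigr|^2 d\mu\le \frac{C_{\mathrm{VD}}}{V(y,t)}\int_{B(y,t)}\bigl|f-f_{B(y,t)}\bigr|^2 d\mu .
\]
Integrating in $y$, using Fubini, and invoking PI($\Psi$) on the ball $B(y,t)$ — here one exploits that in PI($\Psi$) the energy integral on the right is over the \emph{same} ball $B(x,R+r)$, by choosing the parameters with $R'+r'=t$ and $r'\nearrow t/2$ — one arrives at $\|f_{t/2}-f_t\|_{L^2}^2\le C_{\mathrm{VD}}^2 C_{\mathrm{PI}}\,\Psi(t)\int d\Gamma(f,f)$, since $\int_{B(z,t)}\frac{d\mu(y)}{V(y,t)}\le C_{\mathrm{VD}}$ again by doubling. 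Summing the telescoping series and using \eqref{eq:beta} in the form $\Psi(2^{-j}s)\le C_\Psi 2^{-j\beta_1}\Psi(s)$, which turns $\sum_{j\ge 0}\Psi(2^{-j}s)^{1/2}$ into a convergent geometric series bounded by $C_\Psi^{1/2}(1-2^{-\beta_1/2})^{-1}\Psi(s)^{1/2}$, gives the first claim with a constant depending only on $\beta_1,\beta_2,C_\Psi,C_{\mathrm{VD}},C_{\mathrm{PI}}$.

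For the second assertion I would use connectedness to produce a ball, comparable in size and inside $B(x,2R)$, on which $f$ vanishes. Since $B(x,2R)\subset Y$ we have $x\in Y$, and since $Y$ is connected the continuous map $y\mapsto d(x,y)$ has connected range; as $B(x,R)\neq Y$ this range contains a value $\ge R$, hence all of $[0,R]$, so there is $z\in Y$ with $d(x,z)=R/3$. Then $B(z,R/12)\subset B(x,5R/12)\subset B(x,R)\subset Y$; and since the compact support of $f$ lies in $B(x,\rho_0)$ for some $\rho_0<R/4$, every $w\in B(z,R/12)$ satisfies $d(x,w)>R/3-R/12=R/4>\rho_0$, so $f\equiv 0$ on $B(z,R/12)$. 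Applying PI($\Psi$) on $B(x,5R/12)$ (legitimate with $R'=R/3$, $r'=R/12$, since $B(x,2R/3)\subset Y$ and $R/3\le R_0$) and writing $c:=f_{B(x,5R/12)}$, one gets, using that $\Psi$ is increasing,
\[
 \mu(B(z,R/12))\,c^2=\int_{B(z,R/12)}|f-c|^2 d\mu\le \int_{B(x,5R/12)}|f-c|^2 d\mu\le C_{\mathrm{PI}}\,\Psi(R)\int d\Gamma(f,f).
\]
Since $B(x,5R/12)\subset B(z,3R/4)$ and $B(z,3R/2)\subset B(x,11R/6)\subset Y$, Lemma \ref{lem:nu} bounds $\mu(B(x,5R/12))$ by a dimensional multiple of $\mu(B(z,R/12))$; combining with $\int f^2\le 2\int_{B(x,5R/12)}|f-c|^2 d\mu+2\mu(B(x,5R/12))c^2$ yields $\int f^2\,d\mu\le C\,\Psi(R)\int d\Gamma(f,f)$.

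The main obstacle is not any single calculation but the bookkeeping of which balls must lie in $Y$. In the telescoping the scales $t=2^{-j}s$ with $j$ large are harmless, but the first few scales force balls of radius comparable to $s$ (hence to $R$) centered near $\partial B(x,R)$, which only just fit inside $B(x,2R)\subset Y$; it is precisely here that one needs the non-enlarging form of PI($\Psi$) noted above, and one may additionally reduce the regime $s$ close to $R$ to the regime $s\le cR$ by monotonicity of $\Psi$. I expect this scale/domain accounting, rather than the functional-analytic content, to be the delicate part of the write-up; the second assertion avoids it entirely by the explicit choice of radii, for which the two required volume comparisons both stay inside $B(x,2R)$.
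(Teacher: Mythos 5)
Your argument is correct in substance but follows a genuinely different route from the paper's for the first inequality. The paper disposes of both assertions by citing \cite[Lemma 5.3.2 and Lemma 5.2.5]{SC02}: one covers the relevant region by balls $B_i$ of radius $s/10$ with bounded overlap and applies PI($\Psi$) on each $4B_i$, so that every Poincar\'e ball has radius $\sim s/10$. You instead telescope dyadically, $f-f_s=\sum_j(f_{2^{-j-1}s}-f_{2^{-j}s})$, prove the single-scale bound $\|f_{t/2}-f_t\|_2^2\le C\Psi(t)\int d\Gamma(f,f)$ via Jensen, Fubini and PI($\Psi$) on $B(y,t)$, and sum the geometric series coming from \eqref{eq:beta}. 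The single-scale computation and the summation are correct, and the telescoping has the advantage of making the dependence on $\Psi$ through $\beta_1$ completely transparent. Your proof of the second assertion (producing, via connectedness of $Y$, a ball $B(z,R/12)\subset B(x,5R/12)$ on which $f$ vanishes, then applying PI($\Psi$) on $B(x,5R/12)$ and Lemma \ref{lem:nu}) is complete and correct, and is in fact more detailed than the paper's citation.

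The one concrete soft spot is exactly the point you flag but do not resolve: the top scales of the telescoping when $s$ is comparable to $R$. The single-scale estimate at scale $t=s$ invokes PI($\Psi$) on balls $B(y,t)$ centered at points $y$ with $d(x,y)$ up to roughly $R+s$, and such balls need not be contained in $B(x,2R)$, which is all of $Y$ that the hypothesis guarantees. Your proposed repair --- ``reduce the regime $s$ close to $R$ to $s\le cR$ by monotonicity of $\Psi$'' --- does not work: replacing $s$ by $cs$ leaves you with the difference $f_s-f_{cs}$, which is itself a top-scale object requiring Poincar\'e balls of radius $\sim s$ at the same problematic locations, so nothing has been gained. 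The covering route mitigates (though, for $y$ outside $B(x,R)$, does not entirely eliminate) this because its Poincar\'e balls have radius $2s/5$ rather than $s$; the cleanest way out is to observe that wherever the lemma is used in the paper (e.g.\ Theorem \ref{thm:sobolev ineq}) one has the much stronger containment $B(x,8R)\subset Y$, which gives all the room needed at every scale. You should either restate the lemma with that extra room, or run the covering argument at scale $s/10$ for the top term; as written, the first few terms of your telescoping are not justified by the localized PI($\Psi$) alone.
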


\begin{proof}
The proof is as in the classical case $\Psi(r) = r^2$, with the obvious changes regarding the use of $\Psi(r)$. The idea is to cover $B(x,R)$ with balls $2B_i$ where each $B_i$ has radius $s/10$, and to apply the Poincar\'e inequality to each of the balls $4B_i$. For details, see \cite[Lemma 5.3.2 and Lemma 5.2.5]{SC02}.
\end{proof}

\subsection{Localized Sobolev inequality}

\begin{definition}
$(\e^*,\F)$ satisfies the localized \emph{Sobolev inequality} SI($\Psi$) on $Y$ up to scale $R_0$, if there exist constants $\kappa > 1$ and $C_{\mbox{\tiny{SI}}} \in (0,\infty)$ 
such that for any ball $B(x,4R) \subsetneq B(x,8R) \subset Y$ with $0 < R \leq R_0/4$, and all $f \in \F_{\mbox{\tiny{c}}}(B(x,R))$, we have
\begin{align}  \label{eq:sobolev}
 \left( \int_{B(x,R)} |f|^{2\kappa} d\mu \right)^{\frac{1}{\kappa}}   
    \leq \frac{ C_{\mbox{\tiny{SI}}} }{ V(x,R)^{1 - \frac{1}{\kappa}} } \Psi(R)  \int_{B(x,R)} d\Gamma(f,f). 
\end{align}
\end{definition}

\begin{theorem} \label{thm:sobolev ineq}
If {\em A2-$Y$} and {\em VD, PI($\Psi$)} are satisfied on $Y$ up to scale $R_0$, then 
$(\e^*,\F)$ satisfies {\em SI($\Psi$)} on $Y$ up to scale $R_0$.
The Sobolev constant  $C_{\mbox{\em \tiny{SI}}}$ depends only on $\beta_1, \beta_2, C_{\Psi}$, $C_{\mbox{\em \tiny{VD}}}$ and $C_{\mbox{\em \tiny{PI}}}$. The constant $\kappa$ satisfies $1-\frac{1}{\kappa} = \beta_1/\log_2(C_{\mbox{\em\tiny{VD}}})$.
\end{theorem}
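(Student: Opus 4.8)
The plan is to transcribe the classical derivation of a Sobolev inequality from volume doubling and a Poincar\'e inequality (cf.\ \cite[Chapter~2 and \S5.2]{SC02}), with $\Psi(R)$ in the role of $R^2$ and with every auxiliary ball kept inside $Y$. The analytic input is already in place: Lemma \ref{lem:pseudo PI} gives the pseudo-Poincar\'e inequality, which — combined with (VD) — produces a Nash-type (equivalently Faber--Krahn) inequality on $B(x,R)$, and this is then upgraded to \eqref{eq:sobolev} by the usual dyadic truncation argument. Two preliminary reductions are convenient. First, since (VD) on $Y$ up to scale $R_0$ persists if $C_{\mbox{\tiny{VD}}}$ is enlarged, we may assume $\nu:=\log_2 C_{\mbox{\tiny{VD}}}>\beta_1$; this is exactly what makes the target exponent $\kappa$, defined by $1-\tfrac1\kappa=\beta_1/\nu$, satisfy $\kappa>1$. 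Second, fixing $x$ and $0<R\le R_0/4$ with $B(x,8R)\subset Y$, it suffices to prove \eqref{eq:sobolev} for $0\le f\in\F_{\mbox{\tiny{c}}}(B(x,R))$, since $|f|\in\F_{\mbox{\tiny{c}}}(B(x,R))$ with $\int d\Gamma(|f|,|f|)\le\int d\Gamma(f,f)$ and $|f|^{2\kappa}$ depends only on $|f|$; one may also first assume $f$ bounded and remove this at the end by a routine truncation/density step.

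\emph{Step 1 (Nash-type inequality).} For $s\in(0,R)$ put $f_s(y):=V(y,s)^{-1}\int_{B(y,s)}f\,d\mu$, which is supported in $B(x,R+s)\subset B(x,2R)$. Lemma \ref{lem:pseudo PI} gives $\int|f-f_s|^2\,d\mu\le C\,\Psi(s)\int d\Gamma(f,f)$. Lemma \ref{lem:nu}, applied to balls centered in $B(x,2R)$ — all of which satisfy the hypothesis $B(y,4R)\subset Y$ because $B(x,8R)\subset Y$ — yields both the volume lower bound $V(y,s)\ge c\,V(x,R)(s/R)^\nu$ for $y\in B(x,2R)$ and the standard maximal-function bound $\int f_s\,d\mu\le C\int f\,d\mu$; hence $\int f_s^2\,d\mu\le\|f_s\|_{L^\infty}\int f_s\,d\mu\le C\,V(x,R)^{-1}(R/s)^\nu\big(\int f\,d\mu\big)^2$. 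Adding the two estimates and inserting \eqref{eq:beta} in the form $\Psi(s)\le C_\Psi(s/R)^{\beta_1}\Psi(R)$, and writing $s=\theta R$ with $\theta\in(0,1)$, we obtain
\[
 \int_{B(x,R)}f^2\,d\mu\ \le\ C\,\theta^{\beta_1}\,\Psi(R)\int d\Gamma(f,f)\ +\ \frac{C}{V(x,R)}\,\theta^{-\nu}\Big(\int f\,d\mu\Big)^{2}.
\]
Optimizing over $\theta$ puts this in the shape of a local, volume-normalized Nash inequality of effective dimension $2\nu/\beta_1$.

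\emph{Step 2 (Nash $\Rightarrow$ Sobolev).} For $k\in\mathbb Z$ let $f_k:=(f-2^k)^+\wedge 2^k\in\F_{\mbox{\tiny{c}}}(B(x,R))$. By strong locality and the chain rule \eqref{eq:chain rule for Gamma}, $d\Gamma(f_k,f_k)$ coincides with $d\Gamma(f,f)$ on $\{2^k<f<2^{k+1}\}$ and vanishes off it, so $\sum_k\int d\Gamma(f_k,f_k)\le\int d\Gamma(f,f)$, while $f=\sum_k f_k$ pointwise. Apply the inequality of Step 1 to each $f_k$ with $\theta=\theta_k$ chosen so the two terms balance; using $\int f_k\,d\mu\le 2^k\mu(\Omega_k)$ and $\int f_{k-1}^2\,d\mu\ge\tfrac14 4^k\mu(\Omega_k)$, where $\Omega_k:=\{f>2^k\}\cap B(x,R)$, and summing over $k$ exactly as in the Nash--Sobolev equivalence of \cite[Chapter~2]{SC02}, one arrives at
\[
 \Big(\int_{B(x,R)}|f|^{2\kappa}\,d\mu\Big)^{1/\kappa}\ \le\ \frac{C_{\mbox{\tiny{SI}}}}{V(x,R)^{1-1/\kappa}}\,\Psi(R)\int_{B(x,R)}d\Gamma(f,f),\qquad 1-\tfrac1\kappa=\frac{\beta_1}{\nu},
\]
with $C_{\mbox{\tiny{SI}}}$ depending only on $\beta_1,\beta_2,C_\Psi,C_{\mbox{\tiny{VD}}},C_{\mbox{\tiny{PI}}}$.

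The technical heart, and the step most in need of care, is the summation in Step 2: one must carry the \emph{lower} exponent $\beta_1$ of $\Psi$ through Step 1 (rather than $\beta_2$) so that the choice of $\theta_k$ scales consistently across dyadic levels and the resulting series telescopes, and one must verify that every auxiliary ball $B(y,s)$ and $B(y,4R)$ invoked via Lemma \ref{lem:nu} lies in $Y$ — which is precisely what $B(x,8R)\subset Y$ together with $R\le R_0/4$ guarantees. The remaining ingredients — the maximal-function estimate for $f_s$, the reduction to nonnegative (and bounded) $f$, and the passage from the scale-$s$ inequality to the two-term form — are routine transcriptions of the Euclidean case.
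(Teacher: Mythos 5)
Your proposal is correct, and it runs on exactly the same two ingredients as the paper's proof (the pseudo-Poincar\'e inequality of Lemma \ref{lem:pseudo PI} and the volume comparison of Lemma \ref{lem:nu}, both valid because $B(x,8R)\subset Y$), but it routes them through a different intermediate inequality. The paper follows \cite[Theorem 5.2.3]{SC02}: it never estimates $\int f_s^2\,d\mu$, but instead, for each level $\lambda$, chooses $s=s(\lambda)$ so that the $L^\infty$ bound on $f_s$ makes the set $\{f_s\ge\lambda/2\}$ empty; Chebyshev plus the pseudo-Poincar\'e inequality then give the weak-type bound \eqref{eq:S*}, $\lambda^{3-1/\kappa}\mu(\{f\ge\lambda\})\le C(\|f\|_1/\mu(B))^{1-1/\kappa}\Psi(R)\int d\Gamma(f,f)$, which is upgraded to \eqref{eq:sobolev} by the truncation lemma \cite[Lemma 3.2.3]{SC02}. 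You instead keep the $f_s$ term, bound $\int f_s^2\le\|f_s\|_\infty\int f_s$ using a Fubini-plus-doubling estimate $\int f_s\le C\int f$ (a small extra step the paper avoids), optimize over $s=\theta R$ to get a local Nash inequality, and then run the Nash-to-Sobolev dyadic iteration. Both case distinctions are the same in disguise: your regime where the unconstrained optimal $\theta$ exceeds $1$ is precisely the paper's Case 2, and there you cannot simply "take $\theta\to1$" (that would leave a zero-order term $\Psi(R)^{-1}\|f\|_2^2$ in the resulting Sobolev inequality, which \eqref{eq:sobolev} does not have); you must invoke the \emph{second} part of Lemma \ref{lem:pseudo PI}, applied at scale $4R$ so that the support condition $f\in\F_{\mbox{\tiny{c}}}(B(x,(4R)/4))$ and $B(x,4R)\subsetneq Y$ are met --- exactly as the paper does. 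With that one point made explicit, and with Lemma \ref{lem:nu} applied at radius $2R$ (its hypothesis requires $y\in B(x,R')$ with $B(y,2R')\subset Y$, so take $R'=2R$ to cover $y\in B(x,2R)$), your argument is complete and yields the same exponent $1-1/\kappa=\beta_1/\nu$ and the same dependence of $C_{\mbox{\tiny{SI}}}$.
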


\begin{proof}
We follow \cite[Theorem 5.2.3]{SC02}. It suffices to proof the assertion for non-negative $f$.
For any $y \in B = B(x,R)$, $0 < s < R$, we have by Lemma \ref{lem:nu} that
\[   |f_s(y)|  
\leq \frac{1}{\mu(B(y,s))}    \int_{B(y,s)} |f| d\mu 
\leq \frac{C_{\mbox{\tiny{VD}}}^2}{\mu(B)} \left(\frac{R}{s}\right)^{\nu} ||f||_1, \]
where $\nu = \log_2(C_{\mbox{\tiny{VD}}})$. 
For  $0 \leq f \in \F_{\mbox{\tiny{c}}}(B)$ and $\lambda \geq 0$, write
\[ \mu(\{f \geq \lambda \}) \leq \mu(\{ |f - f_s| \geq \lambda/2 \} \cap B ) + \mu(\{ f_s \geq \lambda/2 \} \cap B ) \]
and consider two cases. 

\textbf{Case 1:}
If $\lambda$ is such that 
\[ \frac{\lambda}{4} > \frac{C_{\mbox{\tiny{VD}}}^2}{\mu(B)} ||f||_1, \]
then pick $s \in (0,R)$ depending on $\lambda$ in such a way that
\[ \frac{\lambda}{4} = \frac{C_{\mbox{\tiny{VD}}}^2}{\mu(B)} \left(\frac{R}{s}\right)^{\nu} ||f||_1. \] 
For this choice of $s$, 
\[ \mu(\{ f_s \geq \lambda/2 \} \cap B) = 0. \]
By \eqref{eq:beta}, we then have for $\kappa$ satisfying $1-\frac{1}{\kappa} = \frac{\beta_1}{\nu}$ that
\begin{align} \label{eq:lambda and kappa}
\lambda^{1-\frac{1}{k}}
\leq 
C \frac{\Psi(R)}{\Psi(s)} \left(\frac{||f||_1}{\mu(B)} \right)^{1-\frac{1}{k}},
\end{align}
where $C$ denotes a positive constant that may change from line to line and depends only on $\beta_1, \beta_2, C_{\Psi}, C_{\mbox{\tiny{VD}}}, C_{\mbox{\tiny{PI}}}$.
Applying the pseudo-Poincar\'e inequality of Lemma \ref{lem:pseudo PI} and \eqref{eq:lambda and kappa}, we obtain
\begin{align*}
\mu(\{ f \geq \lambda \}) 
& \leq  \mu(\{ |f - f_s| \geq \lambda/2 \} \cap B) \\
& \leq  \frac{4}{\lambda^2}    \int |f-f_s|^2 d\mu  \\
& \leq C \frac{4}{\lambda^2} \Psi(s) \int d\Gamma(f,f)  \\
& \leq
  \frac{C}{\lambda^{3 - \frac{1}{\kappa}}} \left( \frac{||f||_1}{\mu(B)} \right)^{1 - \frac{1}{\kappa}} \Psi(R)  \int d\Gamma(f,f).
\end{align*}

\textbf{Case 2:}
If $\lambda$ is such that
\[ \frac{\lambda}{4} \leq \frac{C_{\mbox{\tiny{VD}}}^2}{\mu(B)} ||f||_1, \]
then it follows from the second part of Lemma \ref{lem:pseudo PI} that
\begin{align*}
  \int f^2 d\mu  \leq C \Psi(R) \int d\Gamma(f,f).
\end{align*}
Hence,
\[  \mu( \{ f \geq \lambda \}) \leq \frac{1}{\lambda^2}   \int f^2 d\mu  \leq \frac{C}{\lambda^2} \Psi(R) \int d\Gamma(f,f). \]
We obtain that
\begin{align} \label{eq:S*}
 \lambda^{3-\frac{1}{\kappa}} \mu(\{f \geq \lambda\}) \leq C  \left( \frac{||f||_1}{\mu(B)} \right)^{1 - \frac{1}{\kappa}} \Psi(R) \int d\Gamma(f,f)
\end{align}
holds in both cases.
Now the proof can be completed easily by following the reasoning in \cite[Theorem 3.2.2 and Lemma 3.2.3]{SC02}.
\end{proof}

\section{The Moser iteration technique} \label{sec:Moser}
\subsection{Time-dependent forms}
\label{ssec:time-dependence}
For the rest of the paper, we fix a reference form $(\e^*,\F)$ as in Section \ref{ssec:reference form} and an open set $Y \subset X$. We assume $(\e^*,\F)$ satisfies A2-Y, VD, PI($\Psi$), CSA($\Psi$) on $Y$ up to scale $R_0>0$.
Let $(\e_t,\F)$, $t \in \R$, be a family of bilinear forms that satisfy Assumption \ref{as:0} and Assumption \ref{as:skew1}.

\subsection{Local very weak solutions} \label{ssec:very weak solutions}
We recall the notion of very weak solutions introduced in \cite{LierlSC2}. For an open time interval $I$ and a separable Hilbert space $H$, 
let $L^2(I \to H)$ be the Hilbert space of 
those functions $v: I \to H$ such that 
 \[  \Vert v \Vert_{L^2(I \to H)} := \left( \int_I \Vert v(t) \Vert_{H}^2 \, dt \right)^{1/2} < \infty. \]
It is well-known that $L^2(I \to L^2(X,\mu))$ can be identified with  $L^2(I \times X,dt \times d\mu)$. Indeed, continuous functions with compact support in $I \times X$ 
are dense in both spaces and the two norms coincide on these functions.

Let $L^2_{\mbox{\tiny{loc}}}(I \to \F;U)$ be the space of all functions $u:I \times U \to \R$ such that for any open interval $J$ relatively compact in $I$, and any open subset $A$ relatively compact in $U$, there exists a function $u^{\sharp} \in L^2(I \to \F)$ such that $u^{\sharp} = u$ a.e.~in $J \times A$.

\begin{definition} 
Define
\[ D(L_t) = \{ f \in \F : g \mapsto \e_t(f,g) \mbox{ is continuous w.r.t. } \Vert \cdot \Vert_{2} \mbox{ on } \F_{\mbox{\tiny{c}}} \}. \]
For $f \in D(L_t)$, let $L_t f$ be the unique element in $L^2(X)$ such that
 \[ - \int L_t f g d\mu  = \e_t(f,g) \quad \mbox{ for all } g \in \F_{\mbox{\tiny{c}}}. \]
Then we say that $(L_t,D(L_t))$ is the infinitesimal generator 
of $(\e_t,\F)$ on $X$. See, e.g., \cite{MR92}.
\end{definition}

\begin{definition} \label{def:local very weak solution}
Let $I$ be an open interval and $U \subset X$ open. Set $Q = I \times U$. A function $u: Q \to \R$ is a \emph{local very weak solution} of the heat equation $\frac{\partial}{\partial t} u = L_t u$ in $Q$, if
\begin{enumerate}
\item
$u \in L^2_{\mbox{\tiny{loc}}}(I \to \F;U)$,
\item 
For almost every $a,b \in I$,
\begin{align} \label{eq:loc weak sol}
 \forall \phi \in \F_{\mbox{\tiny{c}}}(U), \quad  \int u(b,\cdot)  \phi \, d\mu - \int u(a,\cdot)  \phi \, d\mu + \int_a^b \e_t(u(t,\cdot),\phi) dt = 0.
\end{align} 
\end{enumerate} 
\end{definition}

\begin{definition}
Let $I$ be an open interval and $U\subset X$ open. Set $Q = I \times U$. A function $u: Q \to \R$ is a \emph{local very weak subsolution} of 
$\frac{\partial}{\partial t} u = L_t u$ in $Q$, if
\begin{enumerate}
\item
$u \in L^2_{\mbox{\tiny{loc}}}(I \to \F;U)$,
\item 
For almost every $a,b \in I$ with $a < b$, and any non-negative $\phi \in \F_{\mbox{\tiny{c}}}(U)$,
\begin{align} \label{eq:local very weak subsolution}
 \int u(b,\cdot)  \phi \, d\mu - \int u(a,\cdot)  \phi \, d\mu + \int_a^b \e_t(u(t,\cdot),\phi) dt
\leq 0.
\end{align}
\end{enumerate} 
A function $u$ is called a \emph{local very weak supersolution} if $-u$ is a local very weak subsolution.
\end{definition}

Note that a local very weak solution is not required to have a weak time-derivative. A function $u:Q \to \R$ is a {\em local weak solution} in the classical sense if and only if $u$ is a local very weak solution and $u \in \mathcal{C}_{\mbox{\tiny{loc}}}(I \to L^2(U))$, where $\mathcal{C}_{\mbox{\tiny{loc}}}(I \to L^2(U))$ is the space of measurable functions $u: I \times U \to \R$ such that for any open interval $J$ relatively compact in $I$ and any open subset $A$ relatively compact in $U$, there exists a continuous function $u^{\sharp}:I \to L^2(U)$ such that $u=u^{\sharp}$ on $J \times A$.
See \cite[Proposition 7.8]{LierlSC2}.

\subsection{Estimates for sub- and supersolutions} 
\label{ssec:estim for sub and supsol}
Let $B=B(x,r) \subset Y$ and $a \in \R$. For $\sigma,\delta \in (0,1]$, set
\begin{align*}
& \delta B  = B(x,\delta r), \\
& I^- = (a - \Psi(r), a), \quad
I^+ = (a, a + \Psi(r)), \quad
 I^-_{\sigma} = (a - \sigma \Psi(r), a), \quad
 I^+_{\sigma}  = (a, a + \sigma \Psi(r)), \\
& Q^-(x,a,r)  = I^- \times B(x,r), \quad
 Q^+(x,a,r)  = I^+ \times B(x,r), \\
& Q^-_{\sigma,\delta} = I^-_{\sigma} \times \delta B, \quad
 Q^+_{\sigma,\delta} = I^+_{\sigma} \times \delta B.
\end{align*}
Let $0 < \sigma' < \sigma \leq 1$ and $\hat\sigma := \sigma - \sigma'$.
Let $\chi$ be a smooth function of the time variable $t$ such that $0 \leq \chi \leq 1$,  $\chi = 0 \mbox{ in } (-\infty, a - \sigma \Psi(r))$, 
$\chi = 1$ in $(a - \sigma' \Psi(r), \infty)$ and 
\[ 0 \leq \chi' \leq \frac{2}{\hat\sigma \Psi(r)}. \]
Let $0 < \delta' < \delta < 1$ and $\hat\delta := \delta - \delta'$.
 Let $d\bar \mu = d\mu \times dt$.

\begin{lemma} \label{lem:estimate subsol p>2}
 Let $p \geq 2$. Then there exists a cutoff function $\psi \in \mbox{\em CSA}(\Psi,C_0)$ for $B(x,\delta' r)$ in $B(x,\delta' r + \hat \delta r)$ and constants $a_1 \in (0,1)$, $A_1, A_2 \in [0,\infty)$ depending on $C_0$, $C_{10}$, $C_{11}$ such that
\begin{align} \label{eq:subsol}
& \sup_{t \in I^-_{\sigma'}} \int u^p \psi^2 d\mu + a_1 \int_{I^-_{\sigma'}} \int \psi^2 d\Gamma(u^{p/2},u^{p/2}) dt \nonumber \\
\leq & \, \left(\left(A_1(1 + C_2)\frac{1}{ \Psi( \hat\delta r )} + A_2 C_3 \right) p^{\beta_2} + \frac{2}{\hat\sigma \Psi(r)} \right) \int_{Q_{\sigma,\delta}^-} u^p d\bar\mu
\end{align} 
holds for any non-negative local very weak subsolution $u$ of the heat equation for $L_t$ in $Q = Q^-(x,a,r)$ which satisfies 
$\int_{I^-_{\sigma}} \int_{\delta B} u^p d\mu \, dt < \infty$. 
\end{lemma}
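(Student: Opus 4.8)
The strategy is the standard parabolic Caccioppoli/energy estimate for subsolutions, but carried out with a cutoff function in space coming from CSA($\Psi$) rather than an arbitrary Lipschitz cutoff, so that the energy measure of $\psi$ can be absorbed. First I would fix $\epsilon \in (0,1)$ small (to be chosen at the end depending only on the structural constants $C_{10},C_{11}$), and let $\psi$ be a cutoff function for $B(x,\delta'r)$ in $B(x,\delta'r+\hat\delta r)$ with $\psi \in \mbox{CSA}(\Psi,\epsilon,C_0)$; note $\Psi(\hat\delta r)$ plays the role of $\Psi(r)$ in Assumptions \ref{as:skew1} and in \eqref{eq:CSA}. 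Since $u$ is only a \emph{very weak} subsolution with no weak time-derivative, I cannot directly plug $u^{p-1}\psi^2\chi^2$ into \eqref{eq:local very weak subsolution}; instead I would work with the truncations $u_n = u\wedge n$ and with a Steklov-averaged version of $u$, as in \cite{LierlSC2}, so that the time-derivative manipulations are justified, and pass to the limit at the end. The test function is (a Steklov average of) $\varphi = u\,u_n^{p-2}\psi^2\chi^2$, which is a legitimate element of $\F_{\mbox{\tiny{c}}}(U)$ for each fixed time by Lemma \ref{lem:SUP s} (using $p\geq 2$).

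The core computation: integrating \eqref{eq:local very weak subsolution} against $\varphi$ over $(a-\sigma\Psi(r),s)$ for $s \in I^-_{\sigma'}$ produces, after the Steklov limit, a term $\int u\,\partial_t u\, u_n^{p-2}\psi^2\chi^2$ which one rewrites (via the chain rule, treating $u_n$ as $u$ on $\{u<n\}$ and as a constant on $\{u\geq n\}$) as essentially $\frac1p \partial_t(u\,u_n^{p-1})\psi^2\chi^2$ up to controlled errors, plus the $\chi\chi'$ term that yields the $\frac{2}{\hat\sigma\Psi(r)}\int_{Q^-_{\sigma,\delta}} u^p\,d\bar\mu$ on the right. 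The remaining term is $\int_a^s \e_t(u, u\,u_n^{p-2}\psi^2\chi^2)\,dt \leq 0$; I decompose $\e_t = \e_t^{\mbox{\tiny{s}}} + \e_t^{\mbox{\tiny{sym}}}(\cdot\,,1) + \l_t + \r_t$ as in Section \ref{sec:cutoff sobolev} and estimate the symmetric strongly local part with Lemma \ref{lem:SUP s} and the skew/zero-order parts with Lemma \ref{lem:SUP skew}. This is exactly what those lemmas were built for: the ``bad'' term $\int u^2 u_n^{p-2}\,d\Gamma_t(\psi,\psi)$ is controlled via \eqref{eq:CSA}/\eqref{eq:C_10} by $\epsilon\int\psi^2 u_n^{p-2}\,d\Gamma(u,u)$ plus a zero-order term of order $\frac{C_0(\epsilon)}{\Psi(\hat\delta r)}$; similarly the skew-symmetric estimates contribute $C_{11}\epsilon^{1/2}$ times gradient terms plus $(C_2+C_3\Psi(\hat\delta r))\frac{C_1(\epsilon)}{\Psi(\hat\delta r)}$ times $\int_B u^p$ (where I must also keep track of the $u_n^p$ terms and use $u_n \leq u$, $u_n^{p-2}u^2 \leq u^p$ for $p\geq 2$). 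The point is that all the $d\Gamma(u,u)$ and $d\Gamma(u_n,u_n)$ contributions come with a small coefficient of the form $(\text{const})\,\epsilon^{1/2} + (\text{const})\,\epsilon$, so after collecting them and using $d\Gamma(u^{p/2},u^{p/2}) \asymp u^{p-2}(d\Gamma(u,u)+ d\Gamma(u_n,u_n))$ on $\{u<n\}$ together with \eqref{eq:uu_n^p-2/2}, I can choose $\epsilon = \epsilon(C_{10},C_{11})$ small enough to absorb them into the left-hand side, leaving a strictly positive multiple $a_1$ of $\int_{I^-_{\sigma'}}\int \psi^2\,d\Gamma(u^{p/2},u^{p/2})$.

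Tracking the constants: after absorption the surviving zero-order terms are bounded by a constant times $\frac{C_1(\epsilon)}{\Psi(\hat\delta r)}(1+C_2)\int_{Q^-_{\sigma,\delta}}u^p\,d\bar\mu + C_3 C_1(\epsilon)\int_{Q^-_{\sigma,\delta}}u^p\,d\bar\mu + \frac{2}{\hat\sigma\Psi(r)}\int_{Q^-_{\sigma,\delta}}u^p\,d\bar\mu$. Since $C_1(\epsilon)$ and $\epsilon$ are now fixed absolute functions of $C_{10},C_{11}$, these become $A_1(1+C_2)\frac{1}{\Psi(\hat\delta r)}$ and $A_2 C_3$; the factor $p^{\beta_2}$ arises because the test function involves $p$ through $u_n^{p-2}$, and every place $p$ enters the error bounds it does so polynomially — the worst power, after using \eqref{eq:beta} to convert any $\Psi$-ratios, is $p^{\beta_2}$ (this is where the growth condition \eqref{eq:beta} on $\Psi$ is used; note $(p-2)^2 \leq p^2 \leq p^{\beta_2}$ since $\beta_2\geq 2$ and $p\geq 2$). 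Taking the supremum over $s \in I^-_{\sigma'}$ of the time-boundary term, together with the nonnegativity of the starting boundary term at time $a-\sigma\Psi(r)$ (where $\chi = 0$), and recalling $u\,u_n^{p-1}\geq u_n^p$ but also that we want $u^p$ on the left — here one uses $u_n \uparrow u$ and monotone convergence after the $n\to\infty$ limit, noting the finiteness hypothesis $\int_{I^-_\sigma}\int_{\delta B}u^p < \infty$ guarantees the limiting left-hand side is what we want and all terms are finite — yields \eqref{eq:subsol}. The main obstacle I anticipate is the justification of the time-derivative manipulations for a very weak solution lacking $\partial_t u$: this requires the Steklov averaging argument of \cite{LierlSC2} applied carefully so that one may both integrate by parts in $t$ and pass $n\to\infty$ and the Steklov parameter to zero while keeping all the spatial estimates (which hold for each fixed time slice) intact; the bookkeeping of the $u_n$ versus $u$ terms through this limiting procedure, ensuring no uncontrolled $d\Gamma(u,u)$ survives, is the delicate part.
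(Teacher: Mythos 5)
Your overall architecture (truncation $u_n=u\wedge n$, Steklov averaging in time, splitting $\e_t$ and invoking Lemmas \ref{lem:SUP s} and \ref{lem:SUP skew}, absorbing $d\Gamma(\psi,\psi)$ via CSA($\Psi$)) is exactly the paper's. There is, however, a genuine quantitative gap: you fix $\epsilon=\epsilon(C_{10},C_{11})$ independently of $p$, and with that choice the absorption step fails for large $p$. Concretely, to make the $d\Gamma(u,u)$-coefficient in Lemma \ref{lem:SUP s} negative you must take $k>p-1$ (the paper takes $k=2(p-1)$), and then the positive error attached to $\int\psi^2 u_n^{p-2}\,d\Gamma(u_n,u_n)$ is $2k\epsilon C_{10}(p-2)^2\sim \epsilon\,p^3$, whereas the available negative term is only $-\tfrac{1}{C_{10}}\bigl((1-p)^2+(1-p)\bigr)=-\tfrac{(p-1)(p-2)}{C_{10}}\sim -p^2$. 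Absorption therefore forces $\epsilon\lesssim 1/p$; the paper takes $\epsilon=c/p^2$.

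This also means your account of where $p^{\beta_2}$ comes from is wrong. It does not arise from using \eqref{eq:beta} to convert $\Psi$-ratios (no such conversion occurs in this lemma); it arises because $C_1(\epsilon)=C_0\,\epsilon^{(1-\beta_2)/2}$ and $C_0(\epsilon)=C_0\,\epsilon^{1-\beta_2/2}$ blow up as $\epsilon\downarrow 0$, so the forced choice $\epsilon\sim p^{-2}$ turns the zero-order coefficients into $O(p^{\beta_2-1})$, and the final normalization by $p$ (needed to pass from $\mathcal H_n(u)\approx\tfrac1p u\,u_n^{p-1}$ to $u^p$ on the left-hand side) yields $p^{\beta_2}$. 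With your fixed $\epsilon$ the zero-order terms would be $O(p)$ rather than $O(p^{\beta_2})$ — which looks better but is unattainable precisely because the gradient absorption breaks down. Once you replace "fix $\epsilon$" by "set $\epsilon=c/p^2$ with $c=c(C_{10},C_{11})$ small", the rest of your argument goes through as in the paper. (Two minor further points: the combination $\int u(b)\phi-\int u(a)\phi+\int_a^b\e_t(u,\phi)$ is what is $\le 0$, not the form term alone; and the Steklov limit is carried out with the test function $\mathcal H_n'(u_h(t,\cdot))\psi^2$ rather than the Steklov average of $u\,u_n^{p-2}\psi^2$, which is what makes the commutator terms \eqref{eq:want to let h to 0 part1}--\eqref{eq:want to let h to 0 part2} vanish.)
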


\begin{proof}
We follow the line of reasoning in \cite[Proof of Theorem 3.11]{LierlSC2}. 
We pick $k =2(p-1)$ and  $\epsilon = \frac{c^*}{p^2}$ for some sufficiently small $c >0$ that will be chosen later. 
By Lemma \ref{lem:SUP s}, we have for any $s \in I^-$, any cutoff function $\psi \in \mbox{CSA}(\Psi,\epsilon,C_0)$ 
for $B(x,\delta'r)$ in $B(x,\delta' r + \hat \delta r)$, and any non-negative function $f \in \F \cap \mathcal C_{\mbox{\tiny{c}}}(X)$, $f_n := f \wedge n$, that
\begin{align*} 
-  \e^{\mbox{\tiny{s}}}_s(f,f f_n^{p-2} \psi^2)
& \le \left( 16 C_{10} \epsilon  - \frac{1}{2C_{10}} \right) 
\int \psi^2  f_n^{p-2} d\Gamma(f,f) \nonumber \\       
& \quad + \left( 4 C_{10} \epsilon (p-2)^2 - \frac{1}{C_{10}} (p-2) \right) 
\int \psi^2  f_n^{p-2} d\Gamma(f_n,f_n) \nonumber \\
 & \quad + 8 C_{10}
 \frac{C_0(\epsilon)}{\Psi( \hat\delta r )} \int \psi f^2 f_n^{p-2} d\mu.
\end{align*}
By Lemma \ref{lem:SUP skew}, we have
\begin{align*}
& |\e^{\mbox{\tiny{skew}}}_s(f,f f_n^{p-2} \psi^2)|
+ |\e^{\mbox{\tiny{sym}}}_s(f^2 f_n^{p-2} \psi^2,1)| \\
& \leq 4 C_{11} \epsilon^{1/2} \int f_n^{p-2} \psi^2  d\Gamma(f,f)  \\
& \quad   +  C_{11} \epsilon^{1/2} \left( (p-2)^2 + \frac{p (p-2)}{4} \right) 
\int f_n^{p-2} \psi^2  d\Gamma(f_n, f_n) \\
  & \quad + 4 \big( C_2 + C_3  \Psi( \hat\delta r ) \big) \frac{C_1(\epsilon)}{\Psi( \hat\delta r )} \int_{\delta B} f^2 f_n^{p-2}  d\mu.
\end{align*} 
Combining the two estimates, we get
\begin{align} \label{eq:subsol p>2 estimate for e(f,f f^p)}
& \quad -\e_s(f,f f_n^{p-2} \psi^2) \\
& \leq 
 \left( 16C_{10} \epsilon - \frac{1}{2C_{10}}  + 4 C_{11} \epsilon^{1/2} \right)
\int \psi^2  f_n^{p-2} d\Gamma(f,f) \nonumber \\       
& \quad 
+ \left( 4C_{10} \epsilon (p-2)^2 - \frac{1}{C_{10}} (p-2)  + C_{11} \epsilon^{1/2} \left( (p-2)^2 + \frac{p (p-2)}{4} \right)  \right)
\int \psi^2  f_n^{p-2} d\Gamma(f_n,f_n) \nonumber \\
 & \quad + 
 \left[ 8C_{10}
+ 4 \big( C_2 + C_3  \Psi( \hat\delta r ) \big) \right] \frac{C_1(\epsilon)}{\Psi( \hat\delta r )} \int_{\delta B} f^2 f_n^{p-2} d\mu, \nonumber
\end{align}
for any non-negative $f \in \F \cap \mathcal C_{\mbox{\tiny{c}}}(X)$. By the regularity of the reference form, Assumption \ref{as:0} and 
\cite[Lemma 2.12]{LierlSC2}, we can, for any $t \in I^-$, approximate the very weak subsolution $u(t,\cdot)$ by functions in $\F \cap \mathcal C_{\mbox{\tiny{c}}}(X)$, 
so that \eqref{eq:subsol p>2 estimate for e(f,f f^p)} holds with $u(t,\cdot)$ in place of $f$. 
On each side of the inequality, we take the Steklov average at $t$. Notice that, in fact, the right hand side does not depend on $s$. Writing $u$ for $u(t,\cdot)$ and $u_n$ for $u_n(t,\cdot)$, 
we obtain
\begin{align} \label{eq:subsol p>2 estimate for e(u,u u^p)}
& \quad - \frac{1}{h} \int_t^{t+h} \e_s(u,u u_n^{p-2} \psi^2) ds \\
& \le 
 \left( 16C_{10} \epsilon  - \frac{1}{2C_{10}} + 4 C_{11} \epsilon^{1/2} \right)
\int \psi^2  u_n^{p-2} d\Gamma(u,u)  \nonumber \\       
& \quad 
+ \left( 4C_{10} \epsilon (p-2)^2 - \frac{1}{C_{10}}(p-2) + C_{11} \epsilon^{1/2} \left( (p-2)^2 + \frac{p (p-2)}{4} \right)  \right)
 \int \psi^2  u_n^{p-2} d\Gamma(u_n,u_n)  \nonumber \\
 & \quad + 
 \left[ 8C_{10} 
+ 4 \big( C_2 + C_3  \Psi( \hat\delta r ) \big) \right] \frac{C_1(\epsilon)}{\Psi( \hat\delta r )} \int_{\delta B} u^2 u_n^{p-2} d\mu   \nonumber
\end{align}
This is the analog of Step 1 in \cite[Proof of Theorem 3.11]{LierlSC2}. 

For a positive integer $n$, let $u_n := u \wedge n$, and define a function $\mathcal H_n: \R  \to \R$ by
\begin{align*}
\mathcal{H}_n(v) := \begin{cases} & \frac{1}{p} v^2 (v \wedge n)^{p-2}, \qquad\qquad\qquad\qquad \, \textrm{if } v \leq n, \\
                               & \frac{1}{2} v^2 (v \wedge n)^{p-2} + n^p \left( \frac{1}{p} - \frac{1}{2} \right), \qquad \textrm{if } v > n.
                 \end{cases}
\end{align*} 
Then $\mathcal{H}_n'(v) = v (v \wedge n)^{p-2}$.
For a small real number $h > 0$, let
\[  u_h(t) := \frac{1}{h} \int_t^{t+h} u(s) ds, \quad t \in (a- \Psi(r), a - h),\]
be the Steklov average of $u$.
In this proof, the subscript of the Steklov average will always be denoted as $h$, and $u_h$ should not be confused with the bounded approximation $u_n$.

We will write $u_h(t,\cdot)$ for $u_h(t)$. Note that $u_h \in L^1((a- \Psi(r), a -h) \to \F)$, 
and $\mathcal{H}_n(u(t,\cdot)), \mathcal{H}_n(u_h(t,\cdot)) \in \F_{\mbox{\tiny{loc}}}$ at almost every $t$.
The Steklov average $u_h$ has a strong time-derivative
\[ \frac{\partial}{\partial t} u_h(t,x) = \frac{1}{h} \big( u(t+h,x) - u(t,x) \big). \] 

Let $s_0 = a -\frac{1+\sigma}{2}\Psi(r)$. Following \cite[Proof of Theorem 3.11]{LierlSC2} line  by line, we obtain that for a.e. $t_0 \in I^-_{\sigma'}$, 
for $h$ sufficiently small so that $t_0 + h < a$, and for $J := (s_0, t_0)$, 
\begin{align} \label{eq:steklov subsol estimate lhs}
& \int_X \mathcal H_n( u_h(t_0,\cdot) )  \psi^2 d\mu  \\
& \le 
- \int_J \int_X \frac{\partial u_h (t,\cdot)}{\partial t}  \mathcal H'_n(u_h(t,\cdot)) \psi^2  \chi(t)  d\mu \, dt 
+ \int_J \int_X  \mathcal H_n(u_h) \psi^2 \chi' \, d\mu \, dt \nonumber \\
& \le
- \int_J \frac{1}{h} \int_t^{t+h}\e_s( u(s,\cdot), \mathcal H'_n(u_h(t,\cdot)) \psi^2 ) \chi(t) ds \, dt
+ \int_J \int_X  \mathcal H_n(u_h) \psi^2 \chi' \, d\mu \, dt \nonumber \\
& \le \label{eq:want to let h to 0 part1}
- \int_J \frac{1}{h} \int_t^{t+h}\e_s( u(s,\cdot), [\mathcal H'_n(u_h(t,\cdot)) - \mathcal H'_n(u(t,\cdot))] \psi^2 ) ds \, \chi(t) dt \\
& \quad \label{eq:want to let h to 0 part2}
- \int_J \frac{1}{h} \int_t^{t+h} \e_s( u(s,\cdot) - u(t,\cdot), \mathcal H'_n(u(t,\cdot)) \psi^2 ) ds \, \chi(t) dt \\
& \quad \label{eq:steklov subsol estimate iii}
- \int_J \frac{1}{h} \int_t^{t+h}\e_s( u(t,\cdot), \mathcal H'_n(u(t,\cdot)) \psi^2 ) ds \, \chi(t) dt \\
& \quad \label{eq:steklov subsol estimate iv}
+ \int_J \int_X  \mathcal H_n(u_h) \psi^2 \chi' \, d\mu \, dt.
\end{align}
We will take the limit as $h \to 0$ on both sides of the inequality. As in Step 2 of \cite[Proof of Theorem 3.11]{LierlSC2}, it can be seen that 
\eqref{eq:want to let h to 0 part1} and \eqref{eq:want to let h to 0 part2} go to $0$ as $h \to 0$. As in Step 3 of \cite[Proof of Theorem 3.11]{LierlSC2}, it can be seen that
\[ \lim_{h \to 0} \int_X \mathcal H_n( u_h(t_0,\cdot) )  \psi^2 d\mu = \int_X \mathcal H_n( u(t_0,\cdot) )  \psi^2 d\mu,\]
and
\[ \lim_{h \to 0} \int_J \int_X  \mathcal H_n(u_h) \psi^2 \chi' \, d\mu \, dt = \int_J \int_X  \mathcal H_n(u) \psi^2 \chi' \, d\mu \, dt. \]
We have already estimated the Steklov average in \eqref{eq:steklov subsol estimate iii} in inequality \eqref{eq:subsol p>2 estimate for e(u,u u^p)}. Thus, taking the limit
as $h \to 0$ in \eqref{eq:steklov subsol estimate lhs} - \eqref{eq:steklov subsol estimate iv}, we get
\begin{align}
& \int_X \mathcal H_n( u(t_0,\cdot) )  \psi^2 d\mu  \nonumber \\
& \quad 
- \left( 16 C_{10} \epsilon  - \frac{1}{2C_{10}}   + 4 C_{11} \epsilon^{1/2} \right)
\int_J \int \psi^2  u_n^{p-2} d\Gamma(u,u) \chi(t) dt \nonumber \\       
& \quad 
- \left( 4 C_{10} \epsilon (p-2)^2 - \frac{1}{C_{10}}(p-2)  + C_{11} \epsilon^{1/2} \left( (p-2)^2 + \frac{p (p-2)}{4} \right)  \right)
 \int_J \int \psi^2  u_n^{p-2} d\Gamma(u_n,u_n) \chi(t)  dt \nonumber \\
& \leq
 \left[ 8 C_{10}
+ 4 \big( C_2 + C_3  \Psi( \hat\delta r ) \big) \right] \frac{C_1(\epsilon)}{\Psi( \hat\delta r )} \int_J \int_{\delta B} u^2 u_n^{p-2} d\mu \, \chi(t)  dt   \nonumber \\
& \quad 
+ \int_J \int_X  \mathcal H_n(u) \psi^2 \chi' \, d\mu \, dt.
\end{align}
Finally, we take the supremum over all $t_0 \in I^-_{\sigma'}$ on both sides of the above inequality, and then we let $n$ tend to infinity. This is where we use the assumption that
$\int_{I^-_{\sigma}} \int_{\delta B} u^p d\mu \, dt < \infty$. Multiplying both sides by $p$ and setting  $\epsilon = \frac{c}{p^2}$ for some sufficiently small $c >0$ completes the proof.
\end{proof}

\begin{lemma} \label{lem:estimate subsol p<2}
Let $p \in (1+\eta,2]$ for some small $\eta >0$. Then there exists a cutoff function $\psi \in \mbox{\em CSA}(\Psi,C_0)$ for $B(x,\delta' r)$ in 
$B(x,\delta' r + \hat \delta r)$ and constants $a_1 \in (0,1)$, $A_1, A_2 \in [0,\infty)$ depending on $\eta$, $C_0$, $C_{10}$, $C_{11}$ such that
\begin{align} \label{eq:subsol p<2}
& \sup_{t \in I^-_{\sigma'}} \int u^p \psi^2 d\mu + a_1 \int_{I^-_{\sigma'}} \int \psi^2 d\Gamma(u^{p/2},u^{p/2}) dt \nonumber \\
\leq & \left(\left( A_1(1 + C_2) \frac{1}{ \Psi( \hat\delta r )} + A_2 C_3 \right) p^{\beta_2}  + \frac{2}{\hat\sigma \Psi(r)} \right)  \int_{Q_{\sigma,\delta}^-} u^p d\bar\mu.
\end{align}
holds for any locally bounded, non-negative local very weak subsolution $u$ of the heat equation for $L_t$ in $Q = Q^-(x,a,r)$.
\end{lemma}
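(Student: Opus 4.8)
The plan is to follow the proof of Lemma~\ref{lem:estimate subsol p>2} line by line, with the modifications forced by the range $p\in(1+\eta,2]$. The first modification: since hypothesis~(i) ($p\ge 2$) of Lemmas~\ref{lem:SUP s}, \ref{lem:SUP skew} and \ref{lem:e(1,)} is no longer available, I would work throughout with $u_\varepsilon:=u+\varepsilon$ in place of $u$. This function is locally uniformly positive, so hypothesis~(ii) of those lemmas applies, and since $u$ is \emph{locally bounded} it is also locally bounded; consequently no $n$-truncation is needed (on any relevant compact set take $n$ so large that $(u_\varepsilon)_n=u_\varepsilon$, so $u_n$ collapses to $u_\varepsilon$ everywhere and $\mathcal H_n$ is replaced by $\mathcal H(v)=\tfrac1p v^p$, extended to a $\mathcal C^1$ function on $\R$ with $\mathcal H(0)=0$). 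Because $u_\varepsilon$ is a very weak subsolution only modulo the term $\e_t(\varepsilon,\cdot)$, this error term is carried along and controlled with Lemma~\ref{lem:e(1,)}. I set $\epsilon=c/p^2$ with a small constant $c=c(\eta,C_0,C_{10},C_{11})\in(0,1)$ fixed below, and let $\psi\in\mathrm{CSA}(\Psi,\epsilon,C_0)$ be a cutoff function for $B(x,\delta'r)$ in $B(x,\delta'r+\hat\delta r)$.

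The second, and main, modification concerns the symmetric strongly local part: inequality~\eqref{eq:SUP s} of Lemma~\ref{lem:SUP s} is not of the right shape when $1<p<2$ (the term produced by $-((1-p)^2+1-p)\,d\Gamma_t(u_n,u_n)$ becomes a positive gradient term after dividing by $1-p<0$), so I would estimate $\e_t^{\mathrm s}(u_\varepsilon,u_\varepsilon^{p-1}\psi^2)$ directly. The product and chain rules for $\Gamma_t$ give $\e_t^{\mathrm s}(u_\varepsilon,u_\varepsilon^{p-1}\psi^2)=(p-1)\int\psi^2 u_\varepsilon^{p-2}\,d\Gamma_t(u_\varepsilon,u_\varepsilon)+2\int u_\varepsilon^{p-1}\psi\,d\Gamma_t(\psi,u_\varepsilon)$, and bounding the cross term by Cauchy--Schwarz~\eqref{eq:CS} and Young's inequality with parameter $\tfrac{p-1}{2}$ yields $-\e_t^{\mathrm s}(u_\varepsilon,u_\varepsilon^{p-1}\psi^2)\le -\tfrac{p-1}{2}\int\psi^2 u_\varepsilon^{p-2}\,d\Gamma_t(u_\varepsilon,u_\varepsilon)+\tfrac{2}{p-1}\int u_\varepsilon^p\,d\Gamma_t(\psi,\psi)$. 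Applying~\eqref{eq:C_10}, then CSA~\eqref{eq:CSA} to $\int(u_\varepsilon^{p/2})^2\,d\Gamma(\psi,\psi)$, then~\eqref{eq:chain rule for Gamma} and~\eqref{eq:C_10} once more, and using $\epsilon p^2=c$ together with $p-1>\eta$, the last integral is absorbed up to a term $\tfrac{C_{10}^2c}{2(p-1)}\int\psi^2u_\varepsilon^{p-2}\,d\Gamma_t(u_\varepsilon,u_\varepsilon)$ plus a zero-order term $\lesssim\tfrac{C_{10}}{\eta}\tfrac{C_0(\epsilon)}{\Psi(\hat\delta r)}\int\psi u_\varepsilon^p\,d\mu$; so for $c$ small the resulting gradient coefficient (in $\Gamma_t$-form) is $\le-\tfrac{\eta}{8}$. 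For the skew-symmetric and symmetric zero-order parts $\e_t^{\mathrm{skew}}(u_\varepsilon,u_\varepsilon^{p-1}\psi^2)$ and $\e_t^{\mathrm{sym}}(u_\varepsilon^p\psi^2,1)$ I invoke Lemma~\ref{lem:SUP skew}(ii), and for $\e_t(\varepsilon,u_\varepsilon^{p-1}\psi^2)$ Lemma~\ref{lem:e(1,)}; since $p\in(1+\eta,2]$, the prefactors $(p-2)^2$, $|p(p-2)|$, $\tfrac{|p-2|}{|p|}$, $(p-1)^2$ appearing there are all $O(1)$, so their gradient contributions are $\le\mathrm{const}\cdot C_{11}\sqrt c\int\psi^2u_\varepsilon^{p-2}\,d\Gamma(u_\varepsilon,u_\varepsilon)$ and are absorbed into the $-\tfrac{\eta}{8}$ (after the $\Gamma$--$\Gamma_t$ comparison) for $c$ small, while their zero-order parts combine into $\bigl(\tfrac{A_1(1+C_2)}{\Psi(\hat\delta r)}+A_2C_3\bigr)p^{\beta_2-1}\int_{\delta B}u_\varepsilon^p\,d\mu$, the exponent $\beta_2-1$ coming from $C_1(\epsilon)=C_0\epsilon^{(1-\beta_2)/2}=C_0c^{(1-\beta_2)/2}p^{\beta_2-1}$.

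With these estimates the rest of the proof is verbatim that of Lemma~\ref{lem:estimate subsol p>2}: one tests~\eqref{eq:local very weak subsolution} against $\mathcal H'(u_{\varepsilon,h})\psi^2\chi$, where $u_{\varepsilon,h}=u_h+\varepsilon$ is the Steklov average of $u_\varepsilon$ and $\chi$ is the time cutoff, shows that the error terms corresponding to~\eqref{eq:want to let h to 0 part1}--\eqref{eq:want to let h to 0 part2} tend to $0$ as $h\to0$ (Steps~2--3 of \cite[Proof of Theorem~3.11]{LierlSC2}, using local boundedness and local uniform positivity of $u_\varepsilon$), passes to the limit $h\to0$, takes the supremum over $t_0\in I^-_{\sigma'}$, moves the negative gradient term to the left-hand side---where, via~\eqref{eq:C_10} and $d\Gamma(u_\varepsilon^{p/2},u_\varepsilon^{p/2})=\tfrac{p^2}{4}u_\varepsilon^{p-2}d\Gamma(u_\varepsilon,u_\varepsilon)$, it is $\gtrsim\tfrac{\eta}{C_{10}p^2}\int\psi^2 d\Gamma(u_\varepsilon^{p/2},u_\varepsilon^{p/2})$---and finally multiplies through by $p$ (this turns $p^{\beta_2-1}$ into $p^{\beta_2}$, turns $\tfrac1p\int u_\varepsilon^p\psi^2$ on the left into $\int u_\varepsilon^p\psi^2$, and produces $\tfrac{2}{\hat\sigma\Psi(r)}\int_{Q^-_{\sigma,\delta}}u_\varepsilon^p\,d\bar\mu$ out of $\int\mathcal H(u_\varepsilon)\psi^2\chi'$). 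This gives~\eqref{eq:subsol p<2} for $u_\varepsilon$, with $a_1$ of size $\tfrac{\eta}{C_{10}}$. Letting $\varepsilon\downarrow0$ finishes the proof: on the right $u_\varepsilon^p\downarrow u^p$, so dominated convergence applies (since $u$ is locally bounded), while on the left $\int\psi^2 d\Gamma(u_\varepsilon^{p/2},u_\varepsilon^{p/2})=\tfrac{p^2}{4}\int\psi^2(u+\varepsilon)^{p-2}d\Gamma(u,u)$ increases to $\tfrac{p^2}{4}\int\psi^2 u^{p-2}d\Gamma(u,u)=\int\psi^2 d\Gamma(u^{p/2},u^{p/2})$ by monotone convergence, because $p-2<0$ forces $(u+\varepsilon)^{p-2}\uparrow u^{p-2}$ and, by strong locality, $\Gamma(u,u)$ does not charge $\{u=0\}$; the supremum over $t_0$ on the left is handled by letting $\varepsilon\to0$ along a sequence and using Fatou.

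I expect the main obstacle to be the bookkeeping in the second step: verifying that the direct estimate for $\e_t^{\mathrm s}$ yields a gradient coefficient that is negative and bounded away from $0$ \emph{uniformly} in $p\in(1+\eta,2]$ (this is exactly where $p-1>\eta$ and the choice $\epsilon=c(\eta)/p^2$ enter), and that, after the comparison~\eqref{eq:C_10} between $\Gamma$ and $\Gamma_t$, all the competing gradient terms coming from the skew-symmetric, symmetric zero-order and $\e_t(\varepsilon,\cdot)$ pieces are genuinely dominated by it, so that the constant $a_1$ and the precise $p^{\beta_2}$-dependence on the right of~\eqref{eq:subsol p<2} emerge with the claimed structure.
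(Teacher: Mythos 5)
Your proof is correct and follows exactly the route the paper intends: the paper omits this proof, stating only that it is "analogous to the proofs of Lemma \ref{lem:estimate subsol p>2} and Lemma \ref{lem:estimate supsol}", and your argument is precisely that combination — the Steklov-average machinery of Lemma \ref{lem:estimate subsol p>2} together with the $u_\varepsilon$ regularization, the use of Lemma \ref{lem:e(1,)} for the $\e_t(\varepsilon,\cdot)$ error, and a direct estimate of $\e_t^{\mathrm{s}}(u_\varepsilon,u_\varepsilon^{p-1}\psi^2)$ in the spirit of Lemma \ref{lem:SUP s for p<1} (which is exactly how the paper avoids the sign problem in \eqref{eq:SUP s} for $p$ between $1$ and $2$). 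You also correctly identify the key point — that the coefficient $-\tfrac{p-1}{2}\le-\tfrac{\eta}{2}$ from the Young splitting, combined with $\epsilon=c(\eta)/p^2$, is what keeps the gradient coefficient uniformly negative on $(1+\eta,2]$ and produces the stated $p^{\beta_2}$ dependence.
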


We omit the proof of Lemma \ref{lem:estimate subsol p<2} because it is analogous to the proofs of Lemma \ref{lem:estimate subsol p>2} and Lemma \ref{lem:estimate supsol}.
See also \cite[Proof of Lemma 3.12]{LierlSC2}.

Let $\varepsilon \in(0,1)$ and $u_{\varepsilon} := u + \varepsilon$.

\begin{lemma} \label{lem:estimate supsol}
Let $0 \neq p \in (-\infty, 1-\eta)$ for some $\eta \in (0,1/2)$. 
Then there exists a cutoff function $\psi \in \mbox{\em CSA}(\Psi,C_0)$ for $B(x,\delta' r)$ in $B(x,\delta' r + \hat \delta r)$ such that the following holds for any locally bounded, 
non-negative local very weak supersolution $u$ of the heat equation for $L_t$ in $Q$.
\begin{enumerate}
\item
Let $Q=Q^-(x,a,r)$. If $p<0$, then there are $a_1 \in (0,1)$ and $A_1, A_2 \in [0,\infty)$ depending on $C_0$, $C_{10}$, $C_{11}$ such that
\begin{align} \label{eq:supsol p<0}
& \sup_{t \in I^-_{\sigma'}} \int u_{\varepsilon}^p \psi^2 d\mu + a_1 \int_{I^-_{\sigma'}} \int \psi^2 d\Gamma(u_{\varepsilon}^{p/2},u_{\varepsilon}^{p/2}) dt \nonumber \\
\leq & \left(\left( A_1(1 + C_2) \frac{1}{ \Psi( \hat\delta r )} + A_2 C_3 \right) |p| ( 1+|p|^{\beta_2-1})  + \frac{2}{\hat\sigma \Psi(r)} \right) \int_{Q_{\sigma,\delta}^-} u_{\varepsilon}^p d\bar\mu.
\end{align}
\item
Let $Q=Q^+(x,a,r)$. If $p\in (0,1-\eta)$, then there are $a_1 \in (0,1)$ and $A_1, A_2 \in [0,\infty)$ depending on $\eta$, $C_0$, $C_{10}$, $C_{11}$ such that
\begin{align} \label{eq:supsol 0<p<1}
& \sup_{t \in I^+_{\sigma'}} \int u_{\varepsilon}^p \psi^2 d\mu + a_1 \int_{I^+_{\sigma'}} \int \psi^2 d\Gamma(u_{\varepsilon}^{p/2},u_{\varepsilon}^{p/2}) dt \nonumber \\
\leq & \left(\left( A_1(1 + C_2) \frac{1}{ \Psi( \hat\delta r )} + A_2 C_3 \right) p (1+p^{\beta_2-1}) + \frac{2}{\hat\sigma \Psi(r)} \right)  \int_{Q_{\sigma,\delta}^+} u_{\varepsilon}^p  d\bar\mu.
\end{align}
\end{enumerate}
\end{lemma}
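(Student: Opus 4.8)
The plan is to run the same Steklov-averaged version of Moser's argument as in the proof of Lemma~\ref{lem:estimate subsol p>2}, but now testing the (super)solution inequality against $\phi = u_\varepsilon^{p-1}\psi^2\chi$, where $u_\varepsilon = u + \varepsilon$, $\psi \in \mathrm{CSA}(\Psi,\epsilon,C_0)$ is a cutoff function for $B(x,\delta' r)$ in $B(x,\delta' r + \hat\delta r)$, and $\chi$ is the time cutoff. Since $u$ is locally bounded and $u_\varepsilon \ge \varepsilon > 0$, the function $u_\varepsilon$ is locally uniformly positive and locally bounded on the relatively compact ball $\delta B$, which contains $\mathrm{supp}\,\psi$; hence $u_\varepsilon^{p-1}\psi^2 \in \F_{\mathrm{c}}(Y)$ is an admissible non-negative test function, the chain and product rules for $\Gamma_t$ apply, and (unlike the case $p\ge 2$) no truncation of $u$ itself is needed, although one still passes $n\to\infty$ in order to invoke Lemma~\ref{lem:SUP skew} with $u_\varepsilon \wedge n = u_\varepsilon$ on $\mathrm{supp}\,\psi$.

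First I would derive a pointwise-in-time bound for $\e_s(u, u_\varepsilon^{p-1}\psi^2)$. Writing $\e_s(u,\cdot) = \e_s(u_\varepsilon,\cdot) - \e_s(\varepsilon,\cdot)$, decomposing $\e_s(f,g) = \e_s^{\mathrm{s}}(f,g) + \e_s^{\mathrm{sym}}(fg,1) + \e_s^{\mathrm{skew}}(f,g)$, and using $\e_s^{\mathrm{s}}(\varepsilon,\cdot) = 0$ by strong locality, I would bound $\e_s^{\mathrm{s}}(u_\varepsilon, u_\varepsilon^{p-1}\psi^2)$ by Lemma~\ref{lem:SUP s for p<1} (applied with $\eta$ replaced by $\tfrac12$ in part (i), and with the given $\eta$ in part (ii)), the terms $\e_s^{\mathrm{sym}}(u_\varepsilon^p\psi^2,1)$ and $\e_s^{\mathrm{skew}}(u_\varepsilon, u_\varepsilon^{p-1}\psi^2)$ by Lemma~\ref{lem:SUP skew}, and the remainder $\e_s(\varepsilon, u_\varepsilon^{p-1}\psi^2)$ by Lemma~\ref{lem:e(1,)}. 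Choosing $\epsilon = c^\ast (1+p^2)^{-1}$ with $c^\ast$ small (depending only on $C_{10}, C_{11}$ in part (i), and on $\eta, C_{10}, C_{11}$ in part (ii)), the gap $p - (1-\eta/2) \le -\eta/2$ supplied by Lemma~\ref{lem:SUP s for p<1} makes the combined coefficient of $\int \psi^2 u_\varepsilon^{p-2}\,d\Gamma(u_\varepsilon,u_\varepsilon)$ strictly negative, and the chain rule converts $\psi^2 u_\varepsilon^{p-2}\,d\Gamma(u_\varepsilon,u_\varepsilon)$ into $\psi^2\,d\Gamma(u_\varepsilon^{p/2},u_\varepsilon^{p/2})$ up to a constant. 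The outcome is a pointwise bound $\e_s(u, u_\varepsilon^{p-1}\psi^2) \le -a_1'\int \psi^2\,d\Gamma(u_\varepsilon^{p/2},u_\varepsilon^{p/2}) + c_p\,\tfrac{C_1(\epsilon)}{\Psi(\hat\delta r)}\int_{\delta B} u_\varepsilon^p\,d\mu$, with $a_1' \in (0,1)$ and $c_p = (C_2 + C_3\Psi(\hat\delta r))\,(1 + |p-2|/|p|)$ depending on the stated constants.

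Next, with $\mathcal H(v) := \tfrac1p v_\varepsilon^p$ (so that $\mathcal H'(v) = v_\varepsilon^{p-1}$), I would proceed verbatim as in Lemma~\ref{lem:estimate subsol p>2}: test the (super)solution inequality over an interval $J$ against $\mathcal H'(u_{\varepsilon,h})\psi^2\chi$, where $u_h$ is the Steklov average of $u$ and $u_{\varepsilon,h} = u_h + \varepsilon$, integrate by parts in time using $\partial_t u_h = h^{-1}\big(u(\cdot+h)-u(\cdot)\big)$, and let $h\to 0$; the three error terms vanish exactly as in Steps~2--3 of \cite[Proof of Theorem~3.11]{LierlSC2}, using Assumption~\ref{as:0}(i), the locality of $\e_t$, the density of $\mathcal D$, and dominated convergence (this is where the local boundedness of $u$ enters). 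For $p<0$ (part (i)) one takes $J = (s_0,t_0)$ with $s_0 = a - \tfrac{1+\sigma}{2}\Psi(r)$, $t_0 \in I^-_{\sigma'}$, and the cutoff $\chi$ exactly as in Lemma~\ref{lem:estimate subsol p>2}; for $0<p<1-\eta$ (part (ii)) one uses the time-reflected configuration inside $Q^+$, namely $J = (t_0,s_1)$ with $s_1 = a + \tfrac{1+\sigma}{2}\Psi(r)$, $t_0 \in I^+_{\sigma'}$, and $\chi$ decreasing in $t$ with $\chi(t_0)=1$, $\chi(s_1)=0$ and $|\chi'| \le \tfrac{2}{\hat\sigma\Psi(r)}$. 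In either case evaluating $\mathcal H(u)\psi^2\chi$ at the endpoints of $J$ leaves only $\pm\int \mathcal H(u(t_0))\psi^2\,d\mu = \pm\tfrac1p\int u_\varepsilon(t_0)^p\psi^2\,d\mu$; since the sign of $\tfrac1p$ matches the orientation of $J$, multiplying through by $|p|$ puts $\int u_\varepsilon(t_0)^p\psi^2\,d\mu$ on the left with the correct sign. Taking $\sup_{t_0}$ there, bounding the $\chi'$-contribution by $\tfrac{2}{\hat\sigma\Psi(r)}\int_{Q^\mp_{\sigma,\delta}} u_\varepsilon^p\,d\bar\mu$ and the energy contribution by the pointwise estimate above, and moving the negative $d\Gamma(u_\varepsilon^{p/2},u_\varepsilon^{p/2})$ term to the left, yields \eqref{eq:supsol p<0} and \eqref{eq:supsol 0<p<1}.

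The step I expect to be most delicate is not any individual estimate but the bookkeeping of the $p$-dependence of the constants: one must check that, with $\epsilon = c^\ast(1+p^2)^{-1}$, the combined coefficient of the energy term stays strictly negative for every $p \in (-\infty,1-\eta)\setminus\{0\}$ while at the same time $|p|\,C_1(\epsilon) \asymp |p|(1+|p|^{\beta_2-1})$, and that the several zero-order contributions — in particular the $|p-2|/|p|$-weighted term coming from the algebraic identity behind Lemma~\ref{lem:SUP skew} — combine into the stated factor; this is carried out exactly as in \cite[Proof of Lemma~3.12]{LierlSC2}. A second, more structural point is that the whole argument is run for $u_\varepsilon$ rather than $u$ and, since Lemmas~\ref{lem:SUP s for p<1} and~\ref{lem:SUP skew} require local uniform positivity, one cannot let $\varepsilon\to 0$; the conclusion is therefore necessarily stated for fixed $\varepsilon$, which is precisely what the ensuing Harnack argument uses.
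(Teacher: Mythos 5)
Your proposal is correct and follows essentially the same route as the paper: the paper's (very terse) proof likewise bounds $\e_s^{\mbox{\tiny{s}}}(u_{\varepsilon},u_{\varepsilon}^{p-1}\psi^2)$ via Lemma \ref{lem:SUP s for p<1}, the symmetric zero-order and skew parts via the identity \eqref{eq:skew identity with p} together with Assumption \ref{as:skew1} (with the factor $1+|2-p|/|p|$), the $\varepsilon$-term via Lemma \ref{lem:e(1,)}, chooses $\epsilon$ of order $\min\{1,p^{-2}\}$, and then runs the Steklov-average argument of Lemma \ref{lem:estimate subsol p>2} with the time cutoff reversed for $p\in(0,1-\eta)$ — exactly your configuration. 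Your sign analysis (multiplying by $|p|$ and matching the orientation of $J$ to the sign of $1/p$) and your observation that the fixed-$\varepsilon$ regularization cannot be removed are both consistent with the paper.
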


\begin{proof}
First, consider the case $p \in (-\infty,0)$.
Let $\epsilon \in (0,1)$ be small (to be chosen later). Let $\psi \in \mbox{CSA}(\Psi,\epsilon,C_0)$ a cutoff function for $B(x,\delta' r)$ in $B(x,\delta' r + \hat \delta r)$.
By Lemma \ref{lem:SUP s for p<1}, we have for small $\varepsilon >0$ and for large $k \sim (1-p)$, that
\begin{align*}
& \e^{\mbox{\tiny{s}}}_t (u_{\varepsilon}, u_{\varepsilon}^{p-1} \psi^2) \\
& \le   \left(  \frac{2C_{10} \epsilon}{\eta} p^2 + \frac{1}{C_{10}} \left( p - (1-\eta/2) \right) \right) \int \psi^2  u_{\varepsilon}^{p-2} d\Gamma(u_{\varepsilon},u_{\varepsilon})     
 + \frac{8C_{10}}{\eta}  \frac{C_0(\epsilon)}{\Psi( r )} \int \psi u_{\varepsilon}^p d\mu.
\end{align*}
By \eqref{eq:skew identity with p} and Assumption \ref{as:skew1}, we have for $C = 1 + \frac{|2-p|}{|p|}$, 
\begin{align*}
& \quad |\e^{\mbox{\tiny{sym}}}_t(u_{\varepsilon}^p \psi^2,1)| + |\e^{\mbox{\tiny{skew}}}_t(u_{\varepsilon},u_{\varepsilon}^{p-1} \psi^2)| \\
& \leq 
C C_{11} \epsilon^{1/2} \frac{p^2}{4} \int \psi^2 u_{\varepsilon}^{p-2} d\Gamma(u_{\varepsilon},u_{\varepsilon}) 
 +  C \big( C_2 + C_3  \Psi( \hat\delta r )\big) \frac{C_1(\epsilon)}{\Psi( \hat\delta r )} \int_{\delta B} u_{\varepsilon}^p  d\mu.
\end{align*}
By Lemma \ref{lem:e(1,)}, we have 
\begin{align*}
 |\e_t(\varepsilon,u_{\varepsilon}^{p-1} \psi^2)|
& \le
 C_{11} \epsilon^{1/2} \frac{(p-1)^2}{4} 
\int \psi^2 u_{\varepsilon}^{p-2} d\Gamma(u_{\varepsilon},u_{\varepsilon}) 
 +  \big( C_2 + C_3  \Psi( \hat\delta r )\big) \frac{C_1(\epsilon)}{\Psi( \hat\delta r )} \int_{\delta B} u_{\varepsilon}^p  d\mu.
\end{align*}
If $p< -(1-\eta)$, then we choose $\epsilon = \frac{c \eta}{p^2}$ for a sufficiently small constant $c>0$. Otherwise, we let $\epsilon =c\eta^2$. 
Then the proof for the case $p \in (-\infty,0)$ can be completed similarly to the proof of Lemma \ref{lem:estimate subsol p>2}, see also \cite[Lemma 3.13]{LierlSC2}.

For the case $p \in (0,1-\eta)$, let $\chi$ be such that $0 \leq \chi \leq 1$, $\chi = 0$ in $(a + \sigma \Psi(r), \infty)$, $\chi = 1$ in $(-\infty, a + \sigma' \Psi(r))$, and 
\[ 0 \geq \chi' \geq - \frac{2}{\hat\sigma \Psi(r)}. \]
The proof of \eqref{eq:supsol 0<p<1} can be now completed similarly to the case $p \in (-\infty,0)$, we skip the details.
\end{proof}

It is clear from the proofs that in the above lemmas the cutoff functions $\psi$ can be chosen to be in CSA($\Psi,c(p^{-2} \wedge 1),C_0$) for a small enough constant $c = c(\eta)>0$. 

\subsection{Mean value estimates} \label{ssec:Mean value estimates}

In addition to the assumptions made in Section \ref{ssec:time-dependence}, we assume here that 
the reference form $(\e^*,\F)$ satisfies the localized Sobolev inequality SI($\Psi$) on $Y$ up to scale $R_0$.
Let $a_1$ be small enough and $A_1$, $A_2$ large enough so that the estimates of Section \ref{ssec:estim for sub and supsol} hold with these constants.
Set $A_1':=A_1(1 + C_2 )/a_1$ and $A_2' := A_2 C_3 /a_1$.
Define $\delta B$, $I^-$, $I^+$, $I^-_{\sigma}$, $I^+_{\sigma}$, $Q^-_{\sigma,\delta}$, $Q^+_{\sigma,\delta}$ as in Section \ref{ssec:estim for sub and supsol}. In addition, assume that $2 B \subset Y$.

\begin{theorem} \label{thm:MVE subsol p>2}
Suppose {\em Assumptions \ref{as:0} and \ref{as:skew1}, A2-$Y$, VD, CSA($\Psi$)} and {\em SI($\Psi$)} are satisfied on $Y$ up to scale $R_0$.
Let $p> 1+\eta$ for some $\eta >0$. Fix a ball $B = B(x,r)$, $0 < r \leq \frac{R_0}{4}$, with $B(x,4r) \subsetneq B(x,8r) \subset Y$.
Then there exists a constant $A$, depending only on $\eta, \beta_1, \beta_2, C_{\Psi}, \kappa, C_{\mbox{\em \tiny{SI}}}, C_{\mbox{\em \tiny{VD}}}, C_0, C_{10}, C_{11}$, such that, for any $a \in \R$, 
any $0 < \sigma' < \sigma \le 1$, $0 < \delta' < \delta 
\le 1$, and any non-negative local very weak subsolution $u$ of the heat equation for $L_t$ in $Q = Q^-(x,a,r)$, we have
\begin{align} \label{eq:MVE subsol p>2}
 \sup_{Q^-_{\sigma',\delta'}} \{ u^p \}  
& \leq  \left[  \big(A_1' + A_2' \Psi((\delta - \delta') r)\big)  (\delta - \delta')^{-\beta_2} p^{\beta_2} + (\sigma - \sigma')^{-1} \right]^{\frac{2\kappa - 1}{\kappa -1}} \nonumber \\
& \quad \frac{A}{\Psi(r) \mu(B)}  
 \int_{Q^-_{\sigma,\delta}} u^p d\bar \mu.
\end{align}
\end{theorem}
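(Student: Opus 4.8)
The plan is to run the parabolic Moser iteration: combine the energy estimates of Lemma~\ref{lem:estimate subsol p>2} (for $p\ge 2$) and Lemma~\ref{lem:estimate subsol p<2} (for $1+\eta<p<2$) with the localized Sobolev inequality SI($\Psi$) to produce a self-improving $L^{q}\to L^{\theta q}$ estimate, where $\theta:=2-\tfrac1\kappa=\tfrac{2\kappa-1}{\kappa}$, and then iterate over the exponents $p_j:=\theta^j p$ and a nested family of cylinders shrinking from $Q^-_{\sigma,\delta}$ to $Q^-_{\sigma',\delta'}$. If $\int_{Q^-_{\sigma,\delta}}u^p\,d\bar\mu=\infty$ there is nothing to prove, so we may assume this integral is finite. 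We first treat $p\ge 2$, where Lemma~\ref{lem:estimate subsol p>2} applies directly and all exponents $p_j=\theta^j p$ stay $\ge 2$. The case $1+\eta<p<2$ is then reduced to the case $p=2$: since $u\in L^2_{\mbox{\tiny{loc}}}(I^-\to\F;B(x,r))$, applying the (already established) $p=2$ estimate on cylinders $Q^-(x,a-\tau,r_\tau)$ with $\tau\downarrow 0$ shows that $u$ is locally bounded on $Q^-(x,a,r)$, whence Lemma~\ref{lem:estimate subsol p<2} becomes applicable for the finitely many initial exponents $p_j\in(1+\eta,2)$ and the iteration below proceeds unchanged.

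\textbf{The one-step estimate.}
Fix $p\ge 2$ and scales $0<\sigma''<\sigma\le 1$, $0<\delta''<\delta\le 1$. Lemma~\ref{lem:estimate subsol p>2} (with inner scales $\sigma'',\delta''$) yields a cutoff function $\psi\in\mbox{CSA}(\Psi,c\,p^{-2},C_0)$ for $B(x,\delta''r)$ in $B(x,\delta r)$ and, with
\[ E:=\Big(\big(A_1(1+C_2)\Psi((\delta-\delta'')r)^{-1}+A_2C_3\big)p^{\beta_2}+\tfrac{2}{(\sigma-\sigma'')\Psi(r)}\Big)\int_{Q^-_{\sigma,\delta}}u^p\,d\bar\mu, \]
the bounds $\sup_{t\in I^-_{\sigma''}}\int\psi^2u^p\,d\mu\le E$ and $\int_{I^-_{\sigma''}}\int\psi^2\,d\Gamma(u^{p/2},u^{p/2})\,dt\le E/a_1$. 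Put $w:=\psi\,u^{p/2}$, which for a.e.\ $t$ lies in $\F_{\mbox{\tiny{c}}}(B(x,\delta r))\subset\F_{\mbox{\tiny{c}}}(B(x,r))$. By \eqref{eq:Gamma(fg)} and, for the cross term, \eqref{eq:CSA} with $\epsilon=c\,p^{-2}$ and $f=u^{p/2}$,
\[ \int d\Gamma(w,w)\le\big(2+2c\,p^{-2}\big)\int\psi^2\,d\Gamma(u^{p/2},u^{p/2})+\tfrac{2C_0(c\,p^{-2})}{\Psi((\delta-\delta'')r)}\int\psi u^p\,d\mu, \]
and since $\beta_2\ge 2$ we have $C_0(c\,p^{-2})\lesssim p^{\beta_2-2}$, so that after integrating in $t$ the right-hand side is $\lesssim a_1^{-1}E$. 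Applying SI($\Psi$) to $w(t,\cdot)$ on $B(x,r)$ and interpolating between $L^2$ and $L^{2\kappa}$ in space so that the $L^{2\kappa}$-term enters to the first power — which forces the interpolation exponent to be $\theta=2-\tfrac1\kappa$ — then integrating in $t$ and using $\sup_t\int w^2\,d\mu\le E$, we obtain
\[ \int_{Q^-_{\sigma'',\delta''}}u^{\theta p}\,d\bar\mu\le\int_{I^-_{\sigma''}}\!\int w^{2\theta}\,d\mu\,dt\le\Big(\sup_{t}\int w^2\,d\mu\Big)^{\theta-1}\frac{C_{\mbox{\tiny{SI}}}\,\Psi(r)}{\mu(B)^{1-1/\kappa}}\int_{I^-_{\sigma''}}\!\int d\Gamma(w,w)\,dt\le\frac{C\,\Psi(r)}{\mu(B)^{1-1/\kappa}}\,E^{\theta}. \]

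\textbf{Iteration.}
Fix $\sigma>\sigma'$ and $\delta>\delta'$, and for $j\ge 0$ set $p_j:=\theta^j p$, $\sigma_j:=\sigma'+2^{-j}(\sigma-\sigma')$, $\delta_j:=\delta'+2^{-j}(\delta-\delta')$, and $\Phi_j:=\big(\tfrac{1}{\Psi(r)\mu(B)}\int_{Q^-_{\sigma_j,\delta_j}}u^{p_j}\,d\bar\mu\big)^{1/p_j}$, which is finite for all $j$ by induction using the one-step estimate. Applying that estimate with $(\sigma,\delta)=(\sigma_j,\delta_j)$, $(\sigma'',\delta'')=(\sigma_{j+1},\delta_{j+1})$, and using \eqref{eq:beta} to absorb $\Psi((\delta_j-\delta_{j+1})r)^{-1}\le C_\Psi(\delta_j-\delta_{j+1})^{-\beta_2}\Psi(r)^{-1}$ and $\Psi(r)\le C_\Psi\Psi((\delta-\delta')r)(\delta-\delta')^{-\beta_2}$, one checks that the powers of $\Psi(r)$ and of $\mu(B)$ cancel (here one uses $\tfrac1\kappa-2+\theta=0$) and arrives at $\Phi_{j+1}\le M_j^{1/p_j}\Phi_j$ with $\log M_j\le C\big(1+j+\log P\big)$, where
\[ P:=\big(A_1'+A_2'\Psi((\delta-\delta')r)\big)(\delta-\delta')^{-\beta_2}p^{\beta_2}+(\sigma-\sigma')^{-1} \]
is the prefactor in \eqref{eq:MVE subsol p>2} and the linear-in-$j$ term collects the $(2\theta)^{\beta_2 j}$ growth of the $j$-th step. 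Since $p_j^{-1}=\theta^{-j}p^{-1}$ with $\sum_{j\ge0}\theta^{-j}=\tfrac{\theta}{\theta-1}=\tfrac{2\kappa-1}{\kappa-1}$ and $\sum_{j\ge0}j\,\theta^{-j}<\infty$,
\[ \sup_{Q^-_{\sigma',\delta'}}u=\lim_{j\to\infty}\Phi_j\le\Phi_0\exp\Big(\sum_{j\ge0}\frac{\log M_j}{p_j}\Big)\le A^{1/p}\,P^{\frac{2\kappa-1}{(\kappa-1)p}}\,\Phi_0, \]
with $A$ depending only on $\eta,\beta_1,\beta_2,C_\Psi,\kappa,C_{\mbox{\tiny{SI}}},C_{\mbox{\tiny{VD}}},C_0,C_{10},C_{11}$. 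Raising to the power $p$ and recalling $Q^-_{\sigma_0,\delta_0}=Q^-_{\sigma,\delta}$ gives \eqref{eq:MVE subsol p>2}.

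\textbf{Main obstacle.}
The conceptual input is entirely in the one-step estimate; the genuine work is the bookkeeping of the iteration — tracking that the factor $p_j^{\beta_2}$ together with the geometric shrinking of the cylinder gaps produces only a contribution $\log M_j=O(j)$, so that $\sum_j p_j^{-1}\log M_j$ converges and the ``leading'' part of the prefactor is raised to exactly $\sum_j\theta^{-j}=\tfrac{2\kappa-1}{\kappa-1}$, and verifying that the $\Psi(r)$- and $\mu(B)$-powers cancel as claimed. Secondary points requiring care are the clean absorption of the $\int u^p\,d\Gamma(\psi,\psi)$ term (which is why $\psi$ was chosen in $\mbox{CSA}(\Psi,c\,p^{-2},C_0)$ rather than at a fixed $\epsilon$) and the reduction of the range $1+\eta<p<2$ to the case $p=2$ via local boundedness — a result that is, conversely, a consequence of the theorem itself once it is known for $p=2$ (cf.\ Corollary~\ref{cor:u loc bounded}).
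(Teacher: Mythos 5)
Your proof is correct and follows essentially the same route as the paper's: the energy estimate of Lemma \ref{lem:estimate subsol p>2} combined with H\"older interpolation and SI($\Psi$) applied to $w=\psi u^{p/2}$ gives the $L^{p}\to L^{\theta p}$ gain, which is then iterated over dyadically shrinking cylinders with the exponent $\sum_j\theta^{-j}=\tfrac{2\kappa-1}{\kappa-1}$ emerging from the geometric series, and the range $1+\eta<p<2$ is reduced to $p\ge 2$ via local boundedness and Lemma \ref{lem:estimate subsol p<2}, exactly as in the paper. The only differences are presentational (you fold the finiteness bootstrap into the induction and phrase the iteration as a multiplicative recursion on the normalized quantities $\Phi_j$ rather than as the paper's telescoping product), and your bookkeeping of the $\Psi(r)$-, $\mu(B)$- and $p_j^{\beta_2}$-factors checks out.
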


\begin{proof}
First, consider the case $p\geq 2$. For a ball $B_R = B(x,R)$, let $E(B_R) = C_{\mbox{\tiny{SI}}} \Psi(R) V(x,R)^{-1+\frac{1}{\kappa}}$ be the prefactor in the Sobolev inequality \eqref{eq:sobolev}. Consider $0 \leq v \in \F_{\mbox{\tiny{loc}}}(B)$ and let $v_n = v \wedge n$. By Lemma \ref{lem:SUP s}, we have $v_n^q \in \F_{\mbox{\tiny{loc}}}(B)$ for all $q \geq 1$. 

Let $0 < \delta_1 < \delta_0 \le 1$ and $\hat\delta_0 := \delta_0 - \delta_1$.
Let $\psi \in \mbox{CSA}(\Psi,\epsilon,C_0)$ be the cutoff function for $B(x,\delta_1 r)$ in $B(x,\delta_1 r + \hat \delta_0 r)$ provided by Lemma \ref{lem:estimate subsol p>2}. 
We now apply the H\"older inequality, the Sobolev inequality on $B_{\delta_0 r}$ with $f=\psi v_n$, \eqref{eq:Gamma(fg)} and CSA($\Psi,\epsilon,C_0)$. We get
\begin{align*}
& \int_{B(x,\delta_1 r)} v_n^{2(2-\frac{1}{\kappa})} d\mu  \nonumber \\
& \leq  \left( \int_{B(x,\delta_1 r)} v_n^{2\kappa} d\mu \right)^{1/\kappa}  \left( \int_{B(x,\delta_1 r)} v_n^2 d\mu \right)^{1 - \frac{1}{\kappa}} \nonumber \\
& \leq  E(B(x,\delta_0 r)) \left(\int_{B(x,\delta_0 r)} d\Gamma(\psi v_n, \psi v_n)\right) \left( \int_{B(x,\delta_1 r)} v_n^2 d\mu \right)^{1 - \frac{1}{\kappa}} \nonumber \\
& \leq  E(B(x,\delta_0 r)) \left( 2 \int_{B(x,\delta_0 r)} \psi^2 d\Gamma(v_n,v_n) + 2 \int_{B(x,\delta_0 r)} v_n^2 d\Gamma(\psi,\psi) \right) \\
& \quad \left( \int_{B(x,\delta_1 r)} v_n^2 d\mu \right)^{1 - \frac{1}{\kappa}} \nonumber \\
& \leq 2 E(B(x,\delta_0 r)) \left( \left( 1 + \epsilon \right) \int_{B(x,\delta_0 r)} \psi^2 d\Gamma(v_n,v_n) + \frac{C_0(\epsilon)}{\Psi(\hat \delta_0 r)} \int_{B(x,\delta_0 r)} v_n^2 d\mu \right) \\
& \quad  \left( \int_{B(x,\delta_1 r)} v_n^2 d\mu \right)^{1 - \frac{1}{\kappa}}.
\end{align*}
Letting $n \to \infty$, we obtain
\begin{align} \label{eq:HoelderSobolev}
& \int_{B(x,\delta_1 r)} v^{2(2-\frac{1}{\kappa})} d\mu  \nonumber \\
& \leq 2 E(B(x,\delta_0 r)) \left(  (1 + \epsilon)\int_{B(x,\delta_0 r)} \psi^2 d\Gamma(v,v) + \frac{C_0(\epsilon)}{\Psi(\hat \delta_0 r)} \int_{B(x,\delta_0 r)} v^2 d\mu \right) \nonumber \\
& \quad \left( \int_{B(x,\delta_1 r)} v^2 d\mu \right)^{1 - \frac{1}{\kappa}}.
\end{align}

Now let $u \in L^2_{\mbox{\tiny{loc}}}(I \to \F;B)$ be a non-negative local very weak subsolution of the heat equation in $Q$.
Then for almost every $t \in I$, $v := u(t,\cdot)$ is in $\F_{\mbox{\tiny{loc}}}(B)$ and satisfies \eqref{eq:HoelderSobolev}. Let $0 < \sigma_1 < \sigma_0 \leq 1$ and integrate \eqref{eq:HoelderSobolev} over $I^-_{\sigma_1}$. Applying then the 
H\"older inequality to the time integral yields
\begin{align} \label{eq:HoelderSobolev integrated}
& \int_{I_{\sigma_1}^-} \int_{\delta_1 B} u^{2\theta} d\mu \,dt \nonumber \\
& \leq 
 2 E(\delta_0 B) \left( (1 + \epsilon) \int_{I^-_{\sigma_1}} \int_{\delta_0 B} \psi^2 d\Gamma(u,u)dt + \frac{C_0(\epsilon)}{\Psi(\hat \delta_0 r)} \int_{I^-_{\sigma_1}} \int_{\delta_0 B} u^2 d\mu \, dt\right) \nonumber \\
& \quad
  \sup_{t \in I^-_{\sigma_1}} \left( \int_{\delta_0 B} \psi^2 u^2 d\mu \right)^{1 - \frac{1}{\kappa}},
\end{align}
where $\theta=2-\frac{1}{\kappa}$.
Note that the right hand side of \eqref{eq:HoelderSobolev integrated} is finite by Lemma \ref{lem:estimate subsol p>2} (applied with $p=2$). 
Hence the left hand side is finite and this means that $u^{\theta}$ is in $L^2(I^-_{\sigma_1} \times \delta_1 B)$, which is the prerequisite to apply 
Lemma \ref{lem:estimate subsol p>2} with $p=2\theta$ in the next step.

Let $0 < \sigma_2 < \sigma_1$ and $0 < \delta_2 < \delta_1$.
Applying Lemma \ref{lem:estimate subsol p>2} with $p = 2\theta$, we obtain that there exists a cutoff function in CSA($\Psi,C_0$) for $B(x,\delta_2 r + (\delta_1 - \delta_2) r)$ in $\delta_1 B$ with which we can repeat the argument above to obtain that $u^{\theta \cdot \theta} \in L^2(I^-_{\sigma_2} \times \delta_2 B)$.
Iteratively, we obtain that, for any strictly decreasing sequences $(\sigma_i)$, $0 < \sigma_{i+1} < \sigma_i \leq 1$, and $(\delta_i)$, $0 < \delta_{i+1} < \delta_i \leq 1$, we have $\int_{Q_{\sigma_{i+1}, \delta_{i+1}}^-} u^{2\theta^{i+1}} d\mu\, dt < \infty$. Therefore, for $p$ as in Theorem \ref{thm:MVE subsol p>2},
\begin{align} \label{eq:u^pq in L^2}
\int_{Q_{\sigma,\delta}^-} u^{pq} d\mu\, dt < \infty,
\quad \mbox{ for arbitrary } \sigma,\delta \in (0,1), \mbox{ and } q \geq 1.
\end{align}

Now we pick specific sequences $(\sigma_i)$, $(\delta_i)$ and $(q_i)$ with the aim of applying  \eqref{eq:HoelderSobolev integrated}, Lemma \ref{lem:estimate subsol p>2} and CSA($\Psi$) iteratively. 
Let $\sigma',\sigma,\delta',\delta$ be as in the Theorem.
Set $\hat\sigma_i = (\sigma - \sigma') 2^{-i-1}$ so that $\sum_{i=0}^{\infty} \hat\sigma_i = \sigma - \sigma'$. Set also $\sigma_0 = \sigma$, $\sigma_{i+1} = \sigma_i - \hat\sigma_i = \sigma - \sum_{j=0}^i \hat\sigma_j$.
Set $\hat\delta_i = (\delta - \delta') 2^{-i-1}$ so that $\sum_{i=0}^{\infty} \hat\delta_i = \delta - \delta'$. Set also $\delta_0 = \delta$, $\delta_{i+1} = \delta_i - \hat\delta_i = \delta - \sum_{j=0}^i \hat\delta_j$.

By Lemma \ref{lem:nu} and \eqref{eq:beta},
\begin{align} \label{eq:Psi(delta_i)}
 \left( \frac{\mu(B)}{\mu(\delta_i B)} \right)^{1-\frac{1}{\kappa}} \leq C \frac{\Psi(r)}{\Psi(\delta_i r)}.
\end{align}

Let $\psi_i \in \mbox{CSA}(\Psi,\epsilon,C_0)$ be the cutoff function for $B(x,\delta_{i+1}r)$ in $B(x,\delta_{i+1}r + \hat \delta_i r)$ that is given by Lemma \ref{lem:estimate subsol p>2}. Here, $\epsilon = c (p \theta^i)^{-2}$ for some small fixed constant $c>0$ that depends at most on $C_{10}$ and $C_{11}$.

Similar to how we obtained \eqref{eq:HoelderSobolev integrated} but with $u^{p \theta^{i}/2}$ in place of $u$, we get
\begin{align*}
&   \int \int_{Q^-_{\sigma_{i+1},\delta_{i+1}}} u^{p \theta^{i+1}} d\bar\mu \\
& \le 
2 E(\delta_i B) \left( (1+\epsilon) \int_{I^-_{\sigma_{i+1}}} \int_{\delta_i B} \psi_i^2 d\Gamma( u^{p\theta^i/2},u^{p\theta^i/2}) dt 
 +   \frac{C_0(\epsilon)}{\Psi(\hat\delta_i r)}  \int_{I^-_{\sigma_{i+1}}}  \int_{\delta_i B} u^{p\theta^i} d\bar\mu \right) \\
 & \quad \left( \sup_{t \in I^-_{\sigma_{i+1}}} \int_{\delta_i B} \psi_i^2 u^{p\theta^i} d\mu \right)^{1 - \frac{1}{\kappa}}.
 \end{align*}
 By Lemma \ref{lem:estimate subsol p>2} together with \eqref{eq:u^pq in L^2}, and by \eqref{eq:Psi(delta_i)}, the right hand side is no more than 
\begin{align*}
& \quad 
\frac{C \Psi(\delta_i r)}{ \mu(\delta_i B)^{1-\frac{1}{\kappa}}}
\Bigg( \left[  \left( \left( A_1' \frac{1}{\Psi(\hat\delta_i r)} + A_2' \right) (p\theta^i)^{\beta_2}  + \frac{2}{\hat\sigma_i \Psi(r)}  \right)
+ \frac{C_0(\epsilon)}{\Psi(\hat\delta_i r)} \right] \\
&\quad 
\int \int_{Q^-_{\sigma_i,\delta_i}} u^{p\theta^i} d\bar\mu \Bigg)^{\theta} \\
& \leq  
\frac{C}{[ \Psi(r) \mu(B)]^{1 - \frac{1}{\kappa}}}
\Bigg( \left[ \left( \left( (A_1' + A_2' \Psi(\hat\delta_i r))  \frac{\Psi(r)}{\Psi(\hat\delta_i r)} \right) (p\theta^i)^{\beta_2}+ \frac{2}{\hat\sigma_i}  \right) 
+ \frac{C_0(\epsilon) \Psi(r)}{\Psi(\hat\delta_i r)} \right] \\
& \quad 
 \int \int_{Q^-_{\sigma_i,\delta_i}} u^{p\theta^i} d\bar\mu \Bigg)^{\theta} \\
& \leq 
 \frac{1}{[ \Psi(r) \mu(B)]^{1 - \frac{1}{\kappa}}}
\left( C^{i+1} \left(  \big(A_1' + A_2' \Psi(\hat\delta r)\big)   \hat\delta^{-\beta_2}   p^{\beta_2} + \hat\sigma^{-1}  \right)
\int \int_{Q^-_{\sigma_i,\delta_i}} u^{p\theta^i} d\bar\mu \right)^{\theta},
\end{align*}
where the constant $C \in (0,\infty)$ (which may change from line to line) depends at most on $\theta$, $\beta_1$, $\beta_2$, $\kappa$, $C_{\Psi}$, $C_{\mbox{\tiny{SI}}}$, $C_{\mbox{\tiny{VD}}}$, $C_0$, $C_{10}$, $C_{11}$.
Hence,
\begin{align*}
& \left( \int \int_{Q^-_{\sigma_{i+1},\delta_{i+1}}} u^{p \theta^{i+1}} d\bar\mu \right)^{ \theta^{-i-1} } \\
& \leq 
\left( \frac{1}{[ \Psi(r) \mu(B)]^{1 - \frac{1}{\kappa}}} \right)^{\sum \theta^{-1-j}}
 C^{ \sum (j+1) \theta^{-j} }   \\
& \quad
\left[  \left(\big(A_1' + A_2' \Psi(\hat\delta r)\big) \hat\delta^{-\beta_2} p^{\beta_2} + \hat\sigma^{-1}  \right)  \right]^{ \sum \theta^{-j} } 
 \int_{Q^-_{\sigma,\delta}} u^p d\bar\mu,
\end{align*}
where all the summations are taken from $j=0$ to $j=i$. Letting $i$ tend to infinity, we obtain
\begin{align*}
 \sup_{Q^-_{\sigma',\delta'}} \{ u^p \} 
& \leq  \left[ \left( \big(A_1' + A_2' \Psi(\hat\delta r)\big)   \hat\delta^{-\beta_2} p^{\beta_2} + \hat\sigma^{-1} \right)  \right]^{\frac{2\kappa-1}{\kappa -1}} 
 \frac{C}{\Psi(r) \mu(B)}  \int_{Q^-_{\sigma,\delta}} u^p d\bar\mu.
\end{align*}
This yields \eqref{eq:MVE subsol p>2}.

At this stage of the proof, Corollary \ref{cor:u loc bounded} already follows. Thus, in the case $1+\eta < p < 2$ the assertion can be proved similarly, by using Lemma \ref{lem:estimate subsol p<2} and Corollary \ref{cor:u loc bounded}.
\end{proof}

\begin{corollary} \label{cor:u loc bounded}
Under the same hypotheses as Theorem \ref{thm:MVE subsol p>2}. Then any non-negative local very weak subsolution $u$ for $L_t$ in $Q$ is 
locally bounded.
In particular, any local very weak solution of $u$ for $L_t$ in $Q$ is locally bounded.
\end{corollary}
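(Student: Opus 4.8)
The plan is to read off the conclusion from the mean value estimate \eqref{eq:MVE subsol p>2} of Theorem \ref{thm:MVE subsol p>2}, specialized to $p=2$. Note first that only the case $p\ge 2$ of that theorem is needed, and that case is fully established before the corollary is invoked, so there is no circularity. For $p=2$ the estimate reads
\[ \sup_{Q^-_{\sigma',\delta'}} u^2 \;\le\; C\,\int_{Q^-_{\sigma,\delta}} u^2\, d\bar\mu, \]
where, for fixed $0<\sigma'<\sigma\le 1$ and $0<\delta'<\delta\le 1$, the constant $C$ is finite (it depends on the chosen parameters and on the structural constants via the bracket in \eqref{eq:MVE subsol p>2}). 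Hence the whole content of the corollary is the finiteness of the right‑hand integral, which is exactly what the defining property $u\in L^2_{\mathrm{loc}}(I^-\to\F;B)$ of a local very weak subsolution provides, once the domain of integration has compact closure inside $Q$.

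To turn the uniform bound on the cylinders $Q^-_{\sigma',\delta'}$ into local boundedness on $Q=Q^-(x,a,r)$, I would argue by a finite covering. Fix a compact $K\subset Q$ and a point $(t_0,x_0)\in K$. Choose $\rho_0>0$ small enough that $\overline{B(x_0,\rho_0)}$ is compact (using \textup{(A2-$Y$)}) with $B(x_0,8\rho_0)\subset B(x,r)\subset Y$, that $\rho_0\le R_0/4$ and $B(x_0,4\rho_0)\subsetneq B(x_0,8\rho_0)$, and that $[\,t_0-\tfrac34\Psi(\rho_0),\,t_0+\tfrac14\Psi(\rho_0)\,]\subset(a-\Psi(r),a)$; all of these hold for $\rho_0$ sufficiently small. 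Put $a_0:=t_0+\tfrac14\Psi(\rho_0)$ and consider the cylinder $Q^-(x_0,a_0,\rho_0)$, which now satisfies every geometric hypothesis of Theorem \ref{thm:MVE subsol p>2} (including $2B_0=B(x_0,2\rho_0)\subset Y$). The restriction of $u$ to this cylinder is a non‑negative local very weak subsolution there, and the sub‑cylinder $Q^-_{3/4,3/4}(x_0,a_0,\rho_0)$ has compact closure contained in $Q$, so $\int_{Q^-_{3/4,3/4}(x_0,a_0,\rho_0)}u^2\,d\bar\mu<\infty$ by $u\in L^2_{\mathrm{loc}}$. Applying \eqref{eq:MVE subsol p>2} with $p=2$, $\sigma=\delta=\tfrac34$, $\sigma'=\delta'=\tfrac12$ then shows that $u$ is bounded on the open neighborhood $Q^-_{1/2,1/2}(x_0,a_0,\rho_0)=(t_0-\tfrac14\Psi(\rho_0),\,t_0+\tfrac14\Psi(\rho_0))\times B(x_0,\tfrac12\rho_0)$ of $(t_0,x_0)$. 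Since $K$ is compact it is covered by finitely many such neighborhoods, so $\sup_K u<\infty$; as $K$ was arbitrary, $u$ is locally bounded in $Q$.

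For the last assertion, if $u$ is a local very weak solution then so is $-u$, hence both $u$ and $-u$ are local very weak subsolutions; by the lattice property of very weak subsolutions (the maximum of two of them is again one, which follows from the usual test‑function manipulations available under Assumption \ref{as:0}, with $0$ being a solution), $|u|=u\vee(-u)$ is a non‑negative local very weak subsolution, and the first part applies to it, giving that $|u|$, and therefore $u$, is locally bounded. The analytic substance is contained entirely in Theorem \ref{thm:MVE subsol p>2}; the only points needing attention here are the topological bookkeeping — that the small sub‑cylinders genuinely meet the geometric requirements and that $u\in L^2_{\mathrm{loc}}$ controls the relevant integrals — together with the elementary lattice property used for solutions, and I expect no real obstacle in either.
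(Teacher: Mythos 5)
Your proposal is correct and matches the paper's argument: the paper likewise deduces local boundedness directly from the $p\ge 2$ case of the mean value estimate \eqref{eq:MVE subsol p>2} (noting that case is established before the corollary is used), and handles solutions by observing that $|u|$ is a non-negative local very weak subsolution. The only difference is that for this last fact the paper simply cites \cite[Proposition 3.4]{LierlSC2} rather than sketching the lattice-property argument, which is the one spot where your write-up is more informal than a precise reference.
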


\begin{proof}
The first statement follows from the proof of Theorem \ref{thm:MVE subsol p>2}. By \cite[Proposition 3.4]{LierlSC2}, for any local very weak solution $u$ of the heat equation, 
$|u|$ is a non-negative local very weak subsolution.
\end{proof}

\begin{theorem} \label{thm:MVE subsol 0<p<2}
Suppose {\em Assumptions \ref{as:0} and \ref{as:skew1}, A2-$Y$, VD, CSA($\Psi$)} and {\em SI($\Psi$)} are satisfied on $Y$ up to scale $R_0$.
Let $0 < p < 2$. Fix a ball $B = B(x,r)$, $0 < r \leq \frac{R_0}{4}$, with $B(x,4r) \subsetneq B(x,8r) \subset Y$.
Then there exists a constant $A$, depending only on $C_{\Psi}$, $\beta_1$, $\beta_2$, $\kappa$, $C_{\mbox{\em \tiny{SI}}}$, $C_{\mbox{\em \tiny{VD}}}$, $C_0$, $C_{10}$, $C_{11}$, such that, for any $a \in \R$, 
any $0 < \sigma' < \sigma \le 1$, $0 < \delta' < \delta 
\le 1$, and any non-negative local very weak subsolution $u$ of the heat equation for $L_t$ in $Q = Q^-(x,a,r)$, we have
\begin{align*}
 \sup_{Q^-_{\sigma',\delta'}} \{ u^p \}  
& \le \left(\frac{4}{3}\right)^{\beta_2 \frac{2\kappa - 1}{2(\kappa -1)} 4/p}  \left[  \big( A_1' + A_2' \Psi((\delta - \delta') r)\big)  (\delta - \delta')^{-\beta_2} 2^{\beta_2} + (\sigma - \sigma')^{-1}   \right]^{\frac{2\kappa - 1}{\kappa -1}} \nonumber \\
& \quad \frac{A}{\Psi(r) \mu(B)}  
 \int_{Q^-_{\sigma,\delta}} u^p d\bar \mu.
\end{align*}
\end{theorem}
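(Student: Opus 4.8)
The plan is to reduce the range $0<p<2$ to the already-established case $p=2$ of Theorem~\ref{thm:MVE subsol p>2} (which is legitimate since $2>1+\eta$ for, say, $\eta=\tfrac12$, so that theorem applies with constants depending only on the allowed quantities), via the classical Moser absorption iteration. The essential input is that $u$ is already known to be locally bounded by Corollary~\ref{cor:u loc bounded}, so that all the sup-norms appearing below are finite; without this, the iteration could not even be started.

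Fix a non-negative local very weak subsolution $u$ in $Q=Q^-(x,a,r)$ and parameters $0<\sigma'<\sigma\le1$, $0<\delta'<\delta\le1$. For $t\in[0,1]$ set $\sigma_t:=\sigma'+t(\sigma-\sigma')$, $\delta_t:=\delta'+t(\delta-\delta')$ and $g(t):=\sup_{Q^-_{\sigma_t,\delta_t}}u^2$; then $g$ is finite, bounded and non-decreasing on $[0,1]$, and $\sup_{Q^-_{\sigma',\delta'}}u^p=g(0)^{p/2}$, so it suffices to bound $g(0)$. For $0\le s<t\le1$ one has $Q^-_{\sigma_s,\delta_s}\subset Q^-_{\sigma_t,\delta_t}$ with $\sigma_t-\sigma_s=(t-s)(\sigma-\sigma')$ and $\delta_t-\delta_s=(t-s)(\delta-\delta')$. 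Applying Theorem~\ref{thm:MVE subsol p>2} with exponent $2$ to this pair of cylinders, using that $\Psi$ is increasing, $0<t-s\le1$ and $\beta_2\ge1$ to bound the bracket by $(t-s)^{-\beta_2}\Xi$ with
\[
\Xi:=\big(A_1'+A_2'\Psi((\delta-\delta')r)\big)(\delta-\delta')^{-\beta_2}2^{\beta_2}+(\sigma-\sigma')^{-1},
\]
and interpolating $\int_{Q^-_{\sigma_t,\delta_t}}u^2\le g(t)^{1-p/2}\int_{Q^-_{\sigma,\delta}}u^p$ (valid since $2-p>0$ and $u$ is bounded on the relatively compact cylinder), one obtains, with $N:=\tfrac{2\kappa-1}{\kappa-1}$, $\gamma:=\beta_2 N$, $\theta:=1-\tfrac p2\in(0,1)$ and $C_1:=\Xi^{N}\tfrac{A}{\Psi(r)\mu(B)}\int_{Q^-_{\sigma,\delta}}u^p$, the iteration inequality
\[
g(s)\le C_1\,(t-s)^{-\gamma}\,g(t)^{\theta},\qquad 0\le s<t\le1 .
\]

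The final step is elementary. Taking $t_0=0$ and $t_{k+1}-t_k=(1-\lambda)\lambda^{k}$ with $\lambda=\tfrac34$, iterating the last inequality along $t_k\uparrow1$ and letting $k\to\infty$ (boundedness of $g$ forces $g(t_k)^{\theta^k}\to1$), one gets $g(0)\le c\,C_1^{1/(1-\theta)}=c\,C_1^{2/p}$ with $c=(1-\lambda)^{-\gamma/(1-\theta)}\lambda^{-\gamma\theta/(1-\theta)^2}$. Raising to the power $p/2$ turns $C_1^{2/p}$ back into $C_1$ and produces the factor $c^{p/2}=4^{\gamma}(4/3)^{\gamma(2-p)/p}$; since $2-p\le2$ and $2\gamma/p=\beta_2\tfrac{2\kappa-1}{2(\kappa-1)}\cdot\tfrac4p$, we have $c^{p/2}\le 4^{\gamma}(4/3)^{\beta_2\frac{2\kappa-1}{2(\kappa-1)}\cdot\frac4p}$. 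Absorbing the harmless factor $4^{\gamma}$ into the constant $A$ then gives
\[
\sup_{Q^-_{\sigma',\delta'}}u^p=g(0)^{p/2}\le (4/3)^{\beta_2\frac{2\kappa-1}{2(\kappa-1)}\cdot\frac4p}\,\Xi^{\frac{2\kappa-1}{\kappa-1}}\,\frac{A}{\Psi(r)\mu(B)}\int_{Q^-_{\sigma,\delta}}u^p,
\]
which is exactly the asserted estimate.

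Since the whole argument reduces to the already-proved $p=2$ case, there is no substantial new obstacle. The two points that require care are: (i) the essential appeal to Corollary~\ref{cor:u loc bounded}, which guarantees finiteness of the sup-norms and so allows the iteration to close; and (ii) the bookkeeping of the $p$-dependence through the iteration, in order to recover the explicit prefactor $(4/3)^{\,\beta_2\frac{2\kappa-1}{2(\kappa-1)}\cdot4/p}$ rather than a constant that would degenerate as $p\to0^+$.
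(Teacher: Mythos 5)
Your proof is correct and follows essentially the same route as the paper: both apply the $p=2$ case of Theorem \ref{thm:MVE subsol p>2} on nested cylinders, interpolate $\int u^2 \le (\sup u^2)^{1-p/2}\int u^p$, and close the resulting inequality by the standard absorption iteration of \cite[Theorem 2.2.3]{SC02} with the same geometric choice of radii (ratio $3/4$), yielding the same prefactor. Your explicit appeal to Corollary \ref{cor:u loc bounded} to guarantee finiteness of the suprema is a point the paper leaves implicit, but it is the same argument.
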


\begin{proof}
We follow \cite[Theorem 2.2.3, Theorem 5.2.9]{SC02}. 
Let $D_1 := 2^{\beta_2} \big( A_1' + A_2' \Psi((\delta - \delta') r) \big)$.
By \eqref{eq:MVE subsol p>2} with $p=2$, we have for any $0 < \sigma' < \sigma \le 1$, $0 < \delta' < \delta \le 1$,
\begin{align*}
 \sup_{Q^-_{\sigma',\delta'}} u
& \le 
 \left[  D_1 (\delta - \delta')^{-\beta_2}  + (\sigma - \sigma')^{-1}  \right]^{\frac{2\kappa - 1}{2(\kappa -1)}} \left(\frac{A}{\Psi(r) \mu(B)}\right)^{1/2}  
 \left(\int_{Q^-_{\sigma,\delta}} u^2 d\bar \mu \right)^{1/2} \\
& \le 
 \left[  D_1 (\delta - \delta')^{-\beta_2}  + (\sigma - \sigma')^{-1}  \right]^{\frac{2\kappa - 1}{2(\kappa -1)}} J  \sup_{Q^-_{\sigma,\delta}} u^{\frac{2-p}{2}},
\end{align*}
where $J := \left(\frac{A}{\Psi(r) \mu(B)}\right)^{1/2}  
 \left(\int_{Q^-_{\sigma,\delta}} u^p d\bar \mu \right)^{1/2}$. 
 
Set $\delta_0 := \delta'$, $\delta_{i+1} := \delta_i + (\delta - \delta_i)/4$. Then $(\delta - \delta_i) = \left(\frac{3}{4}\right)^i (\delta-\delta')$. Similarly, we set
$\sigma_0 := \sigma'$, $\sigma_{i+1} := \sigma_i + (\sigma - \sigma_i)/4$. 
Applying the above inequality for each $i$ yields
\begin{align*}
 \sup_{Q^-_{\sigma_i,\delta_i}} u
& \le 
 \left(\frac{4}{3}\right)^{i \beta_2 \frac{2\kappa - 1}{2(\kappa -1)}} \left[  D_1  (\delta - \delta')^{-\beta_2}  + (\sigma - \sigma')^{-1}  \right]^{\frac{2\kappa - 1}{2(\kappa -1)}} J  \sup_{Q^-_{\sigma_{i+1},\delta_{i+1}}} u^{\frac{2-p}{2}},
\end{align*}
Iterating this inequality, we get for $i=1,2,\ldots$,
\begin{align*}
 \sup_{Q^-_{\sigma',\delta'}} u
& \le 
 \left(\frac{4}{3}\right)^{\beta_2 \frac{2\kappa - 1}{2(\kappa -1)} \sum_{j=0}^{i-1} j (1-p/2)^j} \left[ \left[  D_1  (\delta - \delta')^{-\beta_2}  + (\sigma - \sigma')^{-1}  \right]^{\frac{2\kappa - 1}{2(\kappa -1)}}  J \right]^{\sum_{j=0}^{i-1} (1-p/2)^j}  \sup_{Q^-_{\sigma_i,\delta_i}} u^{(1-p/2)^i}.
\end{align*}
Letting $i \to \infty$, and noting that  $\lim_{i \to \infty} \sup_{Q^-_{\sigma_i,\delta_i}} u^{(1-p/2)^i} = 1$, we get
\begin{align*}
 \sup_{Q^-_{\sigma',\delta'}} u
& \le 
 \left(\frac{4}{3}\right)^{\beta_2 \frac{2\kappa - 1}{2(\kappa -1)} 4/p^2} \left[ \left[  D_1  (\delta - \delta')^{-\beta_2}  + (\sigma - \sigma')^{-1}  \right]^{\frac{2\kappa - 1}{2(\kappa -1)}}  J \right]^{2/p}.
\end{align*}
Rasing each side to power $p$ we get the desired inequality.
\end{proof}

The next theorem can be proved analogously to the proof of Theorem \ref{thm:MVE subsol p>2}, by applying Lemma \ref{lem:estimate supsol} instead of Lemma \ref{lem:estimate subsol p>2}.

\begin{theorem} \label{thm:MVE supsol}
Suppose {\em Assumptions \ref{as:0} and \ref{as:skew1}, A2-$Y$, VD, CSA($\Psi$)} and {\em SI($\Psi$)} are satisfied on $Y$ up to scale $R_0$.
Let $0 \neq p \in (-\infty, 1-\eta)$ for some small $\eta \in (0,1)$.
Fix a ball $B = B(x,r)$, $0 < r \leq \frac{R_0}{4}$, with $B(x,4r) \subsetneq B(x,8r) \subset Y$. Let $a \in \R$.
Let $u \in \F_{\mbox{\emph{\tiny{loc}}}}(Q)$ be any non-negative local very weak supersolution of the heat equation for $L_t$ in $Q$. Suppose that $u$ is locally bounded. Let $\varepsilon \in(0,1)$ and $u_{\varepsilon} := u + \varepsilon$.
Let $0 < \sigma' < \sigma \leq 1$, $0 < \delta' < \delta \leq 1$.
\begin{enumerate}
\item
Let $Q=Q^-(x,a,r)$.
If $p \in (-\infty,0)$, then there exists a constant $A$, depending only on $\beta_1, \beta_2, C_{\Psi}, \kappa, C_{\mbox{\em \tiny{SI}}}, C_{\mbox{\em \tiny{VD}}}, C_0, C_{10}, C_{11}$, 
such that
\begin{align*}
 \sup_{Q^-_{\sigma',\delta'}} \{ u_{\varepsilon}^p \}  
& \leq  
\left[  \big(A_1' + A_2' \Psi((\delta - \delta') r)\big)  (\delta - \delta')^{-\beta_2}  (1+|p|^{\beta_2}) + (\sigma - \sigma')^{-1} \right]^{\frac{2\kappa-1}{\kappa-1}} \\
& \quad 
\frac{A}{\Psi(r) \mu(B)} 
 \int_{Q^-_{\sigma,\delta}} u_{\varepsilon}^p d\bar \mu.
\end{align*}
\item
Let $Q=Q^+(x,a,r)$.
If $p \in (0,1-\eta)$, then there exists a constant 
$A$, depending only on $\eta, \beta_1, \beta_2, C_{\Psi}, \kappa, C_{\mbox{\em \tiny{SI}}}, C_{\mbox{\em \tiny{VD}}}, C_0, C_{10}, C_{11}$, such that
\begin{align*}
 \sup_{Q^+_{\sigma',\delta'}} \{ u_{\varepsilon}^p \}  
& \leq  
\left[  \big(A_1' + A_2' \Psi((\delta - \delta') r)\big)  (\delta - \delta')^{-\beta_2} (1+p^{\beta_2}) + (\sigma - \sigma')^{-1} \right]^{\frac{2\kappa-1}{\kappa-1}} \\
& \quad 
\frac{A}{\Psi(r) \mu(B)} 
 \int_{Q^+_{\sigma,\delta}} u_{\varepsilon}^p d\bar \mu.
\end{align*}
\end{enumerate}
\end{theorem}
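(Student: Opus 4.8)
The plan is to carry out the parabolic Moser iteration exactly as in the proof of Theorem \ref{thm:MVE subsol p>2}, with the subsolution Caccioppoli estimate of Lemma \ref{lem:estimate subsol p>2} replaced by the supersolution estimate of Lemma \ref{lem:estimate supsol}. Throughout one works with $u_{\varepsilon} = u + \varepsilon \geq \varepsilon > 0$, so that $u_{\varepsilon}^{q}$ is well defined for every real exponent $q$; since $u$ is assumed locally bounded and the cylinders $Q^{\pm}_{\sigma,\delta}$ have finite $\bar\mu$-measure, every integral $\int_{Q^{\pm}_{\sigma,\delta}} u_{\varepsilon}^{q}\, d\bar\mu$ is automatically finite, which removes the a priori integrability issue that had to be handled separately in the subsolution case (cf.~\eqref{eq:u^pq in L^2}). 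The two engines of the iteration are, as before: (a) the H\"older--Sobolev inequality \eqref{eq:HoelderSobolev integrated}, applied with $u_{\varepsilon}^{p\theta^{i}/2}$ in place of $u$, which turns a sup-in-time bound on $\int \psi_{i}^{2} u_{\varepsilon}^{p\theta^{i}}\, d\mu$ together with an $L^{2}$-in-time bound on $\int \psi_{i}^{2}\, d\Gamma(u_{\varepsilon}^{p\theta^{i}/2},u_{\varepsilon}^{p\theta^{i}/2})$ into a gain of integrability, $\int\int u_{\varepsilon}^{p\theta^{i+1}}\, d\bar\mu < \infty$ with a quantitative estimate, where $\theta = 2 - 1/\kappa > 1$; and (b) Lemma \ref{lem:estimate supsol}, which supplies exactly those two bounds for the annular cutoff $\psi_{i} \in \mbox{CSA}(\Psi,\epsilon_{i},C_{0})$ for $B(x,\delta_{i+1}r)$ in $B(x,\delta_{i+1}r+\hat\delta_{i}r)$ with $\epsilon_{i} \sim c\,(p\theta^{i})^{-2}$. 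One then picks the same geometric shrinkings $\hat\sigma_{i} = (\sigma-\sigma')2^{-i-1}$, $\hat\delta_{i} = (\delta-\delta')2^{-i-1}$, uses Lemma \ref{lem:nu} and \eqref{eq:beta} in the form \eqref{eq:Psi(delta_i)} to absorb the volume factors into the normalization $1/(\Psi(r)\mu(B))$, raises the $i$-th inequality to the power $\theta^{-i-1}$, and multiplies; the $i$-th constant has the shape $C^{\,i+1}\big((A_{1}' + A_{2}'\Psi(\hat\delta r))\hat\delta^{-\beta_{2}}|p|^{\beta_{2}} + \hat\sigma^{-1}\big)^{\theta^{-i}}$ times volume terms, and since $\sum_{i\geq 0}(i+1)\theta^{-i} < \infty$ and $\sum_{i\geq 0}\theta^{-i} = \theta/(\theta-1) = (2\kappa-1)/(\kappa-1)$, the infinite product converges and produces the exponent $\frac{2\kappa-1}{\kappa-1}$ in the statement.

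For part (i) ($p \in (-\infty,0)$, $Q = Q^{-}$) this is a verbatim analogue of Theorem \ref{thm:MVE subsol p>2}: the iteration exponents $p\theta^{i}$ all stay in $(-\infty,0)$, so Lemma \ref{lem:estimate supsol}(i) is applicable at every step, the factor $|p|\theta^{i}$ contributes the $(1+|p|^{\beta_{2}})$-type growth recorded in the statement, and passing to the limit $i \to \infty$ gives $\sup_{Q^{-}_{\sigma',\delta'}}\{u_{\varepsilon}^{p}\}$ on the left, because $u_{\varepsilon}^{p} = u_{\varepsilon}^{-|p|}$ is bounded (as $u_{\varepsilon} \geq \varepsilon$), so no use of the local boundedness of $u$ is needed here.

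Part (ii) ($p \in (0,1-\eta)$, $Q = Q^{+}$) is where the argument genuinely departs from the subsolution case, and this is the main obstacle. The supersolution Caccioppoli estimate of Lemma \ref{lem:estimate supsol}(ii) is available only for exponents in $(0,1-\eta)$ --- it degenerates as the exponent tends to $1$, where the factors $(p-1)$ and $(1-\eta/2)-p$ of Lemma \ref{lem:SUP s for p<1} vanish --- so the sequence $p\theta^{i}$ leaves the admissible range after finitely many steps and one cannot iterate to $+\infty$ in the exponent to reach a supremum. Instead, following the proof of Theorem \ref{thm:MVE subsol 0<p<2}, the plan is to first establish the estimate for one fixed convenient exponent $p_{0} \in (0,1-\eta)$ --- obtained by running the Sobolev/Caccioppoli step above only for exponents kept below $1-\eta$ --- and then to bootstrap down to the given $p \in (0,p_{0}]$ via the interpolation $\int u_{\varepsilon}^{p_{0}} \leq (\sup u_{\varepsilon})^{p_{0}-p}\int u_{\varepsilon}^{p}$ over a sequence of cylinders growing from $Q^{+}_{\sigma',\delta'}$ to $Q^{+}_{\sigma,\delta}$, using the local boundedness hypothesis on $u$ --- which is also what forces the passage from $u$ to $u_{\varepsilon}$, compare Lemma \ref{lem:e(1,)} --- to make the factor $(\sup u_{\varepsilon})^{(1-p/p_{0})^{i}}$ tend to $1$ in the limit; the geometric series $\sum_{i\geq 0}(1-p/p_{0})^{i} = p_{0}/p$ reproduces the power $\frac{2\kappa-1}{\kappa-1}$ and the $(1+p^{\beta_{2}})$-dependence, and the range $p \in (p_{0},1-\eta)$ follows directly from H\"older's inequality and the normalization. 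Apart from this closing step, every step is the line-by-line translation of the corresponding step in the proofs of Theorems \ref{thm:MVE subsol p>2} and \ref{thm:MVE subsol 0<p<2}, with $\Gamma$ replaced by $\Gamma_{t}$ where needed.
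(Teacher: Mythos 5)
Your treatment of part (i) is correct and is exactly what the paper intends: the paper's entire proof is the remark that the theorem ``can be proved analogously to the proof of Theorem \ref{thm:MVE subsol p>2}, by applying Lemma \ref{lem:estimate supsol} instead of Lemma \ref{lem:estimate subsol p>2}'', and for $p<0$ your iteration (exponents $p\theta^i\to-\infty$, all admissible for Lemma \ref{lem:estimate supsol}(i); integrability for free since $\varepsilon\le u_{\varepsilon}\le \|u\|_{\infty}+1$ locally) is the faithful implementation of that remark.

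Part (ii) is where there is a genuine gap, and you have correctly identified a real difficulty that the paper's one-line proof glosses over: Lemma \ref{lem:estimate supsol}(ii) is available only for exponents in $(0,1-\eta)$, so the chain $p\theta^i$ exits the admissible range after finitely many steps and never reaches a supremum. Your repair, however, is circular. The bootstrap of Theorem \ref{thm:MVE subsol 0<p<2} works because the anchor exponent $p_0=2$ is supplied by Theorem \ref{thm:MVE subsol p>2}, whose iteration runs unobstructed to infinity; the interpolation $\int u^{2}\le(\sup u)^{2-p}\int u^{p}$ then converts the known $L^\infty$--$L^2$ bound into an $L^\infty$--$L^p$ bound. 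In your scheme the anchor $p_0\in(0,1-\eta)$ must itself already satisfy a sup bound, but ``running the Sobolev/Caccioppoli step only for exponents kept below $1-\eta$'' yields merely a finite chain of reverse H\"older inequalities between $L^{p\theta^k}$ and $L^{p}$ norms, never a supremum; every candidate anchor is exactly as inaccessible as $p$ itself. Moreover, no argument can close this gap, because the inequality asserted in (ii) fails for general locally bounded non-negative supersolutions: on $\R^3$ with the classical Dirichlet form and $\Psi(r)=r^2$, the time-independent function $u=1+\min\{|x|^{-1},N\}$ is bounded, superharmonic, hence a local very weak supersolution on every cylinder, and satisfies $\sup_{\delta' B}u^{p}\sim N^{p}\to\infty$ while $\int_{\delta B}u^{p}\,d\mu$ remains bounded as $N\to\infty$. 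A sup bound with positive exponent is a subsolution-type estimate; note that in the proof of Theorem \ref{thm:PHI} only part (i) of the present theorem is actually invoked (for $u_{\varepsilon}^{-1}$, i.e.\ for negative exponents), the positive-exponent sup bound being supplied there by Theorem \ref{thm:MVE subsol 0<p<2} applied to the solution $u$. You should therefore flag part (ii) as not provable as stated rather than attempt the bootstrap.
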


\section{Parabolic Harnack inequality} \label{sec:PHI}

\subsection{The log lemma and an abstract lemma}
\label{ssec:bombieri}
Let $(\e_t,\F)$, $t \in \R$, be as in Section \ref{ssec:time-dependence}. In this section, we suppose that Assumptions \ref{as:0} - \ref{as:VD+PI} are satisfied.

Let $a_1$ be small and $A_1$, $A_2$ large enough so that the estimates of Section \ref{ssec:estim for sub and supsol} hold with these constants.
Recall that for $\varepsilon \in(0,1)$, $u_{\varepsilon}:= u + \varepsilon$.

\begin{lemma}\label{lem:log u estimate} 
Suppose that {\em Assumptions \ref{as:0} - \ref{as:VD+PI}} are satisfied.
Let $0 < \sigma < 1$, $0 < \delta < 1$ and $\hat\delta := 1-\delta$. There exists a constant $C \in(0,\infty)$ such that, for any $a \in\R$, $0 < r \leq R_0$, 
$B=B(x,r) \subset Y$, and any non-negative, locally bounded function $u \in \mathcal C_{\mbox{\em \tiny{loc}}}(I \to L^2(B))$ which is a local very weak supersolution of the heat equation for
$L_t$ in $Q$, there exists a constant $c \in (0,\infty)$ depending on $u(a,\cdot)$, such that
\begin{enumerate}
\item
 \[  \bar\mu( \{ (t,z) \in K_+ : \log u_{\varepsilon} < - \lambda - c \} ) \leq C \Psi(r) \mu(B) \lambda^{-1}, \qquad \forall \lambda>0, \]
where $Q = Q^+(x,a,r)$, $K_+ = (a, a + \sigma \Psi(r)) \times \delta B$, and
\item
 \[  \bar\mu( \{ (t,z) \in K_- : \log u_{\varepsilon} > \lambda - c \} ) \leq C \Psi(r) \mu(B) \lambda^{-1},  \qquad \forall \lambda>0, \]
where $Q = Q^-(x,a,r)$, $K_- = (a - \sigma \Psi(r),a) \times \delta B$. 
\end{enumerate}
The constant $C$ depends on $C_{\mbox{\em \tiny{VD}}}$, $C_{\mbox{\em \tiny{PI}}}$, $C_{\Psi}$, $\beta_1$, $\beta_2$, $C_0$, $C_{10}$, $C_{11}$, and upper bounds on $(1+C_2 + C_4)+(C_3+C_5)\Psi(\hat\delta r)$, $\frac{1}{\delta}$ and $\frac{1}{\hat\delta}$.
\end{lemma}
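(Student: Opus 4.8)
The plan is the parabolic Moser log‑lemma scheme. Fix $\varepsilon\in(0,1)$ and write $w:=-\log u_\varepsilon$; since $u$ is locally bounded and $u_\varepsilon\ge\varepsilon>0$, the chain rule gives $w(t,\cdot)\in\F_{\mbox{\tiny{loc}}}(B)$ for a.e.\ $t$. Choose $\epsilon_0\in(0,1)$ small, depending only on $C_{10}$ and $C_{11}$, and let $\psi$ be the weight furnished by Theorem \ref{thm:weighted PI}: a cutoff function for $\delta B$ in $B=B(x,r)$ which belongs to $\mbox{CSA}(\Psi,\epsilon_0,C_0)$ and satisfies the weighted Poincar\'e inequality \eqref{eq:weighted PI} (with $\Psi(R+r)=\Psi(r)$ here). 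Put $V_\psi:=\int\psi^2\,d\mu$ and $W(t):=V_\psi^{-1}\int w(t,\cdot)\,\psi^2\,d\mu$; note that $V_\psi$ is comparable to $\mu(B)$ up to a power of $\delta$ by Lemma \ref{lem:nu}.

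The first main step is a differential inequality for $W$. Testing the supersolution inequality against $u_\varepsilon^{-1}\psi^2$ and differentiating in time --- made rigorous by the Steklov‑average argument of the proof of Lemma \ref{lem:estimate supsol} (the case $p<0$, taken formally at $p=0$) --- one gets $\frac{d}{dt}\int w\,\psi^2\,d\mu\le\e_t(u_\varepsilon,u_\varepsilon^{-1}\psi^2)-\e_t(\varepsilon,u_\varepsilon^{-1}\psi^2)$. I would bound the right side via the decomposition $\e_t(u_\varepsilon,u_\varepsilon^{-1}\psi^2)=\e_t^{\mbox{\tiny{s}}}(u_\varepsilon,u_\varepsilon^{-1}\psi^2)+\e_t^{\mbox{\tiny{sym}}}(\psi^2,1)+\e_t^{\mbox{\tiny{skew}}}(u_\varepsilon,u_\varepsilon^{-1}\psi^2)$: for the symmetric strongly local part, the Leibniz and chain rules for $\Gamma_t$ give $\e_t^{\mbox{\tiny{s}}}(u_\varepsilon,u_\varepsilon^{-1}\psi^2)=-\int\psi^2\,d\Gamma_t(w,w)-2\int\psi\,d\Gamma_t(w,\psi)$, which after Cauchy--Schwarz, \eqref{eq:C_10}, and CSA($\Psi$) applied to the constant function $1$ (so $\Gamma(1,1)=0$) is at most $-\tfrac12\int\psi^2\,d\Gamma_t(w,w)+\frac{C\,\mu(B)}{\Psi(\hat\delta r)}$; the term $\e_t^{\mbox{\tiny{sym}}}(\psi^2,1)$ is handled by Assumption \ref{as:skew1} with $f\equiv1$, the term $\e_t^{\mbox{\tiny{skew}}}(u_\varepsilon,u_\varepsilon^{-1}\psi^2)$ by Assumption \ref{as:skew2} with $f=u_\varepsilon$ (here $u_\varepsilon+u_\varepsilon^{-1}\in L^\infty_{\mbox{\tiny{loc}}}$), and $\e_t(\varepsilon,u_\varepsilon^{-1}\psi^2)$ by Lemma \ref{lem:e(1,)} with $p=0$. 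Every energy contribution produced is of size $O(\epsilon_0^{1/2})\int\psi^2\,d\Gamma(w,w)$ and is absorbed into $\tfrac12\int\psi^2\,d\Gamma_t(w,w)\ge\tfrac1{2C_{10}}\int\psi^2\,d\Gamma(w,w)$ by the choice of $\epsilon_0$, while the zero‑order contributions combine into $\frac{C}{\Psi(\hat\delta r)}\mu(B)$ with $C$ controlled by $C_0,C_{10},C_{11}$ and by $(1+C_2+C_4)+(C_3+C_5)\Psi(\hat\delta r)$. Dividing by $V_\psi$ and using $\mu(B)/V_\psi\le C_{\mbox{\tiny{VD}}}^2\delta^{-\nu}$ together with $\Psi(r)/\Psi(\hat\delta r)\le C_\Psi\hat\delta^{-\beta_2}$ from \eqref{eq:beta}, this gives
\begin{align*}
W'(t)+G(t)\le\frac{C_6}{\Psi(r)},\qquad G(t):=\frac{1}{4C_{10}V_\psi}\int\psi^2\,d\Gamma(w,w)(t)\ \ge 0,
\end{align*}
with $C_6$ depending only on the constants listed in the statement. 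Applying \eqref{eq:weighted PI} to $f=w(t,\cdot)$ (localized to the compact support of $\psi$, so that only $w(t,\cdot)\in\F_{\mbox{\tiny{loc}}}(B)$ is needed), with $f_\psi=W(t)$, yields $\int|w(t,\cdot)-W(t)|^2\psi^2\,d\mu\le C_{\mbox{\tiny{wPI}}}\Psi(r)\int\psi^2\,d\Gamma(w,w)(t)=4C_{10}C_{\mbox{\tiny{wPI}}}\,\Psi(r)\,V_\psi\,G(t)$.

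For the measure bound I treat case (i); case (ii) is analogous, with the obvious sign changes. Take $c$ to be (a suitable positive constant comparable to) the $\psi^2$‑weighted average of $-\log u_\varepsilon$ at the initial time $a$, which depends only on $u(a,\cdot)$; enlarging it only shrinks the set in question. If $\lambda\le C_6\sigma$ the estimate is trivial from $\bar\mu(K_+)\le\Psi(r)\mu(B)$. If $\lambda>C_6\sigma$, set $h(t):=\int_a^tG(s)\,ds\ge0$ and $\Lambda(t):=(\lambda-C_6\sigma)+h(t)>0$. Integrating $W'\le C_6/\Psi(r)-G$ from $a$ to $t\le a+\sigma\Psi(r)$ gives $W(t)\le W(a)+C_6\sigma-h(t)\le c+C_6\sigma-h(t)$, so on $\{w>\lambda+c\}$ one has $w-W(t)>\Lambda(t)$. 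Hence, using $\psi=1$ on $\delta B$, Chebyshev's inequality, and the weighted Poincar\'e bound,
\begin{align*}
\bar\mu\big(\{(t,z)\in K_+:\log u_\varepsilon<-\lambda-c\}\big)\ \le\ 4C_{10}C_{\mbox{\tiny{wPI}}}\Psi(r)V_\psi\int_a^{a+\sigma\Psi(r)}\frac{G(t)}{\Lambda(t)^2}\,dt\ \le\ \frac{4C_{10}C_{\mbox{\tiny{wPI}}}\Psi(r)\mu(B)}{\lambda-C_6\sigma},
\end{align*}
since $\Lambda'=G$ makes $\int G\Lambda^{-2}\,dt=\Lambda(a)^{-1}-\Lambda(a+\sigma\Psi(r))^{-1}\le(\lambda-C_6\sigma)^{-1}$ a telescoping integral and $V_\psi\le\mu(B)$. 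Separating $\lambda>2C_6\sigma$ from $C_6\sigma<\lambda\le2C_6\sigma$ (the latter again by the trivial bound) yields the claim with $C=\max(8C_{10}C_{\mbox{\tiny{wPI}}},2C_6)$. For case (ii) one uses instead $h(t):=\int_t^aG(s)\,ds$, so that $\Lambda'=-G$ and $\int G\Lambda^{-2}\,dt=\Lambda(a)^{-1}-\Lambda(a-\sigma\Psi(r))^{-1}\le(\lambda-C_6\sigma)^{-1}$, and the argument is the same. The \emph{main obstacle} is exactly this last step: the time integral $\iint\psi^2\,d\Gamma(w,w)$ of the Dirichlet energy of $w$ admits no bound by the structural constants alone --- only one in terms of $W(a)-W(a\pm\sigma\Psi(r))$, which involves $u$ at a non‑initial time --- so a naive Chebyshev argument with a fixed threshold would contaminate $C$ with a $u$‑dependent factor; this is circumvented by running Chebyshev against the increasing threshold $\Lambda(t)$ built from the accumulated energy $h(t)$, for which $G(t)/\Lambda(t)^2=\mp(1/\Lambda)'(t)$ telescopes. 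Everything else --- the Steklov‑average justification of the differential inequality, the bookkeeping of constants, and the ball‑containment hypotheses needed to invoke Theorem \ref{thm:weighted PI} for a cutoff of $\delta B$ in $B$ --- is routine and parallels the proofs of Lemma \ref{lem:estimate supsol} and Theorem \ref{thm:weighted PI}.
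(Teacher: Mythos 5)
Your argument is correct and follows essentially the same route as the paper: the same choice of weight $\psi$ from Theorem \ref{thm:weighted PI}, the same Steklov-average derivation of the differential inequality for $W$ with the same term-by-term estimates (chain rule plus CSA for $\e_t^{\mbox{\tiny{s}}}$, Assumption \ref{as:skew1} for $\e_t^{\mbox{\tiny{sym}}}(\psi^2,1)$, Assumption \ref{as:skew2} for the skew part, Lemma \ref{lem:e(1,)} for the $\varepsilon$-term), followed by the weighted Poincar\'e inequality. The only difference is that you write out explicitly the final Chebyshev/telescoping step with the moving threshold $\Lambda(t)$, which the paper delegates to \cite[Lemma 4.12]{LierlSC2}; your execution of that step is the standard Moser argument and is correct.
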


\begin{proof}
For $h > 0$, let
\[ u_{\varepsilon,h}(t) := \frac{1}{h} \int_t^{t+h} u_{\varepsilon}(\tau) d\tau \]
be the Steklov average of $u_{\varepsilon}$. Let $\epsilon \in (0,1)$ (to be chosen later), and let $\psi \in \mbox{CSA}(\Psi,\epsilon,C_0)$ be the cutoff function for
 $B(x,\delta r)$ in $B(x,r')$ given by 
Theorem \ref{thm:weighted PI}, for some $r' \in (\delta r, r)$. 

Using the fact that the Steklov average has a strong time-derivative and the assumption that $u$ is local very weak supersolution, we obtain
\begin{align*}
 & - \frac{d}{dt} \int \log u_{\varepsilon,h}(t) \psi^2 d\mu \\
& = 
- \frac{1}{h} \int [ u(t+h) - u(t) ] \frac{1}{u_{\varepsilon,h}(t)} \psi^2 d\mu \nonumber \\
& \leq 
 \frac{1}{h} \int_t^{t+h} \e_s \left( u(s),\frac{1}{u_{\varepsilon,h}(t)} \psi^2 \right) ds \nonumber \\
& =  
 \frac{1}{h} \int_t^{t+h} \e_s \left( u(s),\frac{1}{u_{\varepsilon,h}(t)} \psi^2  - \frac{1}{u_{\varepsilon}(t)} \psi^2 \right) ds \\
& \quad 
+  \frac{1}{h} \int_t^{t+h} \e_s \left( u(s) - u(t),\frac{1}{u_{\varepsilon}(t)} \psi^2 \right) ds \\
& \quad 
+  \frac{1}{h} \int_t^{t+h} \e_s \left( u_{\varepsilon}(t),\frac{1}{u_{\varepsilon}(t)} \psi^2 \right) 
                                 -  \e_s \left( \varepsilon,\frac{1}{u_{\varepsilon}(t)} \psi^2 \right)ds  \\
& = 
f_h(t) + \hat f_h(t) + g_h(t).
\end{align*}
It can be shown that $f_h(t)$ and $\hat f_h(t)$ tend to $0$ in $L^1((a,a+\sigma \Psi(r)) \to \R)$ as $h \to 0$. Next, we will estimate $g_h(t)$.
We write $u_{\varepsilon} = u_{\varepsilon}(t)$.
Applying \eqref{eq:chain rule for Gamma}, \eqref{eq:CS}, \eqref{eq:C_10} and CSA($\Psi,\epsilon,C_0$), we have for any $k_0>0$ that
\begin{align*}
\e_s^{\mbox{\tiny{s}}}(u_{\varepsilon},u_{\varepsilon}^{-1} \psi^2)
& =  \int 2\psi d\Gamma_s(\log(u_{\varepsilon}),\psi) 
   - \int \psi^2 d\Gamma_s(\log(u_{\varepsilon}),\log(u_{\varepsilon})) \\
&\le
4k_0 \int  d\Gamma(\psi,\psi) 
 - \left( \frac{1}{C_{10}} - \frac{C_{10}}{k_0} \right) \int \psi^2 d\Gamma(\log(u_{\varepsilon}),\log(u_{\varepsilon})) \\
& \le
- \left( \frac{1}{C_{10}} - \frac{C_{10}}{k_0}  \right) \int \psi^2 d\Gamma(\log(u_{\varepsilon}),\log(u_{\varepsilon})) + \frac{4k_0 C_{10} C_0(\epsilon)}{\Psi(\hat \delta r)} \int_B d\mu.
\end{align*}
By Assumption \ref{as:skew1} and Assumption \ref{as:skew2}, we have
\begin{align*}
& \quad \e_s^{\mbox{\tiny{skew}}}(u_{\varepsilon},u_{\varepsilon}^{-1} \psi^2) + \e_s^{\mbox{\tiny{sym}}}(\psi^2,1)  \\
& \le 
C_{11} \epsilon^{1/2} \int \psi^2 d\Gamma(\log(u_{\varepsilon}),\log(u_{\varepsilon})) 
+ \left( C_2 + C_4 + (C_3 + C_5) \Psi(\hat\delta r) \right) \frac{C_1(\epsilon)}{\Psi(\hat \delta r)} \int_B d\mu.
\end{align*}
By Lemma \ref{lem:e(1,)},
\begin{align*}
& \quad - \e_s( \varepsilon,u_{\varepsilon}^{-1} \psi^2)  \\
& \le 
C_{11} \epsilon^{1/2} \frac{1}{4} \int \psi^2 d\Gamma(\log(u_{\varepsilon}),\log(u_{\varepsilon})) 
+ \left( C_2 + C_3 \Psi(\hat\delta r) \right) \frac{C_1(\epsilon)}{\Psi(\hat \delta r)} \int_B d\mu.
\end{align*}

Hence, making a suitable choice of $k_0$ (large) and $\epsilon$ (small), we find that for sufficiently large $k >1$ depending on $C_0$, $C_{10}$, $C_{11}$ and an upper bound for $(1+ C_2 + C_4 + (C_3 + C_5)\Psi(\hat \delta r))$, we have
\begin{align} \label{eq:lem5.4.1a}
 & - \frac{d}{dt} \int \log u_{\varepsilon,h}(t) \psi^2 d\mu  +  \frac{1}{k} \int \psi^2 d\Gamma(\log(u_{\varepsilon}),\log(u_{\varepsilon})) \nonumber \\
& \leq  
f_h(t) + \hat f_h(t) + \left( 1+C_2 + C_4 + (C_3 + C_5) \Psi(\hat\delta r) \right) \frac{k}{\Psi(\hat \delta r)} \mu(B).
\end{align}
Let 
\[ W(t) :=  -\frac{\int \log u_{\varepsilon}(t) \psi^2 d\mu }{ \int \psi^2 d\mu} \quad \mbox{ and} \quad W_h(t) :=  -\frac{\int \log u_{\varepsilon,h}(t) \psi^2 d\mu }{ \int \psi^2 d\mu}. \]
By the weighted Poincar\'e inequality of Theorem \ref{thm:weighted PI}, there is a constant $C_{\mbox{\tiny{wPI}}} \in (0,1)$ such that, for a.e. $t \in I$, 
\begin{align*}
\int |-\log u_{\varepsilon}(t) - W(t)|^2 \psi^2 d\mu  \leq  C_{\mbox{\tiny{wPI}}} \, \Psi(r) \int \psi^2 d\Gamma(\log u_{\varepsilon}(t), \log u_{\varepsilon}(t)).
\end{align*}
The constant $C_{\mbox{\tiny{wPI}}}$ depends only on $C_{\mbox{\tiny{PI}}}$ and an upper bound on $\frac{\mu(B)}{\mu(\delta B)}$.

This and \eqref{eq:lem5.4.1a} yield
\begin{align*} 
& \quad  \frac{d}{dt} W_h(t) + \frac{1}{C \Psi(r) \mu(B)}  \int_{\delta B} |-\log u_{\varepsilon}(t) - W(t)|^2 \psi^2 d\mu \\
& \le
C' \frac{f_h(t) + \hat f_h(t)}{\int \psi^2 d\mu} + C' \left(1+ C_2 + C_4 + (C_3 + C_5) \Psi(\hat\delta r) \right)  \frac{k}{\Psi(\hat \delta r)},
\end{align*}
for some constants $C,C' \in (0,\infty)$ that depend only on $k$, $C_{\mbox{\tiny{VD}}}$ $C_{\mbox{\tiny{PI}}}$, $C_0$ and an upper bound on $\frac{1}{\delta}$.
Notice that by \eqref{eq:beta}, $\frac{1}{\Psi(\hat \delta r)} \le \frac{C''}{\Psi(r)}$ for some constant $C''$ depending only on $C_{\Psi}$, $\beta_2$, and on an upper bound on $\frac{1}{\hat\delta}$. 

Now the proof can be completed easily by following \cite[Lemma 4.12]{LierlSC2} line by line, except for replacing $r^2$ by $\Psi(r)$ and applying \eqref{eq:beta} where needed. 
\end{proof}

Let $U_{\delta}$ be a collection of measurable subsets of $X$ such that $U_{\delta'} \subset U_{\delta}$ for any $0 < \delta' < \delta \leq 1$. 
Let $J_{\sigma}$ be a collection of intervals in $\R$ such that $J_{\sigma'} \subset J_{\sigma}$ for any $0 < \sigma' < \sigma \leq 1$. 

\begin{lemma} \label{lem:Bombieri}
Fix  $\sigma^*,\delta^* \in (0,1)$.
Let $f$ be a positive measurable function on $J_1 \times U_1$ which satisfies
\[ \sup_{J_{\sigma'} \times U_{\delta'}} f \leq  \left(\left( \frac{C}{ (\delta - \delta')^{\gamma_1}} + \frac{C}{(\sigma - \sigma')^{\gamma_2}} \right) 
\frac{1}{|J_{1}| \mu(U_{1})} \int_{J_{\sigma}} \int_{U_{\delta}} f^p d\mu \, dt \right)^{\frac{1}{p}} \]
for all $\sigma^* \leq \sigma' < \sigma < 1$, $\delta^* \leq \delta' < \delta < 1$, $p \in (0,1-\eta)$, for some $\gamma_1, \gamma_2 \geq0$, $\eta \in (0,1)$, $C \in (0,\infty)$. 
Suppose further that 
\begin{align} \label{eq:log f > lambda}
 \bar\mu(\{ \log f > \lambda \}) \leq C \frac{|J_1| \mu(U_1) }{\lambda}, \qquad \forall \lambda > 0.
\end{align}
Then there is a constant $A_3 \in [1,\infty)$, depending only on $\sigma^*, \delta^*, \gamma_1, \gamma_2, C$ and a positive lower bound on $\eta$, such that
\[ \sup_{J_{\sigma^*} \times U_{\delta^*}} f \leq A_3. \]
\end{lemma}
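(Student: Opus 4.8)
The plan is to run the Bombieri--Giusti iteration scheme. Since $U_{\delta'}\subset U_\delta$ and $J_{\sigma'}\subset J_\sigma$ are nested, it suffices to bound $\sup_{\Omega_{\rho_0}}f$, where $\rho_0:=\max\{\sigma^*,\delta^*\}<1$ and $\Omega_\rho:=J_\rho\times U_\rho$, because $J_{\sigma^*}\times U_{\delta^*}\subset\Omega_{\rho_0}$. Put $\gamma:=\max\{\gamma_1,\gamma_2\}$, $d\bar\mu:=d\mu\,dt$, and $|\Omega_1|:=|J_1|\,\mu(U_1)$. Taking $\sigma=\delta=\rho$ and $\sigma'=\delta'=\rho'$ in the hypothesis and using $(\rho-\rho')^{-\gamma_1}+(\rho-\rho')^{-\gamma_2}\le 2(\rho-\rho')^{-\gamma}$ for $\rho-\rho'\le 1$, we obtain, for all $\rho_0\le\rho'<\rho<1$ and $0<p<1-\eta$,
\[ \big(\sup_{\Omega_{\rho'}}f\big)^{p}\ \le\ \frac{2C}{(\rho-\rho')^{\gamma}}\,\frac{1}{|\Omega_1|}\int_{\Omega_\rho}f^{p}\,d\bar\mu . \]

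Next I would turn \eqref{eq:log f > lambda} into the estimate
\[ \frac{1}{|\Omega_1|}\int_{\Omega_\rho}f^{p}\,d\bar\mu\ \le\ e\ +\ C\,p\,\big(\sup_{\Omega_\rho}f\big)^{p}\qquad(0<p<1-\eta,\ \rho<1), \]
proved by splitting the integral at the height $e^{1/p}$: on $\{f\le e^{1/p}\}$ one has $f^{p}\le e$, while on $\{f>e^{1/p}\}$ the layer-cake formula together with $\bar\mu(\Omega_\rho\cap\{f>t\})\le C|\Omega_1|/\log t\le Cp\,|\Omega_1|$ for $t\ge e^{1/p}$ produces the second term. (Here one uses that $\sup_{\Omega_\rho}f<\infty$ for $\rho<1$, in order to truncate the $t$-integral at $\sup_{\Omega_\rho}f$; this holds in all applications of the lemma by Corollary \ref{cor:u loc bounded}.) Combining the two displays, and writing $M_\rho:=\sup_{\Omega_\rho}f$, we get for $\rho_0\le\rho'<\rho<1$ and $0<p<1-\eta$,
\[ M_{\rho'}^{\,p}\ \le\ \frac{2e}{(\rho-\rho')^{\gamma}}\ +\ \frac{2Cp}{(\rho-\rho')^{\gamma}}\,M_\rho^{\,p}. \]

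Now I would iterate. Fix $\bar\rho:=\tfrac12(1+\rho_0)$, let $\eta_j:=\eta_0(j+1)^{-2}$ with $\eta_0$ chosen so $\sum_{j\ge 0}\eta_j=\bar\rho-\rho_0$, and set $\rho_0<\rho_1<\cdots\uparrow\bar\rho$ with $\rho_{j+1}-\rho_j=\eta_j$; choose exponents $p_j:=c_\dagger\eta_j^{\gamma}$ with $c_\dagger>0$ so small that $p_0<1-\eta$ and $\theta:=2Cc_\dagger<1$, so that $p_j\in(0,1-\eta)$ and $2Cp_j\eta_j^{-\gamma}=\theta$ for all $j$. With $m_j:=M_{\rho_j}^{\,p_j}$, $D_j:=2e\,\eta_j^{-\gamma}$ and $q_j:=p_j/p_{j+1}=(\eta_j/\eta_{j+1})^{\gamma}=\big((j+2)/(j+1)\big)^{2\gamma}\ge 1$, the last display applied with $p=p_j$, $\rho'=\rho_j$, $\rho=\rho_{j+1}$, together with $M_{\rho_{j+1}}^{\,p_j}=m_{j+1}^{\,q_j}$, yields $m_j\le D_j+\theta\,m_{j+1}^{\,q_j}$. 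Unrolling this via $(a+b)^{q}\le 2^{q-1}(a^{q}+b^{q})$ $(q\ge 1)$ and the telescoping identity $\prod_{i=0}^{k-1}q_i=(k+1)^{2\gamma}$, the term coming from $D_k$ has the form $\theta^{\Sigma_k}2^{O((k+1)^{2\gamma})}D_k^{\,(k+1)^{2\gamma}}$ with $\Sigma_k=\sum_{i=1}^{k}i^{2\gamma}\sim(k+1)^{2\gamma+1}/(2\gamma+1)$; since $D_k^{\,(k+1)^{2\gamma}}=\exp\!\big((k+1)^{2\gamma}\,O(\log k)\big)$ and $\log\theta<0$, these terms decay super-exponentially and sum to an explicit constant $\Xi$ depending only on $C$, $\gamma$, $\rho_0$ (hence on $\sigma^*,\delta^*$) and a lower bound on $\eta$; the remaining tail term is a coefficient tending to $0$ times $m_k^{\,(k+1)^{2\gamma}}=M_{\rho_k}^{\,c_\dagger\eta_0^{\gamma}}\le M_{\bar\rho}^{\,c_\dagger\eta_0^{\gamma}}<\infty$, hence vanishes. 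Therefore $M_{\rho_0}^{\,p_0}=m_0\le\Xi$, and $\sup_{J_{\sigma^*}\times U_{\delta^*}}f\le M_{\rho_0}\le\Xi^{1/p_0}=:A_3$.

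I expect the main obstacle to be the summability in this last step. A naive iteration with a fixed exponent $p$ fails: the truncation height $e^{1/p}$ and the unavoidably shrinking radius gaps $\eta_j$ force $D_j\to\infty$, and raising the $D_j$ to the compounded powers $\prod_{i<j}q_i$ then makes the series diverge. The remedy is the coupled choice of $p_j\downarrow 0$ proportional to $\eta_j^{\gamma}$ together with a non-geometric (here quadratically decaying) sequence of radius gaps, which keeps $q_j\to 1$ so that $\prod_{i<j}q_i$ grows only polynomially in $j$ while the accumulated contraction factor $\theta^{\Sigma_j}$ decays super-exponentially. (As is standard for this lemma, the argument presupposes finiteness of $\sup_{\Omega_\rho}f$ for $\rho<1$, which in the present paper is supplied by Corollary \ref{cor:u loc bounded}.)
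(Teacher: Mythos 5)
Your proof is correct in substance, but it runs the Bombieri--Giusti machine differently from the paper. The paper follows Moser's original argument: it splits $\int f^p$ according to whether $\log f$ exceeds $\tfrac12\log\phi$ with $\phi=\sup_{J_\sigma\times U_\delta}f$, chooses $p$ \emph{depending on $\phi$ itself} so that the two resulting terms balance (namely $p=\tfrac{2}{\log\phi}\log(\tfrac{\log\phi}{2C})$), and thereby obtains the \emph{linear} recursion $\log\phi(\sigma',\delta')\le\tfrac34\log\phi(\sigma,\delta)+A_2(\cdots)$, which is then iterated geometrically along $\sigma_j=1-\tfrac{1-\sigma^*}{1+j}$. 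You instead fix a deterministic sequence of exponents $p_j\downarrow0$ coupled to the gap widths, which yields the \emph{nonlinear} recursion $m_j\le D_j+\theta\,m_{j+1}^{q_j}$ with compounding exponents, controlled by super-exponential decay; the pleasant feature that the tail exponent $p_kQ_k=c_\dagger\eta_0^{\gamma}$ is constant is what makes your scheme close. The paper's route is shorter and avoids power compounding, at the price of a case analysis on whether $\phi\ge A_1$ and whether \eqref{eq:2.2.12} holds; yours avoids that case analysis and makes the choice of $p$ explicit in terms of the data. Two small points. First, the accumulated factor from $(a+b)^q\le 2^{q-1}(a^q+b^q)$ in front of $D_k^{Q_k}$ is $2^{\sum_{i\le k}(Q_i-1)}=2^{O(k^{2\gamma+1})}$, the \emph{same} order as $\Sigma_k$, not $2^{O(k^{2\gamma})}$ as you wrote; so $\theta<1$ is not quite enough and you need (roughly) $2\theta<1$ --- harmless, since $c_\dagger$ is at your disposal, but the constant should be tracked. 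Second, your argument needs $\sup_{\Omega_\rho}f<\infty$ for $\rho<1$ both to truncate the layer-cake integral and to kill the tail term; you flag this explicitly and point to Corollary \ref{cor:u loc bounded}, which is fine --- the paper's proof makes the same assumption implicitly when it discards the term $(\tfrac34)^k\log\phi(\sigma_k,\delta_k)$ in the limit.
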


\begin{proof}
We follow \cite[Proof of Lemma 3]{Moser71} (see also \cite[Proof of Theorem 4]{BomGiu72}, \cite[Lemma 2.2.6]{SC02}).
Without loss of generality, assume for the proof that $|J_1| \mu(U_1) = 1$.
Define
\[ \phi = \phi(\sigma,\delta) :=  \sup_{J_{\sigma} \times U_{\delta}} f. \]
Decomposing $J_{\sigma} \times U_{\delta}$ into the sets where $\log f > \frac{1}{2}\log(\phi)$ and where $\log f \leq \frac{1}{2}\log(\phi)$, we get from \eqref{eq:log f > lambda} that
\begin{align*}
\int_{J_{\sigma}} \int_{U_{\delta}} f^p d\mu \, dt 
& \leq 
\left(\sup_{J_{\sigma} \times U_{\delta}} f^p \right)  \mu\left( \log f > \frac{1}{2}\log\phi \right) + \phi^{\frac{p}{2}} |J_{\sigma}| \mu(U_{\delta})  \\
& \leq
\phi^p \frac{2C}{\log \phi} + \phi^{\frac{p}{2}}.
\end{align*}
The two terms on the right hand side are equal if 
\[ p = \frac{2}{\log \phi}  \log \left( \frac{\log \phi}{2C} \right). \]
We have $p < 1 - \eta$ if $\phi$ is sufficiently large, that is, if 
\begin{align} \label{eq:A_1 2.2.10}
\phi \geq A_1
\end{align}
for some $A_1$ depending only on $\eta$ (note we can always take $C \geq 1$). 
Hence, for $\phi \geq A_1$, the first hypothesis of the lemma yields
\begin{align*}
\log \phi(\sigma',\delta')
& \leq \frac{1}{p} \log  \left( \frac{C}{ (\delta - \delta')^{\gamma_1}} + \frac{C}{(\sigma - \sigma')^{\gamma_2}} \right)  + \frac{\log \phi}{2} + \frac{\log 2}{p} \\
& \leq \frac{1}{p} \log  \left( \frac{2C}{ (\delta - \delta')^{\gamma_1}} + \frac{2C}{(\sigma - \sigma')^{\gamma_2}} \right)  + \frac{\log \phi}{2} \\
& = \frac{\log \phi}{2} \left[ \frac{\log \left( \frac{2C}{ (\delta - \delta')^{\gamma_1}} + \frac{2C}{(\sigma - \sigma')^{\gamma_2}} \right)}{\log \left( \frac{ \log \phi}{2C} \right)} + 1  \right].
\end{align*}
If 
\begin{align} \label{eq:2.2.12}
 \frac{\log \phi}{2C} \geq \left( \frac{2C}{ (\delta - \delta')^{\gamma_1}} + \frac{2C}{(\sigma - \sigma')^{\gamma_2}} \right)^2,
 \end{align}
then 
\[ \log \phi(\sigma',\delta') \leq \frac{3}{4} \log \phi. \]
On the other hand, if \eqref{eq:2.2.12} or \eqref{eq:A_1 2.2.10} is not satisfied, then
\[ \log \phi(\sigma',\delta') \leq \log \phi \leq \log A_1 + 2C \left( \frac{2C}{ (\delta - \delta')^{\gamma_1}} + \frac{2C}{(\sigma - \sigma')^{\gamma_2}} \right)^2. \]
In all cases, we obtain
\begin{align} \label{eq:bombieri 3/4}
 \log \phi(\sigma',\delta')  \leq \frac{3}{4} \log \phi(\sigma,\delta) + A_2 \left( \frac{C^2}{ (\delta - \delta')^{2\gamma_1}} + \frac{C^2}{(\sigma - \sigma')^{2\gamma_2}}\right)
\end{align}
for some constant $A_2 \in (0,\infty)$ depending only on $\sigma^*, \delta^*, \gamma_1, \gamma_2, C$ and a positive lower bound on $\eta$.
Let $\sigma_j = 1 - \frac{1-\sigma*}{1+j}$ and  
$\delta_j = 1 - \frac{1-\delta*}{1+j}$. Iterating \eqref{eq:bombieri 3/4}, we get
\[ \log \phi(\sigma^*,\delta^*) 
\leq A_2 \sum_{j=0}^{\infty} \left( \frac{3}{4} \right)^j \left( \frac{C^2}{ (\delta_{j+1} - \delta_j)^{2\gamma_1}} + \frac{C^2}{(\sigma_{j+1} - \sigma_j)^{2\gamma_2}} \right) =: A_3 < \infty. \]
\end{proof}

\subsection{Parabolic Harnack inequalities} \label{ssec:PHI main results}
Let $(\e_t,\F)$, $t \in \R$, be as in Section \ref{ssec:time-dependence}. In this section, we suppose that Assumptions \ref{as:0} - \ref{as:VD+PI} are satisfied for an open subset $Y  \subset X$.

Let $B=B(x,r) \subset X$, $a \in \R$. Fix $\delta \in (0,1)$ and let $0 < \tau_1 < \tau_2 < \tau_3 < \tau_4 \leq 1$. 
Set
\begin{align*}
\delta B & = B(x,\delta r), \\
Q &= Q(x,a,r) = (a,a + \Psi(r)) \times B, \\
 Q^- &= (a + \tau_1 \Psi(r), a + \tau_2 \Psi(r)) \times \delta B, \\ 
 Q^+ &= (a + \tau_3 \Psi(r), a + \tau_4 \Psi(r)) \times \delta B.
\end{align*}

\begin{theorem} \label{thm:PHI}
Suppose {\em Assumption \ref{as:0} - \ref{as:VD+PI}} are satisfied. Then the family $(\e_t,\F)$, $t \in \R$, satisfies the parabolic Harnack inequality 
$\mbox{\em PHI($\Psi$)}$ on $Y$ up to scale $R_0$. That is, there is a constant $C_{\mbox{\em \tiny{PHI}}} \in (0,\infty)$  such that for any $a \in \R$, any ball
 $B(x,4r) \subsetneq B(x,8r) \subset Y$, $0 < r < \frac{R_0}{4}$, and any non-negative local weak solution $u$ of the heat equation for $L_t$ in $Q=Q(x,a,r)$, we have
\[ \sup_{Q^-} u \leq C_{\mbox{\em \tiny{PHI}}} \inf_{Q^+} u. \]
The constant $C_{\mbox{\em \tiny{PHI}}}$ depends only on $\delta$, $\tau_1, \tau_2, \tau_3, \tau_4$, $C_{\Psi}$, $\beta_1$, $\beta_2$, $C_{\mbox{\em \tiny{VD}}}$, $C_{\mbox{\em \tiny{PI}}}$, $C_0$, $C_{10}$, $C_{11}$, and an upper bound on $[ (1 + C_2 + C_4) +(C_3+C_5) \Psi((1-\delta)r)]$.
\end{theorem}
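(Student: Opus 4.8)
The plan is to run the classical parabolic Moser scheme: combine the mean value estimates of Section \ref{ssec:Mean value estimates} with the logarithmic estimate of Lemma \ref{lem:log u estimate} through the abstract iteration Lemma \ref{lem:Bombieri}.

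First I would reduce to a convenient situation. Since $u$ is a local weak solution it is both a non-negative local very weak subsolution and, being in $\mathcal{C}_{\mathrm{loc}}(I\to L^2)$ and hence locally bounded by Corollary \ref{cor:u loc bounded}, a non-negative supersolution on $Q$. Fix $\varepsilon\in(0,1)$ and set $u_\varepsilon=u+\varepsilon$; it suffices to prove $\sup_{Q^-}u_\varepsilon\le C\inf_{Q^+}u_\varepsilon$ with $C$ independent of $\varepsilon$ and then let $\varepsilon\to0$. Note $u\le u_\varepsilon$ and $\varepsilon\le u_\varepsilon$, so Theorem \ref{thm:MVE subsol 0<p<2} applied to $u$ yields, for small $p>0$, a subsolution-type mean value inequality for $u_\varepsilon$ itself (with the constant enlarged by a fixed factor); together with Theorem \ref{thm:MVE supsol}(i), applied with negative exponent to the supersolution $u_\varepsilon$, these are the two mean value inputs. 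The constants produced throughout depend only on the quantities listed in the statement, in particular on an upper bound for $[(1+C_2+C_4)+(C_3+C_5)\Psi((1-\delta)r)]$, which is precisely the dependence appearing in Lemma \ref{lem:log u estimate}.

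Second, the bridge. Pick a ``waist'' time $t^*$ strictly between the time-slabs of $Q^-$ and $Q^+$ (possible since $\tau_2<\tau_3$) and apply Lemma \ref{lem:log u estimate} on a cylinder centred at $t^*$ of radius $\rho$ small enough that both halves $(t^*-\Psi(\rho),t^*)\times B(x,\rho)$ and $(t^*,t^*+\Psi(\rho))\times B(x,\rho)$ lie in $Q$. This produces a single constant $c\in(0,\infty)$ (depending on $u$) together with $\bar\mu(\{\log u_\varepsilon>\lambda-c\}\cap K_-)\le C\Psi(\rho)\mu(B(x,\rho))\lambda^{-1}$ on a past set $K_-$ and $\bar\mu(\{\log u_\varepsilon<-\lambda-c\}\cap K_+)\le C\Psi(\rho)\mu(B(x,\rho))\lambda^{-1}$ on a future set $K_+$. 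Now apply Lemma \ref{lem:Bombieri} twice. First to $f=e^{c}u_\varepsilon$ on a family of subcylinders exhausting a cylinder inside $K_-$: the mean value hypothesis is the subsolution estimate for $u_\varepsilon$ (since $\sup(e^cu_\varepsilon)^p=e^{cp}\sup u_\varepsilon^p$) and $\{\log f>\lambda\}=\{\log u_\varepsilon>\lambda-c\}$ supplies \eqref{eq:log f > lambda}; arranging $K_-$ to contain $Q^-$, the conclusion is $\sup_{Q^-}u_\varepsilon\le A_3\, e^{-c}$. Second to $g=e^{-c}u_\varepsilon^{-1}$ on subcylinders exhausting a cylinder inside $K_+$: the mean value hypothesis is now Theorem \ref{thm:MVE supsol}(i) with negative exponent (since $\sup g^p=e^{-cp}\sup u_\varepsilon^{-p}$), and $\{\log g>\lambda\}=\{\log u_\varepsilon<-\lambda-c\}$ supplies \eqref{eq:log f > lambda}; arranging $K_+$ to contain $Q^+$, the conclusion is $\sup_{Q^+}u_\varepsilon^{-1}\le A_3'\, e^{c}$, i.e. $\inf_{Q^+}u_\varepsilon\ge e^{-c}/A_3'$. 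Combining, $\sup_{Q^-}u_\varepsilon\le A_3\,e^{-c}\le A_3A_3'\inf_{Q^+}u_\varepsilon$, the unknown $c$ cancels, and $\varepsilon\to0$ gives the claim with $C_{\mathrm{PHI}}=A_3A_3'$.

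The main obstacle is the geometric bookkeeping in the second step, made delicate because Lemma \ref{lem:log u estimate} ties the spatial and temporal radii of its working cylinder together: one must choose $t^*$, $\rho$, the log-lemma parameters, and the dilation parameters of the two mean value estimates so that simultaneously every cylinder on which a mean value estimate is invoked lies in $Q$ where $u$ is a sub-/super-solution, $K_-$ covers the integration region feeding the first Bombieri application and contains $Q^-$, and $K_+$ covers the integration region feeding the second and contains $Q^+$. For a convenient configuration --- a fixed $\tau_1<\tau_2<\tau_3<\tau_4$ with a comfortable gap and $\delta,\tau_4$ bounded away from $1$ --- this is a direct verification, using \eqref{eq:beta} to compare $\Psi(\rho)$ with $\Psi(r)$. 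The general case of arbitrary $0<\tau_1<\tau_2<\tau_3<\tau_4\le1$ and the given $\delta\in(0,1)$ is recovered by a chaining argument: cover $Q^-$ and $Q^+$ by finitely many small time-space cylinders and link any two of them by a chain along which the convenient Harnack inequality applies, the number of links being controlled by the volume doubling property and the one-dimensional time geometry, and treat $\tau_4=1$ by exhausting the open cylinder $Q^+$ from the inside. By contrast, the substantive difficulties --- establishing the mean value estimates and the logarithmic estimate using only CSA($\Psi$) and no heat kernel, via the Steklov averaging of Sections \ref{ssec:estim for sub and supsol}--\ref{ssec:bombieri} --- are already behind us, so this last step is essentially an assembly.
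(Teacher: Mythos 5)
Your proposal follows the paper's proof essentially verbatim: combine the mean value estimates (Theorem \ref{thm:MVE subsol 0<p<2} for small positive exponents, Theorem \ref{thm:MVE supsol}(i) for negative ones) with the logarithmic estimate of Lemma \ref{lem:log u estimate} taken at the waist time $a+\tau_2\Psi(r)$, apply Lemma \ref{lem:Bombieri} once to $e^{c}u_{\varepsilon}$ on the past cylinder $(a,a+\tau_2\Psi(r))\times\delta B$ and once to $e^{-c}u_{\varepsilon}^{-1}$ on the future cylinder $(a+\tau_2\Psi(r),a+\tau_4\Psi(r))\times\delta B$, cancel the unknown constant $c$, and let $\varepsilon\to 0$. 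The paper's own proof is four lines and silently passes over the two points you dwell on --- transferring the small-exponent subsolution mean value inequality from $u$ to $u_{\varepsilon}$ (which is not itself a subsolution), and fitting the log-lemma cylinders, whose spatial and temporal radii are coupled through $\Psi$, inside $Q$ for arbitrary $\tau_i$ and $\delta$ --- so your extra bookkeeping and the chaining step are a completion of, not a departure from, the argument.
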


\begin{proof}
Let $\varepsilon \in (0,1)$ and $u_{\varepsilon} := u + \varepsilon$.
By Corollary \ref{cor:u loc bounded}, Theorem \ref{thm:MVE supsol}, and Theorem \ref{lem:log u estimate}, we can apply Lemma \ref{lem:Bombieri} to $u_{\varepsilon}$ on 
$(a,a+\tau_2 \Psi(r)) \times \delta B$. We obtain that there is some $c$ such that 
\[ \sup_{Q^-} u_{\varepsilon} e^c \leq C. \]
Similarly, apply Lemma \ref{lem:Bombieri} to $u_{\varepsilon}^{-1}$ on $(a+\tau_2 \Psi(r), a+\tau_4\Psi(r)) \times \delta B$. We obtain that, for the same $c$ as above,
\[ \sup_{Q^+} (u_{\varepsilon} e^c)^{-1} \leq C'. \]
Hence,
\[ \sup_{Q^-} u_{\varepsilon} \leq e^{-c} C  \leq C \frac{C'}{\sup_{Q^+} u_{\varepsilon}^{-1}} \leq C C' \inf_{Q^+} u_{\varepsilon}. \]
Letting $\varepsilon \to 0$ on both sides finishes the proof.
\end{proof}

\begin{corollary} \label{cor:global PHI}
Suppose {\em Assumptions \ref{as:0} - \ref{as:VD+PI}} are satisfied globally on $Y=X$. If $C_3=C_5=0$, then the family $(\e_t,\F)$ satisfies the parabolic Harnack 
inequality {\em PHI($\Psi$)} on $X$. That is, there is a constant $C_{\mbox{\em \tiny{PHI}}}$ such that for any $a \in \R$ and any ball $B(x,4r) \subsetneq X$, any non-negative 
local weak solution $u$ of the heat equation for $L_t$ in $Q=Q(x,a,r)$, we have
\[ \sup_{Q^-} u \leq C_{\mbox{\em \tiny{PHI}}} \inf_{Q^+} u. \]
The constant $C_{\mbox{\em \tiny{PHI}}}$ depends only on $\delta$, $\tau_1, \tau_2, \tau_3, \tau_4$, $C_{\Psi}$, $\beta_1$, $\beta_2$, $C_{\mbox{\em \tiny{VD}}}$, $C_{\mbox{\em \tiny{PI}}}$, $C_0$, $C_{10}$, $C_{11}$, $C_2$, $C_4$.
\end{corollary}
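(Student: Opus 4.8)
The plan is to read the statement off from Theorem~\ref{thm:PHI}, whose hypotheses are precisely the global ($Y=X$) instance of the present ones. The only thing that requires attention is that, under the extra assumption $C_3=C_5=0$, the Harnack constant produced by Theorem~\ref{thm:PHI} can be chosen independently of the radius $r$, so that the scale restriction $r<R_0/4$ disappears.

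First I would fix a ball $B(x,4r)\subsetneq X$ and choose any auxiliary scale $R_0\in(4r,\infty)$. Since Assumptions~\ref{as:0}--\ref{as:VD+PI} are assumed to hold globally on $X$, they in particular hold on $Y=X$ up to scale $R_0$, and $0<r<R_0/4$. Because $Y=X$ is connected, $B(x,4r)\subsetneq X$ forces the sphere $\{z:d(x,z)=4r\}$ to be nonempty: otherwise $\{d(x,\cdot)<4r\}$ and $\{d(x,\cdot)>4r\}$ would be disjoint open sets covering $X$, and connectedness together with $x\in\{d(x,\cdot)<4r\}$ would give $B(x,4r)=X$. Hence $B(x,4r)\subsetneq B(x,8r)\subset X$, which is exactly the geometric hypothesis of Theorem~\ref{thm:PHI}. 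Applying that theorem to the given non-negative local weak solution $u$ of the heat equation for $L_t$ in $Q=Q(x,a,r)$ yields $\sup_{Q^-}u\le C_{\mathrm{PHI}}\inf_{Q^+}u$.

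It remains to control $C_{\mathrm{PHI}}$. By Theorem~\ref{thm:PHI} it depends only on $\delta$, $\tau_1,\tau_2,\tau_3,\tau_4$, $C_\Psi$, $\beta_1$, $\beta_2$, $C_{\mathrm{VD}}$, $C_{\mathrm{PI}}$, $C_0$, $C_{10}$, $C_{11}$ — all global constants of the data, and in particular independent of $r$ and of the auxiliary $R_0$ — together with an upper bound for $(1+C_2+C_4)+(C_3+C_5)\Psi((1-\delta)r)$. When $C_3=C_5=0$ this quantity equals $1+C_2+C_4$, a fixed constant with no dependence on $r$. Thus a single $C_{\mathrm{PHI}}$, depending only on the quantities listed in the statement, works for every $r>0$, and PHI($\Psi$) holds on $X$ with no restriction on the scale; letting $\varepsilon\to0$ in the intermediate estimate for $u_\varepsilon:=u+\varepsilon$ is not even needed here since Theorem~\ref{thm:PHI} already supplies the clean inequality.

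I do not expect any genuine obstacle: this corollary is a bookkeeping consequence of Theorem~\ref{thm:PHI}. The only points deserving a moment's care are (i) that ``globally on $Y=X$'' legitimately permits invoking the theorem at every scale, which is built into the meaning of the hypotheses; and (ii) the elementary connectedness argument ensuring $B(x,4r)\subsetneq B(x,8r)$ — both routine.
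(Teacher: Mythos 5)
Your proposal is correct and matches the route the paper intends (the corollary is stated without proof as an immediate consequence of Theorem \ref{thm:PHI}): apply the theorem with an arbitrary auxiliary scale $R_0>4r$, which is legitimate since the hypotheses hold globally, and observe that with $C_3=C_5=0$ the only $r$-dependent quantity in the Harnack constant, namely $(1+C_2+C_4)+(C_3+C_5)\Psi((1-\delta)r)$, reduces to the fixed constant $1+C_2+C_4$. Your connectedness argument supplying $B(x,4r)\subsetneq B(x,8r)$ from $B(x,4r)\subsetneq X$ is a correct and worthwhile detail, consistent with the paper's standing assumption that $Y$ (here $=X$) is connected.
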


\begin{corollary} \label{cor:Hoelder} 
Suppose {\em Assumptions \ref{as:0} - \ref{as:VD+PI}} are satisfied and each $\e_t$ is left-strongly local. Let $\delta \in (0,1)$.
Then there exist $\alpha \in (0,1)$ and $C \in (0,\infty)$ such that for any $a \in \R$, any ball $B(x,4r) \subsetneq B(x,8r) \subset Y$ with $0 < r < \frac{R_0}{4}$, 
any local weak solution $u$ of the heat equation for $L_t$ in $Q=Q(x,a,r)$ has a continuous version 
which satisfies
 \[ \sup_{(t,y),(t',y')\in Q'} \left\{ \frac{ |u(t,y) - u(t',y')| }{ [ \Psi^{-1}(|t-t'|) + d(y,y')]^{\alpha} } \right \}
\leq \frac{C}{r^{\alpha} } \sup_{Q} |u| \]
where $Q'= (a+\Psi((1-\delta)r),a+\Psi(r)) \times \delta B$.
The constant $C$ depends only on $\delta$, $C_{\Psi}$, $\beta_1$, $\beta_2$, $C_{\mbox{\em \tiny{VD}}}$, $C_{\mbox{\em \tiny{PI}}}$, $C_0$, $C_{10}$, $C_{11}$, and an upper bound on $[ (1 + C_2 + C_4) +(C_3+C_5) \Psi((1-\delta)r)]$.
\end{corollary}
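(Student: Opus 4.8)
The plan is to deduce this from the parabolic Harnack inequality of Theorem~\ref{thm:PHI} by the classical Moser oscillation argument (cf.\ \cite{Moser71}, \cite{SC02}), with the Euclidean scaling $r^2$ systematically replaced by $\Psi(r)$ and the polynomial growth \eqref{eq:beta} invoked whenever $\Psi$ must be compared at two scales. First I would record the linearity reduction that uses the new hypothesis: for any constant $c$ and any $\phi\in\F_{\mbox{\tiny{c}}}$ the left-strong locality of $\e_t$ forces $\e_t(c,\phi)=0$, so if $u$ is a local weak solution of $\partial_t u=L_t u$ in a cylinder, then $c-u$ and $u-c$ are local weak solutions there as well. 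Consequently, for a cylinder $Q_\rho:=Q(y,s-\Psi(\rho),\rho)=(s-\Psi(\rho),s)\times B(y,\rho)$ sitting inside $Q$, both $M_\rho-u$ and $u-m_\rho$ are non-negative local weak solutions in $Q_\rho$, where $M_\rho:=\esssup_{Q_\rho}u$ and $m_\rho:=\essinf_{Q_\rho}u$ (finite by Corollary~\ref{cor:u loc bounded}).

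Next I would fix once and for all admissible parameters $0<\tau_1<\tau_2<\tau_3<\tau_4=1$ (any fixed choice works, so the resulting constants will not depend on them) and pick $\kappa\in(0,1)$, depending only on $C_\Psi,\beta_1$ and these $\tau_i$ via $C_\Psi\kappa^{\beta_1}\le\tau_4-\tau_3$ and $\kappa\le\kappa$, small enough that the full cylinder $Q_{\kappa\rho}$ of radius $\kappa\rho$ with top time level $s$ is contained in the ``$Q^+$'' part of $Q_\rho$ (using Theorem~\ref{thm:PHI} with inner-radius parameter equal to $\kappa$). Applying Theorem~\ref{thm:PHI} to $M_\rho-u$ and to $u-m_\rho$ on $Q_\rho$, writing $C_{\mathrm{PHI}}$ for the Harnack constant, and adding the inequalities $\esssup_{Q^-_\rho}(M_\rho-u)\le C_{\mathrm{PHI}}\essinf_{Q^+_\rho}(M_\rho-u)$ and $\esssup_{Q^-_\rho}(u-m_\rho)\le C_{\mathrm{PHI}}\essinf_{Q^+_\rho}(u-m_\rho)$ after rearranging gives the oscillation contraction
\[ \operatorname*{osc}_{Q_{\kappa\rho}}u \ \le\ \operatorname*{osc}_{Q^+_\rho}u \ \le\ \theta\operatorname*{osc}_{Q_\rho}u, \qquad \theta:=1-\tfrac{1}{C_{\mathrm{PHI}}}\in[0,1), \]
valid whenever $Q_\rho\subset Q$ and the ball condition $B(y,4\rho)\subsetneq B(y,8\rho)\subset Y$ of Theorem~\ref{thm:PHI} holds.

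Then I would iterate. Fix a base radius $\rho_0$ comparable to $(1-\delta)r$, small enough that for every $\xi=(s,y)\in Q'$ the cylinder $Q_{\rho_0}$ with top level $s$ satisfies $Q_{\rho_0}\subset Q$ together with the required ball condition — this is the step where the base scale, and hence $C$, picks up its dependence on $\delta$. Iterating over $\rho=\kappa^k\rho_0$ yields $\operatorname*{osc}_{Q_{\kappa^k\rho_0}}u\le\theta^k\operatorname*{osc}_{Q_{\rho_0}}u\le 2\theta^k\sup_Q|u|$, hence, with $\alpha:=\log(1/\theta)/\log(1/\kappa)\in(0,1)$, a power law $\operatorname*{osc}_{Q_\rho}u\le C(\rho/r)^\alpha\sup_Q|u|$ for all $0<\rho\le\rho_0$ and all $\xi\in Q'$. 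In particular the essential oscillation of $u$ over shrinking cylinders about any point of $Q'$ tends to $0$, so $u$ has a continuous version there. Finally, for $(t,y),(t',y')\in Q'$ with (say) $t'\le t$, set $\rho:=\Psi^{-1}(|t-t'|)+d(y,y')$; since $\Psi(\rho)\ge|t-t'|$ and $d(y,y')\le\rho$, the point $(t',y')$ lies in the one-sided-in-time cylinder $Q_\rho$ with top level $t$, whence $|u(t,y)-u(t',y')|\le\operatorname*{osc}_{Q_\rho}u\le C(\rho/r)^\alpha\sup_Q|u|$ when $\rho\le\rho_0$, while the inequality is trivial after enlarging $C$ when $\rho>\rho_0$; this is exactly the asserted estimate.

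I expect the obstacles to be bookkeeping rather than conceptual. The genuinely essential use of the hypothesis ``each $\e_t$ is left-strongly local'' is in the first step, where it guarantees $\e_t(\mathrm{const},\phi)=0$ so that $M_\rho-u$ and $u-m_\rho$ are legitimate non-negative solutions; without it these need not solve the equation for a non-conservative form. The main care needed is (i) verifying that the ``$Q^+$'' sub-cylinder produced by Theorem~\ref{thm:PHI} really contains a full lower-scale cylinder once $\kappa$ is small — this is what forces the one-sided-in-time choice of cylinders and restricts the modulus estimate to the upper slab $Q'$, although the stated modulus $\Psi^{-1}(|t-t'|)+d(y,y')$ is symmetric because for two points of $Q'$ one may always look backward in time from whichever has the later time coordinate; and (ii) checking that at scale $\kappa^k\rho_0$ the quantity $(1+C_2+C_4)+(C_3+C_5)\Psi((1-\kappa)\kappa^k\rho_0)$ stays bounded by its value at the top scale (it is monotone in the radius since $\Psi$ is increasing), so that $C_{\mathrm{PHI}}$, and hence $\theta,\alpha,C$, can be taken uniform over the iteration — which reproduces exactly the list of parameters on which $C$ is claimed to depend.
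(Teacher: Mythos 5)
Your proposal is correct and is exactly the argument the paper has in mind: the paper's proof simply cites the standard oscillation/iteration argument of \cite[Proof of Theorem 5.4.7]{SC02} with $r^2$ replaced by $\Psi(r)$, and notes, as you do, that left-strong locality is needed precisely so that constants (hence $M_\rho-u$ and $u-m_\rho$) are local weak solutions. Your write-up fills in the details the paper omits; the only blemish is the typo ``$\kappa\le\kappa$'', which should read $\kappa\le\delta_{\mathrm{PHI}}$ for the inner-ball parameter of Theorem \ref{thm:PHI}.
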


\begin{proof}
The proof is standard. For instance, the reasoning in \cite[Proof of Theorem 5.4.7]{SC02} applies with only minor changes such as replacing $r^2$ by $\Psi(r)$. The left-strong locality is assumed because then constant functions are local weak solutions to the heat equation, a fact that is used in this proof.
\end{proof}

\subsection{Characterization of the parabolic Harnack inequality in the symmetric strongly local case} \label{ssec:PHI symmetric case}

It is known from the works of Grigor'yan \cite{Gri91} and Saloff-Coste \cite{SC92} that on complete Riemannian manifolds, the parabolic Harnack inequality is characterized by the volume doubling condition
together with the Poincar\'e inequality, as well as by two-sided Gaussian heat kernel bounds. For related results on fractal-type metric measure spaces with a symmetric strongly local regular Dirichlet form see, e.g., \cite{BBK06, GT12, BGK12} and references therein.

The parabolic Harnack inequality PHI($\Psi$) stated above is slightly different from the Harnack inequalities w-PHI($\Psi$) or s-PHI($\Psi$) introduced in \cite{BGK12} because, in 
defining $Q, Q^-, Q^+$, we used $\tau_i \Psi(r)$ rather than $\Psi(\tau_i r)$. Our choice is in accordance with the parabolic Harnack inequality stated in \cite{HSC01}.
In order to clarify the relation between PHI($\Psi$) and w-PHI($\Psi$), let us define time-space cylinders $\hat Q$ as follows. For $0 < \sigma_1 < \sigma_2 < \sigma_3 < \sigma_4 < 1$, set 
\begin{align*}
\hat Q &= \hat Q(x,a,r) = (a,a + \Psi(r)) \times B, \\
 \hat Q^- &= (a + \Psi(\sigma_1 r), a + \Psi(\sigma_2 r)) \times \delta B, \\ 
 \hat Q^+ &= (a + \Psi(\sigma_3 r), a + \Psi(\sigma_4 r)) \times \delta B.
\end{align*}
Let $\F'$ be the dual space of $\F$.
\begin{definition}
$(\e^*,\F)$ satisfies the {\em weak parabolic Harnack inequality} w-PHI($\Psi$) on $X$ (for local weak solutions) if there is a constant
 $C \in (0,\infty)$ such that for any $a \in \R$, any ball $B(x,r) \subset X$, and any {\em bounded} local weak solution $u$
of the heat equation for $L_t$ in $\hat Q = \hat Q(x,a,r)$, it holds
\[ \sup_{\hat Q^-} u \leq C \inf_{\hat Q^+} u. \]
\end{definition}

\begin{remark}
 In fact, \cite{BGK12} introduced the condition w-PHI($\Psi$) for a space of so-called caloric functions. We show in Proposition \ref{prop:very weak is caloric} below that
local weak solutions have all the properties that define a space of caloric functions.
\end{remark}

\begin{proposition} \label{prop:VD,PI,CSA equiv}
Let $(X,d,\mu,\e^*,\F)$ be a symmetric strongly local regular Dirichlet space. Assume that all metric balls in $(X,d)$ are precompact and {\em VD} is satisfied. Let $\Psi$ be as in \eqref{eq:beta} and 
consider
\begin{enumerate}
\item
$(\e^*,\F)$ satisfies {\em PI($\Psi$)}, and {\em CSA($\Psi$)} on $X$,
\item
$(\e^*,\F)$ satisfies {\em PHI($\Psi$)} on $X$,
\item
$(\e^*,\F)$ satisfies {\em w-PHI($\Psi$)} on $X$ (for local weak solutions),
\item
$(\e^*,\F)$ satisfies {\em weak-PI($\Psi$)}, and {\em CSA($\Psi$)} on $X$.
\end{enumerate}
The following implications hold:
\[ (i) \Rightarrow (ii) \Rightarrow (iii) \Rightarrow (iv). \]
If, in addition, $d$ is geodesic, then $ (iv) \Rightarrow (i)$.
\end{proposition}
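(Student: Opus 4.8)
The plan is to prove the four implications separately, each by reduction to a result already in hand.

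\textbf{$(i)\Rightarrow(ii)$.} I would apply Theorem \ref{thm:PHI} to the constant family $\e_t\equiv\e^*$, $t\in\R$, with $Y=X$ and $R_0=\infty$, so it remains only to check that $(\e^*,\F)$, viewed as such a family, satisfies Assumptions \ref{as:0}--\ref{as:VD+PI}. Symmetry gives $\e_t^{\mbox{\tiny{skew}}}\equiv0$, and strong locality forces $\e^*(h,1)=0$ for all $h\in\F$ (apply \eqref{eq:CS} with $g\equiv1$ and $\Gamma(1,1)=0$), so $\e_t^{\mbox{\tiny{sym}}}(\cdot,1)\equiv0$ and $\l_t\equiv\r_t\equiv0$; hence Assumptions \ref{as:skew1} and \ref{as:skew2} hold with $C_2=C_3=C_4=C_5=C_{11}=0$, and Assumption \ref{as:0} holds with $C_*=C=1$, $c=\alpha=1$ (its product and chain rules for $\l_t$ being vacuous). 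Assumption \ref{as:VD+PI} with $Y=X$ is precisely VD (standing hypothesis), (A2-$X$) (from precompactness of all balls), and PI($\Psi$) and CSA($\Psi$) (hypothesis (i)). Since $C_3=C_5=0$ the Harnack constant is independent of $r$ (cf.\ Corollary \ref{cor:global PHI}), which gives PHI($\Psi$) on $X$.

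\textbf{$(ii)\Rightarrow(iii)$.} Given $0<\sigma_1<\sigma_2<\sigma_3<\sigma_4<1$ and $\delta\in(0,1)$, I would put $\tau_i:=\Psi(\sigma_ir)/\Psi(r)$, so that $0<\tau_1<\tau_2<\tau_3<\tau_4<1$ and the cylinders $Q,Q^-,Q^+$ built from the $\tau_i$ coincide with $\hat Q,\hat Q^-,\hat Q^+$; then PHI($\Psi$) with these parameters is exactly w-PHI($\Psi$) for the given $\sigma_i$. The only point needing care is $r$-independence of the constant: by \eqref{eq:beta} each $\tau_i$ lies in a fixed compact subinterval of $(0,1)$, but the separation $\tau_3-\tau_2$ need not be bounded below; when it is not, one inserts finitely many intermediate scales $\sigma_2=\rho_0<\dots<\rho_k=\sigma_3$ (with $k$ depending only on $\sigma_3/\sigma_2$, $\beta_1$, $C_\Psi$) and composes the Harnack inequality along the corresponding chain of time-translated cylinders, obtaining a constant depending only on $\delta$, the $\sigma_i$, and the structural constants. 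The restriction to bounded solutions in w-PHI($\Psi$) is harmless, since a bounded non-negative local weak solution is in particular a non-negative local weak solution.

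\textbf{$(iii)\Rightarrow(iv)$.} This is the substantial step. By Proposition \ref{prop:very weak is caloric}, local weak solutions for $\e^*$ form a space of caloric functions in the sense of \cite{BGK12}, so w-PHI($\Psi$) holds on that space; together with VD this places us in the framework of \cite{BGK12} (and, in the strongly local case, \cite{GHL15}). The semigroup then admits a heat kernel $p_t(x,y)$ (existence and H\"older regularity following from VD and w-PHI), and one derives the on-diagonal upper bound $p_t(x,x)\le C/V(x,\Psi^{-1}(t))$ — directly from w-PHI applied to $p_\cdot(x,\cdot)$, since $\inf\le$ average — and then the matching near-diagonal lower bound $p_{\Psi(r)}(x,y)\ge c/V(x,r)$ for $d(x,y)\le\delta r$ by the usual $L^2$/chaining argument with VD. From these two-sided sub-Gaussian bounds, weak-PI($\Psi$) follows by the classical argument of \cite{SC02,SturmIII} (estimate $\int_B|f-f_B|^2$ via $\|(I-P_{\Psi(R)})f\|_2^2\le\Psi(R)\,\e^*(f,f)$ together with the space-regularity of the kernel), and CSA($\Psi$) follows because these bounds yield the exit-time estimate $E_x(B(x,r))\asymp\Psi(r)$, which together with VD implies CSA($\Psi$) by \cite{AB15} (see also \cite{GHL15}). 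I expect this last point to be the main obstacle: one needs CSA($\Psi$) itself, not merely the a priori weaker generalized capacity condition of \cite{GHL15}, and this is exactly where the full strength of \cite{AB15} enters.

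\textbf{$(iv)+\text{geodesic}\Rightarrow(i)$.} This is immediate: (iv) already supplies CSA($\Psi$), while by Remark \ref{rem:strong and weak PI} the weak Poincar\'e inequality weak-PI($\Psi$), together with VD, (A2-$X$) (precompactness of balls), and the geodesic hypothesis, implies the strong Poincar\'e inequality PI($\Psi$) — via the weighted Poincar\'e inequality with weight $1_{B(x,R)}$ obtained by Whitney covering and chaining, as in \cite[Section 5.3]{SC02}. Hence PI($\Psi$) and CSA($\Psi$) both hold, i.e.\ (i).
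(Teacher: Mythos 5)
Your overall architecture matches the paper's: $(i)\Rightarrow(ii)$ is exactly Corollary \ref{cor:global PHI} applied to the constant symmetric family (your verification that $\e^{\mbox{\tiny{skew}}}\equiv 0$, $\e^{\mbox{\tiny{sym}}}(\cdot,1)\equiv 0$ and hence $C_2=\dots=C_{11}=0$ is the content the paper leaves implicit); $(iii)\Rightarrow(iv)$ goes, as in the paper, through Proposition \ref{prop:very weak is caloric} and the equivalence of w-PHI($\Psi$) with weak heat kernel estimates from \cite{BGK12}, then to weak-PI($\Psi$) and CSA($\Psi$) (the paper cites \cite{LierlBHPf} for this last step and handles continuity of the cutoffs via H\"older continuity of the kernel; you correctly flag that obtaining full CSA($\Psi$) rather than the generalized capacity condition is the delicate point); and $(iv)\Rightarrow(i)$ is Remark \ref{rem:strong and weak PI} in both treatments.

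The one place you diverge is $(ii)\Rightarrow(iii)$, and there your argument has a problem. You aim to prove w-PHI($\Psi$) for \emph{arbitrary prescribed} $\sigma_1<\dots<\sigma_4$, setting $\tau_i=\Psi(\sigma_i r)/\Psi(r)$ and then repairing the $r$-dependence of the Harnack constant by chaining. But w-PHI($\Psi$) is existential in the parameters (see the discussion in the introduction: the constant need only exist for \emph{some} choice), so this extra strength is not needed; and the chaining you propose is not justified. The obstruction is that \eqref{eq:beta} does not bound $\bigl(\Psi(\sigma_3 r)-\Psi(\sigma_2 r)\bigr)/\Psi(r)$ away from $0$ uniformly in $r$ (only the two-sided power bounds with the constant $C_\Psi$ hold, and these are compatible with $\Psi$ being nearly flat on the window $[\sigma_2 r_k,\sigma_3 r_k]$ along a sequence $r_k$). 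When the relative time gap between $\hat Q^-$ and $\hat Q^+$ degenerates, no bounded number of time-translated applications of PHI($\Psi$) can produce a uniform constant --- already for the classical heat equation the Harnack constant blows up as the time gap shrinks relative to the natural scale --- and bridging a fixed spatial distance $\sim\delta r$ in a short time would require spatial Harnack chains, hence a chain condition on $d$, which is not assumed here. The correct (and simpler) move, which is what the paper does, is to choose fixed $\tau_i$ and $\sigma_i$ together using \eqref{eq:beta} so that $\hat Q^-\subset Q^-$ and $\hat Q^+\subset Q^+$ for all $r$: pick $\tau_4,\sigma_4$ with $C_\Psi^{-1}\tau_4\sigma_4^{-\beta_1}>1$, then $\tau_3<\tau_4$, $\sigma_3<\sigma_4$ with $C_\Psi\tau_3\sigma_3^{-\beta_2}<1$, and similarly for the minus cylinders; the strong PHI for those $\tau_i$ then yields w-PHI for those $\sigma_i$ with an $r$-independent constant. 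With that replacement your proof is complete.
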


\begin{proof}
The implication (i) to (ii) is the content of Corollary \ref{cor:global PHI}. 
To verify the implication (ii) to (iii), it suffices to find parameters $\tau_i$ and $\sigma_i$ such that
 $\hat Q^- \subset Q^-$ and $\hat Q^+ \subset Q^+$. By \eqref{eq:beta}, we have
\begin{align*}
\frac{\tau_4 \Psi(r)}{\Psi(\sigma_4 r)} \geq C_{\Psi}^{-1} \tau_4 \sigma_4^{-\beta_1},
\end{align*} 
for any $\tau_4,\sigma_4 \in (0,1)$. 
We pick $\tau_4$ and $\sigma_4$ such that the right hand side is greater than $1$. 
Applying \eqref{eq:beta} once again, we get
\begin{align*}
\frac{\tau_3 \Psi(r)}{\Psi(\sigma_3 r)} \leq C_{\Psi} \tau_3 \sigma_3^{-\beta_2},
\end{align*} 
for any $\tau_3,\sigma_3 \in (0,1)$. 
We pick $\tau_3 < \tau_4$ and $\sigma_3< \sigma_4$ such that the right hand side is less than $1$. Then $\hat Q^+ \subset Q^+$. 
Similarly, we find $0 < \tau_1 < \tau_2 < \tau_3$ and $0 < \sigma_1 < \sigma_2 < \sigma_3$ such that $\hat Q^- \subset  Q^-$. 

Under VD, condition w-PHI($\Psi$) is equivalent to weak heat kernel estimates (w-HKE($\Psi$) and w-LLE($\Psi$)) by \cite[Theorem 3.1]{BGK12}. 
Under VD, these heat kernel estimates imply the weak Poincar\'e inequality weak-PI($\Psi$) and CSA($\Psi$) by \cite[Theorem 2.12]{LierlBHPf} except for the continuity of the cutoff functions which follows from the H\"older continuity of the Dirichlet heat kernel, which is a consequence of the parabolic Harnack inequality; see also \cite{AB15, BBK06}.
This proves that (iii) implies (iv).
For the implication (iv) $\Rightarrow$ (i) we refer to Remark \ref{rem:strong and weak PI}.
\end{proof}

\begin{definition}
The reverse volume doubling property (RVD) holds if there are constants $C_{\mbox{\tiny{RVD}}}$ and $\nu_0 \in [1,\infty)$ such that
\begin{align} \label{eq:rvd inequality}
\frac{\mu(B(x,R))}{\mu(B(y,s))} \geq C_{\mbox{\tiny{RVD}}} \left(\frac{R}{s} \right)^{\nu_0}
\end{align}
for any $0 < s \le R$, $x \in X$, $y \in B(x,R)$ with $X \setminus B(x,R) \neq \emptyset$.
\end{definition}

\begin{remark}
\begin{enumerate}
\item
Suppose in addition to the  hypotheses of Proposition \ref{prop:VD,PI,CSA equiv} that   RVD holds.
Then condition CSA($\Psi$) in (iv) can equivalently be replaced by the generalized capacity condition introduced in \cite{GHL15}. 
Moreover, under RVD, (iv) is equivalent to a weak upper bound and a 
weak near-diagonal lower bound for the heat kernel, see \cite[Theorem 1.2]{GHL15}. The weak heat kernel bounds imply (iii) by \cite[Theorem 3.1]{BGK12}.
\item
If the metric space $(X,d)$ is not geodesic then (iii) may fail to imply (ii). See \cite{BGK12} for a counterexample on a non-geodesic space.
\item
For the implication (iv) $\Rightarrow$ (i), the hypothesis that $(X,d)$ is geodesic could be replaced by a chaining condition. Then the strong Poincar\'e inequality can be derived from
the weak Poincar\'e inequality by a Whitney covering argument; see, e.g. \cite{SC02}.
\end{enumerate}
\end{remark}

{\em Conjecture:}
 The strong parabolic Harnack inequality PHI($\Psi$) implies the strong Poincar\'e inequality PI($\Psi$), that is, (ii) $\Leftrightarrow$ (i) in Proposition \ref{prop:VD,PI,CSA equiv}.

\section{Estimates for the heat propagator}   \label{sec:heat propagator}

Let $(\e_t,\F)$ be a family of bilinear forms that satisfies Assumptions \ref{as:0}, \ref{as:skew1}, \ref{as:skew2} globally on $Y=X$ with respect to the reference form $(\e^*,\F)$. Observe that the bilinear forms $\hat \e_t(f,g) := \e_t(g,f)$ satisfy the same assumptions. In addition, we suppose that 
Assumption \ref{as:VD+PI} is satisfied locally on $X$, that is, every point $x \in X$ has a neighborhood $Y_x = B(x,8r_x)$ where Assumption \ref{as:VD+PI} is satisfied with $Y=Y_x$ up to scale $R_0=4r_x$ and $B(x,4r_x) \subsetneq Y_x$. 
Recall that $\alpha$ and $c$ are positive constants introduced in Assumption \ref{as:0}(vi).

\begin{proposition} \label{prop:IVP}
Let $s < T \le +\infty$. 
For every $f \in L^2(X)$ there exists a unique weak solution $u$ to the heat equation for $L_t$ on $(s,T) \times X$ satisfying the initial condition $u(s,\cdot) = f$. 
More precisely, there exists a unique $u \in L^2( (s,T) \to \F) $ of the initial value problem
\begin{align} \begin{split} \label{eq:initial value problem}
& \int_s^{T} \left\langle \frac{\partial}{\partial t} u, \phi \right\rangle_{\F',\F} dt + \int_s^{T} \e_t(u,\phi) dt = 0,   \qquad \mbox{ for all } \phi \in L^2( (s,T) \to \F), \\
& \lim_{t \downarrow s} u(t,\cdot) = f \quad \mbox{ in } L^2(X). 
\end{split}
\end{align}
In particular, $u$ has a weak time-derivative $\frac{\partial}{\partial t} u \in L^2((s,T) \to \F')$ and $u \in C^0( [s,T] \to L^2(X))$.
\end{proposition}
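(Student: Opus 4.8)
The plan is to apply the standard Lions–Lax–Milgram theory for parabolic evolution equations (see \cite{LM68, Wlo87en}), which is exactly the framework that Assumption \ref{as:0}(i) and (vi) are designed to fit. First I would reduce to the homogeneous case with the shift trick: set $v(t,\cdot) := e^{-\alpha(t-s)} u(t,\cdot)$ and observe that $u$ solves \eqref{eq:initial value problem} if and only if $v$ solves the analogous problem with $\e_t$ replaced by $\e_t^\alpha(f,g) := \e_t(f,g) + \alpha\int fg\, d\mu$. By Assumption \ref{as:0}(vi), the forms $\e_t^\alpha$ are uniformly coercive on $\F$: $\e_t^\alpha(f,f) \geq c\|f\|_\F^2$; by Assumption \ref{as:0}(i) together with $\|f\|_2 \leq \|f\|_\F$, they are uniformly bounded: $|\e_t^\alpha(f,g)| \leq (C_* + \alpha)\|f\|_\F\|g\|_\F$; and by hypothesis $t\mapsto \e_t^\alpha(f,g)$ is measurable for each $f,g\in\F$. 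This is precisely the Gelfand triple setup $\F \hookrightarrow L^2(X,\mu) \hookrightarrow \F'$ with a measurable family of bounded coercive forms.

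Next I would invoke the abstract existence and uniqueness theorem of Lions (see \cite[Ch.~III]{LM68} or \cite[Thm.~26.1]{Wlo87en}): for a Gelfand triple $V\hookrightarrow H \hookrightarrow V'$ and a family of forms satisfying measurability, boundedness, and coercivity as above, for every $f\in H$ and every $F \in L^2((s,T)\to V')$ there is a unique $v \in L^2((s,T)\to V)$ with $\partial_t v \in L^2((s,T)\to V')$ solving $\partial_t v + A_t v = F$ in $V'$ for a.e.\ $t$ with $v(s) = f$. Here $V = \F$, $H = L^2(X,\mu)$, $F = 0$, and $A_t$ is the operator associated with $\e_t^\alpha$. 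The theorem also gives the regularity statement $v \in C^0([s,T]\to H)$ (after modification on a null set), which is a consequence of the fact that any element of $L^2((s,T)\to V)$ with distributional time derivative in $L^2((s,T)\to V')$ has a continuous representative into $H$, with the integration-by-parts / energy identity $\frac{d}{dt}\|v(t)\|_H^2 = 2\langle \partial_t v(t), v(t)\rangle_{V',V}$ holding in the sense of distributions. Transferring back through $u(t,\cdot) = e^{\alpha(t-s)} v(t,\cdot)$ preserves membership in $L^2_{\mathrm{loc}}((s,T)\to\F)$, the weak-derivative property, and the continuity into $L^2(X)$ (on any compact subinterval; if $T = +\infty$ one works on $(s,T')$ for each finite $T' > s$ and uses uniqueness to patch), and it reinstates $\e_t$ in place of $\e_t^\alpha$ in the weak formulation. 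The limit $u(t,\cdot) \to f$ as $t\downarrow s$ in $L^2(X)$ is then just the continuity at the endpoint.

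Finally I should check that the weak formulation in \eqref{eq:initial value problem}, which tests against $\phi \in L^2((s,T)\to\F)$, is the correct one and is equivalent to the pointwise-in-$t$ equation $\partial_t u + A_t^\alpha u = 0$ delivered by the abstract theorem; this is a routine density argument using Fubini and the measurability of $t\mapsto\e_t(u(t,\cdot),\phi(t,\cdot))$, which follows from Assumption \ref{as:0}. The main obstacle, such as it is, is bookkeeping rather than substance: one must make sure the hypotheses of the Lions theorem are verified \emph{uniformly in $t$} (the coercivity and boundedness constants $c$, $C_*$, $\alpha$ do not depend on $t$, so this is fine), and one must handle the case $T = +\infty$ by exhaustion and uniqueness. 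There is also the minor point that for the identification $L^2((s,T)\to L^2(X,\mu)) \cong L^2((s,T)\times X)$ one uses the density of $C_c$ functions, which is already recalled in Section \ref{ssec:very weak solutions}. No new geometric input (VD, PI, CSA) is needed here — this proposition is purely functional-analytic and sits logically before the Moser machinery.
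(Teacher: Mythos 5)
Your proposal is correct and follows essentially the same route as the paper: the paper's proof likewise reduces to the coercive case via the substitution $e^{-\alpha(t-s)}u$ using Assumption \ref{as:0}(vi) and then cites \cite[Chap.~3, Theorem 4.1 and Remark 4.3]{LM68}. The additional bookkeeping you supply (verifying the Gelfand-triple hypotheses, the equivalence of the weak formulations, and the exhaustion argument for $T=+\infty$) is consistent with what the cited reference provides.
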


\begin{proof}
The proof for the case when $\e_t$ is non-negative definite is given in \cite[Chap.~3, Theorem 4.1 and Remark 4.3]{LM68}. 
For the general case, it suffices to notice that $\e_t + \alpha \langle \cdot, \cdot \rangle$ is positive definite by Assumption \ref{as:0}(vi), and
$u$ is a solution to the initial value problem for $L_t$ if and only if $e^{-\alpha(t-s)} u$ is a solution to the initial value problem for $L_t - \alpha$.
\end{proof}

For $t>s$ we consider the \emph{transition operator} associated with $L_t - \frac{\partial}{\partial t}$,
\[ T^s_t:L^2(X) \to \F. \]
The transition operator assigns to every $f \in L^2(X)$ the function $u(t) = T^s_t f \in \F$,  where $u:t \mapsto T^s_t f$ is the unique solution of the initial value 
problem \eqref{eq:initial value problem} with $T = +\infty$ given by Proposition \ref{prop:T^s_t}.
We set $T^s_s f := \lim_{t \downarrow s} T^s_tf = f$. From Corollary \ref{cor:Hoelder}, we obtain that $(t,y) \mapsto T^s_t f(y)$ has a jointly continuous version which we will denote by $P^s_t f(y)$.  
The transition operators satisfy
\begin{align} \label{eq:semigroup property T^s_t}
T^r_t f = T^s_t \circ T^r_s f, \quad \forall r \leq s \leq t, \ f \in L^2(X).
\end{align}
This follows from the fact that both $t \mapsto T^r_t f$ and $t \mapsto T^s_t \circ T^r_s f$ are weak solutions of the heat equation on $(s,\infty) \times X$ and satisfy the initial 
condition $T^r_t f \big|_{t=s} = T^r_s f = T^s_t \circ T^r_s f \big|_{t=s}$, and because the weak solution to this initial value problem is unique by Proposition \ref{prop:IVP}. 
Moreover, applying Assumption \ref{as:0}(vi) it follows that 
\begin{align} \label{eq:alpha-contraction} 
\| T^s_t f \|_{L^2} \leq e^{(\alpha-c)(t-s)} \| f \|_{L^2}, \quad \forall f \in L^2(X).
\end{align}

\begin{proposition} \label{prop:T^s_t pos preserving}
The transition operators $T^s_t f$, $s \le t$, are positivity preserving. That is, if 
$f \in L^2(X)$, $f \ge 0$, then $T^s_t f \ge 0$.
\end{proposition}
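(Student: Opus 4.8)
The strategy is the energy method: test the weak formulation \eqref{eq:initial value problem} with truncations of the negative part $u^-$ of the solution, and conclude via Gronwall's inequality that $u^-\equiv 0$. Fix $f\in L^2(X)$ with $f\ge 0$ and set $u(t):=T^s_t f$; by Proposition \ref{prop:IVP}, $u$ solves \eqref{eq:initial value problem} (with $T=+\infty$), lies in $L^2((s,T)\to\F)$ for every finite $T>s$, has weak time-derivative $\partial_t u\in L^2((s,T)\to\F')$, and belongs to $C^0([s,T]\to L^2(X))$. For a positive integer $n$ let $j_n$ be the primitive of $x\mapsto -(x^-\wedge n)$ with $j_n(0)=0$; then $j_n$ is convex and Lipschitz, $j_n\ge 0$, and $j_n(x)\uparrow \tfrac12(x^-)^2$ as $n\to\infty$. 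Since $g\mapsto g^-\wedge n$ is a normal contraction on $\F$, the function $\phi_n:=(u^-\wedge n)\,\mathbf{1}_{(s,t)}$ is an admissible test function in $L^2((s,T)\to\F)$, and the chain rule for functions in the Gelfand triple $\F\hookrightarrow L^2(X)\hookrightarrow\F'$ (see \cite{LM68,Wlo87en}), together with $u(s)=f\ge 0$ and hence $j_n(f)=0$, gives $\int_s^t\langle\partial_\tau u,u^-\wedge n\rangle_{\F',\F}\,d\tau=-\int_X j_n(u(t))\,d\mu$. Plugging $\phi_n$ into \eqref{eq:initial value problem} then yields $\int_X j_n(u(t))\,d\mu=\int_s^t\e_\tau(u,u^-\wedge n)\,d\tau$.

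The key point is that $\e_\tau(u^+,u^-\wedge n)=0$ for a.e.\ $\tau$. Since $u^+\cdot(u^-\wedge n)\equiv 0$, which belongs to $\F_{\mbox{\tiny{c}}}(X)$, the decomposition $\e_\tau=\e_\tau^{\mathrm{s}}+\e_\tau^{\mathrm{sym}}(\cdot\,\cdot\,,1)+\l_\tau+\r_\tau$ applies to this pair. The term $\e_\tau^{\mathrm{sym}}(u^+(u^-\wedge n),1)$ vanishes trivially. For the strongly local symmetric part, $\e_\tau^{\mathrm{s}}(u^+,u^-\wedge n)=\int d\Gamma_\tau(u^+,u^-\wedge n)$; applying the chain rule \eqref{eq:chain rule for Gamma} for $\Gamma_\tau$ (legitimate since $u^-\wedge n\in\F\cap L^\infty(X)$) and using that $u^-\wedge n=0$ quasi-everywhere on the quasi-open set $\{u>0\}$, one gets $d\Gamma_\tau(u^+,u^-\wedge n)=\mathbf{1}_{\{u>0\}}\,d\Gamma_\tau(u,u^-\wedge n)=0$. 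The first-order terms vanish by the product and chain rules for $\l_\tau$ in Assumption \ref{as:0}(iv)--(v): approximating $x\mapsto x^+$ and $x\mapsto x^-\wedge n$ by $\mathcal C^2$ functions and passing to the limit via the continuity and $\F\times\F$-extension of $\l_\tau,\r_\tau$ (cf.\ \cite[Section 7.2]{LierlSC2}), one obtains $\l_\tau(u^+,u^-\wedge n)=\l_\tau(u,\mathbf{1}_{\{u>0\}}(u^-\wedge n))=0$ and $\r_\tau(u^+,u^-\wedge n)=-\l_\tau(u^-\wedge n,u^+)=-\l_\tau(u,-\mathbf{1}_{\{-n<u<0\}}u^+)=0$. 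Hence $\e_\tau(u,u^-\wedge n)=-\e_\tau(u^-,u^-\wedge n)$.

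Now let $n\to\infty$. Since $u^-\wedge n\to u^-$ in $\F$, the continuity of $\e_\tau$ on $\F\times\F$ (Assumption \ref{as:0}(i)) gives $\e_\tau(u^-,u^-\wedge n)\to\e_\tau(u^-,u^-)$ for a.e.\ $\tau$, with the dominating function $C_*\|u^-(\cdot)\|_{\F}^2\in L^1(s,t)$ justifying the passage under the $\tau$-integral; and $\int_X j_n(u(t))\,d\mu\uparrow\tfrac12\|u^-(t)\|_{L^2}^2$ by monotone convergence. This yields $\tfrac12\|u^-(t)\|_{L^2}^2=-\int_s^t\e_\tau(u^-,u^-)\,d\tau$. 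By Assumption \ref{as:0}(vi), $\e_\tau(v,v)\ge c\|v\|_{\F}^2-\alpha\|v\|_{L^2}^2\ge-\alpha\|v\|_{L^2}^2$, so $\|u^-(t)\|_{L^2}^2\le 2\alpha\int_s^t\|u^-(\tau)\|_{L^2}^2\,d\tau$. As $t\mapsto\|u^-(t)\|_{L^2}^2$ is continuous and vanishes at $t=s$, Gronwall's inequality forces $u^-(t)=0$ for all $t\ge s$, i.e.\ $T^s_t f\ge 0$.

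The main obstacle is the identity $\e_\tau(u^+,u^-\wedge n)=0$: because the forms $\e_t$ are neither Markovian nor symmetric, it cannot be read off from a Beurling--Deny-type criterion and must instead be extracted from the decomposition and the chain and product rules of Assumption \ref{as:0} (and the extension of $\l_t,\r_t$ to $\F\times\F$). This is precisely where the technical care is needed: approximating the Lipschitz functions $x^+$ and $x^-\wedge n$ by $\mathcal C^2$ functions and controlling the resulting limits. The role of the truncation $\wedge n$ is exactly to render the second argument bounded so that the chain rules apply, the truncation being removed afterwards by $\F$-convergence.
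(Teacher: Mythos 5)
Your proof is correct and follows essentially the same route as the paper: test the weak formulation with the negative part of $u=T^s_tf$, annihilate the cross term $\e_\tau(u^+,u^-)$ by locality, and conclude from coercivity that $u^-\equiv 0$. The only differences are cosmetic --- the paper first reduces to the non-negative definite case via the rescaling $e^{-\alpha(t-s)}T^s_t$ and tests with $u-u^+$ directly rather than with the truncation $u^-\wedge n$ followed by Gronwall, and it simply invokes ``locality'' where you derive $\e_\tau(u^+,u^-\wedge n)=0$ from the decomposition and the chain and product rules.
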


\begin{proof} 
Since $e^{-\alpha(t-s)}T^s_t f$ is the transition operator for $L_t - \alpha$, and $e^{-\alpha(t-s)}T^s_t f \ge 0 $ if and only if $e^{-\alpha(t-s)}T^s_t f \ge 0$, and by Assumption \ref{as:0}(vi), it suffices to give the proof for the case when $\e$ is non-negative definite. 

Take $u = T^s_t f$ and $\phi = u-u^+$ in \eqref{eq:initial value problem}.
Let $u^+ := \max\{u,0\}$. By locality, $\e(u,u-u^+) \ge 0$. We also have $\left\langle  \frac{\partial}{\partial t} (u - u^+ ),u \right\rangle_{\F',\F} \le 0$. 
Therefore,
\begin{align*}
0 & \ge \int_s^{T} \left\langle \frac{\partial}{\partial t} u, u - u^+ \right\rangle_{\F',\F} dt  \\
& \ge \int_s^{T} \frac{\partial}{\partial t} \left\langle u, u - u^+ \right\rangle_{\F',\F} dt = \left\langle u(T), u(T) - u^+(T) \right\rangle_{\F',\F} - \left\langle u(s), u(s) - u^+(s) \right\rangle_{\F',\F}.
\end{align*}
Since $u(s) = f \ge 0$, we have $u(s)-u^+(s) = 0$ and therefore $\left\langle u(T), u(T) - u^+(T) \right\rangle_{\F',\F} \le 0$. Thus, $u(T) = u^+(T) \ge 0$. 
\end{proof}

Similarly, there exist transition operators $S^t_s$, $s \le t$, corresponding to the heat equation for the adjoints $\hat L_s$ of the time-reversed generators $L_s$. It is immediate from \eqref{eq:initial value problem} that $S^t_s$ is the adjoint of $T^s_t$.
 Let $Q^t_s f$ be the continuous version of $S^t_s f$ which exists by Corollary \ref{cor:Hoelder}.

\begin{proposition} \label{prop:T^s_t}
There exists a unique
integral kernel $p(t,y,s,x)$ with the following properties:
\begin{enumerate}
\item
$p(t,y,s,x)$ is non-negative and jointly continuous in $(t,y,x) \in (s,\infty) \times X \times X$.
 \item
For every fixed $s < t$ and $y \in X$, the maps $x \mapsto p(t,y,s,x)$ and $y \mapsto p(t,y,s,x)$ are in $L^2(X)$.
\item
For every $s< t$, all $x,y \in X$ and every $f \in L^2(X)$,
 \[ P^s_t f(y) = \int_X p(t,y,s,x) f(x) \mu(dx). \]
and
 \[ Q^t_s f(x) = \int_X p(t,y,s,x) f(y) \mu(dy). \]
 \item
There exists a constant $C \in (0,\infty)$ such that, for every $s < t$ and $x\in X$,
\[ p(t,x,s,x) 
\le e^{(\alpha-c)(t-s)} \frac{C}{V(x,\tau_x)}, \]
where $\tau_x = r_x \wedge \Psi^{-1}(2(t-s))$, and
and $C$ depends at most on $\beta_1$, $\beta_2$, $C_{\Psi}$, $C_0$, $C_{10}$, $C_{11}$, $C_{\mbox{\tiny \em VD}}$, $C_{\mbox{\tiny \em PI}}$, and on an upper bound on $(1+C_2+C_3\Psi(\tau_x))$. 
\item
For every $s<r<t$ and all $x,y \in X$,
 \[ p(t,y,s,x) = \int_X p(t,y,r,z) p(r,z,s,x) d\mu(z). \]
\item
For every $s < r$ and every fixed $x \in X$, the map $(t,y) \mapsto p(t,y,s,x)$ is a weak solution of the heat equation for $L_t$ in $(r,\infty) \times X$. 
\end{enumerate}
\end{proposition}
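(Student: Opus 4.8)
\emph{Overview and Step 1 (local ultracontractivity and the $L^2$ kernel).} The plan is to extract from the Moser mean value estimates a local $L^2\to L^\infty$ smoothing bound for the propagators, use duality to produce $p$ as an $L^2$ kernel, and then upgrade it to a jointly continuous function by combining the parabolic H\"older estimate of Corollary~\ref{cor:Hoelder} with the Chapman--Kolmogorov identity. Fix $f\in L^2(X)$, $s<t$, $y\in X$, and set $\rho:=r_y\wedge\Psi^{-1}\!\big(\tfrac{t-s}{2}\big)$, so that $Q^-(y,t,\rho)\subset(s,\infty)\times Y_y$ and $B(y,8\rho)\subset Y_y$. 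Since $|P^s_\cdot f|$ is a non-negative, locally bounded local very weak subsolution on $(s,\infty)\times X$ (Corollary~\ref{cor:u loc bounded} and \cite[Proposition~3.4]{LierlSC2}), Theorem~\ref{thm:MVE subsol p>2} with $p=2$ together with the $L^2$-contraction \eqref{eq:alpha-contraction} gives
\[ |P^s_t f(y)|^2\le\frac{C}{\Psi(\rho)V(y,\rho)}\int_{t-\Psi(\rho)}^{t}\|P^s_\tau f\|_{L^2}^2\,d\tau\le\frac{C\,e^{2(\alpha-c)(t-s)}}{V(y,\rho)}\|f\|_{L^2}^2, \]
where I used that $(t,y)$ lies in the closure of the smaller cylinder and $|P^s_\cdot f|$ is continuous there. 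The same argument applied to the forms $\hat\e_t(f,g):=\e_t(g,f)$ yields the analogous bound for $Q^t_s$. Thus, for fixed $s<t$ and $y$, the map $f\mapsto P^s_t f(y)$ is a bounded linear functional on $L^2(X)$, and Riesz representation furnishes $p(t,y,s,\cdot)\in L^2(X)$ with $\|p(t,y,s,\cdot)\|_{L^2}\le Ce^{(\alpha-c)(t-s)}V(y,\rho)^{-1/2}$ and $P^s_t f(y)=\int p(t,y,s,x)f(x)\,d\mu(x)$; similarly $Q^t_s$ is given by an $L^2$ kernel. Since $S^t_s=(T^s_t)^*$, i.e.\ $\int(P^s_t f)g\,d\mu=\int f\,(Q^t_s g)\,d\mu$ for all $f,g\in L^2(X)$, the two kernels coincide, which proves (ii) and (iii); and $\int p(t,y,s,x)f(x)\,d\mu(x)=P^s_t f(y)\ge0$ for $f\ge0$ by Proposition~\ref{prop:T^s_t pos preserving}, so $p\ge0$.

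\emph{Step 2 (Chapman--Kolmogorov, distinguished version, and (vi)).} From $T^s_t=T^r_t\circ T^s_r$ (a consequence of \eqref{eq:semigroup property T^s_t}) and Fubini (legitimate because every kernel is $L^2$ in the integrated slot) one obtains, for $s<r<t$, the equality $p(t,y,s,\cdot)=\int p(t,y,r,z)p(r,z,s,\cdot)\,d\mu(z)$ in $L^2(X)$, i.e.\ $p(t,y,s,x)=\langle p(t,y,r,\cdot),p(r,\cdot,s,x)\rangle_{L^2(X)}$ for $\mu$-a.e.\ $x$. I take this right-hand side (for any fixed $r\in(s,t)$) as the definition of the distinguished version of $p(t,y,s,x)$; a second application of the identity shows it is independent of $r$ and agrees $\mu$-a.e.\ with the $L^2$ kernel of Step~1, so (ii), (iii), (v) hold for it. For (vi), fix $s<r$ and $x$: then $p(t,y,s,x)=T^r_t[p(r,\cdot,s,x)](y)$ for $t>r$, with $p(r,\cdot,s,x)\in L^2(X)$, so $(t,y)\mapsto p(t,y,s,x)$ is the weak solution of the heat equation for $L_t$ on $(r,\infty)\times X$ with datum $p(r,\cdot,s,x)$ at time $r$.

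\emph{Step 3 (joint continuity, uniqueness, and the on-diagonal bound (iv)).} To prove (i) near a point $(t_0,y_0,x_0)$ with $t_0>s$, choose $r\in(s,t_0)$ with $r>t_0-\Psi(r_{y_0})$. As $\langle p(t,y,r,\cdot),g\rangle=P^r_t g(y)$, we have $\|p(t,y,r,\cdot)-p(t',y',r,\cdot)\|_{L^2}=\sup_{\|g\|_{L^2}\le1}|P^r_t g(y)-P^r_{t'}g(y')|$; Corollary~\ref{cor:Hoelder} bounds each difference by $C\,[\Psi^{-1}(|t-t'|)+d(y,y')]^{\alpha}\sup_Q|P^r_\cdot g|$ on a cylinder $Q$ around $(t_0,y_0)$, and the mean value estimate (Corollary~\ref{cor:u loc bounded}, Theorem~\ref{thm:MVE subsol p>2}) together with \eqref{eq:alpha-contraction} gives $\sup_Q|P^r_\cdot g|\le C'\|g\|_{L^2}$, all uniformly over the unit ball of $L^2(X)$. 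Hence $(t,y)\mapsto p(t,y,r,\cdot)\in L^2(X)$ is locally H\"older continuous; the same argument for $\hat\e_t$ gives continuity of $x\mapsto p(r,\cdot,s,x)\in L^2(X)$. Joint continuity of the $L^2$-inner product then yields (i), and also confirms the $r$-independence asserted in Step~2 (the two expressions are continuous in $x$ and agree a.e.). Uniqueness follows since any kernel satisfying (iii) coincides $\mu$-a.e.\ with the one above and, both being continuous, everywhere. For (iv), put $r=\tfrac{s+t}{2}$ in $p(t,x,s,x)=\langle p(t,x,r,\cdot),p(r,\cdot,s,x)\rangle$ and apply Cauchy--Schwarz with the two norm bounds of Step~1; replacing $\rho$ by $\tau_x=r_x\wedge\Psi^{-1}(2(t-s))$ (permissible by volume doubling and \eqref{eq:beta}) gives $p(t,x,s,x)\le Ce^{(\alpha-c)(t-s)}V(x,\tau_x)^{-1}$.

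\emph{Main obstacle.} The crux is Step~3: making the H\"older modulus and the mean value constant uniform over $\{\|g\|_{L^2}\le1\}$ requires tracking how the cylinders in Corollary~\ref{cor:Hoelder} and Theorem~\ref{thm:MVE subsol p>2} are placed --- bounded away from the intermediate time $r$ and contained in the neighbourhoods $Y_z$ --- and keeping straight which version of $p$ (the $L^2$ kernel of Step~1 or the pointwise version of Step~2) is in use at each point. The remaining steps are a routine adaptation of the standard Moser heat-kernel construction, cf.\ \cite{LierlSC2}.
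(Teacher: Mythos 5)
Your proposal is correct and follows essentially the same route as the paper's proof: the $L^2\to L^\infty$ bound from the Moser mean value estimate plus \eqref{eq:alpha-contraction}, Riesz representation for both $P^s_t$ and its adjoint $Q^t_s$, the definition $p(t,y,s,x)=\langle p(t,y,r,\cdot),p(r,\cdot,s,x)\rangle$ at an intermediate time, positivity from Proposition~\ref{prop:T^s_t pos preserving}, the on-diagonal bound by Cauchy--Schwarz, and joint continuity from Corollary~\ref{cor:Hoelder} applied uniformly over $\|f\|_2\le 1$. The only cosmetic difference is that you phrase the continuity step as H\"older continuity of $(t,y)\mapsto p(t,y,r,\cdot)$ in $L^2$ by duality, while the paper states it as continuity in $x$ locally uniformly in $(t,y)$; these are the same estimate.
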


\begin{proof}
In the special case when $(\e_t,\F)$ is a time-independent symmetric strongly local regular Dirichlet form, the proof is given in \cite[Section 4.3.3]{BGK12}.

Let $f \in L^2(X)$, $f \geq 0$, and let $s < t$. Then $(t-\frac{1}{2}\Psi(\tau_y), t+ \frac{1}{2}\Psi(\tau_y)) \subset (s,s+\Psi(r_y))$. 
By the mean value estimate of Theorem \ref{thm:MVE subsol p>2}, the joint continuity of $P^s_t f(y)$ in $(t,y)$, and by \eqref{eq:alpha-contraction}, we have
\begin{align} \begin{split} \label{eq:P^s_t estimate}
 \big[ P^s_t f(y) \big]^2 
& \le \frac{C}{\Psi(\tau_y) V(y,\tau_y)} \int_{t - \frac{1}{2}\Psi(\tau_y)}^{t + \frac{1}{2}\Psi(\tau_y)} \int_{B(y,\tau_y)} 
\big[ P^s_u f(z) \big]^2 d\mu(z) du \\
& \le e^{(\alpha-c)(t - s)} \frac{C}{V(y,\tau_y)} \| f \|_2^2,
\end{split}
\end{align}
for some constant $C \in (0,\infty)$ that depends on $y$ only through an upper bound on $C_3(\Psi(\tau_y))$. 
Considering $f^+$ and $f^-$, the displayed inequality extends to all $f \in L^2$. This shows that $f \mapsto P^s_tf(y)$ is a bounded linear functional.
By the Riesz representation theorem, there exists a unique function $p^s_{t,y} \in L^2(X)$ such that, for every $y \in X$,
\begin{align} \label{eq:P^s_t kernel}
P^s_t f(y) = \int p^s_{t,y} (x) f(x) d\mu(x), \quad \mbox{ for all } f \in L^2(X),
\end{align}
and
\begin{align} \label{eq:norm estimate of p^s_t,y}
 \| p^s_{t,y} \|_2^2 
\leq \frac{C e^{(\alpha-c)(t-s)} }{V(y,\tau_y)}.
\end{align}
By similar arguments, we obtain that there exists a function $q^t_{s,x} \in L^2(X)$ such that, 
\begin{align} \label{eq:Q^t_s kernel}
Q^t_s f(x) = \int q^t_{s,x} f(y) d\mu(y), \quad \mbox{ for all } f \in L^2(X),
\end{align}
and
\begin{align} \label{eq:norm estimate of q^t_s,x}
 \| q^t_{s,x} \|_2^2 
\leq \frac{C e^{(\alpha-c)(t-s)} }{V(x,\tau_x)}.
\end{align}
Since $Q^t_s$ is the adjoint of $P^s_t$, we have $p^s_{t,y}(x) = q^t_{s,x}(y)$ for almost every $x,y \in X$. 
We define
\begin{align} \label{eq:construction of propagator}
 p(t,y,s,x) := \int p^{r}_{t,y}(z) q^r_{s,x}(z) d\mu(z).
\end{align}
for some $r \in (s,t)$.
Then
\begin{align*}  
 p(t,y,s,x)= \int p^{r}_{t,y}(z) p^s_{r,z}(x) d\mu(z) \mbox{ for a.e. } x \in X.
\end{align*}
Proposition \ref{prop:T^s_t pos preserving} together with \eqref{eq:P^s_t kernel} and \eqref{eq:Q^t_s kernel} implies that $p^r_{t,y}$ and $q^r_{t,x}$ are non-negative almost everywhere, hence $p(t,y,s,x)$ is non-negative for all $x,y \in X$. 
Applying \eqref{eq:semigroup property T^s_t}, we get for any $f \in L^2(X)$,
\begin{align*}
P^s_t f(y) 
& = P^r_t \circ P^s_r f(y) = \int p^r_{t,y}(x) P^s_r f(x) d\mu(x) \\
& = \int Q^r_s p^r_{t,y}(x) f(x) d\mu(x) \\
& = \int \int q^r_{s,x}(z) p^r_{t,y}(z) d\mu(z) f(x) d\mu(x) \\
& = \int p(t,y,s,x) f(x) d\mu(x).
\end{align*}
Similary, we obtain $Q^t_s f(x) = \int p(t,y,s,x) f(y) d\mu(y)$.
Combining with \eqref{eq:P^s_t kernel} and \eqref{eq:Q^t_s kernel}, we see that 
$p(t,y,s,\cdot) = p^s_{t,y} \in L^2(X)$ and $p(t,\cdot,s,x) = q^t_{s,x} \in L^2(X,\mu)$.

From a computation similar to the one above, we see that $p(t,y,s,x)$ is in fact independent of the choice of $r$, and the semigroup property (v) holds.

The upper bound (iv) follows from \eqref{eq:construction of propagator}, the Cauchy-Schwarz inequality, as well as \eqref{eq:norm estimate of p^s_t,y} and \eqref{eq:norm estimate of q^t_s,x}. 

Since $p(r,\cdot,s,x)$ is in $L^2(X)$ when $s<r$, the semigroup property implies that
$p(t,y,s,x) = P^r_t p(r, y, s,x)$ for almost every $x \in X$. Since $(t,y) \mapsto P^r_t p(r, y, s,x)$ a weak solution on $(r,\infty) \times X$, we have proved (vi). 

It remains to show the joint continuity. It suffices to show that $p(t,y,s,x)$ is continuous in $x$ locally uniformly in $(t,y)$. Let $f \in L^2(X)$. We apply Corollary \ref{cor:Hoelder} to the weak solution $P^s_t f$ for $L_t$ in 
$Q = Q(x,t,\tau_x) = (t-\frac{1}{2}\Psi(\tau_x),t+\frac{1}{2}\Psi(\tau_x)) \times B(x,\tau_x)$,
Then,
\begin{align*}
| P^s_{t} f(x') - P^s_{t} f(x) |
& \le C \left( \frac{d(x,x')}{\tau_x} \right)^{\alpha} \sup_{(a,z) \in Q} |P^s_a f(z)|.
\end{align*}
By \eqref{eq:P^s_t estimate},
\begin{align*}
\sup_{(a,z) \in Q} |P^s_a f(z)|
\le e^{(\alpha - c) (t-s)}  \frac{C'}{V(x,\tau_x)^{1/2}} \| f \|_2. 
\end{align*}
Here, $C$ is a positive constant that may change from line to line.
Now we set $f = p^s_{r,y}$ where $r = \frac{s+t}{2}$. Then $P^r_t f = p(t,y,s,\cdot)$ 
and $\| f \|_2^2 \le  \frac{C e^{(\alpha-c)(r-s)}}{V(y,\tau_y)}$ by \eqref{eq:norm estimate of p^s_t,y}. 
Hence,
\begin{align*}
| P^s_{t} f(x') - P^s_{t} f(x) |
& \le C \left( \frac{d(x,x')}{\tau_x} \right)^{\alpha} e^{(\alpha - c) 2(t-s)} \frac{1}{V(x,\tau_x)^{1/2}} \frac{1}{V(y,\tau_y)^{1/2}}.
\end{align*}
This shows that $p(t,y,s,x)$ is continuous in $x$ locally uniformly in $(t,y)$, and completes the proof of the joint continuity. 
\end{proof}

The next lemma is immediate from CSA and \eqref{eq:chain rule for Gamma}, \eqref{eq:Gamma(fg)}.
\begin{lemma} \label{lem:e^M psi CSA}
Let $\psi \in \mbox{\em CSA($\Psi,\epsilon,C_0$)}$ be a cutoff function for $B(x,R)$ in $B(x,R+r)$. Let $\phi = e^{M \psi}$ for some constant $M \in \R$. Let $A = B(x,R+r) \setminus B(x,R)$. Then
\begin{align*}
\int f^2 d\Gamma( \phi,\phi)
& \le \frac{2\epsilon}{1-2\epsilon} M^2 \int_A \phi^2 d\Gamma(f,f) + \frac{C_0 \epsilon^{1-\beta_2/2}}{(1-2\epsilon)\Psi(r)} M^2 \int_A \phi^2 f^2 d\mu.
\end{align*}
\end{lemma}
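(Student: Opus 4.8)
The idea is to reduce the estimate directly to the cutoff Sobolev inequality \eqref{eq:CSA} via the chain rule \eqref{eq:chain rule for Gamma}. First I would record the regularity and locality facts. Since $\psi$ is continuous with $0\le\psi\le1$, equals $1$ on $B(x,R)$, and has compact support in $B(x,R+r)$, applying \eqref{eq:chain rule for Gamma} with $\Phi(s)=e^{Ms}-1$ (truncated outside $[0,1]$ so that $\Phi'$ is bounded, which does not affect $\Phi(\psi)$) shows $\phi-1\in\F$, and yields the pointwise identity of energy measures
\[ d\Gamma(\phi,\phi) = M^2 e^{2M\psi}\, d\Gamma(\psi,\psi) = M^2\phi^2\, d\Gamma(\psi,\psi). \]
By strong locality $d\Gamma(\psi,\psi)$, hence $d\Gamma(\phi,\phi)$, is concentrated on the annulus $A=B(x,R+r)\setminus B(x,R)$ (where $\psi$ is non-constant), so $\int f^2\,d\Gamma(\phi,\phi)=M^2\int_A f^2\phi^2\,d\Gamma(\psi,\psi)$.

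Next I would apply \eqref{eq:CSA} with the test function $g:=f\phi$ (which lies in $\F$ since $\phi\in\F\cap L^\infty(X)$ and, say, $f\in\F\cap L^\infty(X)$), obtaining
\[ \int_A f^2\phi^2\,d\Gamma(\psi,\psi)\ \le\ \epsilon\int_A \psi^2\,d\Gamma(f\phi,f\phi) + \frac{C_0\epsilon^{1-\beta_2/2}}{\Psi(r)}\int_A \psi f^2\phi^2\,d\mu. \]
For the energy term I would use the weighted form of the Leibniz estimate behind \eqref{eq:Gamma(fg)}: combining Leibniz's rule with the Cauchy–Schwarz inequality \eqref{eq:CS} gives $\int h^2\,d\Gamma(f\phi,f\phi)\le 2\int h^2 f^2\,d\Gamma(\phi,\phi)+2\int h^2\phi^2\,d\Gamma(f,f)$ for any bounded Borel $h$; taking $h=\psi$ and using $0\le\psi\le1$ on $A$ bounds the right-hand side by $2\int_A f^2\,d\Gamma(\phi,\phi)+2\int_A\phi^2\,d\Gamma(f,f)$, where the first term is precisely the quantity we are trying to estimate (since $d\Gamma(\phi,\phi)$ lives on $A$).

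Finally I would collect terms. Writing $T:=\int f^2\,d\Gamma(\phi,\phi)$ and substituting the last two displays into the first, while bounding $\psi\le1$ in the zero‑order integral, produces an inequality of the shape $T\le c\epsilon\,M^2 T + c\epsilon\,M^2\int_A\phi^2\,d\Gamma(f,f)+\frac{C_0\epsilon^{1-\beta_2/2}M^2}{\Psi(r)}\int_A\phi^2 f^2\,d\mu$. Moving the $T$‑term to the left and dividing by the (positive, for $\epsilon$ small) prefactor then gives the stated bound with the normalizing factor $(1-2\epsilon)^{-1}$. The only step needing a little care is this absorption and the bookkeeping of the constant in front of $T$; there is no genuine obstacle, which is why the lemma follows ``immediately'' once \eqref{eq:chain rule for Gamma}, \eqref{eq:Gamma(fg)} and \eqref{eq:CSA} are available. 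The one conceptual point to keep in mind throughout is that $\Gamma(\phi,\phi)$ is supported on $A$, so that the cross term generated by the product rule can legitimately be absorbed back into the left‑hand side rather than producing an uncontrolled error.
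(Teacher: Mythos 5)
Your argument is exactly the route the paper has in mind (the paper offers no written proof, declaring the lemma ``immediate'' from \eqref{eq:CSA}, \eqref{eq:chain rule for Gamma} and \eqref{eq:Gamma(fg)}): the chain rule and strong locality give $d\Gamma(\phi,\phi)=M^2\phi^2\,d\Gamma(\psi,\psi)$, supported on $A$; one applies \eqref{eq:CSA} to $f\phi$, bounds $d\Gamma(f\phi,f\phi)\le 2f^2\,d\Gamma(\phi,\phi)+2\phi^2\,d\Gamma(f,f)$ via \eqref{eq:Gamma(fg)}, uses $\psi\le 1$, and absorbs the term involving $\Gamma(\phi,\phi)$ into the left-hand side. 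Your device of taking $\Phi(s)=e^{Ms}-1$ so that the chain rule applies even though $\phi\notin\F$ is also the right fix, and working with $f\in\F\cap L^\infty$ (so that $f\phi\in\F$) and truncating is harmless. The one point you wave at but do not carry out is the final bookkeeping, and it does not come out as you assert: writing $T=\int f^2\,d\Gamma(\phi,\phi)=M^2\int_A f^2\phi^2\,d\Gamma(\psi,\psi)$, the absorption step reads $T\le 2\epsilon M^2\,T+2\epsilon M^2\int_A\phi^2\,d\Gamma(f,f)+\frac{C_0\epsilon^{1-\beta_2/2}}{\Psi(r)}M^2\int_A\phi^2f^2\,d\mu$, so the prefactor you divide by is $1-2\epsilon M^2$, not $1-2\epsilon$; in particular the argument requires $\epsilon<1/(2M^2)$ and delivers the stated inequality with $(1-2\epsilon M^2)^{-1}$ in place of $(1-2\epsilon)^{-1}$. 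This appears to be a slip in the lemma's statement rather than a defect of your approach: in its only application (Lemma \ref{lem:Davies-Gaffney}, where $M\ge1$ and Assumption \ref{as:skew davies} is invoked) the author chooses $\epsilon=\hat c/M^2$, for which both normalizations reduce to the same bounded constant. You should either prove the lemma with denominator $1-2\epsilon M^2$ (restricting to $2\epsilon M^2<1$), or justify explicitly why the constant $(1-2\epsilon)^{-1}$ can be obtained; as written, the last sentence of your absorption step is not correct.
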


\begin{assumption} \label{as:skew davies} 
There are constants $C_6, C_7, C_{11} \in [0,\infty)$ such that for all $t \in \R$, for any $\epsilon \in (0,1)$, any $0 < r < R \leq R_0$, any ball 
$B(x,2R) \subset Y$, any $M \ge 1$, any cutoff function $\psi \in \mbox{CSA}(\Psi,\epsilon,C_0)$ for $B(x,R)$ in $B(x,R+r)$, and any 
$0 \leq f \in \F_{\mbox{\tiny{loc}}}(Y) \cap L^{\infty}_{\mbox{\tiny{loc}}}(Y,\mu)$,
\begin{align*}
& \quad |\e_t^{\mbox{\tiny{sym}}}(f^2 \phi^2,1)| + \left| \e_t^{\mbox{\tiny{skew}}}(f,f \phi^2) \right| \\
& \leq C_{11} \epsilon^{1/2} M \int \phi^2 d\Gamma(f,f) + (C_6 + C_7  \Psi( r )) \frac{C_1(\epsilon)}{ \Psi( r )} M \int f^2 \phi^2 d\mu,
\end{align*}
where $B=B(x,R+r)$, $\phi = e^{-M\psi}$. 
\end{assumption}

We set $p(t,y,s,x):= \delta_x(y)$ whenever $t \le s$.
Let 
\begin{align*}
 \Phi_{\beta_2}(R,t) := \sup_{r>0} \left\{ \frac{R}{r} - \frac{t}{r^{\beta_2}} \frac{R^{\beta_2}}{\Psi(R)}\right\}.
 \end{align*}

\begin{lemma} \label{lem:Davies-Gaffney}
Let $x, y \in X$. Suppose Assumption \ref{as:skew davies} is satisfied and {\em CSA($\Psi,C_0$) holds locally on $B(x,d(x,y))$ up to scale $\frac{1}{2}d(x,y)$}. Let $f_1 \in L^2(X)$ with support in $B(x,d(x,y)/4)$, and let $f_2 \in L^2(X)$ with support in 
$B(y,d(x,y)/4)$. Then there is a constant $C' \in (0,\infty)$ such that, for any $s < t$,
\[ \int T^s_t f_1(x) f_2(x) d\mu(x) \leq \| f_1 \|_{L^2} \| f_2 \|_{L^2} \exp \left( -  \Phi_{\beta_2}(d(x,y),C'(t-s)) + (\alpha-c)(t-s) \right). \]
The constant $C'$ depends at most on $C_{\Psi}$, $\beta_1$, $\beta_2$, $C_0$, $C_{10}$, $C_{11}$, and on an upper bound on $(C_6 + C_7\Psi(d(x,y)))$.
\end{lemma}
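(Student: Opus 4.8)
The plan is to run Davies' exponential--perturbation argument with an exponential weight of the form $\phi = e^{-M\psi}$, where $\psi$ is a CSA($\Psi$) cutoff function concentrated near $x$ and $M \geq 1$ is a free parameter to be optimized at the end. Write $R = d(x,y)$. By linearity and Proposition \ref{prop:T^s_t pos preserving} we may assume $f_1, f_2 \geq 0$ (split into positive and negative parts, at the cost of a harmless numerical constant). Let $u(t) = T^s_t f_1$ be the solution of the initial value problem \eqref{eq:initial value problem} with data $f_1$; by Proposition \ref{prop:IVP} it lies in $L^2((s,T) \to \F) \cap C^0([s,T] \to L^2(X))$ with weak derivative $\partial_t u \in L^2((s,T) \to \F')$ on finite intervals, and it is locally bounded by Corollary \ref{cor:u loc bounded}. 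Fix the annulus width $r := R/8$ and let $\psi \in \mbox{CSA}(\Psi,\epsilon,C_0)$ be a cutoff for $B(x,R/4)$ in $B(x,R/4+r)$ (with $\epsilon \in (0,1)$ to be chosen), and set $\phi := e^{-M\psi}$. Then $\phi = e^{-M}$ on $\mbox{supp}(f_1) \subseteq B(x,R/4)$, while $\phi \equiv 1$ on $B(y,R/4) \supseteq \mbox{supp}(f_2)$, because $\mbox{supp}(\psi) \subseteq B(x,R/4+r) \subseteq B(x,R/2)$ is disjoint from $B(y,R/4)$.

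The core is a differential inequality for $t \mapsto \int u(t)^2\phi^2\,d\mu$. Since $\phi$ is bounded and $\phi^2 - 1 \in \F$, the function $\phi^2 u(t)$ is an admissible test function in \eqref{eq:initial value problem}, and the standard product rule for $t \mapsto \langle u(t),\phi^2 u(t)\rangle_{L^2}$ (or a Steklov--averaging argument as elsewhere in the paper) gives $\frac{d}{dt}\int u(t)^2\phi^2\,d\mu = -2\e_t(u(t),u(t)\phi^2)$. Decompose $\e_t(u,u\phi^2) = \e_t^{\mbox{\tiny{s}}}(u,u\phi^2) + \e_t^{\mbox{\tiny{sym}}}(u^2\phi^2,1) + \e_t^{\mbox{\tiny{skew}}}(u,u\phi^2)$. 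For the first summand, the chain and product rules for $\Gamma_t$ together with the Cauchy--Schwarz inequality \eqref{eq:CS} give $\e_t^{\mbox{\tiny{s}}}(u,u\phi^2) \geq \frac12\int\phi^2\,d\Gamma_t(u,u) - 2\int u^2\,d\Gamma_t(\phi,\phi)$; then \eqref{eq:C_10} and Lemma \ref{lem:e^M psi CSA} (applied with the present $u$ in place of $f$) bound $\int u^2\,d\Gamma_t(\phi,\phi)$ by a multiple of $\epsilon M^2\int\phi^2\,d\Gamma(u,u) + \frac{C_0\epsilon^{1-\beta_2/2}M^2}{\Psi(r)}\int\phi^2 u^2\,d\mu$. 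For the remaining two summands, Assumption \ref{as:skew davies} (which is tailored precisely to the weight $e^{-M\psi}$, and applies to $f = u(t) \geq 0$ since $u(t)\in\F_{\mbox{\tiny{loc}}}(Y)\cap L^{\infty}_{\mbox{\tiny{loc}}}(Y,\mu)$) bounds $|\e_t^{\mbox{\tiny{sym}}}(u^2\phi^2,1)| + |\e_t^{\mbox{\tiny{skew}}}(u,u\phi^2)|$ by $C_{11}\epsilon^{1/2}M\int\phi^2\,d\Gamma(u,u)$ plus a zero-order term of the same shape. Choosing $\epsilon = c_0 M^{-2}$ with $c_0$ small (depending only on $C_{10},C_{11}$) makes the total coefficient of $\int\phi^2\,d\Gamma(u,u)$ nonnegative, leaving $\e_t(u,u\phi^2) \geq -K\int u^2\phi^2\,d\mu$ with $K = B_1 M^{\beta_2}/\Psi(R)$, where, after using \eqref{eq:beta} to replace $\Psi(R/8)$ by $\Psi(R)$, the constant $B_1$ depends only on $\beta_1,\beta_2,C_{\Psi},C_0,C_{10},C_{11}$ and an upper bound on $C_6 + C_7\Psi(d(x,y))$.

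Gronwall's inequality then yields $\int u(t)^2\phi^2\,d\mu \leq e^{2K(t-s)}\int f_1^2\phi^2\,d\mu = e^{2K(t-s)-2M}\|f_1\|_{L^2}^2$, and, since $\phi\equiv 1$ on $\mbox{supp}(f_2)$, $\int T^s_t f_1\,f_2\,d\mu = \int_{B(y,R/4)} u(t)f_2\,d\mu \leq \big(\int u(t)^2\phi^2\,d\mu\big)^{1/2}\|f_2\|_{L^2} \leq e^{K(t-s)-M}\|f_1\|_{L^2}\|f_2\|_{L^2}$ for every $M \geq 1$. Optimizing $B_1 M^{\beta_2}(t-s)/\Psi(R) - M$ over $M \geq 1$ and using \eqref{eq:beta} once more, this exponent becomes $-\Phi_{\beta_2}(d(x,y),C'(t-s))$ for a suitable $C'$ of the stated dependence when $t-s$ is not too large; in the complementary range, where $\Phi_{\beta_2}(d(x,y),C'(t-s))$ is bounded, one combines the $M=1$ instance with the $L^2$--contraction \eqref{eq:alpha-contraction} (equivalently, iterates the one-step estimate along a chain of $\sim d(x,y)/r$ concentric annuli via the semigroup property \eqref{eq:semigroup property T^s_t}), which also accounts for the $(\alpha-c)(t-s)$ term. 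The main obstacle is exactly this parameter bookkeeping: one must couple $\epsilon$ to $M$ so that the gradient contribution $\int\phi^2\,d\Gamma(u,u)$ is entirely absorbed while keeping every constant in the admissible list, and then match the optimized exponent to the somewhat unusual function $\Phi_{\beta_2}$, tracking the dependence on $d(x,y)$ through the growth hypothesis \eqref{eq:beta} on $\Psi$ and treating the long-time regime; the remaining ingredients (admissibility of $\phi^2 u$ as a test function, differentiation under the integral) are routine given the regularity of $u$ furnished by Proposition \ref{prop:IVP}.
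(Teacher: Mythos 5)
Your proposal is correct and follows essentially the same route as the paper: Davies' exponential perturbation with the weight $\phi=e^{-M\psi}$ built on a CSA($\Psi$) cutoff near $x$, the coupling $\epsilon\sim M^{-2}$ to absorb the gradient terms via Lemma \ref{lem:e^M psi CSA} and Assumption \ref{as:skew davies}, Gronwall, and then optimization in $M$ matched to $\Phi_{\beta_2}$, with the long-time/degenerate regimes handled by the $L^2$-contraction \eqref{eq:alpha-contraction}. The only superfluous step is the initial reduction to $f_1,f_2\ge 0$: the Gronwall and Cauchy--Schwarz estimates are sign-agnostic, and keeping that reduction would introduce a numerical factor not present in the stated inequality, so it is better omitted (as the paper does).
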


\begin{proof}
Set $R = d(y,x)$.  Let $\psi \in \mbox{CSA}(\Psi,\epsilon,C_0)$ be a cutoff function for $B(x, \frac{1}{4}R)$ in $B(x,\frac{3}{4}R)$. Let $\phi = e^{-M \psi}$ for some $M \ge 1$ that we will choose later. Let $u = T^s_t f_x$.
Following \cite[Theorem 2]{Dav92}, we get
\begin{align*}
\frac{1}{2} \frac{\partial}{\partial t} \| \phi u \|^2_2
& = - \e_t(u,u\phi^2)  \\
& \le 
- \int \phi^2 d\Gamma_t(u,u) + \frac{1}{2} \int \phi^2 d\Gamma_t(u,u) + 2 \int u^2 d\Gamma_t(\phi,\phi) \\
& \quad - \e_t^{\mbox{\tiny{sym}}}(u^2 \phi^2,1) -  \e_t^{\mbox{\tiny{skew}}}(u,u \phi^2) \\
& \le  
\left( -1 + \frac{1}{2} + 2 \frac{2 \epsilon}{1-2\epsilon} M^2 + C_{11} \epsilon^{1/2} M \right) \int \phi^2 d\Gamma_t(u,u) \\
& \quad + \left( \frac{M^2 }{1-2\epsilon} + (C_6 + C_7\Psi(R)) \epsilon^{-1/2} M \right) \frac{C \epsilon^{1-\beta_2/2}}{\Psi(R)} \| \phi u \|^2_2,
\end{align*}
by Lemma \ref{lem:e^M psi CSA} and Assumption \ref{as:skew davies}. Here, $C$ is a positive constants that depends at most on $C_{\Psi}$, $\beta_1$, $\beta_2$, $C_0$, $C_{10}$, $C_{11}$, and on an upper bound on $(C_6 + C_7\Psi(R))$.
Choosing $\epsilon = \hat c/M^2$ for some small enough $\hat c =\hat c(C_{11})$, we get
\begin{align*}
\| \phi T^s_t f_1 \|_2 
& \le \exp \left( \frac{C' M^{\beta_2}}{\Psi(R)}  (t-s) \right) \| \phi f_1 \|_2.
\end{align*}

If $(t-s) \ge \Psi(R)$, then $\Phi_{\beta_2}(R,C'(t-s))$ is bounded from above. In this case 
the desired estimate follows by the 
Cauchy-Schwarz inequality and \eqref{eq:alpha-contraction}. Indeed,
\[ \int T^s_t f_1(x) f_2(x) d\mu(x) \leq \| T^s_t f_1 \|_{L^2} \| f_2 \|_{L^2} \leq e^{(\alpha-c)(t-s)} \| f_1 \|_{L^2} \| f_2 \|_{L^2}. \]
Similarly, if the supremum (in the definition of) $\Phi_{\beta_2}(R,C'(t-s))$ is attained at some $r > R$, then
$\Phi_{\beta_2}(R,C'(t-s)) \le \frac{R}{r} < 1$, and the assertion follows.

It remains to consider the case when $(t-s) < \Psi(R)$ and the supremum $\Phi_{\beta_2}(R,C'(t-s))$ is attained at some $r \le R$. Then we choose $M := \frac{R}{r} \ge 1$. We get 
\begin{align*}
M - \frac{C' M^{\beta_2}}{\Psi(R)}(t-s) 
= \frac{R}{r} - \frac{C' (t-s) R^{\beta_2}}{r^{\beta_2} \Psi(R)}
 = \Phi_{\beta_2}(R,C'(t-s)).
\end{align*}
Hence,
\begin{align*}
& \quad \int T^s_t f_1(x) f_2(x) d\mu(x)  \\
& \le  \| \phi T^s_t f_1 \|_{L^2} \| \phi^{-1} f_2 \|_{L^2} \\
& \le  \exp \left( \frac{C' M^{\beta_2}}{\Psi(R)}  (t-s)   \right)  \| \phi f_1 \|_{L^2} \| \phi^{-1} f_2 \|_{L^2} \\
& \le  \exp \left( \frac{C' M^{\beta_2}}{\Psi(R)}  (t-s)   \right)  \left( \sup_{B(x,R/4)} \phi \right) \left( \sup_{B(y,R/4)} \phi^{-1} \right)  \| f_1 \|_{L^2} \| f_2 \|_{L^2} \\
& \le  \exp \left( \frac{C' M^{\beta_2}}{\Psi(R)}  (t-s) -M  \right)   \| f_1 \|_{L^2} \| f_2 \|_{L^2} \\
& \le  \exp \left( - \Phi_{\beta_2}(R,C'(t-s)) \right) \| f_1 \|_{L^2} \| f_2 \|_{L^2}.
\end{align*}
\end{proof}

By Theorem \ref{thm:MVE subsol 0<p<2}, there exists a constant $C \in (0,\infty)$ such that the following $L^1$-mean value estimate holds for any $0 < r \leq r_y$ and any  non-negative local very weak subsolution $u$ of the heat equation for $L_t$ in $(t-\frac{1}{2}\Psi(r), t+ \frac{1}{2}\Psi(r)) \times B(y,r)$, 
\begin{align} \label{eq:L1 MVE}
 u(t,y) 
& \leq  \frac{C}{\Psi(r) \mu(B(y,r))}  
 \int_{t-\frac{1}{2}\Psi(r)}^{t+\frac{1}{2}\Psi(r)} \int_{B(y,r)} u \, d\mu \, dt,
\end{align}
where $C$ depends on $C_{\Psi}$, $\beta_1$, $\beta_2$, $C_0$, $C_{10}$, $C_{11}$, $C_{\mbox{\tiny{PI}}}$, $C_{\mbox{\tiny{VD}}}$,
and on an upper bound on $(1+C_2+C_3\Psi(r))$.
Here, on the left hand side, we used the jointly continuous version of $u$ that exists by Corollary \ref{cor:Hoelder}.

\begin{theorem} \label{thm:upper HKE}
Suppose Assumptions \ref{as:0}, \ref{as:skew1}, \ref{as:skew2}, \ref{as:skew davies} are satisfied globally on $X$, and Assumption \ref{as:VD+PI} is satisfied locally on $X$.
Let $x,y \in X$.  Suppose {\em CSA($\Psi,C_0$) holds locally on $B(x,d(x,y))$ and on $B(y,d(x,y))$ up to scale $\frac{1}{2}d(x,y)$}.
Then there exist constants $C, C' \in (0,\infty)$ such that, for all $s<t$,
\begin{align*}
p(t,y,s,x)
\leq  C \frac{ \exp\left(- \Phi_{\beta_2}(d(x,y),C'(t-s)) + (\alpha-c) (t-s) \right)}
{V(x,\tau_x)^{\frac{1}{2}} V(y,\tau_y)^{\frac{1}{2}}},
\end{align*}
where $\tau_x = \Psi^{-1}(\frac{t-s}{2}) \wedge r_x$, $\tau_y = \Psi^{-1}(\frac{t-s}{2}) \wedge r_y$. 
The constants $C,C'$ depend only on $C_{\Psi}$, $\beta_1$, $\beta_2$, $C_0$, $C_{10}$, $C_{11}$,  $C_{\mbox{\em \tiny{VD}}}(Y)$, 
$C_{\mbox{\em \tiny{PI}}}(Y)$ for $Y=Y_x$ and for $Y=Y_y$, and on an upper bound on $(1+C_2 +C_6 + C_3 (\Psi(\tau_x) + \Psi(\tau_y)) + C_7 \Psi(d(x,y)))$.
\end{theorem}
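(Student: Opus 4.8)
The plan is to combine the Davies--Gaffney type estimate of Lemma \ref{lem:Davies-Gaffney} with the on-diagonal bound of Proposition \ref{prop:T^s_t}(iv) and the $L^1$-mean value estimate \eqref{eq:L1 MVE}, following the classical scheme of Davies and Grigor'yan. First I would reduce to the case where the two points are far apart relative to the time scale: if $\Psi^{-1}(t-s) \ge \frac{1}{4} d(x,y)$, then $\Phi_{\beta_2}(d(x,y), C'(t-s))$ is bounded above (the supremum is attained at some $r$ comparable to, or larger than, $d(x,y)$, where the expression is $O(1)$), and in that regime the asserted bound follows directly from Proposition \ref{prop:T^s_t}(iv) together with volume doubling (VD on $Y_x$ and $Y_y$) to compare $V(x,\tau_x)$ and $V(y,\tau_y)$ at scale $\tau_x = \tau_y \asymp \Psi^{-1}((t-s)/2)$, absorbing the comparison constants into $C$. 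So from now on assume $(t-s) < \Psi(d(x,y))$, set $R = d(x,y)$, and write $r = \frac{1}{4}R$.

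Next I would localize. Put $f_1 = p(t', \cdot, s, x)$ and $f_2 = p(t, y, t', \cdot)$ for an intermediate time $t' = \frac{s+t}{2}$, restricted to the balls $B(x, R/4)$ and $B(y, R/4)$ respectively; by the $L^2$-norm estimates \eqref{eq:norm estimate of p^s_t,y}, \eqref{eq:norm estimate of q^t_s,x} these restrictions have $L^2$-norms controlled by $e^{(\alpha - c)(t-s)/2} V(x, \tau_x)^{-1/2}$ and $e^{(\alpha-c)(t-s)/2} V(y,\tau_y)^{-1/2}$ up to constants. Using the reproducing identity $p(t,y,s,x) = \int_X p(t,y,t',z)\, p(t',z,s,x)\, d\mu(z)$ from Proposition \ref{prop:T^s_t}(v), together with the semigroup representation $\int T^{t'}_t f_1(z) f_2(z)\, d\mu(z)$, Lemma \ref{lem:Davies-Gaffney} applied at the gap $R = d(x,y)$ yields
\begin{align*}
\int_X p(t',z,s,x)\, \mathbf{1}_{B(x,R/4)}(z)\; p(t,y,t',z)\, \mathbf{1}_{B(y,R/4)}(z)\, d\mu(z)
\le \| f_1 \|_2 \| f_2 \|_2 \exp\!\big( -\Phi_{\beta_2}(R, C'(t-s)) + (\alpha-c)(t-s) \big).
\end{align*}
The issue is that the left side is only part of $p(t,y,s,x)$, since the integral in (v) runs over all of $X$. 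One resolves this by a standard iteration/splitting over dyadic annuli around $x$ (and around $y$): the portions of the $z$-integral over regions far from the geodesic are handled by applying Lemma \ref{lem:Davies-Gaffney} again on those annuli, the geometric decay of the $\Phi_{\beta_2}$-exponential over a doubling family of annuli being summable because of the polynomial growth \eqref{eq:beta} of $\Psi$. This is the standard ``integrated maximum principle $\Rightarrow$ pointwise kernel bound'' argument; the bookkeeping is routine once Lemma \ref{lem:Davies-Gaffney} is in hand.

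Finally I would upgrade from the $L^2$-type bound to a genuine pointwise bound on $p(t,y,s,x)$ using the $L^1$-mean value estimate \eqref{eq:L1 MVE}. Since $(t,y) \mapsto p(t,y,s,x)$ is a non-negative local weak solution on $(s,\infty) \times X$ by Proposition \ref{prop:T^s_t}(vi), applying \eqref{eq:L1 MVE} on the cylinder $(t - \frac{1}{2}\Psi(\tau_y), t + \frac{1}{2}\Psi(\tau_y)) \times B(y,\tau_y)$ (and once more in the $x$-variable on a similar cylinder, using that $x \mapsto p(t,y,s,x)$ is governed by the adjoint family, which satisfies the same assumptions since $\hat\e_t(f,g) = \e_t(g,f)$), the $L^2$-average bound translates into a pointwise bound with the product $V(x,\tau_x)^{-1/2} V(y,\tau_y)^{-1/2}$ and the same exponential factor, the $\Psi(\tau_y)$ and $\Psi(\tau_x)$ factors from \eqref{eq:L1 MVE} canceling against the reciprocal factors produced along the way; one keeps track that the dependence on the point enters only through $C_3\Psi(\tau_x)$, $C_3\Psi(\tau_y)$, $C_7\Psi(d(x,y))$, matching the stated constant dependence. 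The main obstacle is the dyadic-annulus summation in the localization step: one must verify that the decay rate $\Phi_{\beta_2}(2^k R, C'(t-s))$ grows fast enough in $k$ (using the lower bound in \eqref{eq:beta}, i.e. the $\beta_1$-exponent) to make the series converge, and that the accompanying volume factors $V(\cdot, 2^k R)$ do not spoil this — which is exactly where VD and the polynomial control on $\Psi$ are used in tandem.
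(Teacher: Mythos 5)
Your proposal assembles the right ingredients (Lemma \ref{lem:Davies-Gaffney}, the $L^1$-mean value estimate \eqref{eq:L1 MVE}, a case split according to whether the time scale is small compared to $d(x,y)$), but the central combination step has a genuine gap. You split the reproducing identity $p(t,y,s,x)=\int_X p(t,y,t',z)\,p(t',z,s,x)\,d\mu(z)$ and propose to control the part of the $z$-integral away from $x$ and $y$ by ``applying Lemma \ref{lem:Davies-Gaffney} again on dyadic annuli.'' That lemma does not deliver what you need there: it bounds $\int T^s_t f_1\, f_2\,d\mu$ only for $f_1$ supported in $B(x,d(x,y)/4)$ and $f_2$ supported in $B(y,d(x,y)/4)$, not the $L^2$-mass of $T^s_t f_1$ on an annulus far from both points. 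To get such tail estimates you would need Davies--Gaffney bounds centred at points of the annuli, hence CSA on balls other than $B(x,d(x,y))$ and $B(y,d(x,y))$ --- but the theorem's hypotheses only provide CSA on those two balls up to scale $\tfrac12 d(x,y)$. So the ``routine bookkeeping'' is neither routine nor available under the stated assumptions.

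The decomposition is also unnecessary, and this is where the paper's proof differs in an essential way: it applies the $L^1$-mean value estimate \emph{first}, in the forward variable $(t,y)$ and in the backward variable $(s,x)$ (using that the adjoint family satisfies the same assumptions), which reduces the pointwise value $p(t,y,s,x)$ to the space--time average
\begin{align*}
\frac{C^2}{\Psi(\tau_x)\Psi(\tau_y)V(x,\tau_x)V(y,\tau_y)}\int\!\!\int\!\!\int\!\!\int_{B(y,\tau_y)\times B(x,\tau_x)} p(t',y',s',x')\,d\mu(x')\,ds'\,d\mu(y')\,dt' .
\end{align*}
The inner double space integral equals $\int T^{s'}_{t'} 1_{B(x,\tau_x)}\cdot 1_{B(y,\tau_y)}\,d\mu$, and when $\tau_x\vee\tau_y\le d(x,y)/4$ Lemma \ref{lem:Davies-Gaffney} applies \emph{directly} to the indicator functions $f_1=1_{B(x,\tau_x)}$, $f_2=1_{B(y,\tau_y)}$, with $\|f_i\|_2$ giving exactly the $V^{1/2}$ factors; the time integrations cancel the $\Psi(\tau_x)\Psi(\tau_y)$ in the prefactor. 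When $\tau_x\vee\tau_y\ge d(x,y)/4$, the exponential is bounded and plain Cauchy--Schwarz with \eqref{eq:alpha-contraction} suffices. No splitting of the reproducing integral, no annuli, and no summation are needed. You should reorganize your argument in this order.
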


\begin{proof}
Applying the $L^1$-mean value estimate \eqref{eq:L1 MVE} to $(t,y) \mapsto p(t,y,s,x)$ and to $(s,x) \mapsto p(t',y',s,x)$, we get
\begin{align*}
& \quad p(t,y,s,x) \\
& \le  \frac{C}{\Psi(\tau_y) V(y,\tau_y)} \int_{t - \frac{1}{2} \Psi(\tau_y)}^{t + \frac{1}{2} \Psi(\tau_y)}  \int_{B(y,\tau_y)} p(t',y',s,x) d\mu(y') dt' \\
& \le D \int_{t - \frac{1}{2} \Psi(\tau_y)}^{t + \frac{1}{2} \Psi(\tau_y)}  \int_{B(y,\tau_y)} 
\int_{s - \frac{1}{2} \Psi(\tau_x)}^{s + \frac{1}{2} \Psi(\tau_x)}  \int_{B(x,\tau_x)} p(t',y',s',x')  d\mu(x') ds' d\mu(y') dt',
\end{align*}
where $D= \frac{C^2}{\Psi(\tau_x) \Psi(\tau_y) V(x,\tau_x) V(y,\tau_y)}$.

In the case $\tau_x \vee \tau_y \leq d(x,y)/4$, Lemma \ref{lem:Davies-Gaffney} yields
\begin{align*}
& \quad \int_{B(y,\tau_y)} \int_{B(x,\tau_x)} p(t',y',s',x') d\mu(x') d\mu(y') \\
 & \leq V(x,\tau_x)^{1/2} V(y,\tau_y)^{1/2} \exp \left(- \Phi_{\beta_2}(d(x,y),C'(t-s) ) + (\alpha-c)(t-s) \right).
\end{align*}
In the case $\tau_x \vee \tau_y \geq d(x,y)/4$, $\Phi_{\beta_2}(d(x,y),C'(t-s))$ is bounded from above. By Cauchy-Schwarz inequality and \eqref{eq:alpha-contraction},
\begin{align*}
 \int_{B(y,\tau_y)} \int_{B(x,\tau_x)} p(t',y',s',x')  d\mu(x') d\mu(y') 
& = \int P^{s'}_{t'} 1_{B(x,\tau_x)}(y') 1_{B(y,\tau_y)}(y') d\mu(y') \\
& \leq \| T^{s'}_{t'} 1_{B(x,\tau_x)} \|_{2} \,  \| 1_{B(y,\tau_y)} \|_{2} \\
& \leq 
e^{(\alpha-c)(t'-s')} V(x,\tau_x)^{1/2} V(y,\tau_y)^{1/2}.
\end{align*}
In both cases, we obtain the desired estimate.
\end{proof}

\begin{definition}
For an open set $U \subset X$, the time-dependent Dirichlet-type forms on $U$ are defined by
\[ \e^D_{U,t}(f,g) := \e_t(f,g),  \quad f,g \in D(\e^D_U), \]
where, for each $t \in \R$, the domain $D(\e^D_{U,t}) := \F^0(U)$ is defined as the closure of $\F \cap \mathcal C_{\mbox{\tiny{c}}}(U)$ in $\F$ for the norm $\| \cdot \|_{\F}$.
Let $T^D_U(t,s)$, $t \geq s$, be the associated transition operators with integral kernel $p^D_U(t,y,s,x)$.
\end{definition}

\begin{proposition} \label{prop:set monotonicity of propagator}
Let $V \subset U \subset X$ be open subsets.
For any $t>s$, $x,y \in V$, 
\[ p^D_V(t,y,s,x) \le p^D_U(t,y,s,x). \]
\end{proposition}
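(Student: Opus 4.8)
The plan is to derive the kernel comparison from a pointwise comparison of the associated transition operators, which in turn follows from a Gr\"onwall estimate for the difference of the two solutions. First I would reduce the statement as follows. Since Assumption \ref{as:VD+PI} holds locally on $X$, and since a local very weak (sub/super)solution of the heat equation for $\e^D_U$ on a relatively compact time--space cylinder contained in $(s,\infty)\times U$ is also one for $\e_t$, all interior estimates of Section \ref{sec:Moser} apply to $\e^D_U$ and $\e^D_V$; hence Proposition \ref{prop:T^s_t} provides jointly continuous non-negative kernels $p^D_U,p^D_V$ with $T^D_U(t,s)f(y)=\int p^D_U(t,y,s,x)f(x)\,d\mu(x)$ and similarly for $V$. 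Thus it suffices to prove: for every $f\in L^2(X)$ with $f\ge 0$ and $\mathrm{supp}\,f\subset V$, one has $T^D_V(t,s)f\le T^D_U(t,s)f$ $\mu$-a.e.\ on $V$ for all $t>s$. Indeed, letting $f$ range over such functions gives $p^D_V(t,y,s,\cdot)\le p^D_U(t,y,s,\cdot)$ $\mu$-a.e.\ for each fixed $y\in V$ and $t>s$, and joint continuity in the $x$-variable upgrades this to every $x\in V$, hence (also ranging over $y$) the claim.

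Fix such an $f$ and $T\in(s,\infty)$, and set $u:=T^D_U(\cdot,s)f$, $v:=T^D_V(\cdot,s)f$, $w:=v-u$. By Proposition \ref{prop:IVP} (applied to $\e^D_U$ and to $\e^D_V$, whose domains $\F^0(U)\supset\F^0(V)$ inherit Assumption \ref{as:0}(i),(vi)), $u\in L^2((s,T)\to\F^0(U))\cap C^0([s,T]\to L^2(X))$ with $\partial_t u\in L^2((s,T)\to(\F^0(U))')$, and analogously for $v$; moreover $u(s)=v(s)=f$, so $w(s)=0$. By Proposition \ref{prop:T^s_t pos preserving} (whose proof applies verbatim to $\e^D_U$) we have $u\ge 0$. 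Since $\F^0(V)\subset\F^0(U)$, subtracting the weak formulations \eqref{eq:initial value problem} for $v$ and $u$ gives, for a.e.\ $t\in(s,T)$ and all $\phi\in\F^0(V)$, $\langle\partial_t w(t),\phi\rangle+\e_t(w(t),\phi)=0$. The point that needs care is that $w^+=v-v\wedge u$ is an admissible test function: using regularity of $(\e^*,\F)$ so that $\F^0(V)=\{g\in\F:\tilde g=0\ \text{q.e.\ on }V^c\}$ (see \cite{FOT94}), together with $\tilde v=0$ q.e.\ on $V^c$ and $\tilde u\ge 0$ q.e., one gets $\widetilde{v\wedge u}=\tilde v\wedge\tilde u=0$ q.e.\ on $V^c$, hence $v\wedge u\in\F^0(V)$ and $w^+(t)\in\F^0(V)$ for a.e.\ $t$.

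Testing the subtracted equation with $\phi=w^+(t)$ and using the standard chain rule for $t\mapsto\tfrac12\|w^+(t)\|_2^2$ (valid since $w\in L^2((s,T)\to\F^0(U))$ has weak derivative in $(\F^0(U))'$ and $w^+\in L^2((s,T)\to\F^0(U))$, as truncation contracts the $\F$-norm; see \cite{LM68, Wlo87en}) gives $\tfrac12\tfrac{d}{dt}\|w^+(t)\|_2^2=-\e_t(w(t),w^+(t))$. By locality of $\e_t$ the off-diagonal contribution vanishes, $\e_t(w,w^+)=\e_t(w^+,w^+)-\e_t(w^-,w^+)=\e_t(w^+,w^+)$ (the essential supports of $w^+$ and $w^-$ are disjoint, exactly as in the proof of Proposition \ref{prop:T^s_t pos preserving}), and Assumption \ref{as:0}(vi) yields $\e_t(w^+,w^+)\ge c\|w^+\|_\F^2-\alpha\|w^+\|_2^2\ge-\alpha\|w^+\|_2^2$. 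Hence $\tfrac{d}{dt}\|w^+(t)\|_2^2\le 2\alpha\|w^+(t)\|_2^2$ for a.e.\ $t$, and since $\|w^+(s)\|_2=0$, Gr\"onwall's inequality forces $\|w^+(t)\|_2=0$ for all $t\in[s,T]$. As $T$ was arbitrary, $v\le u$ $\mu$-a.e.\ on $X$ for every $t>s$, which completes the reduction. The main obstacles are the two delicate points flagged above --- establishing $w^+(t)\in\F^0(V)$ via the quasi-continuous characterization of $\F^0(V)$, and the disjoint-support/locality argument $\e_t(w^-,w^+)=0$ for the nonsymmetric forms $\e_t$ --- together with checking that the transition-kernel machinery of Section \ref{sec:heat propagator} does apply to the Dirichlet-type forms $\e^D_U,\e^D_V$; everything else is routine.
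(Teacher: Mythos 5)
Your proposal is correct, but it takes a different route from the paper. The paper reduces to non-negative definite forms via the factor $e^{-\alpha(t-s)}$, inserts an intermediate time $r\in(s,t)$, and applies the super-mean value inequality (Corollary \ref{cor:super mvi}) twice: once to $(t,y)\mapsto p^D_U(t,y,s,x)$, viewed as a non-negative supersolution on $(r,\infty)\times V$ with $L^2$ initial datum $p^D_U(r,\cdot,s,x)$, giving $p^D_U(t,y,s,x)\ge\int_V p^D_V(t,y,r,z)\,p^D_U(r,z,s,x)\,d\mu(z)$, and once to the adjoint propagator in the backward variables, giving $p^D_V(t,y,s,x)\le\int_U p^D_U(r,z,s,x)\,p^D_V(t,y,r,z)\,d\mu(z)$; composing the two yields the kernel comparison. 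You instead compare the semigroups directly on non-negative $L^2$ data supported in $V$ and recover the kernel inequality by duality plus joint continuity; this replaces the paper's adjoint step and avoids the intermediate time $r$ (which the paper needs only because $\delta_x$ is not an $L^2$ initial datum). Two remarks. First, your Gr\"onwall computation is essentially a re-proof of the parabolic maximum principle in a special case: the inequality $T^D_U(t,s)f\ge P^D_V(t,s)f$ for $0\le f$ supported in $V$ follows immediately from Corollary \ref{cor:super mvi} applied on the domain $V$ to the non-negative supersolution $u=T^D_U(\cdot,s)f$, so you could shortcut the energy argument entirely. Second, the two points you flag as delicate are handled in the paper by Steklov averaging (Proposition \ref{prop:para max princ}) precisely to avoid the chain-rule identity $\tfrac{d}{dt}\tfrac12\|w^+\|_2^2=\langle\partial_t w,w^+\rangle$ that you invoke from \cite{LM68, Wlo87en}; and your locality claim $\e_t(w^-,w^+)=0$ is at the same (slightly informal) level of rigor as the paper's own ``by locality, $\e(u,u-u^+)\ge0$'' in Proposition \ref{prop:T^s_t pos preserving}, so neither constitutes a gap relative to the paper's standards. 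Your argument for $w^+(t)\in\F^0(V)$ via the quasi-continuous characterization of $\F^0(V)$ is the right one and is implicitly what makes the paper's appeal to Corollary \ref{cor:super mvi} legitimate as well.
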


\begin{proof}
We may assume that each $\e_t$ is non-negative definite (if not, multiply the kernels by $e^{-\alpha(t-s)}$ and notice that the associated bilinear forms $\e_t+\alpha$ are non-negative definite by Assumption \ref{as:0}(vi)).
Let $r \in (s,t)$.

Let $f(z) = p^D_U(r,z,s,x)$. Then $p^D_U(t,\cdot,s,x) = P^D_U(t,r) f$ is a non-negative local weak solution of the heat equation in $(r,\infty) \times V$. As $t \downarrow r$, $P^D_U(t,r) f  \to f$ in $L^2(U)$, and by non-negativity also in $L^2(V)$. 
Hence, by Corollary \ref{cor:super mvi}, 
\begin{align*}
p^D_U(t,y,s,x) \ge P^D_V(t,r) p^D_U(r,\cdot,s,x) = \int_V p^D_V(t,y,r,z) p^D_U(r,z,s,x) d\mu(z).
\end{align*}
Similarly, we have for $p^D_V(t,y,s,x) = Q^D_V(s,r) p^D_V(t,y,r,\cdot)(x)$ that 
\begin{align*}
p^D_V(t,y,s,x)  
\le Q^D_U(s,r) p^D_V(t,y,r,\cdot)(x) = \int_U p^D_U(r,z,s,x) p^D_V(t,y,r,z) d\mu(z).
\end{align*}
Combining both inequalities finishes the proof.
\end{proof}

\begin{theorem} \label{thm:basic p^D_B estimate} 
Suppose Assumptions \ref{as:0}, \ref{as:skew1}, \ref{as:skew2}, \ref{as:skew davies} are satisfied globally on $X$, and Assumption \ref{as:VD+PI} is satisfied locally on $X$.
Let $a \in X$ and $B=B(a,r_a)$.
\begin{enumerate}
\item
For any fixed $\epsilon \in (0,1)$ there are constants $c', C' \in (0,\infty)$, such that for any $x \in B(a,(1-\epsilon)r_a)$ and $0 < \epsilon (t-s) \leq \Psi(r_a)$, 
the Dirichlet heat propagator $p^D_B$ satisfies the near-diagonal lower bound
 \[  p^D_B(t,y,s,x) \geq \frac{c'}{V(x,\Psi^{-1}(t-s) \wedge R_x)},
 \]
for any $y \in B(a,(1-\epsilon)r_a)$ with $d(y,x) \leq \epsilon \Psi^{-1}(t-s)$, where $R_x = d(x,\partial B)$.
The constants $c',C'$ depend at most on $C_{\Psi}$, $\beta_1$, $\beta_2$, $C_0$, $C_{10}$, $C_{11}$, on $C_{\mbox{\em \tiny{VD}}}(Y_a)$ and $C_{\mbox{\em \tiny{PI}}}(Y_a)$
for $Y_a = B(a,8r_a)$, and on an upper bound on $(1+C_2+C_4+(C_3+C_5) \Psi(\tau_a))$. 
\item 
There exist constants $C, C' \in (0,\infty)$ such that for any $x,y \in B$, $t > s$, the Dirichlet heat propagator $p^D_B$ satisfies the upper bound
\begin{align*}
 p^D_B(t,y,s,x) 
\leq 
C \frac{ \exp\left(- \Phi_{\beta_2}(d(x,y),C'(t-s)) + (\alpha-c) (t-s) \right)}
{V(x,\tau_a)^{\frac{1}{2}} V(y,\tau_a)^{\frac{1}{2}}},
\end{align*}
where $\tau_a = \Psi^{-1}\left(\frac{t-s}{2} \right) \wedge r_a$.
\end{enumerate}
The constants $c',C,C'$ depend at most on $C_{\Psi}$, $\beta_1$, $\beta_2$, $C_0$, $C_{10}$, $C_{11}$, on $C_{\mbox{\em \tiny{VD}}}(Y_a)$ and $C_{\mbox{\em \tiny{PI}}}(Y_a)$
for $Y_a = B(a,8r_a)$, and on an upper bound on $(1+C_2+C_6+C_3\Psi(\tau_a) + C_7 \Psi(d(x,y)))$. 
\end{theorem}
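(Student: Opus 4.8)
The plan is to deduce (ii) from the Gaussian-type upper bound for the \emph{global} heat propagator (Theorem \ref{thm:upper HKE}) together with domain monotonicity (Proposition \ref{prop:set monotonicity of propagator}), and to deduce (i) from the parabolic Harnack inequality (Theorem \ref{thm:PHI}), the on-diagonal case of (ii), and a mass-retention estimate obtained from a Davies--Gaffney bound for $p^D_B$. For (ii) I first observe that, since $x,y\in B=B(a,r_a)\subset Y_a=B(a,8r_a)$, every ball centered at $x$ or $y$ of radius at most $4r_a$ lies in $Y_a$, on which {\em VD}, {\em PI($\Psi$)} and {\em CSA($\Psi$)} hold up to scale $4r_a\ge\tfrac12 d(x,y)$; hence one may take $Y_x=Y_y=Y_a$ and $r_x=r_y=\tfrac34 r_a$ in the local hypotheses, so that Theorem \ref{thm:upper HKE} applies to the pair $x,y$ with constants controlled by the data attached to $Y_a$. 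By Proposition \ref{prop:set monotonicity of propagator} with $U=X$ (where $\F^0(X)=\F$ by regularity, so $p^D_X=p$) we get $p^D_B(t,y,s,x)\le p(t,y,s,x)$, and Theorem \ref{thm:upper HKE} bounds the right-hand side in terms of $V(x,\tau_x)^{1/2}V(y,\tau_y)^{1/2}$ with $\tau_x=\Psi^{-1}(\tfrac{t-s}{2})\wedge\tfrac34 r_a$. One checks $\tfrac34\tau_a\le\tau_x\le\tau_a\le r_a$ in every regime, so {\em VD} on the concentric balls $B(x,\tfrac34\tau_a)\subset B(x,\tfrac32\tau_a)\subset Y_a$ gives $V(x,\tau_x)^{1/2}V(y,\tau_y)^{1/2}\ge c\,V(x,\tau_a)^{1/2}V(y,\tau_a)^{1/2}$, while $\Psi(\tau_x)\le\Psi(\tau_a)$ takes care of the constant dependence; this yields (ii). (Alternatively, (ii) can be reproved directly in $B$: the energy identity $\tfrac12\partial_t\|\phi u\|_2^2=-\e^D_{B,t}(u,u\phi^2)$ for $u=T^D_B(t,s)f$ and $\phi=e^{-M\psi}$ holds because $\F^0(B)$ is stable under multiplication by such $\phi$, so Lemma \ref{lem:Davies-Gaffney} is valid for $p^D_B$; combining it with two applications of the $L^1$-mean value estimate \eqref{eq:L1 MVE} as in the proof of Theorem \ref{thm:upper HKE} gives the same bound.)

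\textbf{Part (i): mass retention.} The heart of (i) is the estimate: there is $\theta_0\in(0,1)$, depending only on the listed parameters, such that for $x\in B(a,(1-\epsilon)r_a)$, $0<\rho\le\tfrac{\epsilon}{4}r_a$ and $0<t-s\le\theta_0\Psi(\rho)$,
\[ \int_{B(x,\rho)} p^D_B(t,y,s,x)\,d\mu(y)\ \ge\ \tfrac12 . \]
I would write $\int_{B(x,\rho)}p^D_B(t,\cdot,s,x)\,d\mu\ \ge\ \int_B\phi_1\,p^D_B(t,\cdot,s,x)\,d\mu-\int_{B\setminus B(x,\rho)}p^D_B(t,\cdot,s,x)\,d\mu$, where $\phi_1$ is a continuous cutoff furnished by {\em CSA($\Psi$)} for $B(x,R_x/4)$ in $B(x,R_x/2)$, so that $\phi_1\in\F_{\mbox{\tiny{c}}}(B)$, $\phi_1=1$ on $B(x,R_x/4)\supset B(x,\rho)$, and $\int d\Gamma(\phi_1,\phi_1)\le C\,V(x,R_x)/\Psi(R_x)$ (here $R_x=d(x,\partial B)\ge\epsilon r_a\ge 4\rho$). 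The subtracted term is $\le\tfrac14$ for $\theta_0$ small, by the semigroup form of Lemma \ref{lem:Davies-Gaffney} applied to the normalized indicator $f=1_{B(x,\rho/4)}/\mu(B(x,\rho/4))$, a polynomial volume bound from {\em VD}, \eqref{eq:beta}, and one Harnack step (Theorem \ref{thm:PHI} for the adjoint forms, which satisfy the same assumptions) to pass from the $z$-average of $\int_{B\setminus B(x,\rho)}p^D_B(t,y,s,z)\,d\mu(y)$ over $z$ near $x$ to the value at $z=x$. The first integral equals $\phi_1(x)+\int_s^t\big(-\e_r(p^D_B(r,\cdot,s,x),\phi_1)\big)\,dr=1+\int_s^t\big(-\e_r(\cdot,\phi_1)\big)\,dr$; one bounds $\e_r(\cdot,\phi_1)$ using Assumptions \ref{as:0}--\ref{as:skew davies}, strong locality (so only the annulus $B(x,R_x/2)\setminus B(x,R_x/4)$ contributes), a Caccioppoli inequality of the type in the proof of Theorem \ref{thm:MVE subsol p>2}, and the off-diagonal bound of (ii), which makes both the $L^2$-mass and the energy of $p^D_B(r,\cdot,s,x)$ over that annulus exponentially small for $r-s\le\theta_0\Psi(\rho)$ with $\theta_0$ small; hence the first integral is $\ge\tfrac34$, and the displayed inequality follows.

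\textbf{Part (i): Harnack iterations.} Granting mass retention, I would first derive the on-diagonal lower bound. Apply mass retention at the definite time $t_1=s+\theta_0\Psi(\rho)$ to get $\int_{B(x,\rho)}p^D_B(t_1,y,s,x)\,d\mu(y)\ge\tfrac12$; then apply Theorem \ref{thm:PHI} to the non-negative local weak solution $(r,y)\mapsto p^D_B(r,y,s,x)$ on a cylinder $Q(x,\tilde a,\tilde r)$ with $\tilde r\asymp\rho$ (small enough that $B(x,8\tilde r)\subset Y_a$) and with parameters $0<\tau_1<\dots<\tau_4\le1$ chosen so that $(t_1,B(x,\rho))\subset Q^-$, $(t_1+c''\Psi(\rho),x)\in Q^+$, and $\tilde a>s$ --- the last requiring only that $\tau_1$ be small relative to $\theta_0$, which is permitted. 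Integrating $p^D_B(t_1,y,s,x)\le C_{\mbox{\tiny PHI}}\,p^D_B(t_1+c''\Psi(\rho),x,s,x)$ over $y\in B(x,\rho)$ and using $V(x,\rho)\asymp V(x,\Psi^{-1}(t_1+c''\Psi(\rho)-s))$ gives $p^D_B(\sigma,x,s,x)\ge c/V(x,\Psi^{-1}(\sigma-s))$ whenever $\sigma-s\asymp\Psi(\rho)$ and $\rho\le\tfrac{\epsilon}{4}r_a$. The remaining values of $t-s$ up to $\Psi(r_a)/\epsilon$ are reached by iterating Theorem \ref{thm:PHI} a number of times bounded in terms of $\epsilon$ (cylinders of geometrically increasing radius, capped at a fixed multiple of $r_a$), using the on-diagonal upper bound of (ii) to keep the comparison constants under control; this gives $p^D_B(t,x,s,x)\ge c'/V(x,\Psi^{-1}(t-s)\wedge R_x)$ for all $0<\epsilon(t-s)\le\Psi(r_a)$. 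Finally, the off-diagonal upgrade to $y\in B(a,(1-\epsilon)r_a)$ with $d(x,y)\le\epsilon\Psi^{-1}(t-s)$ follows from at most $O_\epsilon(1)$ further applications of Theorem \ref{thm:PHI} along a chain of overlapping balls joining $x$ to $y$, each of radius a fixed multiple of $\epsilon r_a$ and contained in $B$, with time gaps of order $\Psi(\epsilon r_a)$; the joint continuity of $p^D_B$ from Corollary \ref{cor:Hoelder} makes all pointwise evaluations legitimate.

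\textbf{Main obstacle.} I expect the technical heart to be the mass-retention estimate in the non-symmetric, time-dependent, possibly non-conservative setting, where $\theta_0$ and the retained fraction must be kept uniform in $x,s,t,\rho$ and in the perturbation data. The delicate points are: checking that the Davies--Gaffney argument of Lemma \ref{lem:Davies-Gaffney} carries over to the Dirichlet form $\e^D_{B,t}$ on $\F^0(B)$ (i.e.\ that the test functions $u\phi^2$ and $\phi^{\pm1}f$ remain in $\F^0(B)$); and dealing with the fact that $1_B\notin\F^0(B)$, so that the ``survival mass'' $\int_B p^D_B(t,\cdot,s,x)\,d\mu$ cannot be differentiated in $t$ and must instead be controlled through the interior cutoff $\phi_1$, a Caccioppoli estimate, and the off-diagonal bound of (ii). Everything downstream is a transcription of the classical implication ``PHI $+$ on-diagonal upper bound $\Rightarrow$ near-diagonal lower bound'' (cf.~\cite{SC02, BGK12, GHL15}), with $r^2$ replaced by $\Psi(r)$ throughout.
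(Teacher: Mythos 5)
Your proposal is correct and follows essentially the same route as the paper: part (ii) is obtained exactly as in the paper from Theorem \ref{thm:upper HKE} combined with the domain monotonicity of Proposition \ref{prop:set monotonicity of propagator}, and part (i) is the classical mass-retention-plus-Harnack argument (the paper simply cites \cite[Theorem 5.6]{LierlSC2} and \cite[Theorem 5.4.10]{SC02} for the on-diagonal bound and invokes Theorem \ref{thm:PHI} for the near-diagonal upgrade), of which your sketch is a faithful and appropriately detailed reconstruction.
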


\begin{proof} 
The on-diagonal estimate in (i) can be proved in the same way as in \cite[Theorem 5.6]{LierlSC2}. See also \cite[Theorem 5.4.10]{SC02}. 
For the near-diagonal estimate, apply the parabolic Harnack inequality of Theorem \ref{thm:PHI}. 

(ii) is immediate from Theorem \ref{thm:upper HKE} and the set monotonicity of the heat propagator proved in Proposition \ref{prop:set monotonicity of propagator}.
\end{proof}

If $(X,d)$ satisfies a chain condition as in \cite{GT12}, then we can apply the parabolic Harnack inequality repeatedly along chains to obtain an off-diagonal lower bound. In particular, if $d$ is geodesic, then the lower bound in Proposition \ref{thm:basic p^D_B estimate}(i) can be improved to the following corollary. By Proposition \ref{prop:set monotonicity of propagator}, we obtain the same lower bound for the global heat propagator $p(t,y,s,x)$. 

Let 
\begin{align*}
 \Phi(R,t) := \sup_{r>0} \left\{ \frac{R}{r} - \frac{t}{\Psi(r)} \right\}.
 \end{align*}

\begin{corollary}
Suppose $d$ is geodesic.
Then there are constants $C'', c', c'' \in (0,\infty)$ such that for any $a \in X$, all $x,y \in B(a,r_a/2)$, and $t>s$, the Dirichlet heat kernel on $B = B(a,r_a)$ satisfies the lower bound
\[ 
 p^D_B(t,y,s,x) \geq \frac{c'}{V(x,\Psi^{-1}\left(\frac{t-s}{2} \right) \wedge r_a)}
\exp\left(- C'' \Phi(d(x,y),c''(t-s))  \right), \]
The constants $c', c'', C''$ depend on $C_{\Psi}$, $\beta_1$, $\beta_2$, $C_0$, $C_{10}$, $C_{11}$, $C_2$, $C_3$, $C_4$, $C_5$, on 
$C_{\mbox{\em \tiny{VD}}}(Y)$ and $C_{\mbox{\em \tiny{PI}}}(Y)$ for $Y = B(a,8r_a)$, and on an upper bound on $(1+C_2+C_4+(C_3+C_5) \Psi(r_a))$.
\end{corollary}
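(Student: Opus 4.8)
The plan is to run Aronson's chaining argument: propagate the near-diagonal lower bound of Theorem \ref{thm:basic p^D_B estimate}(i) along a geodesic joining $x$ to $y$, exactly as in the classical case \cite[Section 5.4]{SC02} and in \cite[Section 5]{LierlSC2}, the only new feature being to carry the space-time scaling through $\Psi$ and $\Phi$ rather than through $r^2$. Write $R := d(x,y)$, $T := t-s$. First I would dispose of two easy regimes. If $R \le \epsilon_0\Psi^{-1}(T/2)$ for a small $\epsilon_0$ depending only on the data (and $\epsilon_0 T \le \Psi(r_a)$), then $\Phi(R,c''T)$ is bounded above by a constant and Theorem \ref{thm:basic p^D_B estimate}(i) applied directly to $p^D_B$ already yields the claim; and if $T$ is so large that $\Psi^{-1}(T/2)\wedge r_a = r_a$ and $\Phi(R,c''T)$ is again $O(1)$, the same theorem at the truncated scale $r_a$, combined with at most one application of the parabolic Harnack inequality of Theorem \ref{thm:PHI} to move from $x$ to $y$ inside $B$, settles it. So I would then concentrate on the genuinely sub-diagonal regime $\epsilon_0\Psi^{-1}(T/2) < R$ with $T \le c\,\Psi(r_a)$, $c$ small.

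In that regime I would choose $n \in \N$ so that $R/n$ is comparable to $\Psi^{-1}(T/n)$ --- possible by \eqref{eq:beta} and continuity, the relevant balance being $\Psi(\rho)/\rho = T/R$ with unique solution $\rho = r^\ast$ and $n \asymp R/r^\ast$ --- and check that with this choice $n \asymp \Phi(R,cT)$ up to constants controlled only by $\beta_1,\beta_2,C_\Psi$. Using geodesicity, fix a unit-speed geodesic from $x$ to $y$, points $x = z_0,\dots,z_n = y$ on it with $d(z_{i-1},z_i) = R/n$, a partition $s = t_0 < \dots < t_n = t$ into $n$ equal intervals, and radii $\rho := \epsilon'\Psi^{-1}(T/n)$ with $\epsilon'$ so small that $R/n + 2\rho \le \epsilon\Psi^{-1}(T/n)$. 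Iterating the reproducing property of the Dirichlet propagator (the composition law as in Proposition \ref{prop:T^s_t}(v)) and restricting the intermediate integrations to the balls $B(z_i,\rho)$, I get
\begin{align*}
 p^D_B(t,y,s,x)\ \ge\ \int_{B(z_1,\rho)}\!\!\!\cdots\!\!\int_{B(z_{n-1},\rho)}\ \prod_{i=1}^{n} p^D_B\big(t_i,w_i,t_{i-1},w_{i-1}\big)\,d\mu(w_1)\cdots d\mu(w_{n-1}),
\end{align*}
with $w_0 = x$, $w_n = y$. By the choice of $\rho$ each consecutive pair obeys $d(w_i,w_{i-1}) \le \epsilon\Psi^{-1}(t_i-t_{i-1})$, so Theorem \ref{thm:basic p^D_B estimate}(i) bounds each factor below by $c'/V(w_{i-1},\Psi^{-1}(T/n))$; since consecutive chain points are at distance $\lesssim \Psi^{-1}(T/n)$, volume doubling compares all the volumes $V(w_j,\Psi^{-1}(T/n))$ and masses $\mu(B(z_j,\rho))$ with a constant independent of $n$, each of the $n-1$ integrations then contributes a factor $\ge c_0 \in (0,1)$, and telescoping leaves
\begin{align*}
 p^D_B(t,y,s,x)\ \ge\ \frac{(c'c_0)^{\,n}c_0^{-1}}{V(x,\Psi^{-1}(T/n))}\ \ge\ \frac{c''\,e^{-C''n}}{V(x,\Psi^{-1}(T/2)\wedge r_a)}\ \ge\ \frac{c''\,e^{-C''\Phi(R,c''T)}}{V(x,\Psi^{-1}(T/2)\wedge r_a)},
\end{align*}
using $\Psi^{-1}(T/n) \le \Psi^{-1}(T/2)\wedge r_a$ for $n \ge 2$ and $n \asymp \Phi(R,cT)$.

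The step that will need real care --- and where geodesicity does the work --- is keeping the whole chain inside the region where Theorem \ref{thm:basic p^D_B estimate}(i) is available, namely inside $B(a,(1-\epsilon)\cdot\text{radius})$ for a Dirichlet ball still contained in $B$. For $w$ on a geodesic from $x$ to $y$ one has $d(w,a) \le \min\{d(w,x),d(w,y)\} + r_a/2 \le d(x,y)/2 + r_a/2$, so as long as $d(x,y) \le (1-2\epsilon)r_a$ the chain sits well inside the concentric ball $B'' := B\big(a,(r_a+d(x,y))/(2(1-\epsilon))\big) \subseteq B$, and by the set monotonicity of Proposition \ref{prop:set monotonicity of propagator} it suffices to prove the bound for $p^D_{B''} \le p^D_B$, which is exactly what the chaining delivers. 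The leftover range $d(x,y) > (1-2\epsilon)r_a$ I would reduce to this one by monotonicity of $\Phi$ in its first argument and its near-homogeneity under \eqref{eq:beta} (shrinking $d(x,y)$ to $(1-2\epsilon)r_a$ costs only a bounded factor in $\Phi$, absorbed by shrinking $c''$). I expect the main obstacle to be purely bookkeeping: arranging $n$, $\rho$, and $B''$ so that simultaneously the near-diagonal estimate applies at every step, the per-step losses are genuinely geometric in $n$ with no spurious $\log n$ factor (this is why $n$ must be chosen so that $R/n \asymp \Psi^{-1}(T/n)$, making neighbouring chain points lie at the scale of the heat propagation), and the resulting $e^{-C''n}$ matches $e^{-C''\Phi(d(x,y),c''(t-s))}$ --- all of which goes through as in \cite[Theorem 5.4.12]{SC02} and \cite[Section 5]{LierlSC2} with $r^2$ replaced by $\Psi(r)$ and \eqref{eq:beta} used in place of the homogeneity of $r \mapsto r^2$.
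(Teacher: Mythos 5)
Your plan is essentially the paper's. The printed proof is two sentences --- take the on/near-diagonal lower bound of Theorem \ref{thm:basic p^D_B estimate}(i) for $\epsilon(t-s)<\Psi(r_a)$ and propagate it off-diagonal by applying the parabolic Harnack inequality repeatedly along chains --- and your sketch is a faithful, considerably more explicit expansion of exactly that. The one difference in mechanism is that you chain via the Chapman--Kolmogorov identity, lower-bounding each of the $n$ kernel factors by the near-diagonal estimate, whereas the paper chains Theorem \ref{thm:PHI} directly on the solution $(t,y)\mapsto p^D_B(t,y,s,x)$, losing a fixed Harnack factor per link; the two implementations are interchangeable and produce the same step count, and your verification that the balance $\Psi(\rho)/\rho=(t-s)/d(x,y)$ gives $n\asymp\Phi(d(x,y),c(t-s))$ is the right computation. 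Your handling of the containment issue (keeping the chain where the near-diagonal bound is available, via the $\min\{d(w,x),d(w,y)\}$ estimate for geodesics and the set monotonicity of Proposition \ref{prop:set monotonicity of propagator}) is more careful than anything the paper records.

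The one place your plan would actually fail --- and where the paper's two-line proof is equally silent --- is the regime $t-s\gg\Psi(r_a)$. There $\Phi(d(x,y),c''(t-s))\to 0$, so the asserted bound degenerates to $p^D_B(t,y,s,x)\ge c'/V(x,r_a)$ uniformly in $t$; but for the symmetric reference form the Dirichlet heat kernel of a precompact ball with nonempty complement decays like $e^{-\lambda_1(t-s)}$, so neither your proposed ``one application of the parabolic Harnack inequality at the truncated scale'' nor any chaining (which loses a fixed factor per time step of length $\asymp\Psi(r_a)$ and hence yields $e^{-c(t-s)/\Psi(r_a)}$) can produce a $t$-independent lower bound. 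This is a defect of the statement as printed rather than of your argument: the honest range is $t-s\lesssim\Psi(r_a)$, which is precisely the range in which your chaining is complete.
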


\begin{proof}
From Theorem \ref{thm:basic p^D_B estimate}(i) we obtain an on-diagonal bound for $0 < \epsilon (t-s) < \Psi(r_a)$.
The off-diagonal estimate (for any $t>s$) follows from the parabolic Harnack inequality.
\end{proof}

\begin{corollary}
Suppose {\em Assumptions \ref{as:0}, \ref{as:skew1}, \ref{as:skew2}} and {\em A2-Y, VD, PI($\Psi$), CSA($\Psi$)} are satisfied globally on $Y=X$. Suppose $d$ is geodesic. If  $C_3=C_5=0$, then there are constants $C, C', c', c'', C'' \in (0,\infty)$ such that for any $x,y \in X$ and $t>s$, we have
\begin{align*}
c'  \frac{\exp\left( - C'' \Phi(d(x,y),c''(t-s)) \right)}{V(x,\Psi^{-1}(t-s))}
\leq p(t,y,s,x)
\leq  C \frac{\exp\left( - \Phi_{\beta_2}(d(x,y),C'(t-s)) + (\alpha-c)(t-s) \right)}{V(x,\Psi^{-1}(t-s))}.
\end{align*}
The constants $C, C', c', c'', C''$ depend only on $C_{\Psi}$, $\beta_1$, $\beta_2$, $C_0$, $C_{10}$, $C_{11}$, $C_2$, $C_4$, $C_{\mbox{\em \tiny{VD}}}(X)$, $C_{\mbox{\em \tiny{PI}}}(X)$.
\end{corollary}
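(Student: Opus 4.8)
The plan is to assemble the two bounds from results already in hand, specialized to the global situation $Y=X$ (equivalently $R_0=\infty$), and then to rewrite all the local volume factors in terms of $V(x,\Psi^{-1}(t-s))$ using {\em VD} together with the polynomial estimates~\eqref{eq:beta}. The hypothesis $C_3=C_5=0$ will be used precisely to make the constants in the lower bound independent of the radius of the balls used to exhaust $X$.

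For the lower bound, fix $x,y\in X$ and $t>s$ and apply the preceding corollary (the geodesic lower bound for $p^D_B$). Since Assumption~\ref{as:VD+PI} holds globally, for every $r_a>0$ the ball $B=B(x,r_a)$ meets the hypotheses of that corollary with $a=x$ and $Y_a=B(x,8r_a)$, and moreover $C_{\mbox{\tiny{VD}}}(Y_a)\le C_{\mbox{\tiny{VD}}}(X)$, $C_{\mbox{\tiny{PI}}}(Y_a)\le C_{\mbox{\tiny{PI}}}(X)$. I would take $r_a\ge 2d(x,y)\vee\Psi^{-1}\bigl((t-s)/2\bigr)$, so that $y\in B(x,r_a/2)$ and $\Psi^{-1}\bigl((t-s)/2\bigr)\wedge r_a=\Psi^{-1}\bigl((t-s)/2\bigr)$; since $C_3=C_5=0$, the only $r_a$-dependent quantity appearing in those hypotheses, namely an upper bound on $(1+C_2+C_4)+(C_3+C_5)\Psi(r_a)$, reduces to $1+C_2+C_4$, so the constants $c',c'',C''$ it produces do not depend on $r_a$. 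The preceding corollary then gives
\[
p^D_B(t,y,s,x)\ \ge\ \frac{c'}{V\bigl(x,\Psi^{-1}((t-s)/2)\bigr)}\,\exp\bigl(-C''\,\Phi(d(x,y),c''(t-s))\bigr).
\]
Because $(\e^*,\F)$ is regular, $\F^0(X)=\F$, so the Dirichlet-type form on $X$ is $\e_t$ itself and $p^D_X=p$; hence Proposition~\ref{prop:set monotonicity of propagator} with $B\subset X$ yields $p(t,y,s,x)\ge p^D_B(t,y,s,x)$. Finally $\Psi^{-1}((t-s)/2)\le\Psi^{-1}(t-s)$ gives $V\bigl(x,\Psi^{-1}((t-s)/2)\bigr)\le V\bigl(x,\Psi^{-1}(t-s)\bigr)$, which only strengthens the inequality, and after renaming the constants this is the asserted lower bound.

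For the upper bound I would invoke Theorem~\ref{thm:upper HKE} with $Y=X$: {\em CSA($\Psi,C_0$)} holds on all of $X$, in particular locally on $B(x,d(x,y))$ and $B(y,d(x,y))$ up to scale $\tfrac12 d(x,y)$, and with $Y_x,Y_y$ chosen large enough one has $\tau_x=\tau_y=\tau:=\Psi^{-1}\bigl((t-s)/2\bigr)$, so that
\[
p(t,y,s,x)\ \le\ C\,\frac{\exp\bigl(-\Phi_{\beta_2}(d(x,y),C'(t-s))+(\alpha-c)(t-s)\bigr)}{V(x,\tau)^{1/2}\,V(y,\tau)^{1/2}} .
\]
By~\eqref{eq:beta}, $\Psi^{-1}(t-s)\le(2C_\Psi)^{1/\beta_1}\tau$, so {\em VD} gives $V\bigl(x,\Psi^{-1}(t-s)\bigr)\le C\,V(x,\tau)$; and since $B(x,\tau)\subset B(y,\tau+d(x,y))$, Lemma~\ref{lem:nu} gives $V(x,\tau)\le C_{\mbox{\tiny{VD}}}^{2}\,(1+d(x,y)/\tau)^{\nu}V(y,\tau)$ with $\nu=\log_2 C_{\mbox{\tiny{VD}}}$. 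Together these yield $V(x,\tau)^{1/2}V(y,\tau)^{1/2}\ge c\,(1+d(x,y)/\tau)^{-\nu/2}\,V\bigl(x,\Psi^{-1}(t-s)\bigr)$. It then remains to absorb the factor $(1+d(x,y)/\tau)^{\nu/2}$ into $\exp(-\Phi_{\beta_2})$: using~\eqref{eq:beta} one checks that for every $\theta\in(0,1)$ there is $C_\theta<\infty$ with $\tfrac{\nu}{2}\log(1+d(x,y)/\tau)\le C_\theta+\theta\,\Phi_{\beta_2}(d(x,y),C'(t-s))$, and then the elementary scaling identity $(1-\theta)\,\Phi_{\beta_2}(R,\lambda)=\Phi_{\beta_2}\bigl(R,\lambda(1-\theta)^{1-\beta_2}\bigr)$ rewrites $(1-\theta)\Phi_{\beta_2}(d(x,y),C'(t-s))$ as $\Phi_{\beta_2}(d(x,y),C''(t-s))$ with $C''\ge C'$. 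This gives the asserted upper bound (with $C$ replaced by $C\,e^{C_\theta}$ and $C'$ by $C''$).

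The step I expect to be the real obstacle is this last absorption, which is routine but requires separating the two regimes of $\Phi_{\beta_2}$: when $\Phi_{\beta_2}(d(x,y),C'(t-s))$ is bounded, \eqref{eq:beta} forces $d(x,y)/\tau$ to be bounded too, whereas when it is large, $\log(1+d(x,y)/\tau)$ is only logarithmic in $\Psi(d(x,y))/(t-s)$ while $\Phi_{\beta_2}$ grows like a fixed positive power of that ratio --- the $\Psi$-analogue of the volume-factor manipulations in \cite[Section~5.3]{SC02}. I would also note that the upper bound passes through Theorem~\ref{thm:upper HKE}, which presupposes Assumption~\ref{as:skew davies} (with $C_7=0$ in the global setting, exactly as $C_3=C_5=0$), so that the constants depend only on the quantities listed in the statement.
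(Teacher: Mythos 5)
Your proposal is correct and follows exactly the route the paper intends (the corollary is stated without proof as an assembly of Theorem \ref{thm:upper HKE}, Proposition \ref{prop:set monotonicity of propagator}, and the preceding Dirichlet lower bound): the lower bound via large balls with $r_a$-independent constants thanks to $C_3=C_5=0$, and the upper bound via the standard VD/$\eqref{eq:beta}$ volume comparison with the polynomial factor absorbed into $\exp(-\Phi_{\beta_2})$ by the scaling identity you state. You are also right to flag that the upper bound implicitly requires Assumption \ref{as:skew davies} (with $C_7=0$), which the corollary's hypothesis list omits.
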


\section{Parabolic maximum principle and caloric functions} \label{sec:parabolic max princ}

\begin{proposition}[Parabolic maximum principle] \label{prop:para max princ}
Suppose $(\e_t,\F)$, $t \in \R$, is a family of bilinear forms satisfying Assumption \ref{as:0}. Assume that $\e_t^{\mbox{\em\tiny{sym}}}(f,f) \ge 0$ for all $t \in \R$ and $f \in \F$.
Let $I = (s,T)$ for some $- \infty < s <  T \leq \infty$. Let $U \subset X$ be an open subset. Let $u \in \mathcal{C}_{\mbox{\em\tiny{loc}}}(I \to L^2(U))$ be a local very weak subsolution of the heat equation 
for $L_t$ in $I \times U$.
Assume that $u^+(t,\cdot) \in \F^0(U)$ for every $t \in I$, and $u^+(t,\cdot) \to 0$ in $L^2(U)$ as $t \to s$. 
Then $u\leq 0$ almost everywhere on $I \times U$.
\end{proposition}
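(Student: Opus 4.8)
The plan is to run the classical $L^2$ energy estimate for subsolutions, testing the subsolution inequality against the positive part $u^+$, with a Steklov average supplying the (a priori missing) weak time derivative of $u$, exactly as in the proof of Lemma~\ref{lem:estimate subsol p>2}. It suffices to show $\int_U u^+(t_0,\cdot)^2\,d\mu=0$ for every $t_0\in I$, and since $u\in\mathcal{C}_{\mbox{\tiny{loc}}}(I\to L^2(U))$ it is enough to prove this for a.e.\ $t_0$. As a preliminary I would extend the class of test functions in \eqref{eq:local very weak subsolution} from $\F_{\mbox{\tiny{c}}}(U)$ to non-negative elements of $\F^0(U)$: any non-negative $w\in\F^0(U)$ is an $\|\cdot\|_{\F}$-limit of functions in $\F_{\mbox{\tiny{c}}}(U)$, which may be taken non-negative since $\e^*$ is Markovian, so \eqref{eq:local very weak subsolution} persists under $\|\cdot\|_{\F}$-limits of $\phi$ by Assumption~\ref{as:0}(i) and continuity of $g\mapsto\int u(b,\cdot)\,g\,d\mu$ on $L^2(U)$, localizing in space as $u\in L^2_{\mbox{\tiny{loc}}}(I\to\F;U)$.

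The one genuinely new ingredient is the pointwise identity $\e_t(u(t,\cdot),u^+(t,\cdot))=\e_t^{\mbox{\tiny{sym}}}(u^+(t,\cdot),u^+(t,\cdot))$ for a.e.\ $t$, which together with the hypothesis $\e_t^{\mbox{\tiny{sym}}}(f,f)\ge0$ gives $\e_t(u(t,\cdot),u^+(t,\cdot))\ge0$. Writing $u=u^+-u^-$ and using $\e_t(f,f)=\e_t^{\mbox{\tiny{sym}}}(f,f)$, the identity is equivalent to $\e_t(u^-,u^+)=0$. For the symmetric part, $\e_t^{\mbox{\tiny{sym}}}(u^-,u^+)=\e_t^{\mbox{\tiny{s}}}(u^-,u^+)+\e_t^{\mbox{\tiny{sym}}}(u^-u^+,1)$, where the first summand vanishes because $\e_t^{\mbox{\tiny{s}}}$ is a strongly local Dirichlet form, so by \eqref{eq:chain rule for Gamma} $d\Gamma_t(u^-,u^+)=-1_{\{u<0\}}1_{\{u>0\}}\,d\Gamma_t(u,u)=0$, and the second vanishes since $u^-u^+=0$. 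For the skew part, $\e_t^{\mbox{\tiny{skew}}}(u^-,u^+)=\l_t(u^-,u^+)+\r_t(u^-,u^+)$, and the chain rule of Assumption~\ref{as:0}(v), applied to $\mathcal{C}^2$-approximations of $x\mapsto x^{\pm}$ and $\mathcal D$-approximations of $u(t,\cdot)$ (using that $\l_t,\r_t$ extend continuously to $\F\times\F$), gives $\l_t(u^-,u^+)=-\l_t(u,1_{\{u<0\}}u^+)=0$ and $\r_t(u^-,u^+)=-\l_t(u,1_{\{u>0\}}u^-)=0$. I note that the hypothesis $\e_t^{\mbox{\tiny{sym}}}(f,f)\ge0$ is convenient but not essential: Assumption~\ref{as:0}(vi) alone gives $\e_t(u^+,u^+)\ge-\alpha\int u^+(t,\cdot)^2\,d\mu$, which would suffice via a Gr\"onwall argument.

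For the Steklov step, for small $h>0$ set $u_h(t)=\frac1h\int_t^{t+h}u(\sigma)\,d\sigma$; then $u_h$ has a strong $L^2$-derivative $\partial_t u_h=\frac1h(u(\cdot+h)-u(\cdot))$ and is $\mathcal{C}^1$ into $L^2(U)$, and $u_h(t)^+\in\F^0(U)$ (it is dominated by the Steklov average of $u^+$, which lies in $\F^0(U)$). Testing the extended inequality \eqref{eq:local very weak subsolution} with $a=t$, $b=t+h$, $\phi=u_h(t)^+$, dividing by $h$, using $\partial_t u_h\cdot u_h^+=\partial_t(\frac12(u_h^+)^2)$, and integrating over $t\in(s_0,t_0)$ with $s<s_0<t_0<T$ yields
\[ \frac12\int (u_h(t_0)^+)^2\,d\mu-\frac12\int (u_h(s_0)^+)^2\,d\mu+\int_{s_0}^{t_0}\frac1h\int_t^{t+h}\e_\sigma(u(\sigma),u_h(t)^+)\,d\sigma\,dt\le0. \]
Splitting the last integrand as $\e_\sigma(u(\sigma),u_h(t)^+-u^+(t))+\e_\sigma(u(\sigma)-u(t),u^+(t))+\e_\sigma(u(t),u^+(t))$, the third term is $\ge0$ pointwise in $\sigma$ by the identity above, while the two cross terms integrate to $o(1)$ as $h\to0$ by the same estimates as in Step~2 of the proof of Lemma~\ref{lem:estimate subsol p>2} (using $\|u_h(t)^+-u^+(t)\|_{\F}\le\|u_h(t)-u(t)\|_{\F}$ and $u_h\to u$ in $L^2_{\mbox{\tiny{loc}}}(I\to\F;U)$). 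Letting $h\to0$, so that $u_h(t_0)^+\to u^+(t_0)$ and $u_h(s_0)^+\to u^+(s_0)$ in $L^2(U)$, gives $\int_U u^+(t_0,\cdot)^2\,d\mu\le\int_U u^+(s_0,\cdot)^2\,d\mu$ for all $s<s_0<t_0<T$; letting $s_0\downarrow s$ and using $u^+(s_0,\cdot)\to0$ in $L^2(U)$ forces $\int_U u^+(t_0,\cdot)^2\,d\mu=0$ for every $t_0$, i.e.\ $u\le0$ a.e.\ on $I\times U$.

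I expect the main obstacle to be the $h\to0$ passage in the Steklov step — the term splitting, the vanishing of the cross terms, and the membership and integrability bookkeeping for $u_h(t)^+$ in $\F^0(U)$ — together with the approximation argument underlying $\e_t(u^-,u^+)=0$; both are mechanically the same as in the proof of Lemma~\ref{lem:estimate subsol p>2} and in \cite{LierlSC2}, so the write-up should mainly refer there.
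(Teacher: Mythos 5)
Your overall strategy coincides with the paper's: extend the class of test functions from $\F_{\mbox{\tiny{c}}}(U)$ to $\F^0(U)$, use a Steklov average to supply the missing time derivative, and reduce everything to $\e_t(u(t),u^+(t))=\e_t^{\mbox{\tiny{sym}}}(u^+(t),u^+(t))\ge0$, which you justify (more explicitly than the paper does, via the $\e_t^{\mbox{\tiny{s}}},\l_t,\r_t$ decomposition and the chain rule) by locality. The resulting differential inequality, the splitting of $\frac1h\int_t^{t+h}\e_\sigma(\cdot,\cdot)\,d\sigma$ into two vanishing cross terms plus a signed term, and the final limit $s_0\downarrow s$ are exactly the paper's argument.

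The one genuine problem is your choice of test function $\phi=u_h(t)^+$ (the positive part of the Steklov average) together with the inequality $\|u_h(t)^+-u^+(t)\|_{\F}\le\|u_h(t)-u(t)\|_{\F}$ invoked to kill the first cross term. That inequality is false: $v\mapsto v^+$ contracts the $L^2$ norm and satisfies $\e^*(v^+,v^+)\le\e^*(v,v)$, but it is not a contraction (indeed not even Lipschitz) for the energy norm. Already in $H^1$, taking $w$ to oscillate rapidly through $0$ with small amplitude and $v=w+\delta$ makes $\|v-w\|_{H^1}$ arbitrarily small while $\|(v^+-w^+)'\|_{2}$ blows up. So Assumption \ref{as:0}(i) does not control $\e_\sigma(u(\sigma),u_h(t)^+-u^+(t))$ by $\|u_h(t)-u(t)\|_{\F}$. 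The convergence $u_h(t)^+\to u^+(t)$ in $\F$ that you actually need is true, but it rests on the strong $\F$-continuity (not Lipschitz continuity) of the positive-part map on a strongly local Dirichlet space — a separate lemma requiring the energy-measure calculus — plus a dominated-convergence argument in $t$. The paper avoids this entirely by testing with $(u^+)_h(t)$, the Steklov average of $u^+$, for which $(u^+)_h(t)\to u^+(t)$ in $\F$ follows from Lebesgue differentiation of the Bochner integral with no contraction property needed; the price is the extra term $-\frac2h\int_a^b\int_U[u^-(t+h)-u^-(t)](u^+)_h(t)\,d\mu\,dt$, whose unfavorable half is nonpositive and whose remainder tends to $0$. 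You should either prove the continuity lemma or switch to the paper's test function.
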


For weak subsolutions of the heat equation for symmetric regular Dirichlet forms, the parabolic maximum principle is proved in \cite[Proposition 5.2]{GHL09} 
(see also \cite[Proposition 4.11]{GH08}). Their proof makes explicit use of the Markov property of the Dirichlet form. 
Below we give a proof of Proposition \ref{prop:para max princ} that relies on Steklov averages.

\begin{proof}[Proof of Proposition \ref{prop:para max princ}]
Let $u$ be as in the proposition. Then \eqref{eq:local very weak subsolution} extends to all $\phi \in \F^0(U)$ by an approximation argument together with the Cauchy-Schwarz inequality and 
Assumption \ref{as:0}. Thus, for any fixed $t$, we can take $\phi = (u^+)_h(t) \in \F^0(U)$ as test function in \eqref{eq:local very weak subsolution}. 
Let $s < a < b < T$ and $h>0$ be so small that $b+h < T$. 
Since $u_h$ has the strong time-derivative $\frac{\partial}{\partial t} (u^+)_h(t) = \frac{1}{h} [ u^+(t+h) - u^+(t) ]$, we have
\begin{align}
& \int_U (u^+)_h^2(b)  d\mu - \int_U (u^+)_h^2(a) d\mu \\
& = \int_a^b \frac{d}{dt} \int_U(u^+)_h^2(t)  d\mu \, dt \nonumber \\
& = 2 \int_a^b \frac{1}{h} \int_U  [ u^+(t+h) - u^+(t) ] (u^+)_h(t) d\mu \, dt  \nonumber \\
& = 2 \int_a^b \frac{1}{h} \int_U  [ u(t+h) - u(t) ] (u^+)_h(t) d\mu \, dt  \nonumber \\
& \quad - 2 \int_a^b \frac{1}{h} \int_U  [ u^-(t+h) - u^-(t) ] (u^+)_h(t) d\mu \, dt  \nonumber \\
& \le - 2 \int_a^b \frac{1}{h} \int_t^{t+h} \e_s \left( u(s),(u^+)_h(t) \right) ds \, dt  \nonumber \\
& \quad - 2 \int_a^b \frac{1}{h} \int_U  [ u^-(t+h) - u^-(t) ] (u^+)_h(t) d\mu \, dt  \nonumber \\
& \leq - 2 \int_a^b \frac{1}{h} \int_t^{t+h} \e_s \left( u(s),(u^+)_h(t) - u^+(t) \right) ds \, dt   \label{eq:goes to 0 with h part1}\\
& \quad - 2 \int_a^b \frac{1}{h} \int_t^{t+h} \e_s \left( u(s) - u(t), u^+(t) \right) ds \, dt   \label{eq:goes to 0 with h part2} \\
& \quad -2 \int_a^b \frac{1}{h} \int_t^{t+h} \e_s \left( u(t), u^+(t) \right) ds\, dt 
\label{eq:want to estimate integrand} \\
& \quad + \frac{2}{h} \int_a^b \int_U    u^-(t)  (u^+)_h(t) d\mu \, dt. 
\label{eq:goes to <0 with h part3}
\end{align}
Letting $h$ go to $0$, we see that \eqref{eq:goes to 0 with h part1} and \eqref{eq:goes to 0 with h part2} tend to $0$ by Assumption \ref{as:0} and \cite[Lemma 3.8 and Corollary 3.10]{LierlSC2}. 
In \eqref{eq:want to estimate integrand}, observe that $- \e_s \left( u(t), u^+(t) \right) = - \e_s \left( u^+(t), u^+(t) \right) \le 0$ because $\e_s$ is local and its symmetric part is non-negative definite. The integrand in \eqref{eq:goes to <0 with h part3} converges to $0$ pointwise almost everywhere. Hence \eqref{eq:goes to <0 with h part3} goes to $0$ by the dominated convergence theorem. 
Thus, we obtain
\begin{align*}
 \int_U (u^+)^2(b)  d\mu - \int_U (u^+)^2(a) d\mu 
\leq 0.
\end{align*}
for almost every $s < a < b < T$. 
The assumption that $u^+(t,\cdot) \to 0$ in $L^2(U)$ as $t \to s$ implies that
we can make $\int_U (u^+)^2(a) d\mu $ arbitrarily small by choosing $a$ sufficiently close to $s$. Hence,
\begin{align*}
 \int_U (u^+)^2(b)  d\mu
\leq 0,
\end{align*}
so $u^+(b) = 0$ $\mu$-almost everywhere on $U$, for almost every $b \in I$.
This proves that $u \leq 0$ almost everywhere on $I \times U$.
\end{proof}

\begin{corollary}[Super-mean value inequality] \label{cor:super mvi}
Suppose $(\e_t,\F)$, $t \in \R$, is a family of bilinear forms satisfying Assumption \ref{as:0}. Assume that $\e_t^{\mbox{\em\tiny{sym}}}(f,f) \ge 0$ for all $t \in \R$ and $f \in \F$.
Let $I = (s,T)$ for some $- \infty < s <  T \leq \infty$. Let $f \in L^2(U)$, $f \ge 0$. Let $u \in \mathcal{C}_{\mbox{\em\tiny{loc}}}(I \to L^2(U))$ be a non-negative local very weak supersolution of the heat equation for $L_t$ in $(s,T) \times U$ such that $u(t,\cdot) \to f$ in $L^2(U)$ as $t \downarrow s$. Then, for every $t \in (s,T)$,
\[ u(t,x) \ge P^D_U(t,s)f(x) \quad \mbox{ for a.e. } x \in U. \]
\end{corollary}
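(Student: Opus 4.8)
The plan is to identify $v(t,\cdot):=P^D_U(t,s)f$ with the Dirichlet heat flow of $f$ and to compare $u$ with $v$ by means of the parabolic maximum principle, Proposition \ref{prop:para max princ}. First I would observe that $(\e^D_{U,t},\F^0(U))$, $t\in\R$, is again a family of bilinear forms on the Hilbert space $\F^0(U)$ satisfying the relevant parts of Assumption \ref{as:0} (boundedness and the weak coercivity of Assumption \ref{as:0}(vi) are inherited from $\e_t$ by restriction). Hence Proposition \ref{prop:IVP} provides a unique $v\in L^2_{\mbox{\tiny{loc}}}((s,T)\to\F^0(U))\cap\mathcal{C}_{\mbox{\tiny{loc}}}((s,T)\to L^2(U))$ which is a local weak solution of the heat equation for $L_t$ in $(s,T)\times U$, has $v(t,\cdot)\in\F^0(U)$ for each $t$, and satisfies $v(t,\cdot)\to f$ in $L^2(U)$ as $t\downarrow s$. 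Applying Proposition \ref{prop:para max princ} to $-v$ (whose positive part is $v^-(t,\cdot)\in\F^0(U)$ and tends to $f^-=0$ in $L^2(U)$) shows $v\ge 0$; note that the hypothesis $\e_t^{\mbox{\tiny{sym}}}(g,g)\ge 0$ of the corollary is precisely what the maximum principle requires.

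Set $w:=v-u$. Since $v$ is a local weak solution and $u$ a local very weak supersolution, linearity of the defining relations shows that $w$ is a local very weak subsolution of the heat equation for $L_t$ in $I\times U$, and $w\in\mathcal{C}_{\mbox{\tiny{loc}}}(I\to L^2(U))$ with $w(t,\cdot)=v(t,\cdot)-u(t,\cdot)\to f-f=0$ in $L^2(U)$ as $t\downarrow s$; in particular $w^+(t,\cdot)\to 0$ in $L^2(U)$. The one nontrivial point left to check before applying Proposition \ref{prop:para max princ} is that $w^+(t,\cdot)\in\F^0(U)$. Here I would use that $u\ge 0$, so $w=v-u\le v$ and therefore $0\le w^+\le v$ pointwise; combining this with the local energy estimates available for the subsolution $w$ to first secure $w^+(t,\cdot)\in\F$, one then invokes the description $\F^0(U)=\{g\in\F:\widetilde g=0\text{ q.e.\ on }X\setminus U\}$ for the regular Dirichlet form $(\e^*,\F)$: from $0\le\widetilde{w^+}\le\widetilde v=0$ q.e.\ on $X\setminus U$ it follows that $w^+(t,\cdot)\in\F^0(U)$.

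With all hypotheses in place, Proposition \ref{prop:para max princ} gives $w\le 0$ a.e.\ on $I\times U$, i.e.\ $u\ge v=P^D_U(\cdot,s)f$ a.e.\ on $I\times U$. To pass to the pointwise-in-time statement, fix $t\in(s,T)$, pick a sequence $t_n\to t$ in the full-measure set where $u(t_n,\cdot)\ge v(t_n,\cdot)$ a.e.\ in $U$, and let $n\to\infty$ using the $L^2(U)$-continuity of both $u$ and $v$ (passing to a subsequence that converges a.e.); this yields $u(t,x)\ge P^D_U(t,s)f(x)$ for a.e.\ $x\in U$. The step I expect to be the real obstacle is the verification that $w^+(t,\cdot)\in\F^0(U)$: the pointwise domination $0\le w^+\le v$ makes the ``boundary values'' of $w^+$ vanish q.e., but one must genuinely know that $w^+(t,\cdot)$ lies in $\F$ (not merely in $\F_{\mbox{\tiny{loc}}}(U)$) before the quasi-continuous characterization of $\F^0(U)$ applies — and this is exactly where it matters that $v$ solves the Dirichlet problem rather than that $u$ is merely a local supersolution.
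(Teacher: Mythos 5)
Your proposal is correct and follows essentially the same route as the paper: the paper likewise applies Proposition \ref{prop:para max princ} to the very weak subsolution $v = P^D_U(t,s)f - u$, obtaining non-negativity of $P^D_U(t,s)f$ from Proposition \ref{prop:T^s_t pos preserving} and the membership $v^+(t,\cdot)\in\F^0(U)$ from the domination $0\le v^+\le P^D_U(t,s)f$ via \cite[Lemma 4.4]{GH08} — precisely the step you flag as the crux. The only cosmetic difference is that you rederive positivity of the Dirichlet propagator from the maximum principle instead of citing the positivity-preservation proposition directly.
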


\begin{proof}
Following \cite[Corollary 2.3]{BGK12}, we apply the parabolic maximum principle to the local very weak subsolution 
$v(t,\cdot) = P^D_U(t,s)f - u(t,\cdot)$. 
Indeed, we have $v^+(t,\cdot) \in \F^0(U)$ for every $t \in I$ by Proposition \ref{prop:T^s_t pos preserving} and \cite[Lemma 4.4]{GH08}. Now Proposition \ref{prop:para max princ} yields that $v \le 0$ almost everywhere in $I \times U$. Continuity in $t$ completes the proof of the super-mean value inequality.
\end{proof}

The properties listed in the next Proposition are the defining properties of a space of caloric functions as defined in \cite{BGK12}.

\begin{proposition} \label{prop:very weak is caloric}
Suppose $(\e_t,\F)$, $t \in \R$, is a family of left-strongly local bilinear forms satisfying Assumption \ref{as:0}. Assume that $\e_t^{\mbox{\em\tiny{sym}}}(f,f) \ge 0$ for all $t \in \R$ and $f \in \F$.
Let $I =(s,T)$ for some $T \le \infty$. Let $U \subset X$ be open.
Let $\mathcal W(I\times U)$ be the space of local weak solutions of the heat equation for $L_t$ on $I \times U$. Then
\begin{enumerate}
\item 
$\mathcal W(I\times U)$ is a linear space over $\R$.
\item
If $I' \subset I$ and $U' \subset U$, then $\mathcal W(I\times U) \subset \mathcal W('I\times U')$.
\item
For any $f \in L^2(U)$, the function $(t,x) \mapsto P^D_{U,t}f(x)$ is in $\mathcal W(I\times U)$.
\item
Any constant function in $U$ is the restriction to $U$ of a time-independent function in $\mathcal W(I\times U)$.
\item For any non-negative $u \in \mathcal W(I\times U)$ and every $s < r < t < T$,
\[ u(t,x) \geq P^D_U(t,r) u(r,x) \quad \mbox{ for a.e. } x \in U. \]
\end{enumerate}
\end{proposition}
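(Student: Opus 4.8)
The five properties in Proposition \ref{prop:very weak is caloric} are almost all immediate from the definitions and earlier results; the only one requiring real work is (v), the super-mean value property, which is exactly Corollary \ref{cor:super mvi} specialized to weak solutions. So the strategy is to dispatch (i)--(iv) quickly and then reduce (v) to Corollary \ref{cor:super mvi}.

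First, (i): the heat equation $\frac{\partial}{\partial t}u = L_t u$ is linear and the defining identity \eqref{eq:loc weak sol} is linear in $u$, while $L^2_{\mbox{\tiny{loc}}}(I \to \F;U)$ and $\mathcal{C}_{\mbox{\tiny{loc}}}(I \to L^2(U))$ are linear spaces; hence a linear combination of local weak solutions is again one. Next, (ii): restricting a local weak solution on $I \times U$ to a smaller cylinder $I' \times U'$ preserves all the defining conditions, since the membership requirements in $L^2_{\mbox{\tiny{loc}}}$ and $\mathcal{C}_{\mbox{\tiny{loc}}}$ are stated via relatively compact subintervals/subsets and \eqref{eq:loc weak sol} only needs to hold for test functions in $\F_{\mbox{\tiny{c}}}(U') \subset \F_{\mbox{\tiny{c}}}(U)$ at a.e.\ pair $a,b \in I' \subset I$. (Here I would note the evident typo in the statement: $\mathcal W('I\times U')$ should read $\mathcal W(I'\times U')$.) Then (iii): by Proposition \ref{prop:IVP} and the discussion following it, $t \mapsto T^D_{U}(t,s)f$ solves the initial value problem weakly and has a weak time-derivative, hence is a local weak solution on $I \times U$ in the sense of Definition \ref{def:local very weak solution} together with time-continuity; one can also invoke \cite[Proposition 7.8]{LierlSC2}. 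For (iv): left-strong locality of $\e_t$ implies $\e_t(c,g) = 0$ for constants $c$ and $g \in \F_{\mbox{\tiny{c}}}(U)$, so a constant function on $U$ (extended to a function in $\F_{\mbox{\tiny{loc}}}$, time-independent) satisfies \eqref{eq:loc weak sol} trivially; this is exactly the point flagged in the proof of Corollary \ref{cor:Hoelder}.

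Finally, (v): let $u \in \mathcal W(I\times U)$ be non-negative and fix $s < r < t < T$. By (ii), $u$ restricted to $(r,T) \times U$ is a non-negative local weak solution, hence a non-negative local very weak supersolution there, and since $u \in \mathcal{C}_{\mbox{\tiny{loc}}}((r,T) \to L^2(U))$ we have $u(\tau,\cdot) \to u(r,\cdot) =: f \in L^2(U)$ with $f \ge 0$ as $\tau \downarrow r$. Now Corollary \ref{cor:super mvi} (applied with $s$ replaced by $r$), whose hypotheses $\e_t^{\mbox{\tiny{sym}}}(g,g) \ge 0$ and Assumption \ref{as:0} are in force, gives $u(t,x) \ge P^D_U(t,r)f(x) = P^D_U(t,r)u(r,x)$ for a.e.\ $x \in U$, which is the claim. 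The only slightly delicate point is checking that the hypotheses of Corollary \ref{cor:super mvi} are genuinely met after this restriction --- in particular that a local weak solution is in particular a local very weak supersolution (clear, since equality implies the inequality in \eqref{eq:local very weak subsolution} for non-negative test functions) and that the time-continuity transfers to the required $L^2$-convergence to the initial datum; both are routine. I expect no serious obstacle here: the real content sits upstream in Proposition \ref{prop:para max princ} and Corollary \ref{cor:super mvi}, and this proposition is essentially a bookkeeping statement assembling those pieces.
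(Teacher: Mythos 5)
Your proposal is correct and follows essentially the same route as the paper, which likewise treats (i)--(iv) as immediate consequences of the definitions, left-strong locality, and the construction of $P^D_U$, and derives (v) directly from Corollary \ref{cor:super mvi}. You simply supply the routine details that the paper's one-line proof leaves implicit.
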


\begin{proof}
Properties (i) and (ii) are immediate from the definition of local weak solutions. Property (iii) is immediate from the definition of $P^D_U$. Property (iv) follows from the left-strong locality and the definition of local weak solutions. Property (v) follows from Corollary \ref{cor:super mvi}.
\end{proof}

\section{Construction of non-symmetric local forms}
\label{sec:examples}

In this section we show how to construct non-symmetric forms on a given symmetric strongly local regular Dirichlet space $(X,d,\mu,\e^*,\F)$.
Let $Y \subset X$ be an open subset and $R_0 >0$. Suppose Assumption \ref{as:VD+PI} is satisfied. 

\renewcommand{\H}{\mathcal H}
\begin{definition} Let $\H$ be the space of all functions $h \in \F \cap L^{\infty}(Y,
\mu)$, $h\geq 0$, for which there exists a constant $C_h' \in (0,\infty)$ such that
\begin{align} \label{eq:H2}
\forall \, 0 \le f \in \F_{\mbox{\tiny{c}}}(Y) \cap L^{\infty}(Y,\mu), \qquad  \left| \int d\Gamma(f,h) \right|
\leq C_h' \int f \, d\mu.
\end{align}  
\end{definition}

For instance, $\H$ contains any non-negative bounded function that is $\e^*$-harmonic on the subset $Y$. Also the ground state on a bounded domain containing $Y$ is in $\H$.

\begin{lemma}
Let $h \in \H$. Then there exists a constant $C_h \in (0,\infty)$ such that
\begin{align} \label{eq:H1}
\forall f \in \F_{\mbox{\em \tiny{c}}}(Y), \qquad  
\int f^2 d\Gamma(h,h)  \leq C_h \| f \|_{\F}^2.
\end{align}  
\end{lemma}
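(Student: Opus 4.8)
The plan is to show that any $h \in \H$ has finite energy against $\F_{\mbox{\tiny{c}}}(Y)$-test functions, using the defining inequality \eqref{eq:H2} together with the Leibniz rule for $\Gamma$. First I would reduce to non-negative $f$: since $\Gamma(f,f) = \Gamma(|f|,|f|)$ by the strong locality of $\e^*$ (the energy measure does not see the sign of $f$), and since $|f| \in \F_{\mbox{\tiny{c}}}(Y)$ whenever $f \in \F_{\mbox{\tiny{c}}}(Y)$, it suffices to prove \eqref{eq:H1} for $0 \le f \in \F_{\mbox{\tiny{c}}}(Y)$. I would further reduce to bounded $f$ by truncation: replace $f$ by $f \wedge N$, establish the bound with a constant independent of $N$, and let $N \to \infty$ using monotone convergence for the left-hand side and the fact that $\|f \wedge N\|_{\F} \le \|f\|_{\F}$ (truncation does not increase the Dirichlet norm) on the right.

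The key computational step is the identity obtained from the Leibniz rule \cite[Lemma 3.2.5]{FOT94}: for $0 \le f \in \F_{\mbox{\tiny{c}}}(Y) \cap L^{\infty}$ and $h \in \H$,
\begin{align*}
\int d\Gamma(f^2, h) = 2 \int f \, d\Gamma(f,h).
\end{align*}
Since $f^2 \in \F_{\mbox{\tiny{c}}}(Y) \cap L^{\infty}(Y,\mu)$ and $f^2 \ge 0$, the left-hand side is controlled by \eqref{eq:H2} applied to $f^2$:
\begin{align*}
\left| \int f\, d\Gamma(f,h) \right| = \frac{1}{2}\left| \int d\Gamma(f^2,h) \right| \le \frac{C_h'}{2} \int f^2 \, d\mu.
\end{align*}
Now I would bound $\int f^2 \, d\Gamma(h,h)$ by writing it, via the Leibniz rule applied in the other direction, in terms of $\int f\, d\Gamma(f h, h)$ minus $\int f\, d\Gamma(f,h)\cdot$(something); more cleanly, I expect the cleanest route is the following "integration by parts" trick. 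Note $\int f^2 \, d\Gamma(h,h) = \int d\Gamma(f^2 h, h) - 2\int f h \, d\Gamma(f,h)$ by the Leibniz rule (with quasi-continuous versions throughout). The first term is $\int d\Gamma(g,h)$ with $g = f^2 h \in \F_{\mbox{\tiny{c}}}(Y) \cap L^{\infty}$, $g \ge 0$, hence bounded by $C_h' \int f^2 h \, d\mu \le C_h' \|h\|_{\infty} \int f^2 \, d\mu$. For the second term, Cauchy–Schwarz \eqref{eq:CS} gives
\begin{align*}
\left| \int f h \, d\Gamma(f,h) \right| \le \left( \int f^2 \, d\Gamma(f,f) \right)^{1/2} \left( \int h^2 \, d\Gamma(h,h) \right)^{1/2},
\end{align*}
which is not obviously bounded; so instead I would apply Cauchy–Schwarz in the form $\left| \int f h\, d\Gamma(f,h) \right| \le \left(\int h^2 \, d\Gamma(f,f)\right)^{1/2}\left(\int f^2 \, d\Gamma(h,h)\right)^{1/2} \le \|h\|_\infty \e^*(f,f)^{1/2} \left(\int f^2 \, d\Gamma(h,h)\right)^{1/2}$, and absorb the factor $\left(\int f^2\, d\Gamma(h,h)\right)^{1/2}$ into the left-hand side after moving it across (Young's inequality, $2ab \le \eta a^2 + \eta^{-1} b^2$ with $\eta$ small). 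This yields $\int f^2 \, d\Gamma(h,h) \le C \big( \|h\|_\infty^2 \e^*(f,f) + C_h' \|h\|_\infty \int f^2\, d\mu \big) \le C_h \|f\|_{\F}^2$ with $C_h$ depending on $\|h\|_{\infty}$ and $C_h'$.

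The main obstacle I anticipate is justifying all the Leibniz-rule manipulations at the level of $\F_{\mbox{\tiny{loc}}}(Y)$ rather than $\F$, and in particular ensuring that all the products ($f^2 h$, $fh$, etc.) genuinely lie in $\F_{\mbox{\tiny{c}}}(Y) \cap L^{\infty}(Y,\mu)$ so that \eqref{eq:H2} and \eqref{eq:CS} apply — this needs $h \in \F \cap L^{\infty}$ (given) plus $f$ bounded with compact support, which is why the truncation reduction at the start is essential. A secondary technical point is the passage to the limit $N \to \infty$ in the truncation: one must check that $\int (f\wedge N)^2 \, d\Gamma(h,h) \uparrow \int f^2 \, d\Gamma(h,h)$ by monotone convergence and that the right-hand side stays bounded by $C_h \|f\|_{\F}^2$ uniformly, which follows since $\|(f \wedge N)\|_{\F} \le \|f\|_{\F}$; this also delivers the final claim for unbounded $f \in \F_{\mbox{\tiny{c}}}(Y)$. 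Everything else is a routine application of the Cauchy–Schwarz inequality \eqref{eq:CS}, the Leibniz rule, and Young's inequality.
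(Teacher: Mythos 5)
Your proposal is correct and follows essentially the same route as the paper: truncate to reduce to bounded $f$, write $\int f^2\,d\Gamma(h,h)=\int d\Gamma(f^2h,h)-2\int fh\,d\Gamma(f,h)$ via the Leibniz rule, bound the first term by \eqref{eq:H2} applied to $f^2h\ge 0$, and absorb the second term into the left-hand side after Cauchy--Schwarz in the pairing $\left(\int h^2\,d\Gamma(f,f)\right)^{1/2}\left(\int f^2\,d\Gamma(h,h)\right)^{1/2}$ and Young's inequality. The preliminary reduction to non-negative $f$ is harmless but unnecessary, since non-negativity is only needed for the test function $f^2h$, which holds automatically.
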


\begin{proof}
It suffices to show \eqref{eq:H1} when $f \in \F_{\mbox{\tiny{c}}}(Y)$ is bounded (otherwise approximate $f$ by bounded functions $f_n := (f \wedge n) \vee (-n)$). Then $f^2 h \in \F_{\mbox{\tiny{c}}}(Y) \cap L^{\infty}(Y,\mu)$. By \eqref{eq:chain rule for Gamma}, \eqref{eq:H2}, and \eqref{eq:CS},
\begin{align*}
\int f^2 d\Gamma(h,h)  
& = \int d\Gamma(f^2 h,h) - 2 \int fh \, d\Gamma(f,h)  \\
& \leq \frac{1}{2} \int f^2  d\Gamma(h,h)   + 8 \int h^2  d\Gamma(f,f) + C_h' \int f^2 h \, d\mu.
\end{align*}
Since, by assumption, $h$ is  bounded, \eqref{eq:H1} follows by rearranging the terms in the above inequality.
\end{proof}

\begin{proposition} \label{prop:nonsym form}
Let $h \in \H$.
For $f,g \in \F_{\mbox{\em \tiny{b}}}$, set  
\begin{align} \label{eq:nonsym form}
 \e(f,g) & := \e^*(f,g)  + \int g \, d\Gamma(f,h) - \int f \, d\Gamma(g,h).
 \end{align}
Then the results of Section \ref{sec:Moser}, Sections \ref{ssec:bombieri} - \ref{ssec:PHI main results}, and Section \ref{sec:heat propagator} apply to $(\e,\F)$, provided that $(\e^*,\F)$ satisfies Assumption \ref{as:VD+PI} as required in these results (locally or globally).
\end{proposition}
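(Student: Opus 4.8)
The plan is to verify that the non-symmetric bilinear form $\e$ defined by \eqref{eq:nonsym form} satisfies Assumption \ref{as:0} and Assumptions \ref{as:skew1}, \ref{as:skew2} (and, where needed for the heat-kernel results, Assumption \ref{as:skew davies}) with respect to the reference form $(\e^*,\F)$, with constants $C_3 = C_5 = C_7 = 0$ (so that the $\Psi(r)$-free versions and Corollary \ref{cor:global PHI} apply). Once these structural and quantitative conditions are checked, all cited results apply verbatim since they are stated as consequences of exactly these assumptions together with Assumption \ref{as:VD+PI} on the reference space.

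First I would identify the decomposition of $\e$. Since $\e^{\mbox{\tiny{sym}}} = \e^*$ and the perturbation $(f,g) \mapsto \int g\,d\Gamma(f,h) - \int f\,d\Gamma(g,h)$ is manifestly skew-symmetric, we have $\e_t^{\mbox{\tiny{sym}}} = \e^*$, $\e_t^{\mbox{\tiny{s}}} = \e^*$ (so Assumption \ref{as:0}(iii) holds with $C=1$), $\e_t^{\mbox{\tiny{sym}}}(fg,1) = \e^*(fg,1) = 0$ by strong locality, hence Assumption \ref{as:0}(ii) is trivial and $\l_t, \r_t$ reduce to the skew part. The continuity estimate Assumption \ref{as:0}(i) follows from \eqref{eq:CS} together with \eqref{eq:H1}: $|\int g\,d\Gamma(f,h)| \le (\int g^2 d\Gamma(h,h))^{1/2}(\int d\Gamma(f,f))^{1/2} \le C_h^{1/2}\|g\|_{\F}\|f\|_{\F}$, and symmetrically for the other term. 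Assumption \ref{as:0}(vi) follows since $\e(f,f) = \e^*(f,f) \ge 0$ (the skew part vanishes on the diagonal) so we may take $c = \alpha = 1$; the garden-variety product and chain rules (iv), (v) for $\l_t$ are inherited from the corresponding rules \eqref{eq:chain rule for Gamma} for $\Gamma$ on the algebra $\mathcal D$, using that $\l_t(f,g) = \int g\,d\Gamma(f,h)$ on such functions after the strong-locality simplification.

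The heart of the matter is Assumptions \ref{as:skew1} and \ref{as:skew2}, i.e.\ controlling terms like $\e^{\mbox{\tiny{skew}}}(f, f\psi^2) = \int f\psi^2 d\Gamma(f,h) - \int f\,d\Gamma(f\psi^2,h)$. Using the Leibniz rule for $\Gamma$ one rewrites $d\Gamma(f\psi^2,h) = \psi^2 d\Gamma(f,h) + f\,d\Gamma(\psi^2,h) = \psi^2 d\Gamma(f,h) + 2f\psi\,d\Gamma(\psi,h)$, so the leading $\int f\psi^2 d\Gamma(f,h)$ terms cancel and one is left with $-2\int f^2\psi\,d\Gamma(\psi,h)$. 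Applying \eqref{eq:CS}, then \eqref{eq:H1} with $f^2\psi^2$ (noting $f^2\psi^2 \in \F_{\mbox{\tiny{c}}}(Y)$) — or more carefully \eqref{eq:H2} — bounds this by $C\int f^2\psi^2 d\mu + \text{(energy terms)}$; the $\Gamma(\psi,\psi)$ energy term is then absorbed via the CSA($\Psi$) inequality \eqref{eq:CSA} satisfied by $\psi$, producing the required form $C_{11}\epsilon^{1/2}\int\psi^2 d\Gamma(f,f) + (C_2)\frac{C_1(\epsilon)}{\Psi(r)}\int_B f^2 d\mu$ with $C_3 = 0$. The terms $\e^{\mbox{\tiny{sym}}}(f^2\psi^2,1)$ and $\e^{\mbox{\tiny{skew}}}(f^2\psi^2,1)$ are handled the same way (the former vanishes). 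For Assumption \ref{as:skew2} one does the identical computation with $f$ replaced by $\log f$ via the chain rule, since $\e^{\mbox{\tiny{skew}}}(f,f^{-1}\psi^2)$ expands, after Leibniz, to $-2\int\psi\,d\Gamma(\psi,h)$ plus a $\int\psi^2 d\Gamma(\log f, h)$-type remainder that Cauchy–Schwarz bounds by $C_{11}\epsilon^{1/2}\int\psi^2 d\Gamma(\log f,\log f)$ plus a zero-order term $\frac{C_1(\epsilon)}{\Psi(r)}\int_B d\mu$ with $C_5 = 0$; here one uses that $h$ is bounded and \eqref{eq:H2} with $f$ replaced by $\psi^2$. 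Assumption \ref{as:skew davies} is verified by the same algebra with $\psi$ replaced by $\phi = e^{-M\psi}$ and Lemma \ref{lem:e^M psi CSA} used in place of \eqref{eq:CSA}.

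The main obstacle is keeping the bookkeeping of which function-space ($\F$, $\F_{\mbox{\tiny{c}}}(Y)$, $\F_{\mbox{\tiny{loc}}}(Y)$, bounded vs.\ not) each application of \eqref{eq:CS}, \eqref{eq:H2}, \eqref{eq:H1} and \eqref{eq:CSA} is legitimate in — in particular, that $f^2\psi^2$ or $\psi^2$ is a legitimate test function in \eqref{eq:H2} (it has compact support in $Y$ once $B(x,R+r) \subset Y$, which is guaranteed by $B(x,2R) \subset Y$ and $r < R$), and that the approximation by $\mathcal D$ in Assumption \ref{as:0} and by bounded functions is valid so that the identities (Leibniz, chain rule) pass to the required generality. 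Beyond that, everything is a routine Cauchy–Schwarz-and-absorb argument, and once Assumptions \ref{as:0}, \ref{as:skew1}, \ref{as:skew2} (and \ref{as:skew davies}) are in hand with $C_3 = C_5 = C_7 = 0$, Corollary \ref{cor:u loc bounded}, Theorem \ref{thm:PHI}, Corollary \ref{cor:global PHI}, Corollary \ref{cor:Hoelder}, and the heat-propagator estimates of Section \ref{sec:heat propagator} apply directly to $(\e,\F)$, which is the assertion of the proposition.
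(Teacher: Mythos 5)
Your verification of Assumption \ref{as:0}(i) is the critical gap. You bound $|\int g\, d\Gamma(f,h)|$ by $(\int g^2 d\Gamma(h,h))^{1/2}(\int d\Gamma(f,f))^{1/2}$ and then invoke \eqref{eq:H1}, but \eqref{eq:H1} is only available for $g \in \F_{\mbox{\tiny{c}}}(Y)$ --- its proof rests on \eqref{eq:H2}, which requires compact support in $Y$ --- and nothing in the definition of $\H$ controls $\int g^2 d\Gamma(h,h)$ for a general $g \in \F$ by $\|g\|_{\F}^2$. The paper's own proof concedes exactly this point: the skew-symmetric part of $\e$ may fail Assumption \ref{as:0}(i). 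The correct route is to observe that, by locality, the pairings $\e(f, f\psi^2)$ actually used in the Moser iteration are well defined for $f \in \F_{\mbox{\tiny{loc}}}(Y)$ and $\psi \in \F_{\mbox{\tiny{c}}} \cap L^{\infty}(Y,\mu)$, and to verify directly the one continuity-under-approximation property (convergence of $\e(f_k, f_k (f_k\wedge n)^{p-2}\psi^2)$ along $f_k \to f$ in $(\F,\|\cdot\|_{\F})$ and quasi-everywhere) that is the sole place where Assumption \ref{as:0}(i) enters Sections \ref{sec:Moser}--\ref{ssec:PHI main results}. Your reduction ``check Assumption 0 and everything follows'' therefore does not go through as written.

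Second, the claim $C_3 = C_5 = C_7 = 0$ is not attainable from \eqref{eq:H1}--\eqref{eq:H2}. Both the bound $|\e^{\mbox{\tiny{skew}}}(f^2\psi^2,1)| \le C_h'\int f^2\psi^2\, d\mu$ and the zero-order term $\int (f\psi)^2 d\mu$ produced by \eqref{eq:H1} come without a $1/\Psi(r)$ factor, so they can only be recorded in the required form as $C_3\Psi(r)\cdot\frac{C_1(\epsilon)}{\Psi(r)}\int_B f^2 d\mu$ with $C_3>0$ (and likewise $C_5>0$, $C_7>0$); the paper's verification indeed produces nonzero $C_3, C_5$. This is harmless for the proposition as stated, since the results of Sections \ref{sec:Moser}--\ref{sec:heat propagator} allow nonzero $C_3, C_5$ with constants depending on upper bounds of $(C_3+C_5)\Psi(\cdot)$, but it invalidates your parenthetical appeal to Corollary \ref{cor:global PHI}, which genuinely requires $C_3=C_5=0$. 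The remainder of your computation --- the cancellation $\e^{\mbox{\tiny{skew}}}(f,f\psi^2) = -2\int f^2\psi\, d\Gamma(\psi,h)$, Cauchy--Schwarz, absorption via CSA($\Psi$), the $\log f$ variant for Assumption \ref{as:skew2}, and the $e^{-M\psi}$ variant for Assumption \ref{as:skew davies} --- matches the paper's argument.
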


\begin{proof}
We point out that the bilinear form $\e$ is defined on $\F_{\mbox{\tiny{b}}} \times \F_{\mbox{ \tiny{b}}}$ and its skew-symmetric part may not satisfy the inequality in Assumption 0(i). Nevertheless, by locality the definition \eqref{eq:nonsym form} makes sense for any pair $(f,g)$ where $f \in \F_{\mbox{\tiny{loc}}}(Y)$ and $g = f \psi^2$ for some $\psi \in \F_{\mbox{\tiny{c}}} \cap L^{\infty}(Y,\mu)$. Moreover, if $(f_k) \subset \F \cap \mathcal C_{\mbox{\tiny{c}}}(X)$ converges to some $f \in \F$ in $(\F,\| \cdot \|_{\F})$ and quasi-everywhere, then one can easily verify that, for any positive integer $n$ and any $p \ge 2$,
\[ \lim_{k \to \infty} \e(f_k,f_k (f_k \wedge n)^{p-2} \psi^2) = \e(f,f (f \wedge n)^{p-2} \psi^2). \]
This is in fact sufficient to apply the argument of \eqref{eq:subsol p>2 estimate for e(f,f f^p)} and the paragraph thereafter, which is the only place where we have used Assumption 0(i) within Section \ref{sec:Moser} and Section \ref{ssec:bombieri} - \ref{ssec:PHI main results}. 

Below we will verify that $(\e,\F)$ satisfies Assumption 0(ii)--(vi), Assumption 1 and Assumption 2.

It is immediate from \eqref{eq:nonsym form} that $(\e,\F)$ is a local bilinear form.
The symmetric part of $\e$ is $\e^{\mbox{\tiny{sym}}} = \e^{\mbox{\tiny{s}}} = \e^*$. This follows easily from the definition of $\e^{\mbox{\tiny{s}}}$ and the strong locality of $(\e^*,\F)$. Thus, part (ii), (iii) and (vi) of Assumption \ref{as:0} are trivially satisfied.
Observe that $\l(f,g) =  \int g \, d\Gamma(f,h)$. Since $\Gamma$ obeys the  product rule and the chain rule, part (iv) and part (v) of Assumption \ref{as:0} are verified.

Next, we show that $(\e,\F)$ satisfies Assumption \ref{as:skew1}. The estimate on $\e^{\mbox{\tiny{sym}}}$ is trivially satisfied. 
 Let $\epsilon \in (0,1)$. Let  $0 < r < R \leq R_0$ and $B(x,2R)\subset Y$. Let $g \in \mbox{CSA($\Psi, \epsilon, C_0$)}$ be a cutoff function for $B(x,R)$ in $B=B(x,R+r)$. 
Let $0 \leq f \in \F_{\mbox{\tiny{loc}}}(Y) \cap L^{\infty}_{\mbox{\tiny{loc}}}(Y,\mu)$. 
By \eqref{eq:H2},
\begin{align*}
 \left|\e^{\mbox{\tiny{skew}}}(f^2g^2,1)\right|
 = 
 \left| \int  d\Gamma(f^2g^2,h) \right|
\le C_h' \int f^2 g^2 d\mu \le C_h' \int_B f^2 d\mu.
\end{align*}
Furthermore, we have
\begin{align*}
\e^{\mbox{\tiny{skew}}}(f,f g^2)
 = - 2 \int f^2 g \, d\Gamma(g,h).
\end{align*}
By \eqref{eq:CS}, \eqref{eq:H1}, \eqref{eq:Gamma(fg)}, and the cutoff Sobolev inequality \eqref{eq:CSA},
\begin{align*}
& \quad 2 \left| \int f^2 g \, d\Gamma(g,h) \right| \\
& \leq 
2 \left( \int f^2 d\Gamma(g,g) \right)^{1/2} 
       \left( \int (fg)^2 d\Gamma(h,h) \right)^{1/2} \\
& \leq 
2  C_h^{1/2}  \left( \int f^2 d\Gamma(g,g) \right)^{1/2} 
       \left( 2 \int g^2 d\Gamma(f,f) + 2 \int f^2 d\Gamma(g,g) + \int f^2 g^2 d\mu \right)^{1/2} \\
& \leq 
2  C_h^{1/2}  \left( \epsilon \int g^2 d\Gamma(f,f) + \frac{C_0(\epsilon)}{\Psi(r)} \int g f^2 d\mu \right)^{1/2}  \left( 2 \int g^2 d\Gamma(f,f) \right)^{1/2} \\
& \quad + 2  C_h^{1/2}  \left(2 \int f^2 d\Gamma(g,g) + \int f^2 g^2 d\mu \right) \\
& \leq 
2  C_h^{1/2}  \left[ \frac{1}{\epsilon^{1/2}} \left( \epsilon \int g^2 d\Gamma(f,f) + \frac{C_0(\epsilon)}{\Psi(r)} \int g f^2 d\mu \right)
+  2 \epsilon^{1/2} \int g^2 d\Gamma(f,f) \right] \\
& \quad + 2  C_h^{1/2}  \left( 2 \epsilon \int g^2 d\Gamma(f,f) + \left(2 \frac{C_0(\epsilon)}{\Psi(r)} + 1 \right) \int f^2 g d\mu \right) \\
& \leq 
C_{11} \epsilon^{1/2} \int g^2 d\Gamma(f,f) + (C_2+C_3\Psi(r)) \frac{ \epsilon^{-1/2} C_0(\epsilon)}{\Psi(r)} \int_B f^2 d\mu,
\end{align*}
for some constants $C_{11}$, $C_2$, $C_3$ depending only on $C_h$ and $C_0$.
This proves that $(\e,\F)$ satisfies Assumption \ref{as:skew1}.
Similarly, one can verify that Assumption \ref{as:skew davies} is satisfied.

Next, we show that $(\e,\F)$ satisfies Assumption \ref{as:skew2}. Let $g$ be as above and $0 \leq f \in \F_{\mbox{\tiny{loc}}}(Y)$ with $f + f^{-1} \in L^{\infty}_{\mbox{\tiny{loc}}}(Y)$. By \eqref{eq:chain rule for Gamma}, \eqref{eq:CS}, \eqref{eq:H1}, and by the cutoff Sobolev inequality \eqref{eq:CSA},
\begin{align*}
& \quad | \e^{\mbox{\tiny{skew}}}(f,f^{-1}g^2) |
 = \left| \int f^{-1} g^2 d\Gamma(f,h) - \int f d\Gamma(f^{-1} g^2,h) \right| \\
& = \left|- 2\int g d\Gamma(g,h) + 2 \int g^2 d\Gamma(\log f,h) \right| \\
& \leq  
2   \left( \int d\Gamma(g,g) \right)^{1/2}    \left( \int g^2 d\Gamma(h,h) \right)^{1/2} \\
& \quad + 2  \left( \int g^2 d\Gamma(\log f, \log f) \right)^{1/2} 
       \left( \int g^2 d\Gamma(h,h) \right)^{1/2} \\
 & \leq  
2 C_h^{1/2} \left[ \left(  \int d\Gamma(g,g) + \int g^2 d\mu  
\right) + \left( \int g^2 d\Gamma(\log f, \log f) \right)^{1/2}
       \left(  \int d\Gamma(g,g) +  \int g^2 d\mu \right)^{1/2} \right]\\
& \leq  
2  C_h^{1/2} \left[ \epsilon^{1/2} \int g^2 d\Gamma(\log f, \log f) 
+\left( \frac{C_0}{\Psi(r)} + 1 \right) (1 +\epsilon^{-1/2} ) \int g d\mu \right] \\
& \leq  
C_{11}  \epsilon^{1/2} \int g^2 d\Gamma(\log f, \log f) 
+\left( C_4 + C_5 \Psi(r) \right)  \frac{\epsilon^{-1/2} C_0}{\Psi(r)} \int_B d\mu,
\end{align*}
for some constants $C_{11}$, $C_4$, $C_5$ depending only on $C_h$ and $C_0$.
\end{proof}

It might be possible to weaken Assumption \ref{as:0} in such a way that it covers the example constructed above.
However, this issue concerns the (local) domains of the bilinear forms. We chose to keep Assumption \ref{as:0} as it is for the sake of the readability of the paper.

\bibliographystyle{siam}

\def\cprime{$'$} \def\cprime{$'$}

\noindent
Janna Lierl, 
Department of Mathematics, University of Connecticut, 341 Mansfield Road, Storrs, CT 06269, USA.  janna.lierl@uconn.edu.

\end{document}